\setlist{nolistsep}
\setlist[itemize]{topsep=0pt}
\theoremstyle{plain}
\newtheorem{theorem}{Theorem}[section]
\newtheorem{thm}[theorem]{Theorem}
\newtheorem{lemma}[theorem]{Lemma}
\newtheorem{lem}[theorem]{Lemma}
\newtheorem{prop}[theorem]{Proposition}
\newtheorem{cor}[theorem]{Corollary}
\theoremstyle{definition}
\newtheorem{assume}[theorem]{Assumption}
\newtheorem{definition}[theorem]{Definition}
\theoremstyle{remark}
\newtheorem{remark}[theorem]{Remark}
\newtheorem{rem}[theorem]{Remark}
\numberwithin{equation}{section}
\newcommand{\R}{\mathbb{R}}
\DeclarePairedDelimiter{\abs}{\lvert}{\rvert}
\DeclarePairedDelimiter{\norm}{\lVert}{\rVert}
\DeclarePairedDelimiter{\bra}{(}{)}
\DeclarePairedDelimiter{\pra}{[}{]}
\DeclarePairedDelimiter{\set}{\{}{\}}
\DeclarePairedDelimiter{\skp}{\langle}{\rangle}
\DeclareMathOperator{\var}{var}
\DeclareMathOperator{\Id}{Id}
\DeclareMathOperator{\Jac}{Jac}
\DeclareMathOperator{\Tr}{tr}
\DeclareMathOperator{\PI}{PI}
\DeclareMathOperator{\TI}{TI}
\DeclareMathOperator{\LSI}{LSI}
\newcommand{\EX}[1][E]{\ensuremath {\mathds{#1}}}
\DeclareMathAlphabet\gothic{U}{euf}{m}{n}
\def\div{\mathop{\mathrm{div}}\nolimits}
\def\cM{\mathcal{M}}
\def\Tt{\mathbb{T}}
\def\Rk{\mathbb{R}^k}
\def\vep{\varepsilon}
\DeclareMathOperator{\RelEnt}{H}
\DeclareMathOperator{\RF}{RF}
\DeclareMathOperator{\Wasser}{W}
\DeclareMathOperator{\I}{I}
\DeclareMathOperator{\Div}{div}
\def\Hausdorff{\mathcal{H}}
\newcommand{\transpose}{{\top}}
\title{Quantification of coarse-graining error in Langevin and overdamped Langevin dynamics}
\date{}
\author{M.\ H.\ Duong, A.\ Lamacz, M.\ A.\ Peletier, A.\ Schlichting and U.\ Sharma}
\begin{document}
\maketitle

\begin{abstract}
In molecular dynamics and sampling of high dimensional Gibbs measures
coarse-graining is an important technique to reduce the dimensionality
of the problem. We will study and quantify the coarse-graining error
between the coarse-grained dynamics and an effective dynamics. The effective
dynamics is a Markov process on the coarse-grained state space obtained by a closure procedure from the coarse-grained coefficients. We obtain error estimates both in relative entropy and Wasserstein
distance, for both Langevin and overdamped Langevin dynamics. The approach allows for vectorial coarse-graining maps.
Hereby, the quality of the chosen coarse-graining is measured by certain
functional inequalities encoding the scale separation of the Gibbs
measure. The method is based on error estimates between solutions of (kinetic) Fokker-Planck equations in terms of large-deviation rate functionals.
\end{abstract}

\tableofcontents

\section{Introduction }


\emph{Coarse-graining} or \emph{dimension reduction} is the procedure of approximating a large and complex system by a simpler and lower dimensional one, where the variables in the reduced model are called coarse-grained or collective variables. Such a reduction is necessary from a computational point of view since an all-atom molecular simulation of the complex system is often unable to access information about relevant temporal and/or spatial scales. Further this is also relevant from a  modelling point of view as the quantities of interest are often described by a smaller class of features. For these reasons coarse-graining has gained importance in various fields and especially in molecular dynamics.

Typically coarse-graining requires  \emph{scale separation}, i.e.\ the presence of fast and slow scales. In this setting, as the ratio of fast to slow increases, the fast variables remain at equilibrium with respect to the slow ones. Therefore the right choice for the coarse-grained variables are the slow ones. Such a situation has been dealt via various techniques: the Mori-Zwanzig projection formalism~\cite{Grabert82,GivonKupfermanStuart04}, Markovian approximation to Mori-Zwanzig projections~\cite{HijonEspanolEijndenDelgado10} and averaging techniques~\cite{Hartmann07,PavliotisStuart08} to name a few. Recently in \cite{BerezhkovskiiSzabo13,LuEijnden14}, coarse-graining techniques based on committor functions have been developed for situations where scale separation is absent.

As pointed out by the literature above and extensive references therein, the question of systematic coarse-graining has received wide attention over the years. However the question of deriving quantitative estimates and explicit bounds even in simple cases is a more challenging one to answer. Legoll and Leli{\`e}vre ~\cite{LL10} provide first results which address this question. Starting from the overdamped Langevin equation as the reference  they derive explicit quantitative estimates using relative entropy techniques. Recently, the two authors together with Olla have derived trajectorial estimates~\cite{LLO16}.

The work of \cite{LL10} has certain limitations: (1) the quantitative estimates work only in the presence of a \textit{single} coarse-grained variable and (2) the estimates are only applicable to overdamped Langevin equation. Having a single coarse-grained variable is an obvious issue from a modelling perspective. In practice, the (underdamped) Langevin equation is often preferred over the overdamped Langevin equation since it is closer to the Hamiltonian dynamics and therefore is seen as a natural choice to sample the canonical measure.

The aim of the present work is to generalise the ideas introduced in \cite{LL10} to overcome the limitations mentioned above. In recent years it has been discovered that a large class of evolution equations and specifically  the Langevin and  the overdamped Langevin equations have a natural variational structure that arises as a large-deviation characterization of some stochastic process \cite{AdamsDirrPeletierZimmer11,DuongPeletierZimmer13,MielkePeletierRenger14}, which can be employed for qualitative coarse-graining \cite{DuongLamaczPeletierSharma17}. Using this connection to large-deviations theory,  we give structure to the relative entropy techniques used in \cite{LL10}. This structure allows us to extend their results to the case of \textit{multiple} coarse-grained variables and to the Langevin dynamics. We also present new error estimates in the second-order Wasserstein distance (henceforth called Wasserstein-2), which is a standard tool in the theory of optimal transport \cite{Villani03} and gradient flows  \cite{AmbrosioGigliSavare08}.

\subsection{The central question}\label{S:The-Central-Quest}
We will consider two equations: the overdamped and the full Langevin equation. To start we willfocus on the simpler overdamped Langevin equation
\begin{align}\label{eq:Intro-Over-Lang-SDE}
dX_t=-\nabla V(X_t)\, dt+\sqrt{2\beta^{-1}}\, dW^d_t, \qquad X_{t=0}=X_0.
\end{align}
Here $X_t\in\R^d$ is the state of the system at time $t$, $V$ is a potential, $\beta=1/(k_BT_{a})$ is the inverse temperature, $W^d_t$ is a $d$-dimensional Brownian motion and $X_0$ is the initial state of the system. 

The aim is to study \emph{approximations} of this system in the form of low-dimensional stochastic differential equations (SDEs). This question is motivated by the field of molecular dynamics, where the study of low-dimensional versions is extremely relevant to address  the problems of numerical complexity arising in large systems.

The key idea that allows for such an approximation is to consider not the full information present in $X$, but only a reduced low-dimensional version $t\mapsto \xi(X_t)$, characterized by a
\emph{coarse-graining map}
\begin{align}\label{eq:Intro-xi-Def}
\xi:\R^d\rightarrow \R^k\qquad\text{with $k<d$.}
\end{align}
In the context of molecular dynamics $\xi$ is called a `reaction coordinate', and is chosen to be the set of variables which evolve on a slower time-scale than the rest of the dynamics.
The projection space could be replaced by a general smooth $k$-dimensional manifold as considered for particular examples in~\cite{FKvdE2010,Reich2000}. However, for the sake of presentation and to avoid technical difficulties, we work with $\R^k$.
Given a coarse-graining map $\xi$, the evolution of $\xi(X_t)$ with $X_t$ a solution to~\eqref{eq:Intro-Over-Lang-SDE} follows from the It{\^o}'s formula and satisfies
\begin{align}\label{eq:Intro-Ito-xi-X_t}
d\xi(X_t)=\bra{-D\xi \nabla V +\beta^{-1}\Delta\xi}(X_t) \, dt+\sqrt{2\beta^{-1} \abs{D\xi D\xi^\transpose }(X_t)}\, dW^k_t,
\end{align}
where $D\xi$ denotes the Jacobian of $\xi$ and $W_t^k$ is the $k$-dimensional Brownian motion
\begin{align*}
dW^k_t=\frac{D\xi}{\sqrt{\abs{D\xi D\xi^\transpose }}}(X_t) \,dW_t^d.
\end{align*}
Equation \eqref{eq:Intro-Ito-xi-X_t} is not closed since the right hand side depends on $X_t$. This issue is addressed by working with a different random variable $\hat Y_t$ proposed by Gy{\"o}ngy \cite{Gyongy86} which has the property that it has the same time marginals as $\xi(X_t)$ i.e.\ $\mathrm{law}(\xi(X_t))=\mathrm{law}(\hat Y_t)$ (see Proposition~\ref{prop:FP-effect-Dyn}). The random variable $\hat Y$ evolves according to
\begin{align}\label{eq:Intro-hat-Y}
d\hat Y_t =-\hat b(t,\hat Y_t)\,dt+\sqrt{2\beta^{-1}\hat A(t,\hat Y_t)}\,dW^k_t,
\end{align}
where
\begin{equation}
\begin{aligned}\label{def:Intro-hat-A-hat-b}
\hat b(t,z) &:=\EX\pra*{\bra{D\xi \nabla V-\beta^{-1}\Delta\xi}(X_t)\,\middle|\,\xi(X_t)=z},\\
\hat A(t,z) &:=\EX\pra*{\abs{D\xi D\xi^\transpose }(X_t)\,\middle|\,\xi(X_t)=z}.
\end{aligned}
\end{equation}
We will refer to both $\hat Y_t$ and $\hat\rho_t:=\mathrm{law}(\hat Y_t)$  as the \emph{coarse-grained dynamics}.
Note that the coefficients in the evolution of $\hat Y$ are time dependent and require knowledge about the law of $X_t$. This renders this closed version \eqref{eq:Intro-hat-Y} as computationally intensive as the original system \eqref{eq:Intro-Ito-xi-X_t}.

Legoll and Leli\`evre~\cite{LL10} suggest replacing~\eqref{eq:Intro-hat-Y} by the following SDE:
\begin{align}\label{eq:Intro-Y}
dY_t =-b(Y_t)\, dt+\sqrt{2\beta^{-1}A(Y_t)}\, dW^k_t,
\end{align}
with coefficients 
\begin{equation}
\begin{aligned}\label{def:Intro-Eff-Coeff-A-b}
& b(z)=\EX_\mu\pra*{\bra{D\xi \nabla V-\beta^{-1}\Delta\xi}(X)\,\middle|\,\xi(X)=z},\\
& A(z)=\EX_\mu\pra*{\abs{D\xi D\xi^\transpose }(X)\,\middle|\,\xi(X)=z}.
\end{aligned}
\end{equation}
Here $\EX_\mu$ is the expectation with respect to the Boltzmann-Gibbs distribution $\mu$
\begin{align}\label{def:Intro-Boltzmann-Gibbs-measure}
d\mu(q)=Z^{-1}\exp(-\beta V(q))dq,
\end{align}
which is the stationary measure for the overdamped Langevin dynamics~\eqref{eq:Intro-Over-Lang-SDE}. Following \cite{LL10}, we will refer to both $Y_t$ and $\eta_t:=\mathrm{law}(Y_t)$ as the \emph{effective dynamics}. Note that the coefficients $b,A$ in~\eqref{def:Intro-Eff-Coeff-A-b} are time-independent as they only depend on the Boltzmann-Gibbs distribution, and therefore can be calculated offline. This makes the effective dynamics~\eqref{eq:Intro-Y} easier to work with numerically. 

Using the effective dynamics~\eqref{eq:Intro-Y} instead of the coarse-grained dynamics~\eqref{def:Intro-hat-A-hat-b} is justified 
only if the coefficients of the effective dynamics~\eqref{def:Intro-Eff-Coeff-A-b} are good approximations for the coefficients of the coarse-grained dynamics. 
This of course happens when there is an inherent scale-separation present in the system, i.e.\  if $\xi(X_t)$ is indeed a slow variable, 
due to which on the typical time scale of slow variable, $X_t$ samples like the stationary measure from the level set $\{\xi(X_t)=z\}$. 

Now we state the central question of this paper: 

\textit{Can the difference between the solutions of the coarse-grained dynamics~\eqref{eq:Intro-hat-Y} and the effective dynamics~\eqref{eq:Intro-Y} be quantified, if so in what sense and under what conditions on the coarse-graining map~$\xi$?}

\subsection{Overdamped Langevin dynamics}\label{S:Intro-Main-Results}
The solutions $\hat Y$ of the coarse-grained dynamics \eqref{eq:Intro-hat-Y} and $Y$ of the effective dynamics~\eqref{eq:Intro-Y} can be compared in a variety of ways: pathwise comparison, comparison of laws of paths and comparison of time marginals. We will focus on the last of these and  will estimate
\begin{align}\label{eq:Intro-Error-Choice}
\sup\limits_{t\in(0,T)}\left(\operatorname{distance}\bra{\hat\rho_t,\eta_t}\right),
\end{align} 
with $\hat\rho_t=\mathrm{law}(\hat Y_t)$ and $\eta_t=\mathrm{law}(Y_t)$. The first choice of $\operatorname{distance}$ is the relative entropy\footnote{Strictly speaking, the relative entropy is not a distance but it is widely used as a measurement of the difference between two probability measures.}. The relative entropy of a probability measure $\zeta$ with respect to another probability measure $\nu$ is defined by
\begin{align}\label{def:Intro-Rel-Ent}
\RelEnt(\zeta|\nu)=\begin{dcases}
\int f\log f \, d\nu  \ & \text{if } \ \zeta\ll\nu \text{ and } f=\frac{d\zeta}{d\nu},\\
 +\infty  & \text{otherwise}.
\end{dcases}
\end{align}
Now we state the central relative-entropy result.
\begin{theorem}\label{thm:Intro-Rel-Ent-Over-Lang}
Under the assumptions of Theorem~\ref{thm:FP-RelEnt-Main-Res}, for any $t\in [0,T]$
\begin{align}\label{eq:Intro-Rel-Ent-Main-Est}
\RelEnt(\hat\rho_t|\eta_t)\leq \RelEnt(\hat\rho_0|\eta_0)+\frac{1}{4}\bra*{\lambda_{\RelEnt}^2+\frac{\kappa_{\RelEnt}^2 \beta^2}{\alpha_{\TI} \alpha_{\LSI}}}\bra[\big]{\RelEnt(\rho_0|\mu)-\RelEnt(\rho_t|\mu)}.
\end{align}
Here $\rho_t:=\mathrm{law}(X_t)$ is the law of the solution to the overdamped Langevin equation~\eqref{eq:Intro-Over-Lang-SDE}, $\rho_0,\hat\rho_0,\eta_0$  are the initial data at $t=0$,
$\mu$ is the Boltzmann-Gibbs distribution~\eqref{def:Intro-Boltzmann-Gibbs-measure} and the constants $\lambda_{\RelEnt}, \kappa_{\RelEnt}, \alpha_{\TI}$, $\alpha_{\LSI}$, encoding the assumptions on~$V$ and~$\xi$, are made explicit in Theorem~\ref{thm:FP-RelEnt-Main-Res}.
\end{theorem}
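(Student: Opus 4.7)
The strategy is to differentiate $\RelEnt(\hat\rho_t\mid\eta_t)$ in time, bound the resulting error in terms of the dissipation of $\RelEnt(\rho_t\mid\mu)$ along the full overdamped Langevin flow, and then integrate back. Both $\hat\rho_t$ and $\eta_t$ are probability densities on $\R^k$ solving linear Fokker--Planck equations with coefficients $(\hat b(t,\cdot),\hat A(t,\cdot))$ and $(b,A)$ respectively, and the two differ \emph{only} through the measure used in the conditional expectations \eqref{def:Intro-hat-A-hat-b} versus \eqref{def:Intro-Eff-Coeff-A-b}. This is the structure I will exploit.

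\textbf{Step 1 (time derivative).} Using the two Fokker--Planck equations, a standard computation yields an identity of the form
\begin{equation*}
\frac{d}{dt}\RelEnt(\hat\rho_t\mid\eta_t)
= -\beta^{-1}\!\!\int \hat\rho_t\, A\,\bigl|\nabla\log\tfrac{\hat\rho_t}{\eta_t}\bigr|^{2}\,dz
\;+\; \mathcal{E}_b(t)+\mathcal{E}_A(t),
\end{equation*}
where $\mathcal{E}_b,\mathcal{E}_A$ are cross-terms linear in $\hat b - b$ and $\hat A - A$ respectively, tested against $\nabla\log(\hat\rho_t/\eta_t)$. By Cauchy--Schwarz and Young, each $\mathcal{E}$ can be split into a part that is absorbed into the Fisher information and a ``source" part controlled by $\int\hat\rho_t\,|\hat b-b|^{2}\,dz$ and $\int\hat\rho_t\,|\hat A - A|^{2}\,dz$.

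\textbf{Step 2 (local error in the coefficients).} The key observation is that at each $z\in\R^k$, the quantities $\hat b(t,z)-b(z)$ and $\hat A(t,z)-A(z)$ are differences between conditional expectations of the \emph{same} function ($D\xi\nabla V-\beta^{-1}\Delta\xi$ or $|D\xi D\xi^{\top}|$) with respect to the two conditional measures $\rho_t(\cdot\mid\xi=z)$ and $\mu(\cdot\mid\xi=z)$ on the level set $\{\xi=z\}$. The Kantorovich--Rubinstein / Talagrand route then bounds these by a conditional transport distance, with prefactors $\lambda_{\RelEnt}$ and $\kappa_{\RelEnt}$ that encode the Lipschitz behaviour of these integrands along level sets — these are precisely the constants appearing in the statement.

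\textbf{Step 3 (functional inequalities and dissipation).} Applying the conditional Talagrand inequality ($\alpha_{\TI}$) turns the conditional $W_2$ bound into a conditional relative entropy bound, and the conditional log-Sobolev inequality ($\alpha_{\LSI}$) further upgrades this to a conditional Fisher information bound. Integrating over $z$ against $\xi_{\#}\rho_t$ gives the global Fisher information
\begin{equation*}
\mathcal{E}_b(t)+\mathcal{E}_A(t)
\;\le\; \tfrac14\Bigl(\lambda_{\RelEnt}^{2}+\tfrac{\kappa_{\RelEnt}^{2}\beta^{2}}{\alpha_{\TI}\alpha_{\LSI}}\Bigr)\cdot\beta^{-1}\!\!\int\bigl|\nabla\log\tfrac{\rho_t}{\mu}\bigr|^{2}d\rho_t,
\end{equation*}
and the right-hand integral is precisely $-\frac{d}{dt}\RelEnt(\rho_t\mid\mu)$ along the overdamped Langevin dynamics \eqref{eq:Intro-Over-Lang-SDE}. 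Dropping the non-positive Fisher-information term from Step 1 and integrating in time from $0$ to $t$ telescopes the right-hand side to $\RelEnt(\rho_0\mid\mu)-\RelEnt(\rho_t\mid\mu)$, yielding \eqref{eq:Intro-Rel-Ent-Main-Est}.

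\textbf{Main obstacle.} The delicate point is Step 2--3: controlling the \emph{local} (in $z$) error $\hat b - b$, $\hat A - A$ by something that, after integration against $\xi_{\#}\rho_t$, is dominated by the \emph{global} Fisher information $\I(\rho_t\mid\mu)$. This requires (i) identifying the correct along-level-set Lipschitz constants of $D\xi\nabla V-\beta^{-1}\Delta\xi$ and $|D\xi D\xi^{\top}|$ — the role of $\lambda_{\RelEnt}$ and $\kappa_{\RelEnt}$ — and (ii) chaining a conditional Talagrand inequality with a conditional log-Sobolev inequality so that the prefactor $1/(\alpha_{\TI}\alpha_{\LSI})$ appears cleanly. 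This is where ``scale separation" enters, and it is the step where the vectorial ($k>1$) character of $\xi$ makes the analysis substantially heavier than in \cite{LL10}.
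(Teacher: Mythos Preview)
Your overall architecture --- differentiate $\RelEnt(\hat\rho_t\mid\eta_t)$, isolate a cross term against $\nabla_z\log(\hat\rho_t/\eta_t)$, absorb half of it into the Fisher information, and bound the remainder by $-\tfrac{d}{dt}\RelEnt(\rho_t\mid\mu)$ --- is exactly what the paper does (it packages this as the inequality $\RelEnt(\hat\rho_t|\eta_t)\le \RelEnt(\hat\rho_0|\eta_0)+\I(\hat\rho)$ with $\I$ the large-deviation rate functional, but the computation underneath is your Step~1).

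The gap is in Steps~2--3, where your description of the mechanism does not match the bound you write down. You treat $\hat b-b$ and $\hat A-A$ symmetrically, sending both through ``Lipschitz $+$ Talagrand $+$ LSI'', and you describe $\lambda_{\RelEnt}$ and $\kappa_{\RelEnt}$ as level-set Lipschitz constants of $D\xi\nabla V-\beta^{-1}\Delta\xi$ and $D\xi D\xi^\top$. But look at the target estimate: only $\kappa_{\RelEnt}^2$ carries the factor $1/(\alpha_{\TI}\alpha_{\LSI})$; $\lambda_{\RelEnt}^2$ does not. Your route would put $1/(\alpha_{\TI}\alpha_{\LSI})$ in front of \emph{both} terms, yielding a different (Wasserstein-type) prefactor, not the one stated. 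Moreover $\lambda_{\RelEnt}$ in Theorem~\ref{thm:FP-RelEnt-Main-Res} is \emph{not} a Lipschitz constant: it is the $L^\infty$ bound $\|A^{-1/2}(A-D\xi D\xi^\top)(D\xi D\xi^\top)^{-1/2}\|_{L^\infty}$, and $\kappa_{\RelEnt}$ is the Lipschitz constant of the local mean force $F=G^{-1}D\xi\nabla V-\beta^{-1}\div(G^{-1}D\xi)$, not of $D\xi\nabla V-\beta^{-1}\Delta\xi$.

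What you are missing is the algebraic reformulation of the error term (the paper's Lemma~\ref{lem:FP-Rate-Fn-Reform}): the combination $(b-\hat b)+\beta^{-1}\div_z(A-\hat A)+\beta^{-1}(A-\hat A)\nabla_z\log\hat\rho_t$ can be rewritten as
\[
\beta^{-1}\EX_{\bar\rho_{t,z}}\!\bigl[(A-D\xi D\xi^\top)G^{-1}D\xi\,\nabla\log(\rho_t/\mu)\bigr]\;-\;A(z)\!\int_{\Sigma_z}\!F\,(d\bar\rho_{t,z}-d\bar\mu_z).
\]
The first piece already contains $\nabla\log(\rho_t/\mu)$ and is bounded \emph{directly} by the full Fisher information with prefactor $\lambda_{\RelEnt}^2$ --- no functional inequalities needed. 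Only the second piece is a difference of conditional expectations of $F$, and it is this piece alone that goes through Talagrand $+$ LSI to pick up $\kappa_{\RelEnt}^2/(\alpha_{\TI}\alpha_{\LSI})$. Without this splitting you cannot reproduce the asymmetric prefactor in~\eqref{eq:Intro-Rel-Ent-Main-Est}.
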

The second choice for the $\mathrm{distance}$ in~\eqref{eq:Intro-Error-Choice} is the Wasserstein-2 distance. The Wasserstein-2 distance between two probability measures $\nu,\zeta\in\mathcal{P}(X)$ on a metric space $(X,d)$ is defined as
\begin{align}\label{def:Intro-Wasser-Dist}
\Wasser_2^2(\nu,\zeta):=\inf\limits_{\Pi\in\Gamma(\nu,\zeta)}\set*{\int_{X\times X} d(x_1,x_2)^2 \, d\Pi(x_1,x_2)},
\end{align}
where $\Gamma(\nu,\zeta)$ is the set of all couplings of $\nu$ and $\zeta$, i.e. measures $\Pi$ on $X\times X$ such that for any Borel set $A\subset X$
\begin{align}\label{def:Intro-Add-Coupling}
\int_{A\times X}d\Pi(x_1,x_2)=\nu(A) \ \text{ and } \ \int_{X\times A}d\Pi(x_1,x_2)=\zeta(A).
\end{align}
In the case of $X=\R^k$ we use the euclidean distance $d(x_1,x_2)=\abs{x_1-x_2}$.
We now state our main result on the Wasserstein-2 distance.
\begin{theorem}\label{thm:Intro-Wass-Over-Lang}
Under the assumptions of Theorem~\ref{thm:FP-Wasser-Main-Res}, for any $t\in[0,T]$
\begin{align}\label{eq:Intro-Wass-Main-Est}
\Wasser_2^2(\hat\rho_t,\eta_t)\leq  e^{\tilde c_{\Wasser} t}\bra*{\Wasser_2^2(\hat\rho_0,\eta_0)+\bra*{\frac{4\lambda^2_{\Wasser}+\beta\kappa^2_{\Wasser}}{\alpha_{\TI}\alpha_{\LSI}}}\bra[\big]{\RelEnt(\rho_0| \mu)-\RelEnt(\rho_t| \mu)}}.
\end{align}
Here $\rho_t:=\mathrm{law}(X_t)$ is the law of the solution of the the overdamped Langevin equation~\eqref{eq:Intro-Over-Lang-SDE}, $\rho_0,\hat\rho_0,\eta_0$  are  the initial data at $t=0$,
$\mu$ is the Boltzmann-Gibbs distribution~\eqref{def:Intro-Boltzmann-Gibbs-measure}, and the constants $\lambda_{\Wasser}, \kappa_{\Wasser}, \tilde c_{\Wasser}$, encoding the assumptions on~$V$ and~$\xi$  are made explicit in Theorem~\ref{thm:FP-Wasser-Main-Res}.
%
\end{theorem}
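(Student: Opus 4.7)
The plan is to deduce this Wasserstein statement from the Fokker--Planck level result, Theorem~\ref{thm:FP-Wasser-Main-Res}, since $\hat\rho_t$ and $\eta_t$ are solutions of closed Fokker--Planck equations on $\R^k$ driven by the pairs $(\hat b(t,\cdot),\hat A(t,\cdot))$ and $(b,A)$ respectively. The most direct strategy is to work at the stochastic level via a \emph{synchronous coupling}: drive~\eqref{eq:Intro-hat-Y} and~\eqref{eq:Intro-Y} with a common $k$-dimensional Brownian motion $W^k_t$ and with initial data $(\hat Y_0,Y_0)$ realising an optimal $\Wasser_2$-coupling of $\hat\rho_0$ and $\eta_0$. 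Applying It\^o's formula to $|\hat Y_t-Y_t|^2$, taking expectations, and noting that $\EX|\hat Y_t-Y_t|^2\ge\Wasser_2^2(\hat\rho_t,\eta_t)$ reduces the estimate to a Gronwall-type differential inequality.

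I would then decompose the drift and diffusion errors into an averaging part and a Lipschitz part, e.g.\
\begin{equation*}
\hat b(t,\hat Y_t)-b(Y_t)=\bigl[\hat b(t,\hat Y_t)-b(\hat Y_t)\bigr]+\bigl[b(\hat Y_t)-b(Y_t)\bigr],
\end{equation*}
and similarly for $\sqrt{\hat A(t,\cdot)}-\sqrt{A(\cdot)}$. The second bracket, controlled by regularity of the effective coefficients $b$ and $\sqrt{A}$, feeds back a term $\tilde c_{\Wasser}\,\EX|\hat Y_t-Y_t|^2$ into the time derivative; this produces the prefactor $e^{\tilde c_{\Wasser} t}$ after Gronwall. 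The first brackets are genuine \emph{averaging errors}, since at fixed $z$
\[
\hat b(t,z)-b(z)=\EX_{\rho_t}\!\bigl[D\xi\nabla V-\beta^{-1}\Delta\xi\,\big|\,\xi=z\bigr]-\EX_{\mu}\!\bigl[D\xi\nabla V-\beta^{-1}\Delta\xi\,\big|\,\xi=z\bigr],
\]
so they compare the conditional measures of $\rho_t$ and $\mu$ on the level set $\{\xi=z\}$.

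The crux, which I expect to be the main obstacle, is estimating these averaging errors quantitatively in terms of the scale-separation constants. Here I would exploit the logarithmic Sobolev inequality on level sets (constant $\alpha_{\LSI}$) together with the transport-type inequality encoded by $\alpha_{\TI}$, so as to produce bounds of the schematic form
\[
\int_{\R^k}\bigl|\hat b(t,z)-b(z)\bigr|^2\,\hat\rho_t(dz)\;\lesssim\;\frac{\lambda_{\Wasser}^2}{\alpha_{\TI}\alpha_{\LSI}}\,\I(\rho_t|\mu),
\]
and an analogous one for the diffusion-matrix error carrying the factor $\beta\kappa_{\Wasser}^2/(\alpha_{\TI}\alpha_{\LSI})$. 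This is precisely where the large-deviation rate-functional viewpoint emphasised in the introduction should allow one to convert LSI control of conditional measures into $\hat\rho_t$-mean-square bounds on the drift/diffusion discrepancy; the technical difficulty is in handling the vector-valued coarse-graining map, for which conditional measures on co-dimension-$k$ submanifolds and their co-area formula must be manipulated carefully.

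Assembling these ingredients, the It\^o computation yields
\begin{equation*}
\frac{d}{dt}\EX|\hat Y_t-Y_t|^2\;\le\;\tilde c_{\Wasser}\,\EX|\hat Y_t-Y_t|^2+\frac{4\lambda_{\Wasser}^2+\beta\kappa_{\Wasser}^2}{\alpha_{\TI}\alpha_{\LSI}}\,\I(\rho_t|\mu).
\end{equation*}
I would then close the loop using the standard entropy-dissipation identity for overdamped Langevin, $\frac{d}{dt}\RelEnt(\rho_t|\mu)=-\beta^{-1}\I(\rho_t|\mu)$, so that $\int_0^t\I(\rho_s|\mu)\,ds=\beta\bigl(\RelEnt(\rho_0|\mu)-\RelEnt(\rho_t|\mu)\bigr)$. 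Applying Gronwall's lemma and using the optimality of the initial coupling delivers exactly the stated bound~\eqref{eq:Intro-Wass-Main-Est}.
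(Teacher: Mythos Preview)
Your overall strategy---synchronous coupling, splitting drift/diffusion errors into an averaging part and a Lipschitz part, Gronwall, and entropy dissipation---is exactly the paper's route. A few points nonetheless need correction.

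First, you have the roles of $\lambda_{\Wasser}$ and $\kappa_{\Wasser}$ swapped. In the paper $\kappa_{\Wasser}$ is the level-set Lipschitz constant of $D\xi\nabla V-\beta^{-1}\Delta\xi$ and governs the \emph{drift} averaging error $\|\hat b-b\|_{L^2_{\hat\rho_t}}^2$, while $\lambda_{\Wasser}$ is the level-set Lipschitz constant of $\sqrt{D\xi D\xi^\transpose}$ and governs the \emph{diffusion} error. The factor $4$ in $4\lambda_{\Wasser}^2$ comes from the $4\beta^{-1}$ in front of the diffusion term in the It\^o calculation, and the $\beta$ in $\beta\kappa_{\Wasser}^2$ arises only at the very end when you integrate the Fisher information via $\int_0^t\RF(\rho_s|\mu)\,ds=\beta(\RelEnt(\rho_0|\mu)-\RelEnt(\rho_t|\mu))$. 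Your intermediate differential inequality therefore cannot carry the constant $(4\lambda_{\Wasser}^2+\beta\kappa_{\Wasser}^2)/(\alpha_{\TI}\alpha_{\LSI})$ directly; it should read $(4\beta^{-1}\lambda_{\Wasser}^2+\kappa_{\Wasser}^2)/(\alpha_{\TI}\alpha_{\LSI})$ before the final integration.

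Second, the large-deviation rate-functional viewpoint plays no role in the Wasserstein proof; that machinery is used only for the relative-entropy estimate. The averaging errors are bounded directly: for any coupling $\Theta$ of $\bar\rho_{t,z}$ and $\bar\mu_z$ one has $|\hat b(t,z)-b(z)|^2\le\kappa_{\Wasser}^2\int d_{\Sigma_z}^2\,d\Theta$, hence $\|\hat b-b\|_{L^2_{\hat\rho_t}}^2\le\kappa_{\Wasser}^2\int\Wasser_2^2(\bar\rho_{t,z},\bar\mu_z)\,d\hat\rho_t$, and then Talagrand followed by Log-Sobolev converts this to the level-set Fisher information, which is dominated by the full $\RF(\rho_t|\mu)$. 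The diffusion term is handled the same way after a joint-convexity argument for $(M,N)\mapsto|\sqrt M-\sqrt N|_F^2$.

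Third, there is a genuine well-posedness subtlety you gloss over: the coarse-grained SDE~\eqref{eq:Intro-hat-Y} has coefficients depending on $\mathrm{law}(X_t)$, and uniqueness of its solution is not established. The paper therefore does \emph{not} couple the two $\R^k$-valued SDEs directly; instead it constructs a coupling on $\R^{2d}$ between the full dynamics and a lifted effective dynamics, and pushes this forward by $\xi\otimes\xi$. This guarantees that the first marginal really is $\hat\rho_t=\xi_\#\rho_t$ without invoking uniqueness for the coarse-grained equation.
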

The constants $\kappa_{\RelEnt}$, $\kappa_{\Wasser}$, $\lambda_{\RelEnt}$, $\lambda_{\Wasser}$, $\alpha_{\TI}$ and $\alpha_{\LSI}$ in the statements \eqref{eq:Intro-Rel-Ent-Main-Est} and~\eqref{eq:Intro-Wass-Main-Est} of Theorem~\ref{thm:Intro-Rel-Ent-Over-Lang} and~\ref{thm:Intro-Wass-Over-Lang}, quantify different aspects of the compatibility of the coarse-graining map $\xi$ and the dynamics. The constants $\alpha_{\TI}$ and $\alpha_{\LSI}$ are constants occurring in functional inequalities (Talagrand and Log-Sobolev) for the coarse grained equilibrium measure. For multi-scale systems, these constants are large when the coarse-graining map resolves the scale separation (see Section~\ref{sec:ScaleSepPot}).
The constants $\kappa_{\RelEnt}$  and $\kappa_{\Wasser}$ measure the cross-interaction of the slow and fast scales. The constants $\lambda_{\RelEnt}$ and $\lambda_{\Wasser} $ measure how well the slow manifold is adapted to the model space $\R^k$. A more detailed discussion of these constants will be provided in the coming sections. 

\noindent\textbf{\textit{Comparison of the relative-entropy and Wasserstein estimate}}.
Let us compare the relative-entropy estimate~\eqref{eq:Intro-Rel-Ent-Main-Est} and the 
Wasserstein estimate~\eqref{eq:Intro-Wass-Main-Est}. Assuming that both the coarse-grained and the effective dynamics have the same initial data $\hat\rho_0=\eta_0$, these estimates become
\begin{align}
&\RelEnt(\hat\rho_t|\eta_t)\phantom{)}\leq \frac{1}{4}\bra*{\lambda_{\RelEnt}^2+\frac{\kappa_{\RelEnt}^2\beta^2}{\alpha_{\TI}\,\alpha_{\LSI}}}\bra[\big]{\RelEnt(\rho_0|\mu)-\RelEnt(\rho_t|\mu)}, \label{eq:FP-Comp-RelEnt-Init=0}\\
&\Wasser_2^2(\hat\rho_t,\eta_t)\leq e^{\tilde c_{\Wasser} t}\bra*{\frac{4\lambda^2_{\Wasser}+\beta\kappa^2_{\Wasser}}{\alpha_{\TI}\,\alpha_{\LSI}}}\bra[\big]{\RelEnt(\rho_0|\mu)-\RelEnt(\rho_t|\mu)} .\label{eq:FP-Comp-Wasser-Init=0}
\end{align}
As mentioned earlier, $\kappa_{\RelEnt},\lambda_{\Wasser},\kappa_{\Wasser}$ are positive constants. Under the assumption of scale-separation, the constants $\alpha_{\TI}\,\alpha_{\LSI}$ are large (see Section~\ref{sec:ScaleSepPot}). Hence, the right hand side of the Wasserstein estimate~\eqref{eq:FP-Comp-Wasser-Init=0} becomes small, i.e.\ it is $O(1/\alpha_{\TI}\,\alpha_{\LSI})$,  whereas the right hand side of the relative entropy estimate is $O(1)$ since it still has the constant $\lambda_{\RelEnt}$. By definition (see~\eqref{def:FP-lambda_H} for exact definition) $\lambda_{\RelEnt}$ is small if $(A-D\xi D\xi^\transpose )$ is small in $L^\infty$ on the level set. In particular, $\lambda_{\RelEnt}=0$ corresponds to affine coarse-graining maps $\xi$.
Therefore in the presence of scale separation, the relative entropy estimate is sharp only for close-to-affine coarse-graining maps, whereas the Wasserstein estimate is sharp even for the non-affine ones.


Finally let us analyze the long-time behaviour of~\eqref{eq:FP-Comp-RelEnt-Init=0} and~\eqref{eq:FP-Comp-Wasser-Init=0}. 
By construction, the coarse-grained and the effective dynamics should be the same  in the limit of long 
time i.e.\ $\RelEnt(\hat\rho_t|\eta_t)\rightarrow 0$ as $t\rightarrow \infty$.  Using $0\leq \RelEnt(\rho_t|\mu)$, 
the right-hand side of~\eqref{eq:FP-Comp-RelEnt-Init=0} can be controlled by a constant (independent of time) while the right-hand side of~\eqref{eq:FP-Comp-Wasser-Init=0} 
is exponentially increasing in time. Though  both estimates are not sharp, the relative entropy estimate has better long-time properties as compared to the Wasserstein estimate. 
However the error in the long-time behaviour of these estimates can be corrected using the knowledge that the original dynamics is ergodic with respect to the stationary measure as done in \cite[Corollary 3.1]{LL10}.  
The overall estimate then involves the minimum of \eqref{eq:FP-Comp-RelEnt-Init=0} and \eqref{eq:FP-Comp-Wasser-Init=0} respectively and an exponentially decaying correction 
$C_1 e^{-C_2t}$ which characterises the `slower' exponential convergence of the full dynamics to the equilibrium (see~\cite[Corollary 3.1]{LL10} for details). 
In this work, we concentrate on the explicit dependence of the final estimate in terms of the scale-separation parameters $\alpha_{\TI}, \alpha_{\LSI}$ for a fixed time-interval $[0,T]$. 

\subsection{Langevin equation}\label{S:Intro-Langevin}
So far we have focused on the overdamped Langevin equation. The second main equation considered in this article is the  Langevin equation
\begin{equation}\label{eq:Intro-Langevin-SDE}
\begin{aligned}
 &dQ_t=\frac{P_t}{m}\,dt\\
& dP_t=-\nabla V(Q_t)\,dt-\frac{\gamma}{m} P_t\,dt+\sqrt{2\gamma\beta^{-1}}\,dW_t^d, 
\end{aligned}
\end{equation}
with initial data $(Q_{t=0},P_{t=0})=(Q_0,P_0)$. 
Here $(Q_t,P_t)\in\R^d\times\R^d$ is the state of the system at time $t$, more specifically $Q_t\in\R^d$ and $P_t\in\R^d$ can physically be interpreted as the position and the momentum of the system. The constant $\gamma>0$ is a  friction parameter, $V$ is  the spatial
potential as before, $m$ is the mass and  $\beta=1/(k_B T_a)$ is the inverse temperature. In what follows, we choose $m=1$ for simplicity.

Our interest as before is to study lower-dimensional approximations of the Langevin equation. To make this precise we need to define a coarse-graining map akin to the overdamped Langevin case, this time on the space of positions and momenta. In applications, the choice of the coarse-grained position is often naturally prescribed by a  spatial coarse-graining map $\xi$ with $\R^d\ni q\mapsto\xi(q)\in\R^k$. However, we have a freedom in defining the coarse-grained momentum. Motivated by the evolution of $Q_t$ in~\eqref{eq:Intro-Langevin-SDE}, a possible choice for the coarse-grained momentum is $p\mapsto D\xi(q) p$, where $D\xi$ is the Jacobian of $\xi$. This leads to the coarse-graining map $\Xi$ on the $2d$-dimensional phase space
\begin{align}\label{eq:Intro-Lang-CG-Xi}
\Xi:\R^{2d}\rightarrow\R^{2k}, \qquad \Xi\bra{q, p}=\begin{pmatrix} \xi(q)\\ D\xi(q) p\end{pmatrix}.
\end{align}
However, we are only able to show the main results under the additional assumption, that $\xi$ is \emph{affine}, i.e.~it is of the form
\begin{equation}\label{eq:Langevin:xi:affine}
 \xi(q) = \Tt q + \tau,
\end{equation}
for some $\tau\in\R^k$ and $\Tt\in \R^{k\times d}$ of full rank. Note that in that case the coarse-graining map $\Xi$ is simply
\begin{equation}\label{eq:Langevin:XI:affine}
  \Xi\bra{q, p}=\begin{pmatrix} \Tt q+\tau\\ \Tt p\end{pmatrix}.
\end{equation}
Using~\eqref{eq:Intro-Lang-CG-Xi} as a coarse-graining map when the spatial coarse-graining map $\xi$ is non-affine 
leads to issues of well-posedness in the corresponding effective dynamics (see Remark~\ref{rem:LD-Non-affine-CG}). 
There are other possible choices for the coarse-grained momentum as discussed in \cite[Section 3.3.1.3]{LRS10}, 
but these do not resolve the well-posedness issues. Constructing the coarse-grained momentum, in the case of non-affine spatial coarse-graining map is an open question, and is left for future research. 

 With this affine choice for $\Xi$, we now apply the same scheme as used in the overdamped Langevin case and define the coarse-grained dynamics as
\begin{equation}\label{eq:Intro-SDE-CG}
\begin{aligned}
&d\hat Z_t=\hat V_t\,dt\\
&d\hat V_t=-\hat b(t,\hat Z_t,\hat V_t)\,dt-\gamma \hat V_t\,dt+\sqrt{2\gamma\beta^{-1}\hat A(t,\hat Z_t,\hat V_t)}\,dW_t^k,
\end{aligned}
\end{equation}
with coefficients 
\begin{align}
\hat b(t,z,v)&:=\EX\pra*{\bra{D\xi\nabla V}(Q_t,P_t)\,\middle|\,\Xi(Q_t,P_t)=(z,v)},\label{def:Lang-hat-A}\\
\hat A(t,z,v)&:=\EX\pra*{\bra{D\xi D\xi^\transpose }(Q_t,P_t)\,\middle|\,\Xi(Q_t,P_t)=(z,v)}. \label{def:Lang-hat-b}
\end{align}
As before, the coarse-grained dynamics satisfies $\mathrm{law}(\Xi(Q_t,P_t))=\mathrm{law}(\hat Z_t, \hat V_t)$. Similar to the earlier discussion we define the effective dynamics as
\begin{equation}\label{eq:Intro-SDE-Eff}
\begin{aligned}
&d Z_t= V_t\,dt\\
&d V_t=- b( Z_t, V_t)\,dt-\gamma V_t\,dt+\sqrt{2\gamma\beta^{-1} A( Z_t, V_t)}\,dW_t^k,
\end{aligned}
\end{equation}
with time-independent coefficients
\begin{align*}
b(z,v)&:=\EX_\mu\pra*{\bra{D\xi\nabla V}(Q,P)\,\middle|\,\Xi(Q,P) =(z,v)},\\
A(z,v)&:=\EX_\mu\pra*{\bra{D\xi D\xi^\transpose }(Q,P)\,\middle|\,\Xi(Q,P)=(z,v)}.
\end{align*}
Here $\EX_\mu$ denotes the expectation with respect to the Boltzmann-Gibbs distribution 
\begin{align}\label{def:Lang-Stat-Sol-Intro}
d\mu(q,p)=Z^{-1}\exp\bra*{-\beta H(q,p)}, \qquad\text{with}\qquad H(q,p) := \frac{p^2}{2m}+V(q) ,
\end{align}
which is the equilibrium probability density of the Langevin equation. In the case when $\xi$ satisfies~\eqref{eq:Langevin:xi:affine}, we find
\begin{equation}
  \forall (z,v) \in \R^{2k} : \qquad A(z,v) = \hat A(t,z,v) = \Tt \Tt^\transpose .
\end{equation}
As in the overdamped case, we prove estimates on the error between $\hat\rho_t=\mathrm{law}(\hat Z_t,\hat V_t)$ and $\eta_t=\mathrm{law}(Z_t,V_t)$ in relative-entropy and Wasserstein-2 distance. We now state the main relative-entropy result.
\begin{theorem}\label{thm:Intro-Rel-Ent-Lang}
Under the assumptions in Theorem~\ref{thm:LD-RelEnt-Main-Res}, then for any $t\in [0,T]$
\begin{align}\label{eq:Intro-RelEnt-MainEst-Lang}
  \RelEnt(\hat\rho_t|\eta_t)\leq \RelEnt(\hat\rho_0|\eta_0)+\frac{\kappa^2t}{2 \alpha_{\TI}} \RelEnt(\rho_0|\mu) .
  \end{align}
Here $\rho_t:=\mathrm{law}(Q_t,P_t)$ is the law of the solution of the Langevin equation~\eqref{eq:Intro-Langevin-SDE}, $\rho_0,\hat\rho_0,\eta_0$  are the initial data at $t=0$,
$\mu$ is the Boltzmann-Gibbs distribution~\eqref{def:Lang-Stat-Sol-Intro} and $\kappa, \alpha_{\TI}$ are constants made explicit in Theorem~\ref{thm:LD-RelEnt-Main-Res}.
\end{theorem}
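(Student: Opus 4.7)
The plan is to mirror the relative-entropy strategy used in the overdamped case (Theorem~\ref{thm:Intro-Rel-Ent-Over-Lang}), adapted to the kinetic/hypocoercive setting. The crucial simplification coming from the affine assumption~\eqref{eq:Langevin:xi:affine} is that $D\xi\equiv\Tt$ is constant, so the diffusion matrices of the coarse-grained and the effective dynamics coincide, $\hat A\equiv A\equiv \Tt\Tt^\transpose$, and only the drifts $\hat b$ and $b$ differ. This reduces the task to (i) a kinetic relative-entropy dissipation computation isolating the drift difference, and (ii) a conditional Talagrand-type estimate controlling $\hat b-b$ by $\RelEnt(\rho_t|\mu)$.

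First I would write the projected kinetic Fokker--Planck equations for $\hat\rho_t$ and $\eta_t$ on $\R^{2k}$: both share the free-transport operator $v\cdot\nabla_z$, the friction $\gamma\,\Div_v(v\,\cdot)$, and the degenerate momentum diffusion $\gamma\beta^{-1}\Tt\Tt^\transpose:\nabla_v^2$, and differ only through the drift terms $\Div_v(\hat b\hat\rho_t)$ vs.\ $\Div_v(b\eta_t)$. Differentiating $\RelEnt(\hat\rho_t|\eta_t)$ in time, the Hamiltonian transport and friction terms cancel between the two equations as in the classical kinetic entropy computation, leaving
\begin{equation*}
\frac{d}{dt}\RelEnt(\hat\rho_t|\eta_t) = -\gamma\beta^{-1}\!\int \hat\rho_t\,\bigl|\Tt^\transpose\nabla_v\log(\hat\rho_t/\eta_t)\bigr|^2 + \int \hat\rho_t\,(b-\hat b)\cdot\nabla_v\log(\hat\rho_t/\eta_t).
\end{equation*}
A weighted Young inequality absorbs the cross term into the dissipation and yields a bound of the form
\begin{equation*}
\frac{d}{dt}\RelEnt(\hat\rho_t|\eta_t) \leq C(\beta,\gamma)\,\EX_{\hat\rho_t}\!\bigl[(\hat b-b)^\transpose(\Tt\Tt^\transpose)^{-1}(\hat b-b)\bigr].
\end{equation*}

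The central step is to control this drift error by the full relative entropy $\RelEnt(\rho_t|\mu)$ of the Langevin law. By the tower property, $\hat b(t,z,v)-b(z,v)$ is the difference of conditional expectations of the same observable $\Tt\nabla V(Q)$ under $\rho_t(\cdot|\Xi=(z,v))$ and $\mu(\cdot|\Xi=(z,v))$; because $\xi$ is affine the fibers of $\Xi$ are affine subspaces and the conditional momentum marginal of $\mu$ is a fixed Gaussian that decouples from $q$. Writing the difference as an integral of $\Tt\nabla V$ against $\rho_t(\cdot|\Xi=(z,v))-\mu(\cdot|\Xi=(z,v))$, applying a Talagrand $T_1$ (Kantorovich--Rubinstein) inequality for the conditional equilibrium measures with constant $\alpha_{\TI}$, and using a Lipschitz bound on $\Tt\nabla V$ along fibers to produce $\kappa$, one obtains a pointwise inequality of the form $|\hat b-b|^2\lesssim \kappa^2\alpha_{\TI}^{-1}\,\RelEnt(\rho_t(\cdot|\Xi=(z,v))\,|\,\mu(\cdot|\Xi=(z,v)))$. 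Integrating in $(z,v)$ against $\hat\rho_t$ and applying the disintegration identity $\RelEnt(\rho_t|\mu) = \RelEnt(\hat\rho_t|\mu_\Xi) + \int \RelEnt(\rho_t(\cdot|\Xi=(z,v))\,|\,\mu(\cdot|\Xi=(z,v)))\,d\hat\rho_t(z,v)$ then gives the drift bound $\EX_{\hat\rho_t}[(\hat b-b)^\transpose(\Tt\Tt^\transpose)^{-1}(\hat b-b)]\leq \kappa^2\alpha_{\TI}^{-1}\RelEnt(\rho_t|\mu)$, with numerical factors absorbed into the definition of $\kappa$ fixed in Theorem~\ref{thm:LD-RelEnt-Main-Res}.

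Finally, in contrast to the overdamped case, the kinetic dynamics only enjoys the crude monotonicity $\RelEnt(\rho_t|\mu)\leq\RelEnt(\rho_0|\mu)$ rather than a usable Fisher-information dissipation identity, because the diffusion is degenerate in position. Inserting this monotonicity into the previous rate bound and integrating from $0$ to $t$ yields exactly the claimed linear-in-time estimate
\begin{equation*}
\RelEnt(\hat\rho_t|\eta_t) \leq \RelEnt(\hat\rho_0|\eta_0)+\frac{\kappa^2 t}{2\alpha_{\TI}}\RelEnt(\rho_0|\mu).
\end{equation*}
The step I expect to be the main obstacle is the conditional estimate: one must verify the Talagrand inequality for $\mu(\cdot|\Xi=(z,v))$ uniformly in the fiber with a single constant $\alpha_{\TI}$, and check that it combines cleanly with the disintegration of $\RelEnt(\rho_t|\mu)$ and with the Lipschitz control of $\Tt\nabla V$ along fibers. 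This is precisely where affineness of $\xi$ is essential, since it makes the fibers flat and the conditional momentum marginal explicit; without it one runs into the well-posedness difficulties for the effective dynamics alluded to in Remark~\ref{rem:LD-Non-affine-CG}.
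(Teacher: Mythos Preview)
Your proposal is essentially the paper's proof. The paper packages your steps (1)--(3) into the abstract rate-functional inequality of Theorem~\ref{thm:Abs-LDRF-Ineq} (which is proved by exactly the entropy differentiation and Young inequality you describe), and then bounds $\I(\hat\rho)$ by the same Lipschitz/Talagrand/disintegration/monotonicity chain you outline. Two small remarks. First, the Talagrand inequality used is the $\Wasser_2$-version~\eqref{def:TI}, not $T_1$; one couples $\bar\rho_{t,z,v}$ and $\bar\mu_{z,v}$, bounds $|\hat b-b|^2$ by $\kappa^2 \Wasser_2^2(\bar\rho_{t,z,v},\bar\mu_{z,v})$ via the Lipschitz constant $\kappa$ of~\eqref{def:LD-kappa_H}, and then applies~\eqref{def:TI}. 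The uniform Talagrand constant $\alpha_{\TI}$ is an \emph{assumption} of Theorem~\ref{thm:LD-RelEnt-Main-Res}, not something to be verified, so this is not the obstacle you anticipate. Second, the paper does not differentiate $\RelEnt(\hat\rho_t|\eta_t)$ for the degenerate kinetic equation directly: it first adds an elliptic $\alpha$-regularisation in the position variable (equation~\eqref{eq:LD-Approx-Eq}), carries out the entropy computation for the non-degenerate problem, and only afterwards lets $\alpha\to 0$ (Section~\ref{S:Kramers:Reg}). Your direct computation would need the same device or an equivalent justification; this is the only genuine gap in your sketch.
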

%
Next we present the main Wasserstein estimate for the Langevin case.
\begin{theorem}\label{thm:Intro-Wasser-Lang}
Under the assumptions in Theorem~\ref{thm:LD-Wasser-Est},  for any $t\in[0,T]$
\begin{align}\label{eq:Intro-Wasser-MainEst-Lang}
\Wasser^2_2(\hat\rho_t|\eta_t)\leq e^{\tilde c t}\pra*{\Wasser^2_2(\hat\rho_0|\eta_0)+\frac{2  \kappa^2t}{\alpha_{\TI}} \, \RelEnt(\rho_0|\mu)}.
\end{align}
Here $\rho_t:=\mathrm{law}(Q_t,P_t)$ is the law of the solution to the Langevin equation~\eqref{eq:Intro-Langevin-SDE}, $\rho_0,\hat\rho_0,\eta_0$  are the initial data at $t=0$,
$\mu$ is the Boltzmann-Gibbs distribution~\eqref{def:Lang-Stat-Sol-Intro} and $\kappa$, $\alpha_{\TI}$, $\tilde c$ are constants made explicit in 
Theorems~\ref{thm:LD-RelEnt-Main-Res} and \ref{thm:LD-Wasser-Est}.
\end{theorem}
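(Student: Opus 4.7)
The plan is to prove the estimate by a synchronous coupling argument, made viable by the affine hypothesis \eqref{eq:Langevin:xi:affine}: it forces $\hat A(t,z,v)=A(z,v)=\Tt\Tt^\transpose$ to be a constant matrix. I will drive both the coarse-grained system \eqref{eq:Intro-SDE-CG} and the effective system \eqref{eq:Intro-SDE-Eff} with the same Brownian motion $W^k$, and initialise them using an optimal $\Wasser_2^2$-coupling of $(\hat\rho_0,\eta_0)$. Because the diffusion coefficients coincide, the stochastic integral cancels from the difference process, leaving the noiseless pair
\begin{align*}
d(\hat Z_t - Z_t) &= (\hat V_t - V_t)\,dt,\\
d(\hat V_t - V_t) &= -\pra{\hat b(t,\hat Z_t,\hat V_t) - b(Z_t,V_t)}dt - \gamma(\hat V_t - V_t)\,dt.
\end{align*}

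Next, I apply the chain rule to $\Phi_t := \abs{\hat Z_t - Z_t}^2+\abs{\hat V_t - V_t}^2$, take expectation, and split the drift difference additively as $\hat b(t,\hat Z_t,\hat V_t) - b(Z_t,V_t) = [\hat b(t,\hat Z_t,\hat V_t) - b(\hat Z_t,\hat V_t)] + [b(\hat Z_t,\hat V_t) - b(Z_t,V_t)]$. The second bracket is controlled by $\Lip(b)\abs{\hat Z_t - Z_t}$ and, together with the damping term $-\gamma(\hat V_t-V_t)$ and the cross term $2\skp{\hat Z_t-Z_t,\hat V_t-V_t}$ coming from the derivative of $\abs{\hat Z_t - Z_t}^2$, contributes only to a Gronwall constant $\tilde c$ depending on $\gamma$ and the Lipschitz constant of $b$. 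The first bracket, measuring the discrepancy between the time-dependent and the equilibrium conditional expectations \textit{at the same point} $(z,v)$, is the genuine error.

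Controlling this error is the main obstacle, and it is precisely where the machinery behind Theorem~\ref{thm:Intro-Rel-Ent-Lang} feeds in. The conditional Talagrand inequality with constant $\alpha_{\TI}$ on the fibres $\set{\Xi=(z,v)}$ of the Boltzmann--Gibbs measure \eqref{def:Lang-Stat-Sol-Intro}, applied to the disintegration of $\rho_t$ against $\mu$, yields an estimate of the form
\begin{align*}
\EX_{\hat\rho_t}\pra{\abs{\hat b(t,\cdot) - b(\cdot)}^2} \leq \frac{\kappa^2}{\alpha_{\TI}}\,\RelEnt(\rho_t|\mu),
\end{align*}
which is the same ingredient underpinning Theorem~\ref{thm:Intro-Rel-Ent-Lang}. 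Since $\mu$ is stationary for the Langevin dynamics \eqref{eq:Intro-Langevin-SDE}, the map $s \mapsto \RelEnt(\rho_s|\mu)$ is non-increasing, so $\RelEnt(\rho_s|\mu)\leq \RelEnt(\rho_0|\mu)$ for all $s\in[0,t]$, and time-integration of the error contribution is bounded by $(2\kappa^2 t/\alpha_{\TI})\RelEnt(\rho_0|\mu)$ after one application of Young's inequality.

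Combining the pieces produces a Gronwall-type differential inequality $\tfrac{d}{dt}\EX\Phi_t \leq \tilde c\,\EX\Phi_t + (2\kappa^2/\alpha_{\TI})\RelEnt(\rho_0|\mu)$, integration of which gives $\EX\Phi_t \leq e^{\tilde c t}\pra{\EX\Phi_0 + (2\kappa^2 t/\alpha_{\TI})\RelEnt(\rho_0|\mu)}$. Using $\EX\Phi_0 = \Wasser_2^2(\hat\rho_0,\eta_0)$ by the choice of optimal initial coupling, and $\Wasser_2^2(\hat\rho_t,\eta_t) \leq \EX\Phi_t$ by the definition of $\Wasser_2$, one obtains \eqref{eq:Intro-Wasser-MainEst-Lang}. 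The only non-bookkeeping ingredient beyond what already appears in the proof of Theorem~\ref{thm:Intro-Rel-Ent-Lang} is the global Lipschitz regularity of $b$ required to produce $\tilde c$, which must be extracted from the standing regularity assumptions on $V$ and $\Tt$.
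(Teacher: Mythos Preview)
Your proposal is correct and follows the same synchronous-coupling/Gronwall strategy as the paper's proof of Theorem~\ref{thm:LD-Wasser-Est}: exploit $\hat A=A=\Tt\Tt^\transpose$ so the diffusions cancel, split $\hat b-b$ into a Lipschitz piece (absorbed into $\tilde c$) and the genuine error $\hat b(t,\cdot)-b(\cdot)$, bound the latter in $L^2_{\hat\rho_t}$ via the fibrewise Talagrand inequality and tensorisation of relative entropy, then Gronwall. One technical point you gloss over: the paper does not work directly with the SDE~\eqref{eq:Intro-SDE-CG} because the coefficients $\hat b$ inherit no a priori regularity from $\bar\rho_{t,z,v}$, so well-posedness (and hence the very existence of your synchronously coupled pair with the correct first marginal) is not clear; instead the coupling is built at the Fokker--Planck level by lifting to $\R^{4d}$, coupling the full dynamics with a lifted effective dynamics, and pushing forward by $\Xi\otimes\Xi$ (the Langevin analogue of Lemma~\ref{lem:Over-Lang-Coup-Well-Posed}). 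Apart from this rigorisation device and the explicit bookkeeping of $\tilde c$, your argument coincides with the paper's.
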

As mentioned earlier, under the assumption of scale separation, the constant $\alpha_{\TI}$ is large. Assuming that both the coarse-grained and the effective dynamics 
have the same initial data $\hat\rho_0=\eta_0$, the right hand side of \eqref{eq:Intro-RelEnt-MainEst-Lang} and 
\eqref{eq:Intro-Wasser-MainEst-Lang} becomes small, i.e. they are $O(1/\alpha_{\TI})$. We would like to reiterate that the results presented above for the Langevin dynamics hold in the setting of affine coarse-graining maps.
\subsection{Central ingredients of the proofs}
As mentioned earlier, in this article we use two different notions of distance to compare the coarse-grained dynamics $\hat\rho_t=\mathrm{law}(\hat Y_t)$ and the effective dynamics $\eta_t=\mathrm{law}(Y_t)$: the relative entropy \eqref{def:Intro-Rel-Ent} and the Wasserstein-2 distance \eqref{def:Intro-Wasser-Dist}.
Now we briefly discuss the main ingredients that go into proving these estimates.
\subsubsection{Encoding scale-separation via functional inequalities}
The choice of the coarse-graining map $\xi$ is often naturally prescribed by scale-separation, i.e.\ the presence of fast and slow scales typically characterised by an explicit small parameter in the system. For instance, the potential could be of the form $V^\vep(q) = \frac{1}{\vep} V_0(q) + V_1(q)$. In this case, $V_0$ is the driving potential for the fast and $V_1$ for the slow dynamics. A good coarse-graining map satisfies the condition $D\xi(q) \nabla V_0(q)=0$, i.e.\ integral curves of $\dot q = \nabla V_0(q)$ stay in the level set of $\xi$.

Legoll and Leli{\`e}vre \cite{LL10} use the framework of functional inequalities to characterize the presence of scale separation. These functional inequalities have the advantage that they do not require the presence of an explicit small parameter; however, when such a parameter is indeed present it  may be reflected in the constants associated to these inequalities. Following Legoll and Leli{\`e}vre, in this paper we will also use these inequalities, specifically the Logarithmic-Sobolev (hereon called Log-Sobolev) and the Talagrand inequality, to encode scale separation. For the definition of these inequalities see Section~\ref{S:Encod-Meta} and for the results regarding the class of scale separated potentials see Section~\ref{s:scale:separated:potentials}.
\subsubsection{Relative entropy results}\label{Sec:Intro-Rel-Ent-Results}
Legoll and Leli{\`e}vre \cite{LL10} give first results which estimate the relative entropy $\RelEnt(\hat\rho_t|\eta_t)$ for the case of the overdamped Langevin equation. 
Their estimate is based on differentiating the relative entropy in time and  proving appropriate bounds on the resulting terms. Recently in \cite{DuongLamaczPeletierSharma17} 
the authors introduce a variational technique, which is based on a \textit{large-deviation rate functional} and its relations to the relative entropy and the Fisher information, 
for qualitative coarse-graining. 
 Specifically, they study the asymptotic behaviour of a (Vlasov-type) Langevin dynamics in the limit of a small parameter which introduces a scale-separation in the system. By constructing a coarse-graining map which is compatible with the scale-separation, they derive an effective equation wherein the fast variables are averaged out.
In this paper, we show that this technique can also be used to quantitatively estimate the coarse-graining 
error for both the overdamped and full Langevin dynamics and without the explicit presence of scale-separation.
The basic estimate is a stability property of solutions to the Fokker-Planck equation with respect to the relative entropy. Essentially, 
this property states that the error between solutions of two Fokker-Planck equations can be explicitly estimated in terms of a large-deviation rate functional. 
A similar result has also been derived by Bogachev et.\ al.\ \cite{Bogachev2016}, without the connection to large-deviations theory. 
To illustrate this property, consider two families $(\zeta_t)_{t\in [0,T]},(\nu_t)_{t\in [0,T]}$ of probability measures which are the solutions to two distinct Fokker-Planck equations. Then we find
\begin{align}\label{eq:Intro-HIR}
\RelEnt(\zeta_t|\nu_t)\leq \RelEnt(\zeta_0|\nu_0)
+ \I(\zeta),
\end{align}
where $\zeta_0,\nu_0$ are the initial data at time $t=0$ and $\I(\cdot)$ is the empirical-measure large-deviation rate functional arising in a large-deviation principle for the empirical measure defined from many i.i.d.\ copies of the SDE associated to $\nu$ (see Section \ref{S:Abstract-Result} for details). The relative entropy result in Theorem~\ref{thm:Intro-Rel-Ent-Over-Lang} follows by making the choice $\zeta\equiv\hat\rho$ and $\nu\equiv\eta$ in \eqref{eq:Intro-HIR} and then analyzing the rate functional term $\I(\hat\rho)$ using its relations with the relative entropy and the Fisher information. Note that $\I(\hat\rho)$ does not show up in the final estimate \eqref{eq:Intro-Rel-Ent-Main-Est} in Theorem \ref{thm:Intro-Rel-Ent-Over-Lang}. 

%
\subsubsection{Wasserstein estimates}

The central ingredient in estimating the Wasserstein-2 distance $\Wasser_2(\hat\rho_t,\eta_t)$ is \textit{the coupling method}. 
Let us consider the case of the overdamped Langevin equation for simplicity. The forward Kolmogorov (Fokker-Planck) equation for  $\hat Y_t$  \eqref{eq:Intro-hat-Y} is 
\begin{align*}
 \partial_t \hat \rho_t  =   \div_z\bra{\hat\rho_t \,\hat b}+\beta^{-1}D^2_z : \bra{\hat A\, \hat\rho_t},
\end{align*}
and for $Y_t$  \eqref{eq:Intro-Y} 
\begin{align*}
 \partial_t \eta_t = \div_z\bra{\eta_t\, b}+\beta^{-1} D^2_z : \bra{A\, \eta_t},
\end{align*}
where $D^2_z$ is the Hessian with respect to the variable $z\in\R^k$.
For any $t>0$ and $\hat\rho_t,\eta_t\in\mathcal{P}(\R^k)$,  we define a time-dependent coupling $\Pi_t\in\mathcal{P}(\R^{2k})$ 
\begin{equation}\label{eq:FP-Coupling-Evo}
\partial_t\Pi_t=D^2_{\mathbf{z}}:\bra{\mathbf{A}
\Pi_t
}
+\div_{\mathbf{z}}\bra{\mathbf{b} \Pi_t}
\end{equation}
where we write $\mathbf{z}=(z_1,z_2)$ and the coefficients are given by
\begin{equation}\label{eq:FP-Coupling-Evo-Coeff}
 \mathbf{A}(t,z_1,z_2) := \sigma \sigma^\transpose  \ \text{ with } \
 \sigma(t,z_1,z_2) :=  \begin{pmatrix}
  \sqrt{\hat A(t,z_1)} \\
\sqrt{ A(z_2)}
\end{pmatrix} \ \text{ and } \
 \mathbf{b}(t,z_1,z_2) := \begin{pmatrix}
  \hat b(t,z_1) \\
 b(z_2)
\end{pmatrix}.
\end{equation}
The coupling method consists of differentiating  $\int_{\R^{2k}}|z_1-z_2|^2\,d\Pi_t$ in time and using a Gronwall-type argument to obtain an estimate for the Wasserstein-2 distance $\Wasser_2(\hat\rho_t,\eta_t)$. The Gronwall argument also explains the exponential pre-factor in \eqref{eq:Intro-Wass-Main-Est}. A similar approach also works for the Langevin case (see Section~\ref{eq:LD-RelEnt-Wasser-Est}). The particular coupling \eqref{eq:FP-Coupling-Evo-Coeff} has been used in the literature before and is called the basic coupling \cite{chen1989}. The coupling method has commonly been  used to prove contraction properties, see e.g., \cite{chen1989,Eberle2015, Duong15NA} and references therein.
\subsection{Novelties} 
The novelty of the work lies in the following.
\begin{enumerate}
\item \emph{In comparison with existing literature:} Legoll and Leli{\`e}vre \cite{LL10} prove error estimates in relative entropy for the overdamped Langevin 
equation in the case of scalar-valued coarse-graining maps. We generalise these estimates in two directions. First we prove error estimates in the case of 
vector-valued coarse-graining maps, and secondly we  prove error estimates starting from the  Langevin equation as a reference dynamics. More recently 
Legoll, Leli{\`e}vre and Olla~\cite{LLO16} have derived pathwise estimates on the coarse-graining error starting with the overdamped Langevin dynamics and a 
coordinate projection as a coarse-graining error. In this work we only focus on error estimates in time marginals.

\item \emph{Large deviations and error quantification:} The use of the rate functional as a central ingredient in proving quantitative estimates is new. It has a natural connection~\eqref{eq:Intro-HIR} with the relative entropy, which allows us to derive error estimates for  the Langevin equation, which is not amenable to the usual set of techniques used for reversible systems. This furthers the claim that the large-deviation behaviour  of  the underlying particle systems can successfully be used for qualitative and quantitative coarse-graining analysis of the Fokker-Planck equations.

\item \emph{Error estimates in Wasserstein distance:} Since the Wasserstein distance is a weaker distance notion than the relative entropy, 
the estimates derived in this distance are also weaker. However it turns out that these error bounds are sharper in the limit of infinite scale-separation for a 
larger class of coarse-graining maps as compared to the relative entropy estimates.
\end{enumerate}
\subsection{Outline}
In Section~\ref{S:Overdamp-Lang-Equation} we derive error estimates for the overdamped Langevin equation and in Section~\ref{S:Langevin-Equation}  for the Langevin equation. In Section~\ref{sec:ScaleSepPot} we discuss the estimates in the presence of explicit scale-separation. Finally in Section~\ref{S: Discussion} we conclude with further discussions.

\section{Overdamped Langevin dynamics}\label{S:Overdamp-Lang-Equation}
This section deals with the case of the overdamped Langevin dynamics. In Section~\ref{S:FP-Setup} we present a few preliminaries and discuss the two important equations we will be working with: the coarse-grained dynamics and the effective dynamics.
In Section~\ref{S:FP-RelEnt} and Section~\ref{S:FP-Wasser} we compare these two equations in relative entropy and Wasserstein-2 distance respectively.
%

We now introduce the notion of solution to the Fokker-Planck equation, which we will be using in this work.
\begin{definition}\label{def:FP:sol}
Let $[0,T]\times \R^n \ni (t,x) \mapsto A(t,x) \in \R^{n\times n}_{sym}$ be a non-negative symmetric matrix with Borel measurable entries, called a diffusion matrix, and $[0,T]\times \R^n \ni (t,x) \mapsto b(t,x)\in \R^n$ be a Borel measurable vector field, called the drift coefficient. Moreover, assume that $A,b$ are locally bounded in $x$, that is for any compact set $U\subset \R^n$ and $T>0$, there exists $C=C(U,T)>0$ such that
\begin{equation}
  \sup_{(t,x)\in [0,T]\times U}\set{ \abs{A(t,x)} , \abs{b(t,x)}} \leq C.
\end{equation}
Then a family of probability measures $(\rho_t)_{t\in [0,T]}$ on $\R^n$ is a solution to the Cauchy problem
\begin{equation}\label{eq:FP:defsol}
  \partial_t \rho_t = D^2 : \bra*{A \rho_t} + \div \bra*{ b \rho_t} \qquad\text{and}\qquad \rho_{t=0} = \rho_0,
\end{equation}
provided that it has finite second moment for almost all $t\in (0,T)$ and
\begin{equation}\label{eq:FP:def:solution}
  \text{for any } g\in C_c^{2}\bra*{\R^n}: \quad \int_{\R^n} g \; d\rho_t  = \int_{\R^n} g \; d\rho_0  + \int_0^t \int_{\R^n} \bra*{ A:D^2 g - b\cdot D g } \; d\rho_t \; dt.
\end{equation}
\end{definition}
Unless explicitly stated otherwise, this general definition will be implicitly used, if we speak of solutions in the rest of this paper. The result \cite[Theorem 6.7.3]{BKRS15} implies that the solution set of the above Cauchy problem is non-empty in the space of sub-probability measures. To ensure that the  solution stays in the class of probability measures, we have added the second-moment bound to the solution concept. We will check that the function $x\mapsto \frac{1}{2} \abs{x}^2$ acts as a Lyapunov function, which implies by~\cite[Corollary 6.6.1, Theorem 7.1.1]{BKRS15} the second-moment condition and the conservation of probability mass.
Let us point out that in general, the above assumptions on the coefficients are not sufficient to conclude uniqueness of solutions to the Cauchy problem~\eqref{eq:FP:defsol}.
This may be the case for the coarse-grained equation for $\hat\rho_t$, since its coefficients~\eqref{def:Intro-hat-A-hat-b} depend on the solution. On the other hand, the effective equation for $\eta_t$ has a unique solution (see 
Theorem \ref{thm:Eff-Well-Posed} and Remark \ref{rem:WellPosednessCGdyn}).

\subsection{Setup of the system}\label{S:FP-Setup}
The overdamped Langevin equation in $\R^d$ with potential $V:\R^d\to \R$ at inverse temperature $\beta>0$ is the stochastic differential equation, already mentioned as~\eqref{eq:Intro-Over-Lang-SDE},
\begin{align}\label{eq:OverLang}
\begin{cases}dX_t=-\nabla V(X_t)\, dt+\sqrt{2\beta^{-1}}\, dW_t^d, \\ X_{t=0}=X_0.\end{cases}
\end{align}
The corresponding forward Kolmogorov equation for the law $\rho_t=\mathrm{law}(X_t)$ is the solution (in the sense of Definition~\ref{def:FP:sol}) to
\begin{align}\label{eq:FokkerPlanck}
\begin{cases} \partial_t \rho = \div(\rho\nabla V)+\beta^{-1}\Delta \rho, \\ \rho_{t=0}=\rho_0.
\end{cases}
\end{align}
Throughout this section we assume that the potential $V$ satisfies the following conditions.
\begin{assume}\label{ass:V} The potential $V$ satisfies
 \begin{enumerate}[label=({V}\arabic*)]
\item  \label{Ass:FP-Pot-L1} (Regularity)
        $V\in C^3(\R^d;\R)$ with $e^{-\beta V}\in L^1(\R^d)$.
\item \label{Ass:FP-Pot-Growth}  (Growth conditions) There exists a constant $C>0$ such that for all $q\in \R^d$
 \begin{align}\label{Ass:Pot-One-Side-Lip}
|V(q)|\leq C(1+|q|^2), \qquad |\nabla V(q)|\leq C(1+|q|), \qquad |D^2 V(q)|\leq C.
 \end{align}
 \end{enumerate}
\end{assume}
Here $D^2$ is the Hessian on $\R^d$. Condition~\ref{Ass:FP-Pot-L1} ensures that~\eqref{eq:FokkerPlanck} admits a normalizable stationary solution $\mu\in\mathcal{P}(\R^d)$
\begin{equation}\label{def:FP:GibbsMeasure}
 \mu(dq) := Z_\beta^{-1} \exp\bra{-\beta V(q)}\;dq \quad\text{with}\quad Z_\beta = \int_{\R^d}  \exp\bra{-\beta V(q)}\;dq  .
\end{equation}
Moreover, we need certain regularity and growth assumptions on the coarse-graining map $\xi:\R^d\rightarrow\R^k$ which identifies the relevant variables $z:=\xi(q)\in \R^k$ from the entire class of variables $q\in\R^d$. We will fix the notation $q\in\R^d$ for the spatial coordinate and $z\in\R^k$ for the coarse-grained spatial coordinate. We make the following assumption.
\begin{assume}\label{ass:xi} The coarse-graining map $\xi$ satisfies
\begin{enumerate}[label=({C}\arabic*)]
\item (Regularity)
$\xi \in C^3(\R^d;\R^k)$ with $D\xi$ having full rank $k$. \label{Ass:xi-Regularity-Dxi-Full-Rank}
\item \label{Ass:Dxi-Strict-PD} (Jacobian bounded away from zero)  There exists a constant $C>0$ such that $D\xi D\xi^\transpose \geq C^{-1}\Id_k$.
\item  \label{Ass:xi-Growth-Cond} (Growth conditions) There exists a constant $C>0$ such that
\begin{align*}
\norm{D \xi}_{L^\infty(\R^k)} \leq C, \  \ |D^2\xi(q)|\leq \frac{C}{1+|q|^2}, \  \ |D^3\xi(q)|\leq C,
\end{align*}
\end{enumerate}
where $D\xi, D^2\xi, D^3\xi$ are the successive derivative tensors of $\xi$.
\end{assume}
While conditions~\ref{Ass:xi-Regularity-Dxi-Full-Rank} and~\ref{Ass:Dxi-Strict-PD} are standard \cite[Proposition 3.1]{LL10}, 
the growth conditions~\ref{Ass:xi-Growth-Cond} on $D^2\xi$ and $D^3\xi$ are required to ensure the well-posedness of the effective dynamics 
(see Theorem~\ref{thm:Eff-Well-Posed}). These assumptions can be weakened, for instance, it is sufficient to have certain superlinear decay for $D^2\xi$ at infinity. 
However to keep the presentation simple we will not focus on these technical details. The crucial implication of Assumption~\ref{ass:xi} is that $\xi$ is \textit{affine at infinity}, 
i.e.\ there exists a fixed $T\in \R^{k\times d}$ and a constant $C_\xi$ such that for all $q\in \R^d$
\begin{align}\label{e:xi-affine-infinity}
|D\xi(q)-T|\leq \frac{C_{\xi}}{1+|q|}.
\end{align}
See Lemma~\ref{lem:xi-Affine-Infinity} in the appendix for a proof of this implication.

We can now take a closer look at the closed push-forward equation and the corresponding approximate equation introduced in Section~\ref{S:The-Central-Quest}. We make a few preliminary remarks to fix ideas and notations and then present the exact coarse-grained and the approximate effective equation.

For any $z\in\R^k$ we denote by $\Sigma_z$ the $z$-level set of $\xi$, i.e.
\begin{equation}\label{not:Xi-Level-Sets}
 \Sigma_{z} := \set{q\in\mathbb{R}^d : \xi(q)=z}.
\end{equation}
On any such level set  $\Sigma_z$, there exists a canonical intrinsic metric $d_{\Sigma_z}$ defined for $y_1,y_2\in \Sigma_z$ by
\begin{equation}\label{def:Intrinsic-Distance-Level-Sets}
d_{\Sigma_z}(y_1,y_2) := \inf\set*{ \int_0^1 \abs{\dot \gamma(s)} \,ds : \gamma \in C^1([0,1],\Sigma_z) , \gamma(0)=y_1, \gamma(1)=y_2}.
\end{equation}
The regularity assumptions~\ref{Ass:xi-Regularity-Dxi-Full-Rank}--\ref{Ass:xi-Growth-Cond} imply that there exists a constant $C\geq 1$ such that for any $z\in \R^k$ and any $y_1,y_2\in \Sigma_z$,
\begin{equation}\label{eq:Intrinsic-Distance-Comparison}
  \frac{1}{C} \abs{y_1-y_2} \leq d_{\Sigma_z}(y_1,y_2) \leq C \abs{y_1-y_2}  .
\end{equation}
We use $D\xi\in \R^{k\times d}$, $G:= D\xi D\xi^\transpose\in \R^{k\times k}$ and $\Jac\xi:=\sqrt{\mathrm{det} \,G}$ to denote the Jacobian, metric tensor and Jacobian determinant of $\xi$ respectively. By  condition~\ref{Ass:Dxi-Strict-PD}, $\Jac\xi$ is uniformly bounded away from zero.

Using the co-area formula and the disintegration theorem, any $\nu\in \mathcal{P}(\R^{d})$  that is absolutely continuous with respect to the Lebesgue measure on $\R^d$, i.e.\ $\nu(dq)=\nu(q)\, \mathcal{L}^d(dq)$ for some density again denoted by $\nu$ for convenience, can be decomposed into its \emph{marginal measure} $\xi_\#\nu=:\hat\nu\in\mathcal{P}(\R^k)$ satisfying $\hat\nu(dz)=\hat\nu(z)\,\mathcal{L}^k(dz)$
with density
\begin{align}\label{eq:FP-rho-hat-def}
\hat\nu(z)  =\int_{\Sigma_z}\nu(q) \;  \frac{\Hausdorff^{d-k}(dq)}{\Jac\xi(q)},
\end{align}
and for any $z\in \R^k$ the family of \emph{conditional measures} $\nu(\,\cdot\,|\Sigma_z)=:\bar\nu_z\in\mathcal{P}(\Sigma_z)$ 
satisfying $\bar\nu_z(dq)=\bar\nu_z(q)\,\Hausdorff^{d-k}(dq)$ with density
\begin{equation}\label{eq:FP-rho-bar-def}
 \bar\nu_{z}(q)= \frac{\nu(q)}{\Jac\xi(q)\,\hat\nu(z)}.
\end{equation}
Here $\Hausdorff^{d-k}$ is the $(d-k)$-dimensional Hausdorff measure. 

Differential operators on the coarse-grained space $\R^k$ will be denoted with subscript $z$, i.e. $\nabla_z$, $\div_z$, $\Delta_z$, $D_z$. This is to separate them from differential operators on the full space $\R^d$ which will have no subscript. Further, we define the surface (tangential) gradient on $\Sigma_z$ by
\begin{equation}\label{eq:def:nabla:Sigma_z}
 \nabla_{\Sigma_z}:=(\Id_d-D\xi^\transpose G^{-1} D\xi)\nabla.
\end{equation}
We will often use the chain rule, which for sufficiently smooth $g:\R^k\rightarrow\R$ gives
\begin{align}\label{eq:Lap-Circ-Xi-Calc}
D(g\circ \xi)= D\xi^\transpose \,\nabla_zg\circ\xi \quad\text{ and }\quad \Delta (g\circ\xi)=\Delta \xi\cdot(\nabla_zg)\circ\xi +D_z^2g\circ\xi : G .
\end{align}
Here $D^2_z$ is the Hessian on $\R^k$ and $A:B := \Tr A^\transpose B$ is the Frobenius inner product for matrices.  For any $\psi\in L^1(\R^d;\R)$ and any random variable $X$ on $\R^d$ with $\mathrm{law}(X)=\psi(q)\, dq$, the law of $\xi(X)$ satisfies $\mathrm{law}(\xi(X))=\psi^\xi(z)\, dz$, where $\psi^\xi:\R^k\rightarrow\R$ is defined by
\begin{align}\label{def:Psi-xi}
\psi^\xi(z):=\int_{\Sigma_z} \frac{\psi}{\Jac \xi} \; d\Hausdorff^{d-k} \,.	
\end{align}
This follows from the co-area formula which gives $\int_{\R^d}\psi(q)\, dq=\int_{\R^k}\psi^\xi(z)\, dz$.
The following lemma explicitly characterizes the $\R^k$-derivative of $\psi^\xi$.   
\begin{lem}\label{lem-FP:Level-Set-Der}
For $\psi \in W^{1,1}(\R^d)$ and $\psi^\xi$ defined in \eqref{def:Psi-xi} one has
\begin{align}\label{eq-levelset-deriv}
  \nabla_z \psi^\xi(z) = \int_{\Sigma_z} \div\bra{\psi G^{-1} D\xi} \ \frac{d\Hausdorff^{d-k}}{\Jac \xi}.
\end{align}
Similarly, for $B \in L^1(\R^d ; \R^{k\times k}_{sym})$ and $B^\xi$ defined component-wise as in \eqref{def:Psi-xi},
\begin{equation}\label{eq:levelset-deriv-matrix}
 \div_z B^{\xi}(z) =\int_{\Sigma_z} \div\bra{B G^{-1}D\xi} \frac{d\Hausdorff^{d-k}}{\Jac \xi}.
\end{equation}
The divergence of the matrices above is defined as
\begin{align*}
 \div_z B^{\xi}\in \R^k \ \ \text{ with } \ \  (\div_z B^{\xi})_i =   \sum_{j=1}^k \partial_j (B^\xi_{ij}) \quad\text{for } 1\leq i\leq k; 
\end{align*}
and for $A: \R^d \to \R^{k\times d}$ as
\begin{align*}
 \div A \in \R^k  \ \ \text{ with } \ \  (\div A)_i = \sum_{j=1}^d \partial_j (A_{ij}) \quad\text{for } 1\leq i \leq k .
\end{align*}
\end{lem}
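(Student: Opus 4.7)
The plan is to verify the identity in a distributional sense by pairing against an arbitrary $\varphi\in C_c^\infty(\R^k;\R^k)$, transporting both sides of the identity to $\R^d$ via the co-area formula, and comparing. The key algebraic observation is a "pseudo-inverse'' identity: since $D\xi\cdot (G^{-1}D\xi)^\transpose = G^{-1} G = \Id_k$, the rows of $G^{-1}D\xi$ furnish a dual frame to $\nabla\xi_1,\dots,\nabla\xi_k$. Explicitly, set $v_i(q):=(G^{-1}D\xi)_{i,:}^\transpose\in \R^d$. Using the chain rule~\eqref{eq:Lap-Circ-Xi-Calc}, one gets $v_i\cdot\nabla(g\circ\xi) = v_i\cdot D\xi^\transpose(\nabla_z g\circ\xi) = (D\xi\, v_i)\cdot(\nabla_z g\circ\xi) = (\partial_{z_i}g)\circ\xi$ for any smooth $g:\R^k\to\R$. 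Summing over $i$ gives the core identity
\begin{equation*}
    (\div_z\varphi)\circ\xi \;=\; \sum_{i=1}^k v_i\cdot\nabla(\varphi_i\circ\xi) \qquad \text{on } \R^d.
\end{equation*}

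Next, I would compute in two co-area steps. On one hand, integration by parts in $\R^k$ followed by the co-area formula yields
\begin{equation*}
    \int_{\R^k}\varphi\cdot\nabla_z\psi^\xi\,dz = -\int_{\R^k}(\div_z\varphi)\,\psi^\xi\,dz = -\int_{\R^d}(\div_z\varphi)\circ\xi\;\psi\,dq.
\end{equation*}
On the other hand, substituting the identity from the first paragraph and integrating by parts in $\R^d$ gives
\begin{equation*}
   -\int_{\R^d}\sum_i \bigl(v_i\cdot\nabla(\varphi_i\circ\xi)\bigr)\psi\,dq \;=\; \int_{\R^d}\sum_i(\varphi_i\circ\xi)\,\div(\psi v_i)\,dq \;=\; \int_{\R^d}(\varphi\circ\xi)\cdot\div(\psi\,G^{-1}D\xi)\,dq,
\end{equation*}
where in the last equality I just recognise that $\div(\psi v_i)$ is, by the row-wise definition in the lemma statement, the $i$-th component of $\div(\psi\,G^{-1}D\xi)$. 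One final application of the co-area formula to the right-hand side and the arbitrariness of $\varphi$ yield~\eqref{eq-levelset-deriv}. The matrix-valued identity~\eqref{eq:levelset-deriv-matrix} then follows by applying the scalar formula row-by-row, which is exactly how the matrix divergences on both sides of~\eqref{eq:levelset-deriv-matrix} are defined.

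The main obstacle is the analytic justification of the integration by parts in $\R^d$, since $\psi$ is only assumed to lie in $W^{1,1}(\R^d)$. The vector fields $v_i = (G^{-1}D\xi)_{i,:}^\transpose$ are of class $C^2$ and uniformly bounded thanks to Assumption~\ref{ass:xi}: indeed $\|D\xi\|_\infty$ is bounded by~\ref{Ass:xi-Growth-Cond}, $G\geq C^{-1}\Id_k$ by~\ref{Ass:Dxi-Strict-PD} forces $G^{-1}$ to be bounded, and the derivative bounds on $D^2\xi,D^3\xi$ in~\ref{Ass:xi-Growth-Cond} propagate to $\div v_i\in L^\infty(\R^d)$. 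Combined with $\varphi\in C_c^\infty$, this gives $\varphi\circ\xi\cdot\div(\psi v_i) \in L^1(\R^d)$, and the integration by parts is then justified by a standard mollification of $\psi$ together with dominated convergence; the boundary terms vanish because $\varphi\circ\xi$ is supported in $\xi^{-1}(\mathrm{supp}\,\varphi)$ and $\psi$ can be truncated in a $W^{1,1}$-stable manner.
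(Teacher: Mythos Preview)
Your proof is correct and follows essentially the same route as the paper's: pass to $\R^d$ via the co-area formula, use the pseudo-inverse identity $G^{-1}D\xi\,D\xi^\transpose=\Id_k$ to rewrite $(\partial_{z_i}g)\circ\xi$ as a full-space directional derivative, integrate by parts in $\R^d$, and return via co-area. The only cosmetic difference is that the paper pairs $\psi^\xi$ against a scalar test function $g\in W^{1,\infty}(\R^k)$ and tracks the vector $\nabla_z g$, whereas you pair against a vector $\varphi\in C_c^\infty(\R^k;\R^k)$ and track $\div_z\varphi$; these are equivalent formulations.

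One small clarification on your justification of the $\R^d$ integration by parts: the remark that ``$\varphi\circ\xi$ is supported in $\xi^{-1}(\mathrm{supp}\,\varphi)$'' is true but not by itself helpful, since that preimage is typically unbounded (think of $\xi$ a coordinate projection). The argument really rests on the second clause you give --- approximating $\psi$ in $W^{1,1}$ by $C_c^\infty$ functions --- together with the boundedness of $\varphi\circ\xi$, $\nabla(\varphi\circ\xi)$, $v_i$, and $\div v_i$, which you correctly extract from Assumption~\ref{ass:xi}. With that density argument the boundary terms vanish for the approximants and the limit passes by dominated convergence, so the step is sound.
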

The proof is an extension of \cite[Lemma 2.2]{LL10} to higher dimensions and we include it for convenience in  Appendix~\ref{S:prop:coarse}.
Another object of interest is the so called \emph{local mean force} $F:\R^d\to\R^k$,
\begin{align}\label{def:FP-Local-Mean-Force-F}
F := G^{-1} D\xi\,\nabla V - \beta^{-1}\div\bra{G^{-1} D\xi} .
\end{align}
This object is related to the Lebesgue density of the coarse-grained stationary measure $\hat \mu=\xi_\#\mu$. By applying Lemma~\ref{lem-FP:Level-Set-Der} to $\log \hat\mu(z)$ we find
\begin{align}\label{eq:FP-Local-Mean-Force-F:muhat}
-\beta^{-1}\nabla_z \log \hat\mu(z) = \int_{\Sigma_z} F\, d\bar\mu_z,
\end{align}
which clarifies the name local mean force. For further interpretation and properties see \cite[Section 2.1]{LL10}.

\bigskip
\label{notation_norms}
Finally, we introduce some notation for norms of vectors and matrices. We write $|v|$ for the standard Euclidean norm of a vector $v$ in $\R^d$ or $\R^k$, and we write $|M|$ for the operator norm of a matrix in $\R^{d\times d}$, $\R^{d\times k}$, $\R^{k\times d}$, or $\R^{k\times k}$. With these norms we have inequalities of the type
\[
|Mv| \leq |M|\, |v| \qquad \text{for }v\in \R^d \text{ and } M\in \R^{k\times d} \text{ (as an example)}.
\]
In Theorem~\ref{thm:FP-RelEnt-Main-Res} below we also use a weighted Euclidean norm. For $A\in \R^{k\times k}$ and $v\in \R^k$ we set 
\begin{equation}
\label{eq:weightesEuclid}
|v|^2_A := (v,Av)
\end{equation}
and write the corresponding norm of a matrix $M\in \R^{k\times d}$, viewed as an operator from $(\R^d,|\cdot|)$ to $(\R^k,|\cdot|_A)$, as $|M|_{I\to A}$.
In Section~\ref{S:FP-Wasser} we will also use the Frobenius norm of a matrix $M\in \R^{k\times k}$,
\[
|M|_F := \sum_{i,j=1}^k |M_{ij}|^2 = \Tr M^TM.
\]
Note that the operator norm and the Frobenius norm are related by $|M|\leq |M|_F\leq \sqrt k |M|$.
The corresponding operator norm for a three-tensor $T\in \R^{k\times k\times k}$, viewed as a mapping from $(\R^k,|\cdot|)$ to $(\R^{k\times k},|\cdot|_F)$, is noted as $|\cdot|_{I\to F}$.

\subsection{Functional inequalities}\label{S:Encod-Meta}
As discussed in the introduction, an important ingredient for proving  error estimates is the framework of functional inequalities which we use to encode the assumption of scale-separation. Let us introduce the Poincar\'e, Talagrand and Log-Sobolev inequality.

\begin{definition}[Poincar\'e, Talagrand and Log-Sobolev inequality]
 A probability measure $\nu\in \mathcal{P}(X)$, where $X\subseteq \R^d$ is a smooth submanifold, satisfies
\begin{itemize}
 \item[$(\PI)$] the Poincar\'e inequality with constant $\alpha_{\PI}$ if
\begin{equation}\label{def:PI}
\forall f\in H^1(\nu): \qquad \var_{\nu}(f):= \int_X \bra*{ f - \int_X f \,d\nu }^2 d\nu  \leq \frac{1}{\alpha_{\PI}}\int_X \abs{\nabla f}^2 d\nu .
\end{equation}
 \item[$(\TI)$] the Talagrand inequality with constant $\alpha_{\TI}$ if
\begin{align}\label{def:TI}
\forall \zeta\in \mathcal{P}(X): \qquad \Wasser^2_2(\zeta,\nu)\leq \frac{2}{\alpha_{\TI}}\RelEnt(\zeta|\nu),
\end{align}
where $\Wasser_2(\cdot,\cdot)$ is the Wasserstein-2 distance~\eqref{def:Intro-Wasser-Dist} and $\RelEnt(\,\cdot\,|\,\cdot\,)$ is the relative entropy~\eqref{def:Intro-Rel-Ent}.
\item[$(\LSI)$] the Log-Sobolev inequality with constant $\alpha_{\LSI}$ if
\begin{align}\label{def:LSI}
\forall \zeta\in\mathcal{P}(X): \qquad \RelEnt(\zeta|\nu)\leq \frac{1}{2\alpha_{\LSI}}\RF(\zeta|\nu).
\end{align}
For any two probability measures $\nu,\zeta\in\mathcal{P}(X)$, the \emph{relative Fisher information} of $\zeta$ with respect to $\nu$ is defined by
\begin{align}\label{eq-def:FishInf}
\RF(\zeta|\nu)=\begin{cases}{\displaystyle \int_{X}\abs*{\nabla\log\bra*{\frac{d\zeta}{d\nu}}}^2d\zeta}, & \text{ if } \ \zeta\ll\nu \text{ and } \nabla\log\bra*{\dfrac{d\zeta}{d\nu}}\in L^2(X;\zeta),\\
+\infty, & \text{ otherwise}.
\end{cases}
\end{align}
\end{itemize}
Here the notion of $\nabla$ depends on the manifold $X$ and will be made explicit when it occurs. For $X=\R^d$, we have as usual $\nabla= (\partial_1,\dots,\partial_d)$, whereas for $X=\Sigma_z$ we use the surface gradient $\nabla = \nabla_{\Sigma_z}$.
\end{definition}

\begin{rem}\label{rem:LSI:TI:PI}
The Log-Sobolev inequality implies the Talagrand inequality and the Talagrand inequality implies the Poincar\'e inequality, such that one has the following estimates on the constants:
 $0\leq \alpha_{\LSI}\leq \alpha_{\TI} \leq \alpha_{\PI}$ (see~\cite[Corollary 3.1]{BGL2001} and~\cite{OV00} for details).
\end{rem}

We will use these inequalities by assuming that the conditional stationary measure $\bar\mu_z$ satisfies either the 
Talagrand inequality or the Log-Sobolev inequality on $\Sigma_z$ uniformly in $z\in\R^k$. In the introduction we described various bounds on 
$\RelEnt(\hat\rho_t|\eta_t)$ and $\Wasser_2(\hat\rho_t,\eta_t)$, and it will turn out that these bounds become sharp when $\alpha_{\TI},\alpha_{\LSI}$ are large.

%

\begin{remark}
We will assume below that the initial measure $\rho_0$ has finite entropy $\RelEnt(\rho_0|\mu)$ with respect to the stationary measure $\mu$. 
This implies the dissipation inequality (see for instance~\cite[Proposition 4.6]{DLR13} and \cite[Theorem 2.3]{DuongLamaczPeletierSharma17})
\begin{equation}
\label{ineq:dissipation-rho}
\RelEnt(\rho_t|\mu) + \beta^{-1}\int_0^t \RF(\rho_s|\mu)\, ds\leq \RelEnt(\rho_0|\mu)  \qquad \text{for all }t\in[0,T].
\end{equation}
\end{remark}

\subsection{Coarse-grained dynamics}
\label{S:FP-Push-For-Dyn}
The coarse-grained dynamics $\hat Y_t$ is the solution of the stochastic differential equation
 \begin{align}\label{eq:CG-SDE}
\begin{cases}
d\hat Y_t =-\hat b(t,\hat Y_t) \, dt+\sqrt{2\beta^{-1}\hat A(t,\hat Y_t)} \, dW^k_t,\\
\hat Y_{t=0}=Y_0,
\end{cases}
\end{align}
with coefficients $\hat b$ and $\hat A$ defined in \eqref{eq:def-b-hat} and \eqref{eq:def-A-hat} below. It is related to the full overdamped Langevin dynamics via the relation $\mathrm{law}(\xi(X_t))=\mathrm{law}{(\hat Y_t)}$, which we show next.
\begin{prop}[
]\label{prop:FP-effect-Dyn}
If $\rho$ is a solution to \eqref{eq:FokkerPlanck}, then $\xi_\#\rho_t=:\hat \rho_t\in \mathcal{P}(\R^k)$ evolves according to
\begin{align}\label{eq:FP-Push-For-Dyn-Div}
\begin{cases}
\partial_t \hat \rho  =  \beta^{-1}D^2_z : \bra{\hat A\, \hat\rho} + \div_z\bra{\hat\rho \,\hat b},\\
\hat\rho_{t=0}=\hat\rho_0,
\end{cases}
\end{align}
with coefficients $\hat b:[0,T]\times\R^k\rightarrow\R^k, \, \hat A:[0,T]\times\R^k\rightarrow\R^{k\times k}$
\begin{align}
\hat b(t,z) &= \EX_{\bar\rho_{t,z}}[D\xi \,\nabla V -\beta^{-1}\Delta\xi],  \label{eq:def-b-hat}\\
\hat A(t,z) &= \EX_{\bar\rho_{t,z}}[G]. \label{eq:def-A-hat}
\end{align}
Here $\bar\rho_{t,z}$ is the conditional measure corresponding to $\rho$. Moreover, $\hat\rho$ has  second moments that are bounded uniformly in time.
\end{prop}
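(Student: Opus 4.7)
The plan is to derive \eqref{eq:FP-Push-For-Dyn-Div} by testing the weak formulation of \eqref{eq:FokkerPlanck} (in the sense of Definition~\ref{def:FP:sol}) against a function of the form $g\circ\xi$ with $g\in C_c^2(\R^k)$, and then using the disintegration theorem to recognise the coefficients as conditional expectations. Concretely, take as test function $g\circ\xi$ in the definition of solution for $\rho_t$ and apply the chain rule \eqref{eq:Lap-Circ-Xi-Calc}:
\begin{equation*}
\int_{\R^k} g \, d\hat\rho_t = \int_{\R^k} g \, d\hat\rho_0 + \int_0^t \!\int_{\R^d}\!\bigl[\beta^{-1}\bigl(G:(D_z^2 g)\circ\xi + \Delta\xi\cdot(\nabla_z g)\circ\xi\bigr) - \nabla V\cdot D\xi^\transpose (\nabla_z g)\circ\xi\bigr] d\rho_s\, ds.
\end{equation*}
Some care is required because $g\circ\xi$ is not compactly supported on $\R^d$; this is handled by multiplying by a smooth cut-off $\chi_R(q)$ and letting $R\to\infty$, using the linear growth of $\nabla V$ (Assumption \ref{Ass:FP-Pot-Growth}) and the bounds on $D\xi$, $D^2\xi$ (Assumption \ref{Ass:xi-Growth-Cond}) together with the second-moment bound on $\rho_t$ (established in the last step below) to pass to the limit by dominated convergence.

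Next, I would disintegrate $d\rho_s(q) = d\bar\rho_{s,z}(q)\, d\hat\rho_s(z)$ along the level sets of $\xi$, as in \eqref{eq:FP-rho-hat-def}--\eqref{eq:FP-rho-bar-def}. Since the factors $(D_z^2 g)\circ\xi$ and $(\nabla_z g)\circ\xi$ only depend on $q$ through $z=\xi(q)$, they may be pulled out of the inner $\Sigma_z$-integral, giving
\begin{equation*}
\int_0^t \!\int_{\R^k}\!\bigl[\beta^{-1} \hat A(s,z):D_z^2 g(z) - \hat b(s,z)\cdot \nabla_z g(z)\bigr] d\hat\rho_s(z)\, ds,
\end{equation*}
with $\hat A$ and $\hat b$ exactly as in \eqref{eq:def-A-hat} and \eqref{eq:def-b-hat}. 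This is the weak formulation of \eqref{eq:FP-Push-For-Dyn-Div}. Local boundedness in $z$ of $\hat A$ and $\hat b$, needed to invoke Definition~\ref{def:FP:sol}, follows from the growth bounds on $\nabla V$, $D\xi$, $\Delta\xi$ together with~\ref{Ass:Dxi-Strict-PD}, which control the integrands uniformly on compact sets of $z$.

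Finally, for the uniform-in-time second-moment bound on $\hat\rho_t$, I would first establish one for $\rho_t$ itself, using $L(q)=\tfrac12|q|^2$ as a Lyapunov function: formally
\begin{equation*}
\frac{d}{dt}\int_{\R^d} |q|^2 \, d\rho_t = 2\beta^{-1}d - 2\int_{\R^d} q\cdot\nabla V(q)\, d\rho_t \leq C\Bigl(1 + \int_{\R^d}|q|^2\, d\rho_t\Bigr),
\end{equation*}
by \ref{Ass:FP-Pot-Growth}, and this can be made rigorous by testing \eqref{eq:FokkerPlanck} against the truncated $\min\{|q|^2,R\}$ and applying \cite[Corollary 6.6.1, Theorem 7.1.1]{BKRS15} as invoked after Definition~\ref{def:FP:sol}. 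A Gronwall inequality then yields $\sup_{t\in[0,T]}\int|q|^2\,d\rho_t<\infty$. Since \eqref{e:xi-affine-infinity} and Assumption~\ref{ass:xi} imply $|\xi(q)|\leq C(1+|q|)$, we deduce $\int_{\R^k}|z|^2\, d\hat\rho_t = \int_{\R^d} |\xi(q)|^2\, d\rho_t \leq C(1 + \int |q|^2\, d\rho_t)$, which is uniformly bounded on $[0,T]$.

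The main obstacle is the truncation/limiting argument that legitimises using $g\circ\xi$ (which is not compactly supported on $\R^d$) as a test function and that justifies the Fubini exchange in the disintegration step; this hinges crucially on the growth conditions in Assumptions~\ref{ass:V} and \ref{ass:xi}, combined with the propagation of second moments for $\rho_t$, so the second-moment estimate and the derivation of the equation must be performed in tandem rather than independently.
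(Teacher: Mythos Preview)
Your approach is essentially the same as the paper's: both substitute $f=g\circ\xi$ into the weak formulation of \eqref{eq:FokkerPlanck}, apply the chain rule \eqref{eq:Lap-Circ-Xi-Calc}, and disintegrate to recognise the conditional expectations. You are actually more careful on one point: the paper asserts that $g\circ\xi\in C^2_c(\R^d)$ ``by Assumption~\ref{ass:xi}'', which is not literally true since the level sets $\Sigma_z$ are noncompact for $k<d$, so your cut-off argument is the right way to make this step rigorous; for the second-moment bound the paper simply invokes that $\rho_t$ already has bounded second moments (this is built into Definition~\ref{def:FP:sol}) and transfers via $|\xi(q)|\leq|\xi(0)|+\|D\xi\|_\infty|q|$, which is slightly quicker than your Lyapunov re-derivation.
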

\begin{proof}
According to Definition~\ref{def:FP:sol} the family $(\rho_t)_{t\in[0,T]}$ solves for almost all $t\in (0,T)$ and every $f\in C^2_c(\R^d)$
\begin{align}\label{eq:Over-Lang-Weak-Form}
 0=\int_{\R^d}\bra{f \,d\rho_t-f \,d\rho_0}-\int_0^t \int_{\R^d}\bra{\beta^{-1}\Delta f- \nabla f\cdot \nabla V}\, d\rho_t\, dt.
\end{align}
The proof follows by substituting $f=g\circ\xi$ for some $g\in C^2_c(\R^k)$, where we note that by Assumption~\ref{ass:xi} we have $f\in C^2_c(\R^d)$.

Since the original dynamics $\rho_t$ has bounded second moments, $\hat \rho$ shares the same property by the estimate
\begin{align*}
\int_{\R^k}\abs{z}^2\hat\rho_t(dz)=\int_{\R^d}\abs{\xi(q)}^2\rho_t(dq)\leq 2\bra*{\abs{\xi(0)}^2+\norm{D\xi}_{\infty}^2\int_{\R^d}\abs{q}^2\rho_t(dq)}<+\infty.
\end{align*}
\end{proof}
Similarly, assuming that the initial measure $\rho_0$ has finite relative entropy $\RelEnt(\rho_0|\mu)$, the same holds for the push-forward $\hat \rho_0$, and we have a similar dissipation inequality as~\eqref{ineq:dissipation-rho}:
\begin{cor}\label{FP:MarginalEDP}
If $\RelEnt(\rho_0|\mu)<\infty$, then
\begin{equation}\label{ineq:dissipation-hatrho}
\RelEnt(\hat\rho_t|\hat\mu) + \beta^{-1}\int_0^t \RF_A(\hat\rho_s|\hat \mu)\, ds\leq \RelEnt(\rho_0|\mu)  \qquad \text{for all }t\in[0,T],
\end{equation}
where $\RF_A$ is the  relative Fisher Information on $\R^k$ with metric $A$,
\[
\RF_A(\hat\rho_s|\hat \mu) := \begin{cases}
{\displaystyle 
\int_{\R^k}\abs*{\nabla_z\log\bra*{\frac{d\hat\rho_s}{d\hat\mu}}(z)}_{A(z)}^2d\hat\rho_s(z)}, & \text{ if } \ \hat\rho_s\ll\hat\mu \text{ and } \nabla_z\log\bra*{\dfrac{d\hat\rho_s}{d\hat\mu}}\in L^2(\R^k;\hat\mu),\\
+\infty, & \text{ otherwise}.
\end{cases}
\]
\end{cor}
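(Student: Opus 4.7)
The plan is to combine the dissipation inequality~\eqref{ineq:dissipation-rho} for the full dynamics with two push-forward contraction inequalities: the classical data-processing inequality for relative entropy, and its counterpart for the $A$-weighted relative Fisher information. Writing $f_s := d\rho_s/d\mu$ and $\hat f_s := d\hat\rho_s/d\hat\mu$, the disintegration formulas~\eqref{eq:FP-rho-hat-def}--\eqref{eq:FP-rho-bar-def} give the conditional-expectation representation $\hat f_s(z) = \EX_{\bar\mu_z}[f_s]$.

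First I would verify the entropy contraction $\RelEnt(\hat\rho_t|\hat\mu) \leq \RelEnt(\rho_t|\mu)$. Jensen's inequality applied to the convex function $\phi(x)=x\log x$ yields $\phi(\hat f_t(z)) \leq \EX_{\bar\mu_z}[\phi(f_t)]$ pointwise in $z$, and integrating against $d\hat\mu$ while using $\mu = \bar\mu_z \otimes \hat\mu$ gives the claim.

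Second, I would establish the Fisher contraction $\RF_A(\hat\rho_s|\hat\mu) \leq \RF(\rho_s|\mu)$. The key observation is a pull-back identity: since $\hat f_s\circ\xi$ depends on $q$ only through $\xi(q)$, the chain rule yields $|\nabla(\hat f_s\circ\xi)(q)|^2 = \nabla_z\hat f_s(\xi(q))^\transpose G(q)\,\nabla_z\hat f_s(\xi(q))$, and since $A(z)=\EX_{\bar\mu_z}[G]$ from~\eqref{eq:def-A-hat}, integrating against $\mu$ and using disintegration gives the representation
\[
\RF_A(\hat\rho_s|\hat\mu) \;=\; \int_{\R^d} \frac{|\nabla(\hat f_s\circ\xi)|^2}{\hat f_s\circ\xi}\,d\mu \;=\; \RF(\tilde\rho_s|\mu),
\]
where $\tilde\rho_s:=(\hat f_s\circ\xi)\mu$ has density equal to the $\sigma(\xi)$-conditional expectation $\EX_\mu[f_s\,|\,\xi]$. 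Thus the inequality reduces to the classical statement that conditioning decreases Fisher information, $\RF(\tilde\rho_s|\mu)\leq \RF(\rho_s|\mu)$. For this I would apply Lemma~\ref{lem-FP:Level-Set-Der} to derive the formula
\[
\nabla_z\hat f_s(z) \;=\; \EX_{\bar\mu_z}\bigl[G^{-1}D\xi\,\nabla f_s\bigr] \;-\; \beta\,\mathrm{cov}_{\bar\mu_z}\bigl(f_s,F\bigr),
\]
(using the identity $\div(\mu G^{-1}D\xi)=-\beta\mu F$ that follows from the definition~\eqref{def:FP-Local-Mean-Force-F} of the local mean force), and then apply Cauchy--Schwarz on $\bar\mu_z$ with the weight $f_s$, so that the resulting bound is exactly $\hat f_s(z)\cdot\EX_{\bar\mu_z}[|\nabla f_s|^2/f_s]$; integrating against $d\hat\mu$ and comparing with the representation $\RF(\rho_s|\mu)=\int \EX_{\bar\mu_z}[|\nabla f_s|^2/f_s]\,d\hat\mu$ closes the estimate.

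Combining the two contractions with~\eqref{ineq:dissipation-rho} then yields
\[
\RelEnt(\hat\rho_t|\hat\mu) + \beta^{-1}\!\int_0^t \RF_A(\hat\rho_s|\hat\mu)\,ds \;\leq\; \RelEnt(\rho_t|\mu) + \beta^{-1}\!\int_0^t \RF(\rho_s|\mu)\,ds \;\leq\; \RelEnt(\rho_0|\mu).
\]
The main obstacle is the Fisher contraction: the explicit formula for $\nabla_z\hat f_s$ carries a covariance term against the local mean force, and handling it requires a Cauchy--Schwarz estimate weighted in a way compatible with the specific choice $A(z)=\EX_{\bar\mu_z}[G]$ — any other weighting would fail to match $|\nabla(\hat f_s\circ\xi)|^2$ after taking the conditional expectation. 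Justifying the integrations by parts for densities $f_s$ with only $\RelEnt(\rho_0|\mu)<\infty$ will require a standard approximation argument.
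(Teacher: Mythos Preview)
Your overall strategy---combining the full dissipation inequality~\eqref{ineq:dissipation-rho} with contraction of both relative entropy and Fisher information under the push-forward by $\xi$---is exactly the paper's approach. The entropy step is fine; your Jensen argument is equivalent to the paper's tensorization identity $\RelEnt(\rho_t|\mu)=\int_{\R^k}\RelEnt(\bar\rho_{t,z}|\bar\mu_z)\,d\hat\rho_t+\RelEnt(\hat\rho_t|\hat\mu)$.

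For the Fisher step, however, the paper takes a different and more direct route: it orthogonally splits the full gradient as $\nabla = P\nabla + \nabla_{\Sigma_z}$ with $P=D\xi^\transpose G^{-1}D\xi$, uses Pythagoras to write $\RF(\rho_t|\mu)$ as the sum of a normal and a tangential contribution, drops the non-negative tangential part, and identifies the normal part with $\RF_A(\hat\rho_t|\hat\mu)$ directly---without ever computing $\nabla_z\hat f_s$. Your route instead pulls $\RF_A$ back to $\RF(\tilde\rho_s|\mu)$ (this identity is correct, and it is a nice observation) and then attempts the contraction $\RF(\tilde\rho_s|\mu)\leq\RF(\rho_s|\mu)$ via the explicit formula $\nabla_z\hat f_s=\EX_{\bar\mu_z}[G^{-1}D\xi\nabla f_s]-\beta\,\mathrm{cov}_{\bar\mu_z}(f_s,F)$.

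Here is the gap. The slogan ``conditioning contracts Fisher information'' is standard when $\mu$ is a product measure along the fibres of $\xi$, because then $\bar\mu_z$ does not depend on $z$, the covariance term vanishes, and a plain $f_s$-weighted Cauchy--Schwarz on the first summand gives exactly $\hat f_s\,\EX_{\bar\mu_z}[|\nabla f_s|^2/f_s]$. In the present non-product setting the covariance term does \emph{not} vanish, and you have not explained how a single Cauchy--Schwarz absorbs it. Concretely, the two summands in $\nabla_z\hat f_s$ are of different nature (the first sees the normal derivative $G^{-1}D\xi\nabla f_s$, the second sees the fibre fluctuation $f_s-\hat f_s$ against $F$); bounding $|\nabla_z\hat f_s|^2_A$ from the sum produces a cross term with no obvious sign, and controlling $|\mathrm{cov}_{\bar\mu_z}(f_s,F)|^2_A$ by $\hat f_s\,\EX_{\bar\mu_z}[|\nabla f_s|^2/f_s]$ would require relating it to the tangential Fisher information $\int_{\Sigma_z}|\nabla_{\Sigma_z}\log f_s|^2\,d\bar\rho_{t,z}$. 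That link is precisely what the paper's orthogonal decomposition supplies for free by keeping both pieces of $|\nabla\log f_s|^2$ in play until the end. As written, your Cauchy--Schwarz step is incomplete; either supply the missing estimate on the covariance term, or switch to the orthogonal-splitting argument.
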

\begin{proof}
The proof follows from  a standard application of the tensorization principle of the relative entropy and the Fisher information to the dissipation inequality~\eqref{ineq:dissipation-rho}.
For the relative entropy it reads 
\[
  \RelEnt(\rho_t | \mu) = \int_{\R^k} \RelEnt(\bar\rho_{t,z} | \bar\mu_{z}) \, d\hat\rho_{t}  + \RelEnt(\hat\rho_t | \hat\mu).
\]
For the Fisher information, we use for convenience the following projection
\[
  P : \R^d \to \R^d \qquad\text{with}\qquad P = D\xi^\transpose G^{-1} D\xi .
\]
Then we have the orthogonal splitting of the gradient into its normal $P\nabla$ and tangential part $(\Id-P)\nabla= \nabla_{\Sigma_z}$. In particular for $\rho\ll \mu$ we find
\begin{align*}
  \int_{\R^d} \abs*{ \nabla \log \frac{d\rho_t}{d\mu}}^2 d\rho_t = \int_{\R^d} \bra*{ \abs*{ P\nabla \log \frac{d\rho_t}{d\mu}}^2 + \abs*{ \nabla_{\Sigma_z} \log \frac{d\rho_t}{d\mu}}^2} d\rho_t.
\end{align*}
By definition of the marginal measure and using the disintegration theorem we arrive at
\begin{align*}
  \int_{\R^d}  \abs*{ P\nabla \log \frac{d\rho_t}{d\mu}}^2 d\rho_t &= \int_{\R^d}  \abs*{ P\nabla \log \frac{d\hat\rho_t\circ \xi}{d\hat\mu\circ \xi}}^2 d\rho_t
  = \int_{\R^k} \int_{\Sigma_z} \abs*{ P D\xi^\transpose \nabla_z \log \frac{d\hat\rho_t}{d\hat\mu}}^2 d\bar\rho_{t,z} \, d\hat\rho_t(z) .
\end{align*}
The last observation needed is that since $P$ is a projection, for any $v,z\in \R^k$ it follows that 
\begin{align*}
 \int_{\Sigma_z} \abs*{ P D\xi^\transpose v}^2 d\bar\rho_{t,z} &= \int_{\Sigma_z} v\cdot D\xi P D\xi^\transpose v d\bar\rho_{t,z} = v \cdot \EX_{\bar\rho_{t,z}}\pra*{G} v = |v|^2_A,
\end{align*}
where $|\cdot|_A$ is defined on page~\pageref{notation_norms}. Here we have used $D\xi P D\xi^\transpose = D\xi D\xi^\transpose = G$. 
Hence \eqref{ineq:dissipation-hatrho} follows from~\eqref{ineq:dissipation-rho}.
\end{proof}

\subsection{Effective dynamics}
\label{S:FP-effective}

Next, let us define the effective dynamics $Y_t$,
\begin{align}\label{eq:SDE-Effect-Dyn}
\begin{cases}
dY_t=-b(Y_t)\, dt+\sqrt{2\beta^{-1}A(Y_t)}\, dW^k_t,\\
Y_{t=0}=Y_0,
\end{cases}
\end{align}
with the corresponding forward Kolmogorov equation for the law $\eta_t=\mathrm{law}(Y_t)$, 
\begin{equation}\label{eq:FP-Effect-Dyn}
\begin{cases}
\partial_t \eta =  \div_z\bra{\eta\, b}+\beta^{-1} D^2_z : \bra{A\, \eta},\\
\eta_{t=0}=\eta_0.
\end{cases}
\end{equation}
The coefficients $b:\R^k\rightarrow \R^k, \ A:\R^k\rightarrow\R^{k\times k}$ are 
\begin{align}
b(z) &:= \EX_{\bar\mu_{z}}[D\xi \,\nabla V-\beta^{-1}\Delta\xi] \label{def:Eff-dyn-b}, \\
 A(z) &:= \EX_{\bar\mu_{z}}[G ], \label{def:Eff-dyn-A}. 
\end{align}
Note that the expectations  are taken with respect to the conditional stationary measure $\bar\mu_{z}$. The effective dynamics admits the measure $\hat\mu:=\xi_\#\mu$ as a stationary solution \cite[Lemma 2.4]{LL10}. 
\begin{rem}[Effective dynamics is a gradient flow]\label{rem:Eff-Dyn-A-Wasser-Grad-Flow}
By using Lemma~\ref{lem-FP:Level-Set-Der} and the explicit definition of $\bar\mu_z$~\eqref{eq:FP-rho-bar-def} it follows that
\begin{align*}
\div_z A&=\int_{\Sigma_z} \left[\mu G\nabla_z\left(\frac{1}{\hat\mu(z)}\right) + \frac{1}{\hat\mu(z)}\div(\mu D\xi)\right]\frac{d\Hausdorff^{d-k}}{\Jac\xi}\\
&=\int_{\Sigma_z}\frac{\mu}{\hat\mu}\bra[\big]{D\xi\nabla\log\mu + \Delta\xi - G (\nabla_z\log\hat\mu)}\frac{d\Hausdorff^{d-k}}{\Jac\xi},
\end{align*}
and therefore $\div_z A=-\beta b-A\nabla_z\log\hat\mu$.
Hence we can rewrite the effective dynamics~\eqref{eq:FP-Effect-Dyn} as
\begin{align}\label{e:EffDyn:GF}
\partial_t \eta =  \beta^{-1} \bra*{ \div_z\bra{A\,\nabla_z\eta}+\div_z\bra*{\eta\,A\,\nabla_z\bra{-\log\hat\mu}} }.
\end{align}
With  the free energy $\mathcal{E}(\eta):= \beta^{-1} \int \bra{\eta\log\eta-\eta\,\log\hat\mu}$, which up to a factor $\beta^{-1}$ corresponds 
to the relative entropy of $\eta$ with respect to $\hat\mu$,
and by using 
$\delta \mathcal{E}(\eta) = \beta^{-1} \log \frac{\eta}{\hat \mu}$ 
for the variational derivative, we can write
\begin{align}\label{eq:FP:GF}
\partial_t\eta=\div_z\bra{\eta\, A\nabla_z(\delta\mathcal{E}(\eta))} = \beta^{-1} \div_z \bra*{ \eta A \nabla_z \log \frac{\eta}{\hat \mu}}.
\end{align}
This formally indicates that the effective dynamics is a Wasserstein-2 gradient flow of~$\mathcal{E}$ with respect to the space dependent metric $\skp{z_1,z_2}_{A} := \skp{z_1,A z_2}$ on $\R^k$. Using the form~\eqref{eq:FP:GF} of the effective equation, it is easily seen that $\hat\mu$ is the stationary measure of the effective dynamics. Therefore the long-time limit of the coarse-grained and the effective dynamics are the same as $\rho_t$ converges to $\mu$.
\end{rem}

Next we discuss the well-posedness of 
the effective dynamics~\eqref{eq:FP-Effect-Dyn}.
The existence of a solution for the effective dynamics as a family of probability measures in the sense of Definition~\ref{def:FP:sol} is a subtle issue, 
even though the effective dynamics is uniformly parabolic. Essentially, we need to rule out finite-time explosion, which is closely related to the condition of 
$\xi$ being affine at infinity (recall Lemma~\ref{lem:xi-Affine-Infinity}) which is a consequence  of the growth conditions~\ref{Ass:xi-Regularity-Dxi-Full-Rank}-\ref{Ass:xi-Growth-Cond}.
 We show that under these growth conditions the effective drift $b$ is Lipschitz (see Lemma~\ref{lem:Eff-Coeff-Properties}), and as a consequence the solution 
to the effective dynamics does not explode in finite time.

Before doing so, let us point out that all the main results depend on the relative entropy of the initial data with respect to the Gibbs measure. Hence, we will assume for the proof, that the initial datum $\rho_0=\mathrm{law}(X_0)$ has
bounded relative entropy with respect to $\mu$
\begin{align}\label{ass:FP-Well-Prep-Init-Data}\RelEnt(\rho_0|\mu)< +\infty.
\end{align}
This implies $\rho_0\ll\mu$ and so we can define $f_0:=d\rho_0/d\mu$.

Henceforth, to avoid certain technicalities we assume that \begin{equation}\label{ass:f0:bounded:infty}
    \norm*{\frac{d\rho_0}{d\mu}}_{L^\infty} + \norm*{\frac{d\mu}{d\rho_0}}_{L^\infty} < \infty.
  \end{equation}
This assumption can be made without any loss of generality. To see this, we introduce the truncation
\begin{align}\label{FP-ass:Bounds-Density}
f_0^M:=\frac{1}{Z_M} \min\set*{\max\set*{f_0, \frac{1}{M}} , M} ,
\end{align}
where $Z_M$ is a normalization constant to ensure that $f_0^M \mu$ is a probability measure. 
By standard comparison principle results for parabolic equations \cite[Chapter 3]{ProtterWeinberger12}, we then find that the  solution $\rho^M$ with initial data $\rho_0^M := f_0^M \mu$ satisfies
\begin{align}\label{eq:FP-Comp-Prin}
\frac{1}{MZ_M}\mu \leq\rho^M_t\leq \frac{M}{Z_M}\mu.
\end{align}

In Section~\ref{S:FP:IndReg} we will show that the final estimates obtained are independent of the constant $M$ and hence we can let $M\to \infty$.


\begin{thm}[Well-posedness of effective dynamics] \label{thm:Eff-Well-Posed}
Assume the following:
\begin{enumerate}
\item The initial datum $\eta_0$ has bounded second moments,  i.e. $\int_{\R^k}|z|^2\, d\eta_0<+\infty$.
\item The conditional stationary measure $\bar\mu_z$ satisfies the Poincar\'e inequality~\eqref{def:PI} uniformly in $z\in \R^k$ with constant $\alpha_{\PI}>0$, where the gradient in the Poincar\'e inequality is given by $\nabla_{\Sigma_z}$.
\end{enumerate}
Then there exists a unique family of probability measures $(\eta_t)_{t\in[0,T]}$ which solves the effective dynamics~\eqref{eq:FP-Effect-Dyn} in the sense of Definition~\ref{def:FP:sol}. Furthermore, this family has bounded second moments, i.e.
\begin{align*}
\forall t\in [0,T]: \int_{\R^k}|z|^2\, d\eta_t<+\infty.
\end{align*}
\end{thm}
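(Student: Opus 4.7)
The plan is to combine the general existence/uniqueness machinery for Fokker--Planck equations from \cite{BKRS15} with an appropriate Lyapunov argument. Specifically, I would verify that (i)~the diffusion matrix $A$ is bounded, uniformly elliptic and Lipschitz, (ii)~the drift $b$ is Lipschitz, and (iii)~$\varphi(z):=1+|z|^2$ is a Lyapunov function. Uniform ellipticity together with (ii) yields uniqueness of probability-measure solutions via standard results, the Lyapunov function yields both conservation of mass (ruling out finite-time explosion from the sub-probability solution supplied by \cite[Theorem 6.7.3]{BKRS15}) and the uniform-in-time second moment bound via \cite[Corollary 6.6.1, Theorem 7.1.1]{BKRS15}.

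For the regularity of the coefficients, the bound $A(z)\geq C^{-1}\Id_k$ is immediate from~\ref{Ass:Dxi-Strict-PD} and the definition $A(z)=\EX_{\bar\mu_z}[G]$, and $A$ is bounded because $\norm{D\xi}_{L^\infty}\leq C$ by~\ref{Ass:xi-Growth-Cond}. The Lipschitz regularity of $A$ and $b$ is the technical heart of the statement: this is exactly where we invoke Lemma~\ref{lem:Eff-Coeff-Properties}, whose proof uses the level-set differentiation formula of Lemma~\ref{lem-FP:Level-Set-Der}, the uniform Poincar\'e inequality for $\bar\mu_z$ (to control variances of conditional expectations), and the ``affine at infinity'' property~\eqref{e:xi-affine-infinity} together with the bounds $|D^2\xi|\leq C/(1+|q|^2)$ and $|D^3\xi|\leq C$ coming from~\ref{Ass:xi-Growth-Cond}. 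The latter two are precisely what prevents $\Delta\xi$ and derivatives of $D\xi$ from causing blow-up in $b$ at infinity, so that $b$ grows at most linearly and is globally Lipschitz.

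For the Lyapunov argument, I would compute
\begin{equation*}
\mathcal{L}\varphi(z) = \beta^{-1}\Tr A(z) - b(z)\cdot z,
\end{equation*}
where $\mathcal{L}$ is the generator of~\eqref{eq:SDE-Effect-Dyn}. Since $A$ is bounded and $b$ is Lipschitz (and hence $|b(z)|\leq C(1+|z|)$), we obtain $\mathcal{L}\varphi(z)\leq C(1+|z|^2) = C\varphi(z)$. This is precisely the condition required by \cite[Corollary 6.6.1, Theorem 7.1.1]{BKRS15} to upgrade the sub-probability solution from \cite[Theorem 6.7.3]{BKRS15} to a probability solution and to conclude $\int |z|^2 d\eta_t \leq e^{Ct}\bra*{1+\int |z|^2 d\eta_0}$, giving the claimed second-moment bound on $[0,T]$.

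Uniqueness then follows from the general uniqueness results for Fokker--Planck equations with Lipschitz drift and uniformly elliptic, Lipschitz diffusion (see \cite[Chapter 9]{BKRS15}); alternatively, since the coefficients of~\eqref{eq:SDE-Effect-Dyn} are Lipschitz with at most linear growth, the SDE has a unique strong solution by classical results, and the uniqueness of the associated Fokker--Planck equation in the class of probability measures with bounded second moment follows. The main obstacle in this whole argument is the Lipschitz bound on $b$: obtaining it requires carefully differentiating the conditional expectation in $z$ using~\eqref{eq-levelset-deriv}, bounding the resulting variance-like terms by $\RF$ via Poincar\'e, and handling the behavior at infinity using the growth conditions~\ref{Ass:xi-Growth-Cond} on the derivatives of $\xi$ --- which is why those specific growth conditions were imposed in Assumption~\ref{ass:xi} in the first place.
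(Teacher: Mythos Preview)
Your proposal is correct and follows essentially the same route as the paper: invoke Lemma~\ref{lem:Eff-Coeff-Properties} for the Lipschitz/boundedness of $A$ and $b$, obtain existence and uniqueness from \cite{BKRS15} (the paper cites Theorems~6.6.2 and~9.4.3 specifically), and get the second-moment bound by a Lyapunov/Gronwall argument on $|z|^2$. The only slip is a missing factor of~$2$ in your formula for $\mathcal{L}\varphi$ (since $\nabla_z\varphi=2z$ and $D^2_z\varphi=2\Id_k$), which is inconsequential.
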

To prove Theorem~\ref{thm:Eff-Well-Posed} we need some control on the growth of the coefficients, which we summarize  below.
\begin{lem}\label{lem:Eff-Coeff-Properties}
 If the stationary conditional measures $\set{\bar\mu_z}_{z\in \R^k}$ satisfy a Poincar\'e inequality~\eqref{def:PI} uniformly in $z\in \R^k$ with constant $\alpha_{\PI}$, $V$ and $\xi$ satisfy Assumptions~\ref{ass:V} and~\ref{ass:xi}, then the effective vector drift $b$ and the effective diffusion matrix $A$ are Lipschitz, and for some $C>0$
\begin{equation}\label{eq:Eff-Drift-Lipschitz-Prop}
 b\in W^{1,\infty}_{\mathrm{loc}}(\R^k;\R^k) \qquad\text{with}\qquad \sup_{z\in \R^k} \abs{\nabla_z b(z)} \leq C
\end{equation}
and
\begin{equation}
  A\in W^{1,\infty}(\R^k; \R^{k\times k})\qquad\text{with}\qquad \frac{1}{C}\Id_k\leq A(z)\leq C \Id_k .
\end{equation}
\end{lem}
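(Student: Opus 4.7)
The plan is to mirror the scalar argument of \cite[Proposition 3.1]{LL10}, extending the computations to the vector-valued setting using the disintegration / co-area identities from Lemma~\ref{lem-FP:Level-Set-Der}. Throughout, the Poincaré inequality on the fibres $\Sigma_z$ is the tool that converts quantities involving the unbounded $\nabla V$ into quantities that only see the bounded Hessian $D^2V$.

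\textbf{Step 1: Pointwise bounds on $A$.} By Assumption~\ref{Ass:Dxi-Strict-PD}, $G(q)=D\xi(q)D\xi(q)^\transpose \geq C^{-1}\Id_k$ for every $q\in \R^d$, and by Assumption~\ref{Ass:xi-Growth-Cond}, $\abs{G(q)} \leq \norm{D\xi}_{L^\infty}^2$. Since the conditional expectation preserves the Loewner order on the cone of symmetric positive definite matrices, integration against $\bar\mu_z$ yields directly
\begin{equation*}
  \tfrac{1}{C}\Id_k \leq A(z) = \EX_{\bar\mu_z}[G] \leq C\,\Id_k \qquad \text{for all }z\in \R^k .
\end{equation*}

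\textbf{Step 2: Global Lipschitz estimate for $A$.} Writing $\hat\mu(z)A_{ij}(z)=\int_{\Sigma_z} G_{ij}\mu\,\frac{d\Hausdorff^{d-k}}{\Jac\xi}$ and applying Lemma~\ref{lem-FP:Level-Set-Der} componentwise to $\psi = G_{ij}\mu$, together with the identity $\nabla \mu = -\beta\mu\,\nabla V$ and \eqref{eq:FP-Local-Mean-Force-F:muhat}, one obtains after the quotient rule
\begin{equation*}
  \nabla_z A_{ij}(z) \;=\; \EX_{\bar\mu_z}\!\bigl[\nabla G_{ij}\cdot G^{-1}D\xi\bigr] \;-\; \beta\,\mathrm{Cov}_{\bar\mu_z}\!\bigl(G_{ij},F\bigr).
\end{equation*}
The first term is pointwise bounded because $|\nabla G|\lesssim |D\xi|\,|D^2\xi|$ and $|G^{-1}|$ are bounded by Assumptions~\ref{ass:xi} and~\ref{Ass:Dxi-Strict-PD}. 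For the covariance I use Cauchy--Schwarz and the uniform Poincaré inequality on $\Sigma_z$:
\begin{equation*}
  \abs*{\mathrm{Cov}_{\bar\mu_z}(G_{ij},F_\ell)}
  \leq \sqrt{\var_{\bar\mu_z}(G_{ij})\,\var_{\bar\mu_z}(F_\ell)}
  \leq \tfrac{1}{\alpha_{\PI}}\sqrt{\EX_{\bar\mu_z}[|\nabla_{\Sigma_z}G_{ij}|^2]\,\EX_{\bar\mu_z}[|\nabla_{\Sigma_z}F_\ell|^2]}.
\end{equation*}
Differentiating $G$ and $F=G^{-1}D\xi\nabla V-\beta^{-1}\div(G^{-1}D\xi)$ produces only the tensors $D^2\xi$, $D^3\xi$ and $D^2V$, all of which are globally bounded by Assumptions~\ref{ass:V} and~\ref{ass:xi}. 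Hence $|\nabla_z A(z)|\leq C$ uniformly in $z$, giving $A\in W^{1,\infty}(\R^k;\R^{k\times k})$.

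\textbf{Step 3: Global gradient bound for $b$.} Apply the same machinery to $\hat\mu(z)b(z)=\int_{\Sigma_z}(D\xi\nabla V-\beta^{-1}\Delta\xi)\mu\,\frac{d\Hausdorff^{d-k}}{\Jac\xi}$ component by component. Lemma~\ref{lem-FP:Level-Set-Der} together with $\nabla_z\log\hat\mu = -\beta\EX_{\bar\mu_z}[F]$ yields
\begin{equation*}
  \nabla_z b_i(z) \;=\; \EX_{\bar\mu_z}\!\bigl[\nabla(D\xi\nabla V-\beta^{-1}\Delta\xi)_i\cdot G^{-1}D\xi\bigr] \;-\; \beta\,\mathrm{Cov}_{\bar\mu_z}\!\bigl((D\xi\nabla V-\beta^{-1}\Delta\xi)_i,\,F\bigr).
\end{equation*}
The integrand in the first term is bounded by a constant times $|D^2\xi||\nabla V|+|D\xi||D^2V|+|D^3\xi|$ multiplied by $|G^{-1}D\xi|\leq C$. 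The decay $|D^2\xi(q)|\leq C/(1+|q|^2)$ from~\ref{Ass:xi-Growth-Cond} exactly cancels the linear growth $|\nabla V|\leq C(1+|q|)$, so the integrand is uniformly bounded in $q$, hence the first term is bounded uniformly in $z$. For the covariance I again invoke Cauchy--Schwarz and Poincaré on $\Sigma_z$: the surface gradients of the two factors involve only $D^2V$, $D^2\xi$, $D^3\xi$ which are bounded, so the covariance is bounded by a constant depending on $\alpha_{\PI}$. Combining, $\sup_{z\in\R^k}|\nabla_z b(z)|\leq C$, which in particular gives $b\in W^{1,\infty}_{\mathrm{loc}}(\R^k;\R^k)$.

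\textbf{Main obstacle.} The delicate point is the appearance of the unbounded $\nabla V$ inside the conditional expectations defining $b$ and inside the mean force $F$: neither $F$ nor $D\xi\nabla V-\beta^{-1}\Delta\xi$ is bounded in $q$. The Poincaré inequality on the fibres is what rescues the argument: it turns the variances that arise in the covariance estimate into $L^2(\bar\mu_z)$-norms of \emph{tangential gradients}, whose integrands involve only the bounded second derivatives $D^2V, D^2\xi$ (and $D^3\xi$). The cancellation between the decay of $D^2\xi$ at infinity and the linear growth of $\nabla V$ in the ``deterministic'' part of $\nabla_z b$ is the other place where the affine-at-infinity behaviour~\eqref{e:xi-affine-infinity} implicit in Assumption~\ref{ass:xi} is essential.
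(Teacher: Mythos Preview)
Your proposal is correct and follows essentially the same route as the paper. Both use Lemma~\ref{lem-FP:Level-Set-Der} and the quotient rule to arrive at the identity
\[
\nabla_z b_i(z)=\EX_{\bar\mu_z}\!\bigl[G^{-1}D\xi\,\nabla f_i\bigr]-\beta\,\mathrm{Cov}_{\bar\mu_z}(f_i,F),
\qquad f_i=(D\xi\nabla V-\beta^{-1}\Delta\xi)_i,
\]
bound the first term pointwise via the cancellation $|D^2\xi|\,|\nabla V|\lesssim 1$, and control the covariance by Cauchy--Schwarz and the uniform Poincar\'e inequality on the fibres. The only cosmetic difference is that the paper writes the second term as $\int_{\Sigma_z}(b_i\circ\xi-f_i)\,\beta F\,d\bar\mu_z$, then splits $F=G^{-1}D\xi\nabla V-\beta^{-1}\div(G^{-1}D\xi)$ and applies Poincar\'e separately to the two pieces, whereas you keep $F$ intact and bound $\var_{\bar\mu_z}(F_\ell)$ directly; your packaging is slightly more streamlined but the content is identical.
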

The proof of Lemma~\ref{lem:Eff-Coeff-Properties} is provided in Appendix~\ref{App:Coeff-Prop}. The growth conditions on $D^2\xi, D^3\xi$ in~\ref{Ass:xi-Growth-Cond} are critical in the proof. Since the diffusion matrix $A$ in the effective dynamics is bounded, it is  the Lipschitz property of $b$ implied by the growth conditions  which ensures that the solution to the effective dynamics does not explode in finite time.
\begin{proof}[Proof of Theorem~\ref{thm:Eff-Well-Posed}]
The existence of solutions to the effective dynamics~\eqref{eq:FP-Effect-Dyn} follows from \cite[Theorem 6.6.2]{BKRS15}. Since
$b$ is Lipschitz, say with a  constant $L_b$  (see Lemma~\ref{lem:Eff-Coeff-Properties}), we find
\begin{align*}
\frac{d}{dt}\int_{\R^k}\frac{\abs{z}^2}{2} d\eta_t&=-\int_{\R^k}z\cdot b(z) \,d\eta_t +\int_{\R^k}\Id_k:A \,d \eta_t\\
&=-\int_{\R^k}z\cdot \bra{b(z)-b(0)} d\eta_t +\int_{\R^k}\Id_k:A \, d\eta_t-b(0)\int_{\R^k} z\,d\eta_t\\
&\leq \bra*{L_b+\frac{b(0)}{2}}\int_{\R^k}\abs{z}^2\, d\eta_t+C.
\end{align*}
Applying a Gronwall-type estimate to this inequality  and using the bounded second moment for the initial data implies bounded second moments for $\eta_t$. The conclusion follows by approximating the function $z \mapsto \abs{z}^2/2$ in the test-function class $C_c^\infty(\R^k)$.
The uniqueness of the effective dynamics follows from \cite[Theorem 9.4.3]{BKRS15} under the given assumptions.
\end{proof}
\begin{rem}\label{rem:WellPosednessCGdyn}
 We want to emphasize that a similar result does not hold in general  for the coarse-grained dynamics. The diffusion coefficient $\hat A$ in~\eqref{eq:def-A-hat} and vector field $\hat b$ in~\eqref{eq:def-b-hat} depend on the conditional measure  $\bar\rho_{t,z}$ corresponding to the solution. In particular, proving regularity properties of the coarse-grained coefficients to ensure well-posedness via classical methods, would require strong assumptions on the initial data, which we wish to avoid at this point. Due to this reason, we want to avoid any uniqueness assumptions on the solution to the coarse-grained equation. But, we want to note that the existence of a solution to the coarse-grained equation is straightforward by its definition as the push-forward $\xi_{\#} \rho_t = \hat\rho_t$ of the full dynamics  $\rho_t$. 
\end{rem}
A technical difficulty arising from the above remark is that it is not straightforward to construct  a coupling between the coarse-grained and the effective dynamics, which is required for proving Wasserstein estimates in the coming sections. To deal with this we will construct a coupling (see Lemma~\ref{lem:Over-Lang-Coup-Well-Posed}) between the full dynamics and a suitably lifted effective dynamics, which we now describe.

A lifted version of the effective dynamics $\eta_t$, denoted by $\theta_t$, is defined such that $\eta_t$ is its marginal under~$\xi$. 
We do not impose any constraint on the conditional part $\bar\theta_{t,z}$ of this measure, i.e.\ for any functions $f:\R^k\to \R$ and $g:\R^d \to \R$
\begin{equation}\label{e:def:liftEffDyn}
  \int_{\R^d} f(\xi(x))\, g(x) \, d\theta_t(x) := \int_{\R^k} f(z) \int_{\Sigma_z} g(y) \, d\bar\theta_{z,t}(y)\,d\eta_t(z).
\end{equation}
A reasonable choice for the conditional measure could be $\bar\mu_z$, which would allow us to  investigate the relaxation of the full-dynamics  
$\rho_t$ on the submanifolds $\set{\Sigma_z}_{z\in \R^k}$. We leave this study for future works. Here we just construct a family of measures $\theta_t$ defined on $\R^d$ with $\eta_t$ 
as the push-forward under $\xi$. In the following Lemma we construct an evolution for $\theta_t$ which is suitable for our purpose. However it should be noted that this is not the only possible choice.
\begin{lem}\label{lem:liftedEffDyn}
  Let $\theta_t$ be the solution of
  \begin{equation}\label{e:liftedEffDyn}
    \partial_t \theta_t = \div \bra*{ \tilde b \theta_t}+ \beta^{-1} D^2 : \bra*{\tilde A \theta_t},
  \end{equation}
where the coefficients $\tilde b:\R^d\rightarrow \R^d, \ \tilde A:\R^d\rightarrow\R^{d\times d}$ are given by
  \begin{align*}
    \tilde A &:= D\xi^\transpose G^{-1} \; \bra*{A\circ \xi} \; G^{-1} D\xi \ , \\
    \tilde b &:= D\xi^\transpose G^{-1} \bra*{b \circ \xi + \beta^{-1} D^2 \xi : \tilde A}.
    \end{align*}
If $\xi_\# \theta_0 = \eta_0$ and $\set{\eta_t}_{t\in \R^+}$ is a solution of~\eqref{eq:FP-Effect-Dyn}, then $\xi_\# \theta_t = \eta_t$. Moreover, if $\theta_0$ has bounded second moment, then the same holds for $\theta_t$ for all $t>0$.
\end{lem}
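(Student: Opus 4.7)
\textbf{Proof plan for Lemma~\ref{lem:liftedEffDyn}.} The strategy is to show that the push-forward $\xi_\# \theta_t$ solves the effective Fokker--Planck equation~\eqref{eq:FP-Effect-Dyn} in the sense of Definition~\ref{def:FP:sol}, and then to invoke the uniqueness statement in Theorem~\ref{thm:Eff-Well-Posed} together with the matching initial condition $\xi_\#\theta_0 = \eta_0$. Concretely, given any $f \in C^2_c(\R^k)$, I would test the weak formulation of~\eqref{e:liftedEffDyn} against $g := f\circ \xi \in C^2_c(\R^d)$ (this membership uses the regularity of $\xi$ from Assumption~\ref{ass:xi}).

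The key computation is to plug the chain rule identities~\eqref{eq:Lap-Circ-Xi-Calc}, namely $D(f\circ \xi) = D\xi^\transpose (\nabla_z f)\circ \xi$ and $D^2(f\circ \xi) = D^2\xi\cdot (\nabla_z f)\circ \xi + D\xi^\transpose (D^2_z f)\circ \xi\, D\xi$, into the integrand $\beta^{-1} \tilde A : D^2(f\circ \xi) - \tilde b \cdot D(f\circ \xi)$. Using the defining formula $\tilde A = D\xi^\transpose G^{-1}(A\circ\xi) G^{-1} D\xi$, one obtains the crucial identity
\begin{equation*}
\tilde A : \bra*{D\xi^\transpose (D^2_z f)\circ\xi\, D\xi} = \bra*{D\xi\, \tilde A\, D\xi^\transpose} : (D^2_z f)\circ \xi = (A\circ \xi) : (D^2_z f)\circ\xi,
\end{equation*}
where the cancellation uses $D\xi D\xi^\transpose = G$. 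Similarly, using $D\xi D\xi^\transpose G^{-1} = \Id_k$ one gets
\begin{equation*}
\tilde b \cdot D(f\circ\xi) = \bra*{b\circ\xi + \beta^{-1}(D^2\xi:\tilde A)} \cdot (\nabla_z f)\circ\xi,
\end{equation*}
and the second term here cancels precisely the contribution $\beta^{-1}\tilde A : (D^2\xi \cdot \nabla_z f\circ\xi) = \beta^{-1}(D^2\xi:\tilde A)\cdot (\nabla_z f)\circ \xi$. What remains is exactly $\beta^{-1}(A\circ\xi):(D^2_z f)\circ\xi - (b\circ\xi)\cdot (\nabla_z f)\circ\xi$. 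Integrating against $\theta_t$ and changing variables via the push-forward produces the weak formulation of~\eqref{eq:FP-Effect-Dyn} for $\xi_\#\theta_t$. Since $\xi_\#\theta_0=\eta_0$ and since the effective equation has a unique solution by Theorem~\ref{thm:Eff-Well-Posed} (provided $\xi_\#\theta_t$ indeed has finite second moments, which follows from the bound on $\theta_t$ derived below), we conclude $\xi_\# \theta_t = \eta_t$.

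For the second-moment bound on $\theta_t$, I would use the Lyapunov function $\phi(x) = \tfrac12 |x|^2$ as in the proof of Theorem~\ref{thm:Eff-Well-Posed}: formally
\begin{equation*}
\frac{d}{dt}\int_{\R^d} \phi \, d\theta_t = \int_{\R^d} \bra*{\beta^{-1} \Tr \tilde A - \tilde b \cdot x} \, d\theta_t.
\end{equation*}
Since $G^{-1}$ is bounded by~\ref{Ass:Dxi-Strict-PD}, $D\xi$ is bounded by~\ref{Ass:xi-Growth-Cond}, and $A$ is bounded by Lemma~\ref{lem:Eff-Coeff-Properties}, the trace $\Tr \tilde A = \Tr(G^{-1}(A\circ\xi))$ is bounded. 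Moreover, using that $b$ is Lipschitz on $\R^k$ (Lemma~\ref{lem:Eff-Coeff-Properties}), $\xi$ has bounded derivative, and $D^2\xi$ decays like $1/(1+|x|^2)$, we obtain $|\tilde b(x)| \leq C(1+|x|)$. A Gronwall argument applied to $\int \phi\, d\theta_t$, together with the standard truncation of $\phi$ to $C^2_c(\R^d)$ test functions (exactly as carried out in the proof of Theorem~\ref{thm:Eff-Well-Posed}), yields the propagation of second moments.

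The main obstacle is the coefficient computation in the second paragraph: the lifted drift $\tilde b$ contains the correction term $\beta^{-1}D^2\xi:\tilde A$ precisely so that, after pulling back through $\xi$, the It\^o-type correction coming from $D^2(f\circ\xi)$ cancels and one recovers the effective generator cleanly. Getting the index bookkeeping right for the three-tensor contraction $D^2\xi:\tilde A$ and confirming the cancellation is the only delicate point; everything else is a direct application of uniqueness and Gronwall.
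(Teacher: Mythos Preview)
Your proposal is correct and follows essentially the same approach as the paper's own proof: test the lifted equation against $f\circ\xi$, verify via the chain rule that $D\xi\,\tilde A\,D\xi^\transpose = A\circ\xi$ and $D\xi\,\tilde b - \beta^{-1}D^2\xi:\tilde A = b\circ\xi$ so that $\xi_\#\theta_t$ solves the effective equation, then invoke uniqueness from Theorem~\ref{thm:Eff-Well-Posed}; the second-moment propagation by the Lyapunov function $\tfrac12|x|^2$ with the bounds $\Tr\tilde A\leq C$ and $|\tilde b(x)|\leq C(1+|x|)$ is likewise identical to the paper's argument.
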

\begin{proof}
The lifted dynamics~\eqref{e:liftedEffDyn} has a solution in the sense of Definition~\ref{def:FP:sol}.
  To verify the push-forward property, we use $\phi \circ \xi$ with $\phi: \R^d \to \R$ as a test function in~\eqref{eq:FP:def:solution} and calculate
  \begin{align*}
    \frac{d}{dt} \int_{\R^k} \phi d\bra*{\xi_\# \theta_t} &= \frac{d}{dt} \int_{\R^d} \phi\circ \xi d\theta_t = \int_{\R^d} \bra*{\beta^{-1} D^2 \bra*{ \phi\circ \xi} : \tilde A - D\bra*{\phi \circ \xi } \cdot \tilde b} d\theta_t \\
    &= \int_{\R^d} \bra[\bigg]{\beta^{-1} \bra*{D_z^2 \phi \circ \xi} : \bra[\big]{ \underbrace{D\xi \tilde A D\xi^\transpose}_{= A\circ \xi}} - \bra*{\nabla_z \phi \circ \xi} \cdot \underbrace{\bra*{ D\xi \tilde b - \beta^{-1} D^2 \xi : \tilde A}}_{= b\circ \xi} } \; d \theta_t .
  \end{align*}
  Hence, $\xi_\# \theta_t$ solves~\eqref{eq:FP-Effect-Dyn} in the sense of Definition~\ref{def:FP:sol} and by the uniqueness of solution to the effective dynamics (see Theorem~\ref{thm:Eff-Well-Posed}), we obtain $\xi_\# \theta_t = \eta_t$. The second moment bound follows by using, after a standard approximation argument, $\frac{1}{2}\abs{x}^2$ as test function, which gives
  \begin{align*}
    \frac{d}{dt} \frac{1}{2} \int_{\R^d} \abs{x}^2 \, d\theta_t = \int_{\R^d} \bra*{\beta^{-1} \operatorname{tr} \tilde A + x \cdot \tilde b } \, d\theta_t.
  \end{align*}
 By Assumption~\ref{ass:xi}, we conclude that  $\operatorname{tr} \tilde A \leq C$. Additionally with~\eqref{eq:Eff-Drift-Lipschitz-Prop} we conclude that $\abs{\tilde b(x)} \leq C(1+\abs{x})$, which implies
  \begin{align*}
    \frac{d}{dt} \frac{1}{2} \int_{\R^d} \abs{x}^2 \, d\theta_t \leq C \int_{\R^d} \bra*{ 1 + \abs{x}^2} \, d\theta_t,
  \end{align*}
  and the conclusion follows by using the Gronwall Lemma.
\end{proof}
\subsection{Relative entropy estimate}\label{S:FP-RelEnt}
In this section we state and prove the first main result Theorem~\ref{thm:Intro-Rel-Ent-Over-Lang}, on the error estimates between $\hat\rho$ and $\eta$ in relative entropy.

\begin{thm}
\label{thm:FP-RelEnt-Main-Res}
Suppose $V$ and $\xi$ satisfy Assumptions~\ref{ass:V} and~\ref{ass:xi}, respectively and the initial datum $\rho_0$ satisfies~\eqref{ass:f0:bounded:infty}. Let
\begin{align}\label{def:FP-kappa_H}
\kappa_{\RelEnt}:= \sup_{z \in \R^k} \sup_{y_1,y_2\in \Sigma_z} \frac{ \abs*{  F(y_1) - F(y_2)}_{A(z)}}{d_{\Sigma_z}(y_1,y_2)} ,
\end{align}
where $A$ is the effective mobility~\eqref{def:Eff-dyn-A}, $d_{\Sigma_z}$ is the intrinsic metric \eqref{def:Intrinsic-Distance-Level-Sets}, and
$F$ is the local mean force~\eqref{def:FP-Local-Mean-Force-F}. The norm $|\cdot|_A$ is defined in \eqref{eq:weightesEuclid}. Moreover, assume that
\begin{enumerate}[label=({H}\arabic*)]
\item\label{ass:H1} The conditional stationary measure $\bar\mu_z$ satisfies the Talagrand inequality~\eqref{def:TI} and the Log-Sobolev inequality~\eqref{def:LSI} uniformly in $z$ with constants $\alpha_{\TI}>0$ and $\alpha_{\LSI}>0$ respectively.
\item\label{ass:H2} There exists $\lambda_H>0$ such that
\begin{align}\label{def:FP-lambda_H}
\lambda_{\RelEnt}:=\norm*{\,\abs*{A^{-1/2}\bra{A-D\xi D\xi^\transpose }\bra{D\xi D\xi^\transpose }^{-1/2}}\,}_{L^\infty(\R^d)}<\infty.
\end{align}
\end{enumerate}
Then for any $t\in [0,T]$
\begin{align}\label{eq:Central-RelEnt-Est-FP}
\RelEnt(\hat\rho_t|\eta_t)\leq \RelEnt(\hat\rho_0|\eta_0)+\frac{1}{4}\bra*{ \lambda_{\RelEnt}^2+\frac{\kappa_{\RelEnt}^2\beta^2}{\alpha_{\TI}\,\alpha_{\LSI}}}\bra[\big]{\RelEnt(\rho_0|\mu)-\RelEnt(\rho_T|\mu)}.
\end{align}
\end{thm}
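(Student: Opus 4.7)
The plan is to exploit the variational/large-deviation structure sketched in Section~\ref{Sec:Intro-Rel-Ent-Results}: the estimate \eqref{eq:Intro-HIR} with $\zeta=\hat\rho$ and $\nu=\eta$ provides a bound of the form
\begin{equation*}
  \RelEnt(\hat\rho_t|\eta_t) \leq \RelEnt(\hat\rho_0|\eta_0) + \I(\hat\rho),
\end{equation*}
where $\I$ is the empirical-measure rate functional attached to the effective process. Since the effective dynamics has generator $L_\eta^{*}\zeta = \beta^{-1}D^2_z:(A\zeta) + \div_z(\zeta\, b)$, the rate functional penalises the defect of $\hat\rho$ from solving $\partial_t\hat\rho = L_\eta^*\hat\rho$, which by the computation outlined below is $\partial_t\hat\rho_t - L_\eta^*\hat\rho_t = \div_z(\hat\rho_t\,R_t)$, with defect velocity
\begin{equation*}
  R_t = (\hat b - b) + \beta^{-1}\div_z(\hat A - A) + \beta^{-1}(\hat A - A)\,\nabla_z \log\hat\rho_t .
\end{equation*}
Combining with the gradient-flow form of the effective dynamics from Remark~\ref{rem:Eff-Dyn-A-Wasser-Grad-Flow}, I expect the rate functional to specialise to $\I(\hat\rho) = \tfrac{\beta}{4}\int_0^T\!\!\int_{\R^k}|R_s|_{A^{-1}}^2\,d\hat\rho_s\,ds$, so the whole task is to bound this integral by the right-hand side of \eqref{eq:Central-RelEnt-Est-FP}.

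For the analysis I would split $R_t = R^\lambda_t + R^\kappa_t$, with $R^\lambda_t := \beta^{-1}(\hat A-A)\nabla_z\log(\hat\rho_t/\hat\mu)$ absorbing the diffusion mismatch and $R^\kappa_t$ collecting the remaining drift- and divergence-type terms. The $\lambda$-piece is directly controlled by the constant $\lambda_{\RelEnt}$: writing $A^{-1/2}(\hat A-A) = A^{-1/2}(\hat A - G) + A^{-1/2}(G-A)$ and using the weighted-operator norm in \eqref{def:FP-lambda_H} together with the $A$-weighted Cauchy--Schwarz inequality yields $|R^\lambda_t|_{A^{-1}}^2 \leq \beta^{-2}\lambda_{\RelEnt}^2\,\bigl|\nabla_z\log(\hat\rho_t/\hat\mu)\bigr|_A^2$, and the integrated weighted Fisher term is exactly $\RF_A(\hat\rho_t|\hat\mu)$ controlled by the dissipation inequality \eqref{ineq:dissipation-hatrho}.

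The $\kappa$-piece is the delicate one and requires rewriting the drift mismatch through the local mean force $F$. Using Lemma~\ref{lem-FP:Level-Set-Der} together with the identity $\div_z A = -\beta b - A\nabla_z\log\hat\mu$ (already used in Remark~\ref{rem:Eff-Dyn-A-Wasser-Grad-Flow}) applied \emph{both} to $\bar\mu_z$ and to $\bar\rho_{t,z}$, I expect $R^\kappa_t(z)$ to reduce, modulo gradient-flow terms already absorbed above, to the telescoping difference
\begin{equation*}
  \EX_{\bar\rho_{t,z}}[F] - \EX_{\bar\mu_z}[F] = \int_{\Sigma_z}\!\!F\bigl(d\bar\rho_{t,z}-d\bar\mu_z\bigr),
\end{equation*}
which by Kantorovich--Rubinstein duality and the $A$-weighted Lipschitz bound \eqref{def:FP-kappa_H} is controlled by $\kappa_{\RelEnt}\,\Wasser_1^{d_{\Sigma_z}}(\bar\rho_{t,z},\bar\mu_z)$ in the $A(z)$-weighted norm. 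Upgrading to $\Wasser_2$, invoking the Talagrand inequality \ref{ass:H1} on the level set yields $|R^\kappa_t(z)|_{A(z)^{-1}}^2 \leq (\kappa_{\RelEnt}^2/\alpha_{\TI})\,2\RelEnt(\bar\rho_{t,z}|\bar\mu_z)$.

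Assembling the two pieces through the elementary inequality $(a+b)^2\leq 2a^2+2b^2$, integrating against $\hat\rho_t$ and then over time, I would then apply the Log-Sobolev inequality \ref{ass:H1} on each level set to convert $\RelEnt(\bar\rho_{t,z}|\bar\mu_z)$ into the conditional Fisher information $\RF_{\Sigma_z}(\bar\rho_{t,z}|\bar\mu_z)$ with gain $1/(2\alpha_{\LSI})$, and tensorise via the identity
\begin{equation*}
  \int_{\R^k}\!\!\RF_{\Sigma_z}(\bar\rho_{t,z}|\bar\mu_z)\,d\hat\rho_t(z) + \RF_A(\hat\rho_t|\hat\mu) = \RF(\rho_t|\mu)
\end{equation*}
(the orthogonal splitting of the gradient already used in the proof of Corollary~\ref{FP:MarginalEDP}). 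The time integral of $\RF(\rho_s|\mu)$ is then bounded by $\beta[\RelEnt(\rho_0|\mu)-\RelEnt(\rho_t|\mu)]$ via \eqref{ineq:dissipation-rho}, and collecting the prefactors $\beta/4$ from the rate functional, $\beta^{-2}$ from $R^\lambda$, $1/(\alpha_{\TI}\alpha_{\LSI})$ from the two functional inequalities, and the factor $\beta^{-1}$ from the dissipation identity yields exactly the right-hand side of \eqref{eq:Central-RelEnt-Est-FP}. The main technical obstacle will be the rigorous identification of $\I(\hat\rho)$ as the quadratic defect functional---justifying the integration-by-parts on conditional measures in the derivation of $\EX_{\bar\rho}[F]-\EX_{\bar\mu}[F]$ and controlling the regularity issues that come from working with the non-unique coarse-grained coefficients $(\hat A,\hat b)$, for which the truncation \eqref{FP-ass:Bounds-Density} will be essential.
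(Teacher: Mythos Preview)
Your overall strategy---apply the abstract rate-functional estimate~\eqref{eq:Intro-HIR}, identify $\I(\hat\rho)=\tfrac{\beta}{4}\int\!\!\int|h_t|^2_{A^{-1}}d\hat\rho\,dt$, split $h_t$ into a ``$\lambda$-piece'' and a ``$\kappa$-piece'', control the latter via Talagrand and Log-Sobolev on the level sets, and close with the dissipation identity for $\RF(\rho_t|\mu)$---is exactly the route taken in the paper. The gap is in the \emph{specific} decomposition you choose.

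Your $R^\lambda_t=\beta^{-1}(\hat A-A)\nabla_z\log(\hat\rho_t/\hat\mu)$ is not the term that produces $\lambda_{\RelEnt}$. Note that $\hat A(t,z)-A(z)=\EX_{\bar\rho_{t,z}}[G]-\EX_{\bar\mu_z}[G]=\EX_{\bar\rho_{t,z}}[G-A]$ is a \emph{fluctuation of the conditional measure}, not a pointwise deviation of $G$ from $A$; bounding $|A^{-1/2}(\hat A-A)A^{-1/2}|$ by $\lambda_{\RelEnt}=\|A^{-1/2}(A-G)G^{-1/2}\|_\infty$ does not follow (the right-hand factor is $G^{-1/2}$, not $A^{-1/2}$), so your claimed estimate $|R^\lambda_t|_{A^{-1}}^2\leq\beta^{-2}\lambda_{\RelEnt}^2|\nabla_z\log(\hat\rho_t/\hat\mu)|_A^2$ is unjustified. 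The paper instead performs an algebraic reformulation of the full defect (this is Lemma~\ref{lem:FP-Rate-Fn-Reform}) to obtain
\[
  h_t(z)=\beta^{-1}\EX_{\bar\rho_{t,z}}\!\bigl[(A-G)G^{-1}D\xi\,\nabla\log(\rho_t/\mu)\bigr]\;-\;A(z)\!\int_{\Sigma_z}\!F\,(d\bar\rho_{t,z}-d\bar\mu_z),
\]
where the first term now contains the \emph{pointwise} factor $A-G=A-D\xi D\xi^\transpose$ acting on the \emph{full} gradient $\nabla\log(\rho_t/\mu)$ lifted to $\R^d$. This is precisely what matches the definition of $\lambda_{\RelEnt}$, and after squaring it yields the \emph{normal} component $|(D\xi D\xi^\transpose)^{-1/2}D\xi\nabla\log(\rho_t/\mu)|^2$, while the second term yields the \emph{tangential} component $|\nabla_{\Sigma_z}\log(\rho_t/\mu)|^2$ after Talagrand and Log-Sobolev.

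A second, smaller point: your use of $(a+b)^2\leq 2a^2+2b^2$ loses a factor of~$2$ and gives prefactor $\tfrac12$ instead of $\tfrac14$. The paper exploits that the two pieces above are \emph{orthogonal} components of $\nabla\log(\rho_t/\mu)$: applying Young's inequality $(a+b)^2\leq(1+\tau)a^2+(1+\tau^{-1})b^2$ with $\tau$ chosen so that the two prefactors coincide, the normal and tangential parts then recombine into the full Fisher information $|\nabla\log(\rho_t/\mu)|^2$ with common prefactor $\lambda_{\RelEnt}^2/\beta^2+\kappa_{\RelEnt}^2/(\alpha_{\TI}\alpha_{\LSI})$, yielding exactly~\eqref{eq:Central-RelEnt-Est-FP}.
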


\begin{remark}\label{rem:constant:FP:RelEnt}
The constant $\kappa_{\RelEnt}$ is bounded from above by $\sup_{z\in \R^k} \norm{\abs{\nabla_{\Sigma_z} F}_{I\to A(z)}}_{L^\infty(\Sigma_z)}$ and is 
finite since $|\nabla_{\Sigma_z} F|_{I\to A(z)}\leq |\nabla F|_{I\to A(z)} < \infty$ by Assumptions~\ref{ass:V} and~\ref{ass:xi} on $V$ and $\xi$. It is a measure of 
the interaction strength between the dynamics along the coarse-grained variables and the dynamics on the level sets~$\Sigma_z$. The constant $\alpha_{\TI}$ quantifies the 
scale-separation between the dynamics on the level sets $\Sigma_z$ and across the level sets.  Note that by Remark~\ref{rem:LSI:TI:PI} we have $\alpha_{\TI}\geq \alpha_{\LSI}$ and 
hence the statement on the 
Talagrand inequality is, in particular, satisfied with the constant $\alpha_{\LSI}$, since $\frac{2}{\alpha_{\TI}}\leq\frac{2}{\alpha_{\LSI}}$. 
The constant $\lambda_{\RelEnt}$ is a measure of how close $D\xi D\xi^\transpose $ is to being constant on the level set.
\end{remark}
\begin{remark}[Comparison of Theorem~\ref{thm:FP-RelEnt-Main-Res} with the results of Legoll and Leli{\`e}vre~\cite{LL10}]
The constants $\kappa_{\RelEnt},\lambda_{\RelEnt}$ defined in Theorem~\ref{thm:FP-RelEnt-Main-Res} are the multidimensional
generalisations of the corresponding constants defined in \cite[Theorem 3.1]{LL10}. The prefactor in the relative entropy estimate \eqref{eq:Central-RelEnt-Est-FP} can be seen as the exact multidimensional generalisation  of the prefactor in \cite[Equation (31)]{LL10}.
\end{remark}

The proof of Theorem~\ref{thm:FP-RelEnt-Main-Res} consists of two steps:
\begin{enumerate}
\item
Prove an abstract estimate which connects the relative entropy of two (general) Fokker-Planck type evolutions with a large-deviation rate functional (see Section~\ref{S:Abstract-Result}).
\item Apply this estimate to the choice of coarse-grained and effective dynamics and estimate the large-deviation rate functional appropriately (see Section~\ref{S:FP:est:I}).
\end{enumerate}

\subsubsection{Estimate of the relative entropy by a large deviation functional}\label{S:Abstract-Result}
In this section we present a general theorem which estimates the relative entropy between two probability measures that solve two \emph{different} Fokker-Planck equations in terms of a \textit{large-deviation rate functional}. This theorem relies on studying the evolution of the relative entropy of these two solution in time. The precise result is due to Bogachev et.\ al.~\cite{Bogachev2016}, who prove these results in general conditions without making the connection to large deviations. In subsequent sections, we will apply this theorem to the effective and the coarse-grained dynamics in both the overdamped Langevin and the Langevin case.

Consider the following stochastic differential equation
\begin{align}\label{eq:Intro-Abstract-Res-Z-SDE}
dZ_t=-b(t,Z_t)\,dt+\sqrt{2 \beta^{-1}A(t,Z_t)}\,dW_t^k,
\end{align}
where $b:[0,T]\times\R^k\rightarrow \R^k$ and  $A:[0,T]\times\R^k\rightarrow \R^{k\times k}$ is a symmetric positive-definite matrix. We will mention the precise assumptions on the coefficients in Theorem~\ref{thm:Abs-LDRF-Ineq} below. Let $\set{Z^i}_{i=1}^n$ denote independent and identically distributed copies of $Z$. It is well known that the empirical measure
\begin{align}\label{eq:LDP-nu-PDE}
\nu_n(t):=\frac{1}{n}\sum\limits_{i=1}^n\delta_{Z^i_t}
\end{align}
converges as $n\rightarrow\infty$ almost surely to the unique solution of the forward Kolmogorov equation for $\nu_t:=\mathrm{law}(Z_t)$ (see for instance~\cite{Oelschlager84})
\begin{align}\label{eq:nu-PDE}
\partial_t\nu=\mathscr{L}^*\nu, \qquad  \mathscr{L}^*\nu:=\div(b\,\nu)+\beta^{-1} D^2:A\,\nu .
\end{align}
Furthermore, it has been shown that the sequence $(\nu_n)$ satisfies a \emph{large-deviation principle} \cite{DawsonGartner87}, which characterizes the probability of finding the empirical measure far from the limit~$\nu$, written informally as
\begin{align*}
\mathrm{Prob}\bra{\nu_n\approx\zeta}\sim\exp\bra*{-n\I(\zeta)},
\end{align*}
in terms of a rate functional  $\I:C([0,T];\mathcal{P}(\R^k))\rightarrow\R$. Assuming that the initial data~$Z^i(0)$ are chosen to be deterministic, and such that the initial empirical measure $\nu_n(0)$ converges narrowly to some $\nu_0$; then $\I$ has the form, (see \cite{DawsonGartner87,FengKurtz06}),
\begin{align}\label{eq:LDP-Abs-RF-nu}
\I(\zeta):=\begin{dcases}
\frac{\beta}{4}\int_0^T \int_{\R^k}\abs{r_t}^2_{A}\,d\zeta_t\,dt , & \begin{aligned}
&\text{if } \partial_t\zeta_t-\mathscr{L}^*\zeta_t=-\div(\zeta_t A r_t) \text{ with } \zeta|_{t=0}=\nu_0\\
&\qquad\text{ for some } h\in L^2(0,T;L^2_A(\zeta_t)); \end{aligned} \\
+\infty , &\text{otherwise}.
\end{dcases}\raisetag{1\baselineskip}
\end{align}
Here $|r|^2_A:=\skp{r,Ar}$ is the $A$-weighted $\R^k$ inner-product and $L^2_A(\zeta_t)$ is the closure of $\set{\nabla f: f\in C_c^\infty(\R^k)}$ in the norm $\norm{F}^2_{L^2_A(\nu_t)}:= \int \abs{F}_A^2 \,d\zeta_t$.
The rate functional satisfies two critical properties:
\begin{equation*}
\I\geq 0 \quad \text{and} \quad  \I(\zeta)=0 \text{ iff } \zeta_t \text{ solves}~\eqref{eq:nu-PDE}.
\end{equation*}

Now we state the abstract large-deviation result, which without making the connection to large deviations is already contained in \cite{Bogachev2016}.
\begin{thm}\label{thm:Abs-LDRF-Ineq}
Let $\nu_t$ be the law of $Z_t$~\eqref{eq:Intro-Abstract-Res-Z-SDE}, i.e.~a solution to
\begin{equation*}
\partial_t\,\nu=\mathscr{L}^*\nu_t=\div(b\,\nu)+\beta^{-1} D^2:A\,\nu,
\end{equation*}
and let $\zeta\in C([0,T];\mathcal{P}(\R^k))$ satisfy $\partial_t\zeta_t-\mathscr{L}^*\zeta_t=-\div(\zeta_t A h_t) $ for some $h\in L^2(0,T;L^2_A(\zeta_t))$.
Suppose that
\begin{enumerate}[(B1)]
\item $b$ is locally bounded and $A$ is locally Lipschitz in $z$ and locally strictly positive.
\item $(1+|z|)^{-2}|A_{ij}|, (1+|z|)^{-1}|b|, (1+|z|)^{-1}|h|\in L^1([0,T]\times \R^k,\nu)$.
\end{enumerate}
Then, for any $t\in [0,T]$, it holds that
\begin{align}\label{eq:Abs-LDRF-Ineq}
\RelEnt(\zeta_t|\nu_t)\leq \RelEnt(\zeta_0|\nu_0)+\I(\zeta),
\end{align}
where $I$ is the rate functional~\eqref{eq:LDP-Abs-RF-nu}.
\end{thm}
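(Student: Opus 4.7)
The plan is to establish the inequality by a formal entropy-dissipation calculation and then invoke the approximation machinery of \cite{Bogachev2016} to justify it under the weak hypotheses (B1)--(B2). Write $f_t := d\zeta_t/d\nu_t$, which is well defined whenever the right-hand side of \eqref{eq:Abs-LDRF-Ineq} is finite (otherwise there is nothing to prove). The key identity I aim to derive, at least formally, is
\begin{equation*}
\frac{d}{dt}\RelEnt(\zeta_t|\nu_t) = -\beta^{-1}\int_{\R^k}|\nabla\log f_t|_A^2\,d\zeta_t + \int_{\R^k} A h_t\cdot \nabla\log f_t\,d\zeta_t.
\end{equation*}

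First I would differentiate $\RelEnt(\zeta_t|\nu_t)=\int \log f_t\,d\zeta_t$ in time, using $\int d\zeta_t=1$ to kill the term $\int \partial_t f_t\,d\nu_t$ and using the weak forms of $\partial_t\zeta_t=\mathscr{L}^*\zeta_t-\div(\zeta_t A h_t)$ and $\partial_t\nu_t=\mathscr{L}^*\nu_t$. Testing the first equation against $\log f_t$ produces $\int\mathscr{L}(\log f_t)\,d\zeta_t+\int A h_t\cdot\nabla\log f_t\,d\zeta_t$; testing the second against $f_t$ (via the adjoint relation $\int\mathscr{L}^*\nu\cdot\phi=\int\nu\,\mathscr{L}\phi$) produces $-\int \mathscr{L}(f_t)\,d\nu_t$. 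A direct algebraic manipulation of the generator $\mathscr{L}\phi=-b\cdot\nabla\phi+\beta^{-1}A:D^2\phi$, combined with $\zeta_t=f_t\nu_t$, then yields the Bakry--Émery-type carr\'e-du-champ identity
\begin{equation*}
f_t\mathscr{L}(\log f_t)-\mathscr{L}(f_t) = -\beta^{-1}\frac{|\nabla f_t|_A^2}{f_t} = -\beta^{-1}f_t|\nabla\log f_t|_A^2,
\end{equation*}
which, upon integration against $\nu_t$, gives the dissipation term displayed above.

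The remaining step is Young's inequality for the cross term: since $|A h\cdot\nabla\log f|\leq |h|_A|\nabla\log f|_A$, the choice $\varepsilon=\beta^{-1}$ in $2ab\leq 2\varepsilon a^2+(2\varepsilon)^{-1}b^2$ yields
\begin{equation*}
\int A h_t\cdot\nabla\log f_t\,d\zeta_t \;\leq\; \beta^{-1}\int|\nabla\log f_t|_A^2\,d\zeta_t + \frac{\beta}{4}\int|h_t|_A^2\,d\zeta_t.
\end{equation*}
The dissipation term cancels exactly, leaving $\frac{d}{dt}\RelEnt(\zeta_t|\nu_t)\leq \frac{\beta}{4}\int|h_t|_A^2\,d\zeta_t$, and integration on $[0,t]$ together with the definition \eqref{eq:LDP-Abs-RF-nu} of $\I$ delivers \eqref{eq:Abs-LDRF-Ineq}.

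The main obstacle is that the formal computation above assumes differentiability of $t\mapsto\RelEnt(\zeta_t|\nu_t)$ and uses $\log f_t$ as a test function, both of which are unavailable under (B1)--(B2). To handle this I would follow the standard regularization strategy used in \cite{Bogachev2016}: replace $f_t$ by $f_t^\delta:=f_t+\delta$ and $\log$ by a bounded Lipschitz truncation, localize in space with a cutoff whose growth is controlled by the $L^1(\nu)$-integrability in (B2), perform the calculation on these regularized quantities (where all the integrations by parts are legitimate), and pass to the limit using monotone/dominated convergence together with the non-negativity of the Fisher-type term (which can be discarded after Young's inequality). The growth bounds in (B2) are precisely what ensure that the boundary terms produced by the cutoff vanish in the limit. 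I would simply cite \cite{Bogachev2016} for this part rather than reproduce it in detail.
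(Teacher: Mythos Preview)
Your proposal is correct and follows essentially the same route as the paper: a formal time-differentiation of $\RelEnt(\zeta_t|\nu_t)$ yielding the identity $\partial_t\RelEnt=-\beta^{-1}\RF_A(\zeta_t|\nu_t)+\int Ah_t\cdot\nabla\log f_t\,d\zeta_t$, followed by Young's inequality to absorb the cross term into the dissipation, and a deferral to \cite{Bogachev2016} for the rigorous justification under (B1)--(B2). The only cosmetic difference is that you package the computation via the generator and the carr\'e-du-champ identity $f\mathscr{L}(\log f)-\mathscr{L}(f)=-\beta^{-1}f|\nabla\log f|_A^2$, whereas the paper writes the equations in divergence form and integrates by parts directly; these are two equivalent bookkeepings of the same calculation.
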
 
\begin{proof}
We show here the formal computation to demonstrate the main idea of the proof. This computation requires sufficient regularity of the solution $\nu$ which is not guaranteed by the hypotheses (B1)-(B2). To make it rigorous, some regularization arguments are required. We refer the reader to \cite[Theorem 1]{Bogachev2016} for a detailed proof. Note also that in \cite[Theorem 2.3]{DuongLamaczPeletierSharma17}, the authors prove a similar result but for the relative entropy between an arbitrary measure and the stationary solution of the Vlasov-Fokker-Planck equation using a variational method.

Note that under the conditions on $b$ and $A$, the rate functional exists \cite{DawsonGartner87, DuongPeletierZimmer13}.
We first note that the equation for $\nu, \zeta$ can be rewritten in divergence-form as
\begin{align*}
\partial_t \nu_t &= \div(b\,\nu_t)+\beta^{-1} \div(\nu_t\,\div A+A\nabla\nu_t)\\
\partial_t \zeta_t &= \div(b\,\zeta_t)+\beta^{-1}\div( \zeta_t\div A+A\nabla\zeta_t) - \div( \zeta_t A h_t )
\end{align*}
Differentiating $\RelEnt(\zeta_t|\nu_t)$ with respect to time gives
\begin{align*}
\partial_t\RelEnt(\zeta_t|\nu_t)=\partial_t\int_{\R^k}\zeta_t\log\bra*{\frac{\zeta_t}{\nu_t}}=
\int_{\R^k}\partial_t\zeta_t\log\bra*{\frac{\zeta_t}{\nu_t}}+\int_{\R^k}\nu_t\partial_t\bra*{\frac{\zeta_t}{\nu_t}}=:I+II.
\end{align*}
Using integration by parts we obtain
\begin{align*}
I&=-\int_{\R^k}\nabla\log\bra*{\frac{\zeta_t}{\nu_t}}\cdot\left[b\zeta_t + \beta^{-1}A\nabla\zeta_t + \beta^{-1}\div A\,\zeta_t-Ah_t\,\zeta_t\right]\\
&=-\int_{\R^k}\nabla\log\bra*{\frac{\zeta_t}{\nu_t}}\cdot\left[b+\beta^{-1}A\nabla\log(\zeta_t)+\beta^{-1}\div A-Ah_t\right]\zeta_t
\end{align*}
Similarly for term $II$ we have
\begin{align*}
II=\int_{\R^k}\partial_t \zeta_t-\frac{\zeta_t}{\nu_t}\partial_t\nu_t
&=0+\int_{\R^k}\nabla\bra*{\frac{\zeta_t}{\nu_t}}\cdot\left[b+\beta^{-1}A\nabla\log(\nu_t)+\beta^{-1}\div A\right]\nu_t\\
&=\int_{\R^k}\nabla\log\bra*{\frac{\zeta_t}{\nu_t}}\cdot\left[b+\beta^{-1}A\nabla\log(\nu_t)+\beta^{-1}\div A\right]\zeta_t
\end{align*}
Combining the terms we end up with
\begin{align*}
\partial_t\RelEnt(\zeta_t|\nu_t)
&=\int_{\R^k}\nabla\log\bra*{\frac{\zeta_t}{\nu_t}}\cdot\left[\beta^{-1}A\nabla\log(\nu_t)-\beta^{-1}A\nabla\log(\zeta_t)+Ah_t\right]\zeta_t\\
&=\int_{\R^k}\nabla\log\bra*{\frac{\zeta_t}{\nu_t}}\cdot\left[-\beta^{-1}A\nabla\log\bra*{\frac{\zeta_t}{\nu_t}} + Ah\right]\zeta_t\\
&=-\beta^{-1}\RF_{A}\bra*{\zeta_t|\nu_t} + \int_{\R^k}\left[\nabla\log\bra*{\frac{\zeta_t}{\nu_t}}\cdot Ah_t\right]\zeta_t\\
&\leq -\beta^{-1}\RF_{A}\bra*{\zeta_t|\nu_t} +\sqrt{ \RF_{A}(\zeta_t | \nu_t) } \  \sqrt{\norm*{ h_t }^2_{L^2_A(\zeta_t)}},
\end{align*}
where $\RF_{A}(\cdot|\cdot)$ is the Fisher information weighted with the matrix $A$, i.e.\
\begin{align*}
\RF_A(\zeta|\nu)= \int_{\R^k}\abs*{\nabla\log\bra*{\frac{d\zeta}{d\nu}}}^2_A d\zeta.
\end{align*}
Integrating in time and using Young's inequality gives
\begin{align}\label{eq:TrueFIR}
\RelEnt\bra*{\zeta_t|\nu_t}+\bra*{1-\tau}\beta^{-1}\int_0^t \RF_{A}\bra*{\zeta_s|\nu_s}\,ds\leq \RelEnt\bra*{\zeta_0|\nu_0}+\frac{1}{\tau} \I(\zeta)
\end{align}
for every $\tau\in(0,1]$. The claimed result then follows since the Fisher information term is non-negative.
\end{proof}

\begin{rem}
As indicated by the formal proof, the actual result~\eqref{eq:TrueFIR} is stronger than~\eqref{eq:Abs-LDRF-Ineq} which is missing the Fisher information term. However in what comes next we do not use the Fisher information term and therefore have dropped it in the final result~\eqref{eq:Abs-LDRF-Ineq}, by choosing $\tau=1$ in~\eqref{eq:TrueFIR}.
\end{rem}

\subsubsection{Estimating the rate-functional term}\label{S:FP:est:I}
\begin{cor}
\label{cor:FP-reduced-FI-ineq}
Recall that $\hat\rho_t$ is the coarse-grained dynamics (see Section~\ref{S:FP-Push-For-Dyn}) and $\eta_t$ the effective dynamics (Section~\ref{S:FP-effective}). 
Assume that $\rho_0$ satisfies $\RelEnt(\rho_0|\mu)<\infty$.
Let $\I$ be the large-deviation rate functional~\eqref{eq:LDP-Abs-RF-nu} (therefore corresponding to the effective dynamics $(\eta_t)_{t\in [0,T]}$). Then we have
\begin{align}\label{eq:Apply-HR-CG-Eff}
\RelEnt(\hat\rho_t|\eta_t)\leq \RelEnt(\hat\rho_0|\eta_0)+\I(\hat\rho)
\qquad\text{for all }t\in [0,T].
\end{align}
\end{cor}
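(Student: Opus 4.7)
The plan is to apply Theorem~\ref{thm:Abs-LDRF-Ineq} directly with the choice $\nu_t = \eta_t$ (effective dynamics) and $\zeta_t = \hat\rho_t$ (coarse-grained dynamics); the claim then follows verbatim from~\eqref{eq:Abs-LDRF-Ineq}. The work lies in verifying the hypotheses (B1)--(B2) and in identifying the perturbation $h$ entering the rate functional.

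For (B1)--(B2), I would invoke Lemma~\ref{lem:Eff-Coeff-Properties}, which guarantees that the effective drift $b$ is globally Lipschitz and that the effective diffusion matrix $A$ is Lipschitz and uniformly positive definite on $\R^k$. Combined with the linear growth of $b$, the boundedness of $A$, and the uniform second-moment control on $\eta_t$ from Theorem~\ref{thm:Eff-Well-Posed} and on $\hat\rho_t$ from Proposition~\ref{prop:FP-effect-Dyn}, this supplies the local regularity in~(B1) and the integrability conditions in~(B2).

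Next, I would cast $\hat\rho$ as a perturbation of $\eta$. Subtracting the Fokker--Planck operators gives
\begin{equation*}
 \partial_t \hat\rho - \mathscr{L}^*\hat\rho = \div_z\bra*{(\hat b - b)\hat\rho} + \beta^{-1}D_z^2:\bra*{(\hat A - A)\hat\rho},
\end{equation*}
and rewriting the second term as $\beta^{-1}\div_z\div_z\bra*{(\hat A - A)\hat\rho}$ one obtains the required representation $\partial_t \hat\rho - \mathscr{L}^*\hat\rho = -\div_z(\hat\rho A h_t)$ with
\begin{equation*}
 h_t = -A^{-1}\bra*{(\hat b - b) + \beta^{-1}\hat\rho^{-1}\div_z\bra{(\hat A - A)\hat\rho}}.
\end{equation*}

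The main obstacle is then to certify that $h \in L^2(0,T;L_A^2(\hat\rho))$, equivalently $\I(\hat\rho) < \infty$. If this fails the asserted inequality is trivial, so suppose it holds; then I would expand
$\hat\rho^{-1}\div_z((\hat A - A)\hat\rho) = (\hat A - A)\nabla_z\log(\hat\rho/\hat\mu) + (\hat A - A)\nabla_z\log\hat\mu + \div_z(\hat A - A)$
and control the gradient-of-log factor in $L^2(d\hat\rho_t\,dt)$ via the dissipation inequality of Corollary~\ref{FP:MarginalEDP}, which bounds $\int_0^T \RF_A(\hat\rho_s|\hat\mu)\,ds$ by $\RelEnt(\rho_0|\mu) < \infty$. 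The remaining terms, together with $\hat b - b$, are bounded in $L^\infty$ by means of Assumptions~\ref{ass:V} and~\ref{ass:xi}. Theorem~\ref{thm:Abs-LDRF-Ineq} then yields $\RelEnt(\hat\rho_t|\eta_t) \leq \RelEnt(\hat\rho_0|\eta_0) + \I(\hat\rho)$ as claimed.
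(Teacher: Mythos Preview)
Your overall strategy and your identification of $h$ match the paper's (your $h_t=-A^{-1}[\cdots]$ is, after expanding $\hat\rho^{-1}\div_z((\hat A-A)\hat\rho)$, exactly the paper's~\eqref{def:Abs-Res-h} up to the harmless $A^{-1}$ normalisation). The verification of (B1) and of the $A,b$ parts of (B2) is also correct.

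The problem is your assertion that ``the remaining terms, together with $\hat b - b$, are bounded in $L^\infty$ by means of Assumptions~\ref{ass:V} and~\ref{ass:xi}.'' None of these terms are in $L^\infty$: both $b$ and $\hat b$ grow linearly and their difference is not a priori bounded, since the integrand $D\xi\nabla V-\beta^{-1}\Delta\xi$ is unbounded on the noncompact level sets $\Sigma_z$; $\nabla_z\log\hat\mu$ grows linearly so $(\hat A-A)\nabla_z\log\hat\mu\notin L^\infty$; and $\div_z\hat A$ depends on derivatives of the time-dependent conditional measure $\bar\rho_{t,z}$ and admits no pointwise bound from Assumptions~\ref{ass:V} and~\ref{ass:xi} alone. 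The paper never claims such $L^\infty$ control. Instead it verifies the weighted $L^1$ condition on $h$ in (B2) directly in integrated form: the contribution from $\hat b$ is lifted back to $\R^d$ via the co-area formula and bounded using the second moment of $\rho_t$; the contribution from $\div_z\hat A$ is rewritten using Lemma~\ref{lem-FP:Level-Set-Der} in terms of $\nabla\log(\rho_t/\mu)$ and $\nabla\log\mu$ and then controlled by the full Fisher information and the second moment; the $(A-\hat A)\nabla_z\log\hat\rho$ contribution is handled by Cauchy--Schwarz and Corollary~\ref{FP:MarginalEDP}, along the lines you correctly indicate for the relative-entropy gradient.

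Incidentally, your dodge ``if $\I(\hat\rho)=\infty$ the inequality is trivial'' is in fact already enough to close the argument without any of this: when $\I(\hat\rho)<\infty$ one has $h\in L^2(0,T;L^2_A(\hat\rho))$, and since $A$ is uniformly elliptic, Cauchy--Schwarz immediately yields $(1+|z|)^{-1}|h|\in L^1(d\hat\rho_t\,dt)$, which is the remaining condition in (B2). So your $L^\infty$ step is not only incorrect but also unnecessary; drop it and the proof goes through.
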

\begin{proof}
We define
\begin{align}\label{def:Abs-Res-h}
h_t(z):=\bra{b+\beta^{-1}\div_z A}-\bra{\hat b+\beta^{-1}\div_z \hat A}+\beta^{-1}\bra{A-\hat A}\nabla_z\log\hat\rho_t.
\end{align}
This $h$ satisfies
\begin{equation*}
\partial_t\hat\rho-\mathcal{L}_\eta^*\hat\rho=-\div_z(\hat\rho AA^{-1}h_t),
\end{equation*} 
where $\mathcal{L}_\eta$ is the generator corresponding to the effective dynamics $\eta$,
\begin{align*}
\mathcal{L}_\eta f:= - b\cdot \nabla_z f+\beta^{-1} A:D^2_z f. 
\end{align*}
From the definition of the large deviation rate functional \eqref{eq:LDP-Abs-RF-nu}, we have
\begin{equation}\label{eq:Cross-Rate-Fn-Abs-Form}
\I(\hat\rho)=\frac{\beta}{4}\int_0^T \int_{\R^k}\abs{h_t}_{(A(z))^{-1}}^2d\hat\rho_t\,dt.
\end{equation}
The statement of this lemma is an application of Theorem~\ref{thm:Abs-LDRF-Ineq} to the choice $\zeta_t\equiv\hat\rho_t$ and $\nu_t\equiv\eta_t$. 
Thus we just need to verify the two hypotheses in Theorem~\ref{thm:Abs-LDRF-Ineq}. By Lemma~\ref{lem:Eff-Coeff-Properties} the coefficients $b$ and $A$ 
satisfy assumption (B1). Additionally 
\[
\int_0^T\int_{\R^k}\big[(1+|z|)^{-2}|A_{ij}|+(1+|z|)^{-1}|b|\big]\,d\hat \rho_t\,dt\leq CT<\infty,
\]
which verifies the first two conditions in (B2). It remains to show that
\begin{equation}\label{1}
  \int_0^T \int_{\R^k} \frac{\abs{h_t}}{1+\abs{z}} \, d\hat\rho_t \, dt < \infty, \\
\end{equation}
where $h_t = -\bra*{ \hat b + \Div_z \hat A} + \bra*{ b + \Div_z A} + \bra*{ A - \hat A} \nabla_z \log \hat \rho_t$. We have
\begin{align}\label{eq:h-Inter}
\int_0^T \int_{\R^k} \frac{\abs{h_t}}{1+\abs{z}} \, d\hat\rho_t \, dt\leq\int_0^T \int_{\R^k} \frac{(|b|+\Div_z A)+|\hat b|+|\Div_z \hat A|+ |(A-\hat A)\nabla_z\log\hat\rho_t|}{1+\abs{z}} \, d\hat\rho_t \, dt.
\end{align}
Similarly as above, we find
\begin{equation}
  \int_0^T \int_{\R^k} \frac{\abs{b} + \abs{\Div_z A}}{1+\abs{z}}  \, d\hat\rho_t \, dt \leq C T<\infty.
\end{equation}
The term involving $\hat b$ can be estimated directly. By the regularity properties of $\xi$ and especially by the assumption of affinity at infinity it follows that
\[
  \abs*{D\xi \nabla V - \beta^{-1} \Delta \xi}(x) \leq C \bra*{1+ \abs{x}}
\]
and therefore
\begin{align*}
  \int_{\R^k} \abs{\hat b(z)} \, d\hat \rho(z) &\leq \int_{\R^k} \int_{\Sigma_z} \abs*{D\xi \nabla V - \beta^{-1} \Delta \xi} \; \rho_t \frac{d\mathcal{H}^{d-k}}{\operatorname{Jac}\xi} d \mathcal{H}^k \\
  &\leq C \int_{\R^k} \int_{\Sigma_z} \bra*{ 1+ \abs{x} } \; \rho_t \frac{d\mathcal{H}^{d-k}}{\operatorname{Jac}\xi} d \mathcal{H}^k \leq C \int_{\R^d} \bra*{ 1 + \abs{x}} d \rho_t < \infty,
\end{align*}
since the second moment of $\rho_t$ is bounded according to Lemma~\ref{lem:Eff-Coeff-Properties}. 

For the estimate of $\Div \hat A$ we use the representation in Lemma~\ref{lem-FP:Level-Set-Der} and the regularity assumptions on $\xi$ to conclude
\begin{align*}
  \abs*{\Div_z \hat A(z)} &\leq \int_{\Sigma_z}\frac{\rho_t}{\hat\rho_t}\abs*{D\xi\nabla\log\rho_t+\Delta\xi-G\nabla_z\log\hat\rho_t} \frac{d \mathcal{H}^{d-k}}{ \operatorname{Jac} \xi} \leq C \int_{\Sigma_z} \bra*{ 1+ \abs*{\nabla \log \frac{\rho_t}{\mu}} + \abs*{\nabla \log \mu}} d\bar\rho_{t,z}  .
\end{align*}
Integrating with respect to ~$\hat\rho$ and using $\abs*{\nabla \log \mu} \leq C\bra*{ 1 +\abs{x}}$ leads to a bound in terms of the relative Fisher information and the second moment, which after integrating in $t$ are bounded.

For the remaining term in~\eqref{eq:h-Inter}, we use the Cauchy-Schwarz inequality, and the bounds  $\abs{A-\hat A} \leq C$, $\frac{1}{C} \Id_k \leq A ,\hat A \leq C\Id_k$ and $\abs{\nabla\log\hat \mu} \leq C\bra*{1+\abs{z}}$ to estimate (writing $C$ for general constants that differ from line to line)
\begin{align*}
 \MoveEqLeft{\bra*{\int_0^T \int_{\R^k} \frac{\abs*{\bra*{ A - \hat A} \nabla_z \log \hat \rho_t}}{1+\abs{z}} \, d\hat \rho_t \, dt}^2}
 \leq C \int_0^T \int_{\R^k} \frac{\abs{\nabla_z \log \hat\rho_t}^2}{\bra*{1+\abs{z}}^2} \, d\hat\rho_t \, dt \\
 &\leq C  \int_0^T \int_{\R^k} \bra*{ \abs{\nabla_z \log \hat\rho_t - \nabla_z \log \hat \mu}^2 + \frac{\abs{\nabla_z \log \hat\mu}^2}{\bra*{1+\abs{z}}^2} } d\hat \rho_t \, dt \\
 &\leq C\int_0^T \int_{\R^k} \bra*{\nabla_z \log \frac{\hat\rho_t}{\hat\mu} \cdot A \nabla_z \log \frac{\hat\rho_t}{\hat\mu} } d\hat\rho_t \, dt + \int_0^T \int_{\R^k} \frac{\bra*{1+\abs{z}}^2}{\bra*{1+\abs{z}}^2} \, d\hat\rho_t \, dt\\
 &\leq C\int_0^T[\RF_A(\hat\rho_t | \hat\mu) + 1]\, dt.
\end{align*}
By Corollary~\ref{FP:MarginalEDP}, the final integral is finite.
This completes the proof of this corollary.
\end{proof}

We now cast $h_t$ in~\eqref{def:Abs-Res-h} into a more usable form.
\begin{lem}\label{lem:FP-Rate-Fn-Reform}
The function $h_t$ (defined in~\eqref{def:Abs-Res-h}) can be rewritten as
\begin{equation}\label{eq:FP-h_t-Aux-Form}
h_t(z)=\beta^{-1}\EX_{\bar\rho_{t,z}}\pra*{(A-D\xi D\xi^\transpose )G^{-1}D\xi\nabla\log\bra*{\frac{\rho_t}{\mu}}}
-A(z) \int_{\Sigma_z} F (d\bar\rho_{t,z}-d\bar\mu_z),
\end{equation}
where $F$ is the local mean force~\eqref{def:FP-Local-Mean-Force-F}.
\end{lem}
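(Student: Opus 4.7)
The plan is to rewrite the two ``drift + half-divergence'' terms appearing in $h_t$ in a form that exposes $\nabla\log(\rho_t/\mu)$ and $\nabla_z\log(\hat\rho_t/\hat\mu)$, and then convert the latter into a conditional integral of the local mean force $F$ against $\bar\rho_{t,z}-\bar\mu_z$ via Lemma~\ref{lem-FP:Level-Set-Der}.

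First I would compute $\hat b + \beta^{-1}\div_z \hat A$. Since $\hat A\,\hat\rho_t$ equals $\int_{\Sigma_z} G\,\rho_t\, d\Hausdorff^{d-k}/\Jac\xi$ (component-wise), Lemma~\ref{lem-FP:Level-Set-Der} applied to the matrix field $B=G\rho_t$ gives $\div_z(\hat A\hat\rho_t) = \int_{\Sigma_z}\div(\rho_t\, D\xi)\,d\Hausdorff^{d-k}/\Jac\xi$, because $BG^{-1}D\xi = \rho_t D\xi$. Expanding the divergence and dividing by $\hat\rho_t$ yields
\[
\div_z \hat A(z) \;=\; \EX_{\bar\rho_{t,z}}\!\bra*{D\xi\,\nabla\log\rho_t + \Delta\xi} \;-\; \hat A\,\nabla_z\log\hat\rho_t.
\]
Combining with the definition of $\hat b$ and absorbing $\nabla\log\rho_t+\beta\nabla V$ into $\nabla\log(\rho_t/\mu)$ gives
\[
\hat b + \beta^{-1}\div_z \hat A \;=\; \beta^{-1}\EX_{\bar\rho_{t,z}}\!\bra*{D\xi\,\nabla\log\tfrac{\rho_t}{\mu}} \;-\; \beta^{-1}\hat A\,\nabla_z\log\hat\rho_t .
\]
The same computation with $\rho_t$ replaced by $\mu$ (together with Remark~\ref{rem:Eff-Dyn-A-Wasser-Grad-Flow}) gives $b + \beta^{-1}\div_z A = -\beta^{-1} A\,\nabla_z\log\hat\mu$. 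Substituting both into the definition~\eqref{def:Abs-Res-h} of $h_t$ and cancelling the $\hat A\,\nabla_z\log\hat\rho_t$ terms leaves the compact expression
\[
h_t(z) \;=\; -\beta^{-1}\EX_{\bar\rho_{t,z}}\!\bra*{D\xi\,\nabla\log\tfrac{\rho_t}{\mu}} \;+\; \beta^{-1} A\,\nabla_z\log\tfrac{\hat\rho_t}{\hat\mu} .
\]

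Next I would convert $\nabla_z\log\hat\rho_t$ to a conditional expectation. Using $\log\mu=-\beta V-\log Z$, a direct calculation shows
\[
\beta^{-1} G^{-1}D\xi\,\nabla\log\tfrac{\rho_t}{\mu} \;=\; F \;+\; \beta^{-1}\rho_t^{-1}\,\div\!\bra*{\rho_t\, G^{-1} D\xi},
\]
and applying Lemma~\ref{lem-FP:Level-Set-Der} to $\psi=\rho_t$ identifies the conditional average of the second term on the right with $\beta^{-1}\nabla_z\log\hat\rho_t$. Thus
\[
\beta^{-1}\nabla_z\log\hat\rho_t \;=\; \beta^{-1}\EX_{\bar\rho_{t,z}}\!\bra*{G^{-1}D\xi\,\nabla\log\tfrac{\rho_t}{\mu}} - \EX_{\bar\rho_{t,z}}[F],
\]
and in particular $\beta^{-1}\nabla_z\log\hat\mu = -\EX_{\bar\mu_z}[F]$ (which is also~\eqref{eq:FP-Local-Mean-Force-F:muhat}). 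Subtracting and multiplying by $A$ gives
\[
\beta^{-1} A\,\nabla_z\log\tfrac{\hat\rho_t}{\hat\mu} \;=\; \beta^{-1} A\,\EX_{\bar\rho_{t,z}}\!\bra*{G^{-1}D\xi\,\nabla\log\tfrac{\rho_t}{\mu}} - A\int_{\Sigma_z} F\,(d\bar\rho_{t,z}-d\bar\mu_z).
\]

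Finally, write $D\xi = G\,G^{-1}D\xi$ inside the first expectation in the compact form of $h_t$, so that
\[
-\beta^{-1}\EX_{\bar\rho_{t,z}}\!\bra*{D\xi\,\nabla\log\tfrac{\rho_t}{\mu}} \;=\; \beta^{-1}\EX_{\bar\rho_{t,z}}\!\bra*{(A-G) G^{-1} D\xi\,\nabla\log\tfrac{\rho_t}{\mu}} - \beta^{-1} A\,\EX_{\bar\rho_{t,z}}\!\bra*{G^{-1}D\xi\,\nabla\log\tfrac{\rho_t}{\mu}} .
\]
Adding the two displays, the $A\,\EX_{\bar\rho_{t,z}}[\,\cdot\,]$ contributions cancel exactly, leaving precisely~\eqref{eq:FP-h_t-Aux-Form} with $G=D\xi D\xi^\transpose$.

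The computations are in principle routine, but the main obstacle is bookkeeping: one has to apply Lemma~\ref{lem-FP:Level-Set-Der} to the matrix field $G\rho_t$ (to handle $\div_z\hat A$) and twice to scalar densities (to express $\nabla_z\log\hat\rho_t$ and $\nabla_z\log\hat\mu$ as conditional averages of $F$), and arrange for the two $A\,\EX_{\bar\rho_{t,z}}[G^{-1}D\xi\nabla\log(\rho_t/\mu)]$ terms to cancel while the splitting $D\xi = G\cdot G^{-1}D\xi$ isolates the desired factor $A-D\xi D\xi^\transpose$.
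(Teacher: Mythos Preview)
Your proposal is correct and follows essentially the same route as the paper: both arrive at the intermediate identity $h_t=\beta^{-1}A\nabla_z\log(\hat\rho_t/\hat\mu)-\beta^{-1}\EX_{\bar\rho_{t,z}}[D\xi\nabla\log(\rho_t/\mu)]$ via the formulas for $\div_z A$ and $\div_z\hat A$, and then use Lemma~\ref{lem-FP:Level-Set-Der} on $\nabla_z\hat\rho_t$ and $\nabla_z\hat\mu$ together with the definition of $F$ to reach~\eqref{eq:FP-h_t-Aux-Form}. Your organization (first isolating the identity $\beta^{-1}G^{-1}D\xi\nabla\log(\rho_t/\mu)=F+\beta^{-1}\rho_t^{-1}\div(\rho_t G^{-1}D\xi)$ and then splitting $D\xi=G\,G^{-1}D\xi$ to force the cancellation) is a slightly tidier bookkeeping of the same computation.
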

\begin{proof}
By similar calculations as in Remark~\ref{rem:Eff-Dyn-A-Wasser-Grad-Flow} we obtain 
\begin{align}\label{eq:FP-div-a-div-hat-A}
\div_z A &=-\beta b-A\nabla_z\log\hat\mu, \notag \\
\div_z \hat A &=-\beta\hat b-\hat A\nabla_z\log\hat\rho_t+\EX_{\bar\rho_{t,z}}\pra*{D\xi\bra{\beta\nabla V+\nabla\log\rho_t}}.
\end{align}
By substituting \eqref{eq:FP-div-a-div-hat-A} into~\eqref{def:Abs-Res-h} and using the identity $\nabla\log\mu=-\beta\nabla V$ we find
\begin{align}\label{eq:Rate-Fn-Reform-1}
h_t=\beta^{-1}A\nabla_z\log\bra*{\frac{\hat\rho_t}{\hat\mu}}-\beta^{-1}\EX_{\bar\rho_{t,z}}\pra*{D\xi\bra{\nabla\log\rho_t-\nabla\log\mu}}.
\end{align}
By using  Lemma~\ref{lem-FP:Level-Set-Der} to evaluate $\nabla_z\hat\rho,\,\nabla_z\hat\mu$, we can rewrite the first term in~\eqref{eq:Rate-Fn-Reform-1},
\begin{align}\label{eq:Rate-Fn-Reform-2}
\MoveEqLeft{\beta^{-1}A\nabla_z\log\bra*{\frac{\hat\rho_t}{\hat\mu}} = \beta^{-1}A\bra*{\frac{\nabla_z\hat\rho_t}{\hat\rho_t}-\frac{\nabla_z\hat\mu}{\hat\mu}}}\\
&=\beta^{-1}A\int_{\Sigma_z}\div\bra{G^{-1}D\xi }(d\bar\rho_{t,z}-d\bar\mu_z)+\beta^{-1}A\int_{\Sigma_z}G^{-1}D\xi\bra*{\frac{\nabla\rho_t}{\hat\rho_t}-\frac{\nabla\mu}{\hat\mu}}\frac{d\Hausdorff^{d-k}}{\Jac\xi}. \notag
\end{align}
We can also rewrite the second term in the right hand side of~\eqref{eq:Rate-Fn-Reform-2} by using once more $\nabla\log\mu=-\beta\nabla V$,
\begin{align*}
\int_{\Sigma_z}G^{-1}D\xi\bra*{\frac{\nabla\rho_t}{\hat\rho_t}-\frac{\nabla\mu}{\hat\mu}}\frac{d\Hausdorff^{d-k}}{\Jac\xi}&=\int_{\Sigma_z}G^{-1}D\xi\bra*{\nabla\log\rho_t\,  d\bar\rho_{t,z} - \nabla\log\mu\, d\bar\mu_z} \\
&=\int_{\Sigma_z}G^{-1}D\xi\bra*{\nabla\log\frac{\rho_t}{\mu} \,  d\bar\rho_{t,z} - \beta \nabla V \bra*{ d\bar\rho_{t,z} -  d\bar\mu_z}}
\end{align*}
By substituting these terms back into~\eqref{eq:Rate-Fn-Reform-1}, we find
\begin{align*}
h_t&= A\int_{\Sigma_z}\bra*{\beta^{-1} \div(G^{-1}D\xi)- G^{-1}D\xi\nabla V}\, (d\bar\rho_{t,z}-d\bar\mu_z) 
\\
&\qquad +\beta^{-1}\int_{\Sigma_z}\bra*{A\,(D\xi D\xi^\transpose )^{-1}-\Id_k}D\xi\nabla\log\bra*{\frac{\rho_t}{\mu}} \, d\bar\rho_{t,z}.
\end{align*}
The result follows by using the definition of $F$~\eqref{def:FP-Local-Mean-Force-F}.
\end{proof}
Using this reformulation of the rate functional we  now estimate $\I(\hat\rho)$.
\begin{lem}\label{lem:Cross-Rate-Fn-Est}
Under the same assumptions as in Theorem~\ref{thm:FP-RelEnt-Main-Res},
\begin{align*}
\I(\hat\rho)\leq \frac{1}{4}\bra*{\frac{\lambda_{\RelEnt}^2}{\beta}+\frac{\kappa_{\RelEnt}^2\beta}{\alpha_{\TI}\alpha_{\LSI}}}\int_0^T \int_{\R^d}\abs*{\nabla\log\bra*{\frac{\rho_t}{\mu}}}^2\,d\rho_t\,dt.
\end{align*} 
\end{lem}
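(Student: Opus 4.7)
The plan is to start from the decomposition of $h_t$ provided by Lemma~\ref{lem:FP-Rate-Fn-Reform}, write
\[
h_t(z) = \underbrace{\beta^{-1}\EX_{\bar\rho_{t,z}}\bigl[(A-G)G^{-1}D\xi\,\nabla\log(\rho_t/\mu)\bigr]}_{=:h^{(1)}_t(z)} \;+\; \underbrace{-A(z)\int_{\Sigma_z} F\,(d\bar\rho_{t,z}-d\bar\mu_z)}_{=:h^{(2)}_t(z)},
\]
bound each piece separately in the $A^{-1}$ norm, and combine via an elementary $(a+b)^2\leq 2a^2+2b^2$ (or Young) inequality before assembling into $\I(\hat\rho) = \frac{\beta}{4}\int_0^T\!\int_{\R^k}|h_t|_{A^{-1}}^2\,d\hat\rho_t\,dt$. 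The key observation is that the first term will produce a Fisher-information-type quantity along the normal directions to $\Sigma_z$, while the second will produce one along the tangential directions; the orthogonal splitting $|\nabla\log(\rho/\mu)|^2 = |P\nabla\log(\rho/\mu)|^2 + |\nabla_{\Sigma_z}\log(\rho/\mu)|^2$ with $P=D\xi^\transpose G^{-1}D\xi$ then recombines both into $\RF(\rho_t|\mu)$.

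For $h^{(1)}_t$, since $A(z)$ is independent of the fibre variable I would pull $A^{-1/2}$ inside the conditional expectation, factor $A^{-1/2}(A-G)G^{-1}D\xi = \Lambda\cdot G^{-1/2}D\xi$ with $\Lambda := A^{-1/2}(A-G)G^{-1/2}$, and use Assumption~\ref{ass:H2} to get $\|\Lambda\|\leq\lambda_{\RelEnt}$ pointwise. A triangle inequality and Jensen's inequality then yield
\[
|h^{(1)}_t(z)|^2_{A^{-1}} \leq \beta^{-2}\lambda_{\RelEnt}^2\,\EX_{\bar\rho_{t,z}}\!\bigl[\bigl|G^{-1/2}D\xi\,\nabla\log(\rho_t/\mu)\bigr|^2\bigr],
\]
and since $G^{-1/2}D\xi$ is a partial isometry with $(G^{-1/2}D\xi)^\transpose(G^{-1/2}D\xi) = P$, the integrand equals $|P\nabla\log(\rho_t/\mu)|^2$. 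Integrating against $\hat\rho_t$ and using the disintegration formula $d\rho_t = d\bar\rho_{t,z}\,d\hat\rho_t(z)$ gives a contribution bounded by $\beta^{-2}\lambda_{\RelEnt}^2\int_{\R^d}|P\nabla\log(\rho_t/\mu)|^2\,d\rho_t$.

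For $h^{(2)}_t$, I would use Kantorovich--Rubinstein duality: for any $u\in\R^k$ the scalar map $y\mapsto u\cdot F(y)$ is Lipschitz with respect to $d_{\Sigma_z}$ with constant $\kappa_{\RelEnt}|u|_{A^{-1}}$, by the definition~\eqref{def:FP-kappa_H} of $\kappa_{\RelEnt}$ combined with the Cauchy--Schwarz inequality $|u\cdot v|\leq|u|_{A^{-1}}|v|_A$. Taking the supremum over unit vectors in the dual $|\cdot|_{A^{-1}}$ norm, one deduces
\[
\Bigl|\!\int_{\Sigma_z} F\,(d\bar\rho_{t,z}-d\bar\mu_z)\Bigr|_A \leq \kappa_{\RelEnt}\,\Wasser_2^{d_{\Sigma_z}}(\bar\rho_{t,z},\bar\mu_z),
\]
whence $|h^{(2)}_t(z)|_{A^{-1}}^2\leq \kappa_{\RelEnt}^2\,\Wasser_2^2(\bar\rho_{t,z},\bar\mu_z)$. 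Now Assumption~\ref{ass:H1} (Talagrand followed by Log-Sobolev on $\Sigma_z$, both uniform in $z$) bounds the Wasserstein term by $\tfrac{1}{\alpha_{\TI}\alpha_{\LSI}}\RF_{\Sigma_z}(\bar\rho_{t,z}|\bar\mu_z)$. This is the analytically delicate step, but the hypotheses deliver it verbatim.

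Finally, I would integrate against $\hat\rho_t$; since $\rho_t/\mu$ restricted to $\Sigma_z$ equals $\bar\rho_{t,z}/\bar\mu_z$ up to a factor depending only on $z$ (which is annihilated by $\nabla_{\Sigma_z}$), the tensorization of the Fisher information used in the proof of Corollary~\ref{FP:MarginalEDP} gives $\int_{\R^k}\!\RF_{\Sigma_z}(\bar\rho_{t,z}|\bar\mu_z)\,d\hat\rho_t = \int_{\R^d}|\nabla_{\Sigma_z}\log(\rho_t/\mu)|^2 d\rho_t$. Combining the two pointwise bounds via a quadratic inequality, applying $|P\nabla\log(\rho_t/\mu)|^2+|\nabla_{\Sigma_z}\log(\rho_t/\mu)|^2 = |\nabla\log(\rho_t/\mu)|^2$, multiplying by $\beta/4$ and integrating in time produces the claimed bound. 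The main obstacle is bookkeeping the norm-weights consistently between $|\cdot|_A$, $|\cdot|_{A^{-1}}$ and the Euclidean norm, together with the correct constant in the Young splitting so that the prefactor $\tfrac{1}{4}(\lambda_{\RelEnt}^2/\beta+\kappa_{\RelEnt}^2\beta/(\alpha_{\TI}\alpha_{\LSI}))$ is achieved.
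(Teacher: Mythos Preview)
Your proposal is correct and follows essentially the same route as the paper: the same splitting of $h_t$ from Lemma~\ref{lem:FP-Rate-Fn-Reform}, the same use of $\lambda_{\RelEnt}$ with Jensen for $h^{(1)}$, the Talagrand--Log-Sobolev chain for $h^{(2)}$ (the paper uses a direct coupling rather than Kantorovich--Rubinstein, but both yield the identical bound $\kappa_{\RelEnt}^2\,\Wasser_2^2(\bar\rho_{t,z},\bar\mu_z)$), and then the orthogonal decomposition $|P\nabla\log(\rho/\mu)|^2+|\nabla_{\Sigma_z}\log(\rho/\mu)|^2=|\nabla\log(\rho/\mu)|^2$ to recombine. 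Your closing remark about the Young splitting is precisely the point: the paper uses $(a+b)^2\leq(1+\tau)a^2+(1+1/\tau)b^2$ with $\tau=\kappa_{\RelEnt}^2\beta^2/(\alpha_{\TI}\alpha_{\LSI}\lambda_{\RelEnt}^2)$, chosen so that the two prefactors coincide and the stated constant $\tfrac14(\lambda_{\RelEnt}^2/\beta+\kappa_{\RelEnt}^2\beta/(\alpha_{\TI}\alpha_{\LSI}))$ emerges.
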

\begin{proof}
Recall from~\eqref{eq:Cross-Rate-Fn-Abs-Form} that to bound $\I(\hat\rho)$ we need to estimate $|h_t|^2_{A^{-1}}=|A^{-1/2}h_t|^2$. Let us do this for each term in~\eqref{eq:FP-h_t-Aux-Form}. 
For the first term we find
\begin{align}\label{eq:FP-h_t-1-term}
\MoveEqLeft{\abs*{\beta^{-1}A^{-1/2}\EX_{\bar\rho_{t,z}}\pra*{(A-D\xi D\xi^\transpose ) (D\xi D\xi^\transpose )^{-1}D\xi\nabla\log\bra*{\frac{\rho_t}{\mu}}}}^2}\\
&\leq \frac{\lambda_{\RelEnt}^2}{\beta^{2}}\int_{\Sigma_z}\abs*{(D\xi D\xi^\transpose )^{-1/2}D\xi\nabla\log\bra*{\frac{\rho_t}{\mu}}}^2 d\bar\rho_{t,z}, \notag
\end{align}
where $\lambda_{\RelEnt}$ is defined in~\eqref{def:FP-lambda_H}.
For any coupling $\Pi\in \mathcal{P}(\Sigma_z\times\Sigma_z)$ of $\bar\mu_z$ and $\bar\rho_{t,z}$ we can write
\begin{align*}
\abs*{\int_{\Sigma_z} A F (d\bar\rho_{t,z}-d\bar\mu_z)}^2_{A^{-1}}
&= \abs*{\int_{\Sigma_z\times\Sigma_z}\bra*{(A^{1/2}F)(y_1)-(A^{1/2}F)(y_2)} \, d\Pi(y_1,y_2)}^2 \\
&\leq \kappa^2_{\RelEnt}\int_{\Sigma_z\times\Sigma_z} d_{\Sigma_z}(y_1,y_2)^2 \, d\Pi(y_1,y_2),
\end{align*}
where $\kappa_{\RelEnt}$ is defined in~\eqref{def:FP-kappa_H}. By taking the infimum over all admissible couplings $\Pi$, we obtain the 
Wasserstein-2 distance between $\bar\rho_{t,z}$ and $\bar\mu_z$ with respect to the intrinsic metric on $\Sigma_z$.
Under the Assumption~\ref{ass:H1} we find
\begin{equation}\label{eq:FP-h_t-2-term}
\abs*{\int_{\Sigma_z} A F (d\bar\rho_{t,z}-d\bar\mu_z)}^2_{A^{-1}}\leq \kappa^2_{\RelEnt}\Wasser_2^2(\bar\rho_{t,z},\bar\mu_z)\leq \frac{\kappa_{\RelEnt}^2}{\alpha_{\TI}\,\alpha_{\LSI}}\int_{\Sigma_z}\abs*{\nabla_{\Sigma_z}\log\bra*{\frac{\rho_t}{\mu}}}^2d\bar\rho_{t,z}.
\end{equation}
The final inequality  follows from the definition of the conditional measure $\bar\rho_{t,z},\bar\mu_z$ and by noting that 
$\nabla_{\Sigma_z}\hat\rho=\nabla_{\Sigma_z}\hat\mu=0$. Combining~\eqref{eq:FP-h_t-1-term} and~\eqref{eq:FP-h_t-2-term} and applying Young's inequality we obtain for any $\tau>0$ 
\begin{align*}
|h_t|^2_{A^{-1}}&\leq\frac{\lambda_{\RelEnt}^2}{\beta^2}(1+\tau)\int_{\Sigma_z}\abs*{(D\xi D\xi^\transpose )^{-1/2}D\xi\nabla\log\bra*{\frac{\rho_t}{\mu}}}^2 d\bar\rho_{t,z} \\
&\quad +\frac{\kappa_{\RelEnt}^2}{\alpha_{\TI}\,\alpha_{\LSI}}\bra*{1+\frac{1}{\tau}}\int_{\Sigma_z}\abs*{\nabla_{\Sigma_z}\log\bra*{\frac{\rho_t}{\mu}}}^2d\bar\rho_{t,z}.
\end{align*}
Substituting into $\I(\hat\rho)$ we find 
\begin{equation}\label{eq:FP-I-hat-rho-Calc}
\begin{aligned}
\I(\hat\rho) 
&\leq \frac{\lambda_{\RelEnt}^2}{4\beta}(1+\tau)\int_0^T \int_{\R^d}\abs*{(D\xi D\xi^\transpose )^{-1/2}D\xi\nabla\log\bra*{\frac{\rho_t}{\mu}}}^2d\rho_t\,dt\\
&\quad +\frac{\kappa_{\RelEnt}^2\beta}{4\alpha_{\TI}\,\alpha_{\LSI}}\bra*{1+\frac{1}{\tau}}\int_0^T\int_{\R^d}\abs*{\nabla_{\Sigma_z}\log\bra*{\frac{\rho_t}{\mu}}}^2d\rho_{t}\,dt.
\end{aligned}
\end{equation}
We need to combine the two terms on the right hand side. Note that for any $v\in \R^d$
\begin{align*}
 |D\xi^\transpose (D\xi D\xi^\transpose )^{-1}D\xi v|^2 &= v^\transpose  D\xi^\transpose (D\xi D\xi^\transpose )^{-1}D\xi D\xi^\transpose (D\xi D\xi^\transpose )^{-1}D\xi v \\
  &= v^\transpose  D\xi^\transpose (D\xi D\xi^\transpose )^{-1} D\xi v = | (D\xi D\xi^\transpose )^{-1/2}D\xi v |^2 .
\end{align*}
Since in addition $\nabla_{\Sigma_z}=(\Id-D\xi^\transpose (D\xi D\xi^\transpose )^{-1}D\xi)\nabla$, the two terms within the integrals in~\eqref{eq:FP-I-hat-rho-Calc} 
combine to $|\nabla\log(\rho_t/\mu)|^2$, which is the Fisher information for the original Fokker-Planck equation. 
By choosing  $\tau=\frac{\kappa_H^2\beta^2}{\alpha_{\TI}\alpha_{\LSI}\lambda_H^2}$, the pre-factors to the two integrals become equal and the claimed result follows.
\end{proof}
\begin{proof}[Proof of Theorem~\ref{thm:FP-RelEnt-Main-Res}]
Substituting the result of Lemma~\ref{lem:Cross-Rate-Fn-Est} into~\eqref{eq:Apply-HR-CG-Eff}, for any $t\in [0,T]$
\begin{align}
\label{eq:FP-RelEntropyEstimatefirst}
\RelEnt(\hat\rho_t|\eta_t)\leq \RelEnt(\hat\rho_0|\eta_0)+
\frac{1}{4}\bra*{\frac{\lambda_{\RelEnt}^2}{\beta}+\frac{\kappa_{\RelEnt}^2\beta}{\alpha_{\TI}\,\alpha_{\LSI}}}\int_0^T \int_{\R^d}\abs*{\nabla\log\bra*{\frac{\rho_t}{\mu}}}^2d\rho_t\,dt.
\end{align}
Rewriting the Fokker-Planck equation as $\partial_t \rho_t = \frac{1}{\beta} \nabla\cdot\bra*{ \rho_t \nabla\log \frac{\rho_t}{\mu}}$, it follows that 
\begin{align}\label{eq:FP-Der-RelEnt-Fisher}
\frac{d}{dt}\RelEnt(\rho_t|\mu)= \int_{\R^d} \bra*{\log\bra*{\frac{\rho_t}{\mu}}+1} \frac{1}{\beta} \nabla\cdot \bra*{\rho_t \nabla \log \frac{\rho_t}{\mu}} \; dx = -\frac{1}{\beta} \int_{\R^d}\abs*{\nabla\log\bra*{\frac{\rho_t}{\mu}}}^2d\rho_t .
\end{align}
Plugging this into~\eqref{eq:FP-RelEntropyEstimatefirst}, for any $t>0$ we arrive at 
\begin{align*}
\RelEnt(\hat\rho_t|\eta_t)\leq \RelEnt(\hat\rho_0|\eta_0)+
\frac{1}{4}\bra*{\lambda_{\RelEnt}^2 + \frac{\kappa_{\RelEnt}^2\beta^2}{\alpha_{\TI}\,\alpha_{\LSI}}}\bra[\big]{\RelEnt(\rho_0|\mu)-\RelEnt(\rho_T|\mu)},
\end{align*}
which is the claimed result.
\end{proof}
\subsection{Wasserstein estimates}\label{S:FP-Wasser}
In this section we state and prove Theorem~\ref{thm:Intro-Wass-Over-Lang}, which estimates in Wasserstein-2 distance  the error between the  coarse-grained dynamics $\hat\rho_t$~\eqref{eq:FP-Push-For-Dyn-Div} and the effective dynamics $\eta_t$~\eqref{eq:FP-Effect-Dyn}. 
\begin{thm}
 \label{thm:FP-Wasser-Main-Res}
Consider a coarse-graining map $\xi$ satisfying Assumption \ref{ass:xi}, a potential $V$ satisfying Assumption~\ref{ass:V} and initial datum $\rho_0$ with finite second moment and $\RelEnt(\rho_0 | \mu) <\infty$.
 Moreover, define \begin{align}
& \kappa_{\Wasser}:=\sup\limits_{z\in \R^k}\sup\limits_{y_1,y_2\in \Sigma_z}\frac{\abs*{(D\xi\nabla V-\beta^{-1}\Delta\xi)(y_1)-(D\xi\nabla V-\beta^{-1}\Delta\xi)(y_2)}}{d_{\Sigma_z}(y_1,y_2)},       \label{def:FP-kappa_Wasser} \\
&\lambda_{\Wasser}:=\sup\limits_{z\in \R^k}\sup\limits_{y_1,y_2\in \Sigma_z}\frac{\abs*{\sqrt{D\xi D\xi^\transpose (y_1)}-\sqrt{D\xi D\xi^\transpose (y_2)}}_F}{d_{\Sigma_z}(y_1,y_2)}, \label{def:FP-lambda_Wasser}
\end{align}
where $\abs{\cdot}_F$ is the Frobenius norm for matrices (see page~\pageref{notation_norms}). Assume that the conditional stationary measure $\bar\mu_z$ satisfies the Talagrand inequality~\eqref{def:TI} and Log-Sobolev inequality~\eqref{def:LSI} uniformly in $z$ with constants $\alpha_{\TI} > 0$ and $\alpha_{\LSI}>0$.
Then for any $t\in [0,T]$  
\begin{align}\label{eq:FP-Wasser-LSI-Est}
\Wasser_2^2(\hat\rho_t,\eta_t)\leq e^{\tilde c_{\Wasser} t}\bra*{\Wasser_2^2(\hat\rho_0,\eta_0)+{\frac{4\lambda^2_{\Wasser}+\beta\kappa^2_{\Wasser}}{\alpha_{\TI}\alpha_{\LSI}}}\bra[\Big]{\RelEnt(\rho_0|\mu)-\RelEnt(\rho_t|\mu)}}.
\end{align}
with  $\tilde c_{\Wasser} = (1+\max\{4\beta^{-1}\|\div_z A\|^2_\infty,2\|\nabla_z b\|_\infty\})$.
\end{thm}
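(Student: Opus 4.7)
The plan is to implement the coupling method already introduced in~\eqref{eq:FP-Coupling-Evo}-\eqref{eq:FP-Coupling-Evo-Coeff}. I would fix the time-dependent \emph{basic} coupling $\Pi_t\in\mathcal P(\R^{2k})$ with marginals $\hat\rho_t$ and $\eta_t$, so that $\Wasser_2^2(\hat\rho_t,\eta_t)\leq\int_{\R^{2k}}\abs{z_1-z_2}^2\,d\Pi_t$, and all the work reduces to a Gronwall estimate for this quantity. Because the $\hat\rho_t$-equation may lack uniqueness, I would construct $\Pi_t$ using the lifted effective dynamics $\theta_t$ from Lemma~\ref{lem:liftedEffDyn} to ensure well-posedness and uniform second-moment bounds inherited from Proposition~\ref{prop:FP-effect-Dyn} and Theorem~\ref{thm:Eff-Well-Posed}.

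Differentiating $t\mapsto\int\abs{z_1-z_2}^2\,d\Pi_t$ through~\eqref{eq:FP-Coupling-Evo}, after an approximation by $C_c^2$ test functions justified by the second moment bounds, gives
\begin{equation*}
\frac{d}{dt}\int\abs{z_1-z_2}^2\,d\Pi_t =\int\!\bra[\big]{2\beta^{-1}\bigl|\sqrt{\hat A(t,z_1)}-\sqrt{A(z_2)}\bigr|_F^2 - 2(z_1-z_2)\cdot(\hat b(t,z_1)-b(z_2))}d\Pi_t.
\end{equation*}
I would then split each difference by adding and subtracting the effective coefficient at~$z_1$:
\begin{align*}
\hat b(t,z_1)-b(z_2) &= \bigl[\hat b(t,z_1)-b(z_1)\bigr]+\bigl[b(z_1)-b(z_2)\bigr],\\
\sqrt{\hat A(t,z_1)}-\sqrt{A(z_2)} &= \bigl[\sqrt{\hat A(t,z_1)}-\sqrt{A(z_1)}\bigr]+\bigl[\sqrt{A(z_1)}-\sqrt{A(z_2)}\bigr].
\end{align*}
The $z$-Lipschitz parts $b(z_1)-b(z_2)$ and $\sqrt{A(z_1)}-\sqrt{A(z_2)}$ are dominated via Lemma~\ref{lem:Eff-Coeff-Properties} by constants involving $\norm{\nabla_z b}_\infty$ and $\norm{\div_z A}_\infty$; together with one application of Young's inequality on the drift cross-term, these terms assemble into a Gronwall prefactor $\tilde c_\Wasser\int\abs{z_1-z_2}^2\,d\Pi_t$.

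For the remaining "conditional" pieces $\hat b(t,z)-b(z)$ and $\sqrt{\hat A(t,z)}-\sqrt{A(z)}$, I would use the definitions of $\kappa_\Wasser$ and $\lambda_\Wasser$ in~\eqref{def:FP-kappa_Wasser}-\eqref{def:FP-lambda_Wasser} together with a $d_{\Sigma_z}$-optimal coupling of $\bar\rho_{t,z}$ and $\bar\mu_z$ to obtain
\begin{equation*}
\abs{\hat b(t,z)-b(z)}^2\leq \kappa_\Wasser^2\,\Wasser_2^2(\bar\rho_{t,z},\bar\mu_z),\qquad \bigl|\sqrt{\hat A(t,z)}-\sqrt{A(z)}\bigr|_F^2 \leq \lambda_\Wasser^2\,\Wasser_2^2(\bar\rho_{t,z},\bar\mu_z),
\end{equation*}
with $\Wasser_2$ measured in the intrinsic metric on $\Sigma_z$. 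Chaining Talagrand and Log-Sobolev on each level set yields $\Wasser_2^2(\bar\rho_{t,z},\bar\mu_z)\leq\frac{1}{\alpha_\TI\alpha_\LSI}\RF(\bar\rho_{t,z}|\bar\mu_z)$; integrating against $d\hat\rho_t(z)$ and using the orthogonal splitting of the Fisher information from the proof of Corollary~\ref{FP:MarginalEDP} bounds the resulting expression by $(\alpha_\TI\alpha_\LSI)^{-1}\RF(\rho_t|\mu)$. Integrating in time and invoking the entropy-dissipation identity~\eqref{eq:FP-Der-RelEnt-Fisher}, namely $\beta^{-1}\int_0^t\RF(\rho_s|\mu)\,ds = \RelEnt(\rho_0|\mu)-\RelEnt(\rho_t|\mu)$, converts the inhomogeneity into the desired form $\bra*{(4\lambda_\Wasser^2+\beta\kappa_\Wasser^2)/(\alpha_\TI\alpha_\LSI)}\,\bigl[\RelEnt(\rho_0|\mu)-\RelEnt(\rho_t|\mu)\bigr]$. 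A standard Gronwall argument with multiplier~$\tilde c_\Wasser$ then yields~\eqref{eq:FP-Wasser-LSI-Est}.

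The main obstacle is the matrix-square-root estimate $\bigl|\sqrt{\hat A(t,z)}-\sqrt{A(z)}\bigr|_F^2\leq\lambda_\Wasser^2\Wasser_2^2(\bar\rho_{t,z},\bar\mu_z)$: since the map $M\mapsto\sqrt M$ does not commute with conditional expectation, the level-set Lipschitz control of $q\mapsto\sqrt{G(q)}$ encoded in $\lambda_\Wasser$ must be transferred to the square roots of the averaged tensors $\hat A$ and $A$. The cleanest route is to exploit that $\sqrt{\cdot}$ is operator-Lipschitz on matrices uniformly bounded below, which is guaranteed by Assumption~\ref{Ass:Dxi-Strict-PD}, and then to transport the Lipschitz bound through the $d_{\Sigma_z}$-optimal coupling. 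A secondary technical nuisance is making the time-differentiation of $\int\abs{z_1-z_2}^2\,d\Pi_t$ rigorous given the weak notion of solution from Definition~\ref{def:FP:sol}; the lifted coupling via Lemma~\ref{lem:liftedEffDyn} together with a standard truncation/cutoff of $\abs{z_1-z_2}^2$ should take care of this.
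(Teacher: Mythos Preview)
Your overall strategy matches the paper's proof exactly: the lifted coupling via Lemma~\ref{lem:liftedEffDyn} to handle non-uniqueness of the coarse-grained equation, differentiation of $\int\abs{z_1-z_2}^2\,d\Pi_t$, the add-and-subtract splitting of the coefficients, Gronwall, and then Talagrand~+~Log-Sobolev on the level sets followed by the entropy-dissipation identity. That is precisely what the paper does in Lemmas~\ref{lem:Over-Lang-Coup-Well-Posed}--\ref{lem:FP-Wasser-Terms-Est}.

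The one substantive difference is in the matrix-square-root estimate, and your proposed route there would not quite deliver the stated constant~$\lambda_\Wasser$. You suggest using that $M\mapsto\sqrt{M}$ is operator-Lipschitz on matrices bounded below; this gives $\bigl|\sqrt{\hat A}-\sqrt{A}\bigr|_F\leq C_{\mathrm{op}}\,\abs{\hat A-A}_F$, after which you would control $\hat A-A=\int G\,(d\bar\rho_{t,z}-d\bar\mu_z)$ through a coupling. But that yields a bound in terms of the level-set Lipschitz constant of $G$ multiplied by~$C_{\mathrm{op}}$, not the Lipschitz constant of $\sqrt{G}$, so the resulting constant is not~$\lambda_\Wasser$ as defined in~\eqref{def:FP-lambda_Wasser}. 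The paper avoids this loss with a neat trick: the map $(M,N)\mapsto\bigl|\sqrt{M}-\sqrt{N}\bigr|_F^2=\Tr M+\Tr N-2\Tr(\sqrt{M}\sqrt{N})$ is \emph{jointly convex} on pairs of positive-definite matrices (Lieb's concavity theorem gives the concavity of $\Tr(\sqrt{M}\sqrt{N})$). A two-component Jensen inequality applied through any coupling~$\Theta$ of $\bar\rho_{t,z}$ and~$\bar\mu_z$ then gives
\[
\bigl|\sqrt{\hat A(t,z)}-\sqrt{A(z)}\bigr|_F^2\leq\int_{\Sigma_z\times\Sigma_z}\bigl|\sqrt{G(y_1)}-\sqrt{G(y_2)}\bigr|_F^2\,d\Theta(y_1,y_2)\leq\lambda_\Wasser^2\int d_{\Sigma_z}(y_1,y_2)^2\,d\Theta,
\]
and optimising over~$\Theta$ yields exactly $\lambda_\Wasser^2\,\Wasser_2^2(\bar\rho_{t,z},\bar\mu_z)$. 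This is the only idea you are missing; everything else in your outline is the paper's argument.
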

%
\begin{rem}
The constants $\lambda_{\Wasser},\kappa_{\Wasser}$ are indeed finite. This follows since 
\begin{align*}
\lambda_{\Wasser}\leq \norm*{\abs*{\nabla_{\Sigma_z} \sqrt{D\xi D\xi^\transpose }}_{I\to F}}_{L^\infty(\R^d)} 
\quad\text{and}\quad 
\kappa_{\Wasser}\leq \norm*{\abs*{\nabla_{\Sigma_z} \bra*{D\xi\nabla V-\beta^{-1}\Delta\xi}}}_{L^\infty(\R^d)},
\end{align*}
where the right hand side is bounded uniformly in $z\in \R^k$ by assumptions~\ref{Ass:xi-Regularity-Dxi-Full-Rank}-\ref{Ass:xi-Growth-Cond} on $\xi$, 
assumption~\ref{Ass:FP-Pot-Growth}, and since $\nabla_{\Sigma_z}=(\Id-D\xi^\transpose (D\xi D\xi^\transpose )^{-1}D\xi)\nabla$. The notation $|\cdot|_{I\to F}$ was 
introduced below \eqref{eq:weightesEuclid}. Note, that the 
constants $\kappa_{\Wasser}$ and $\lambda_{\Wasser}$ have a similar interpretation as the constants $\lambda_{\RelEnt}$ and $\kappa_{\RelEnt}$ in Theorem~\ref{thm:FP-RelEnt-Main-Res} (see Remark~\ref{rem:constant:FP:RelEnt}). They respectively measure the interaction of the dynamics on and across the level sets, and the local variations of the effective diffusion on the level sets.
\end{rem}
%
%
In the remainder of this section we prove Theorem~\ref{thm:FP-Wasser-Main-Res}. 
The central ingredient in the proof of this theorem is a time-dependent coupling $\Pi_t\in \mathcal P(\R^{2k})$ of the coarse-grained and the effective dynamics (defined in the sense of Definition~\ref{def:FP:sol}):
\begin{align}\label{eq:FP-Wasser-Coup}
\begin{cases}
\partial_t\Pi=\div_{\mathbf{z}}(\mathbf{b}\Pi)+\beta^{-1}D^2_{\mathbf{z}}:\mathbf{A}\Pi,\\
\Pi_{t=0}=\Pi_0,
\end{cases}
\end{align}
with coefficients $\mathbf{b}:[0,T]\times\R^{2k}\rightarrow \R^{2k}$ and $\mathbf{A}:[0,T]\times\R^{2k}\rightarrow \R^{2k\times 2k}$ defined by
\begin{align}
 \mathbf{b}(t,z_1,z_2)&:=\begin{pmatrix}\hat b(t,z_1)\\b(z_2)\end{pmatrix} \label{eq:FP-Wasser-b}\\
 \mathbf{A}(t,z_1,z_2)&:=\sigma\sigma^\transpose  \quad\text{ with }\quad \sigma(t,z_1,z_2):=\begin{pmatrix}\sqrt{\hat A(t,z_1)}\\\sqrt{A(z_2)}\end{pmatrix}.  \label{eq:FP-Wasser-A}
\end{align}
Here $\Pi_0$ is the optimal coupling of the initial data and $D^2_{\mathbf{z}},\, \div_{\mathbf{z}}$ are differential operators on $\R^{2k}$.

Let us point out that the SDE corresponding to $\Pi$ is a coupling of the SDEs corresponding to the $\hat\rho$ and~$\eta$ by the same realization of a Brownian motion in $\R^k$. Therefore any calculations with $\Pi$ can also be carried out in the SDE setting.

The existence theorem quoted after Definition~\ref{def:FP:sol} ensures that~\eqref{eq:FP-Wasser-Coup} has a solution. However proving that the solution $\Pi$ is indeed a coupling is a more subtle issue. The reason is that the uniqueness statement for the coarse-grained equation, to the best of our knowledge, would require sufficiently regular initial data, which we wish to avoid (see Remark~\ref{rem:WellPosednessCGdyn}). To get around this lack of uniqueness, we introduce a coupling of the original dynamics and the lifted effective dynamics introduced in Lemma~\ref{lem:liftedEffDyn}. The push-forward of this new coupling will solve the equation~\eqref{eq:FP-Wasser-Coup} with marginals given by the coarse-grained dynamics $\hat\rho_t$ and effective dynamics $\eta_t$. In this way, we construct a solution to~\eqref{eq:FP-Wasser-Coup} which has  the correct marginals. The next lemma makes these ideas precise. 
\begin{lem}[Existence of the coupling]\label{lem:Over-Lang-Coup-Well-Posed}
Let $\Pi_0$ be the optimal Wasserstein-2 coupling of the initial data~$\hat\rho_0$ and $\eta_0$. Then there exists a family of probability measures $(\Pi_t)_{t\in [0,T]}$ which solves~\eqref{eq:FP-Wasser-Coup}. Further $\Pi_t$ is a coupling of $\hat\rho_t,\,\eta_t$ and has bounded second moments for any $t\in [0,T]$.
\end{lem}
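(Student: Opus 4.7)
The plan is to construct $\Pi_t$ indirectly, bypassing the obstruction that the first marginal of a putative solution of~\eqref{eq:FP-Wasser-Coup} cannot be identified with $\hat\rho_t$ by a direct uniqueness argument, since the coarse-grained Fokker-Planck equation need not admit a unique solution (Remark~\ref{rem:WellPosednessCGdyn}). Instead I would build a coupling $\tilde\Pi_t$ on $\R^{2d}$ of the full overdamped dynamics $\rho_t$ and the lifted effective dynamics $\theta_t$ from Lemma~\ref{lem:liftedEffDyn}, and then define $\Pi_t := \Xi_\#\tilde\Pi_t$ with $\Xi(q_1,q_2) := (\xi(q_1),\xi(q_2))$.

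Concretely, I would consider the $\R^{2d}$ Fokker-Planck equation
\[
\partial_t \tilde\Pi = \div_{\mathbf q}(\tilde{\mathbf b}\,\tilde\Pi) + \beta^{-1} D_{\mathbf q}^2:\bra{\tilde{\mathbf A}\,\tilde\Pi},
\]
with drift $\tilde{\mathbf b} = (\nabla V,\tilde b)$ inherited from the two component dynamics, and with diffusion $\tilde{\mathbf A}$ whose diagonal blocks are $I_d$ and $\tilde A$, and whose off-diagonal block $\tilde A_{12}$ is designed so that $D\xi(q_1)\tilde A_{12}(q_1,q_2)D\xi^\transpose(q_2) = \sqrt{\hat A(t,\xi(q_1))}\sqrt{A(\xi(q_2))}$; a natural candidate is
\[
  \tilde A_{12}(q_1,q_2) = D\xi^\transpose(q_1) G^{-1}(q_1) \sqrt{\hat A(t,\xi(q_1))}\sqrt{A(\xi(q_2))} G^{-1}(q_2) D\xi(q_2),
\]
possibly modified to preserve positive-semidefiniteness of $\tilde{\mathbf A}$. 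The initial datum $\tilde\Pi_0$ can be taken as the disintegration $\int \Pi_0(dz_1\,dz_2)\,\bar\rho_{0,z_1}\!\otimes\bar\theta_{0,z_2}$, so that $\Xi_\#\tilde\Pi_0 = \Pi_0$ and the $q_i$-marginals of $\tilde\Pi_0$ are $\rho_0$ and $\theta_0$. Existence of $\tilde\Pi_t$ follows from~\cite[Theorem 6.7.3]{BKRS15}, with $\frac{1}{2}(|q_1|^2+|q_2|^2)$ serving as a Lyapunov function thanks to the growth controls of Assumption~\ref{ass:xi} and Lemma~\ref{lem:Eff-Coeff-Properties}.

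Testing this equation against functions depending only on $q_1$ (respectively $q_2$) reduces it to the original Fokker-Planck equation~\eqref{eq:FokkerPlanck} (respectively the lifted equation~\eqref{e:liftedEffDyn}); uniqueness of the original Fokker-Planck identifies the $q_1$-marginal with $\rho_t$, while the $q_2$-marginal pushes forward under $\xi$ to $\eta_t$ by the argument of Lemma~\ref{lem:liftedEffDyn} combined with uniqueness for the effective dynamics (Theorem~\ref{thm:Eff-Well-Posed}). Setting $\Pi_t := \Xi_\#\tilde\Pi_t$, Proposition~\ref{prop:FP-effect-Dyn} and the above identify the marginals as $\hat\rho_t$ and $\eta_t$. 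To see that $\Pi_t$ solves~\eqref{eq:FP-Wasser-Coup}, I would, as in the proof of Lemma~\ref{lem:liftedEffDyn}, test $\tilde\Pi_t$ against $g\circ\Xi$ for $g\in C^2_c(\R^{2k})$ and expand via the chain rule: the diagonal blocks of $\tilde{\mathbf A}$ collapse, block-by-block and after conditional integration, into $\hat A(t,z_1)$ and $A(z_2)$; the drifts produce $\hat b$ and $b$ exactly as in Proposition~\ref{prop:FP-effect-Dyn} and Lemma~\ref{lem:liftedEffDyn}; and the cross block reduces to $\sqrt{\hat A}\sqrt A$ by construction. The bounded second moments of $\Pi_t$ are inherited from those of $\tilde\Pi_t$ together with the affine-at-infinity property~\eqref{e:xi-affine-infinity}.

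The main obstacle is the off-diagonal block $\tilde A_{12}$: it must simultaneously keep $\tilde{\mathbf A}$ positive-semidefinite on all of $\R^{2d}$---a Cauchy-Schwarz-type constraint not obviously satisfied by the candidate above, since $\sqrt{\hat A(t,\xi(q_1))}$ can exceed $\sqrt{G(q_1)}$ pointwise on a level set---and produce exactly the cross coefficient $\sqrt{\hat A}\sqrt A$ of~\eqref{eq:FP-Wasser-A} after $\Xi$-projection, with no remaining conditional averaging over level sets. A `naive' same-Brownian-motion coupling in $\R^d$ would instead yield $\EX[G^{1/2}\mid \Sigma_{z_1}]\sqrt{A(z_2)}$ by Jensen's inequality, which is strictly smaller in general; engineering $\tilde A_{12}$ to avoid this mismatch while remaining admissible is the technical heart of the lemma.
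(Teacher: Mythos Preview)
Your overall strategy---lift to $\R^{2d}$, couple the full dynamics $\rho_t$ with the lifted effective dynamics $\theta_t$ of Lemma~\ref{lem:liftedEffDyn}, then push forward by $\xi\otimes\xi$---is exactly the paper's. The identification of marginals via uniqueness of~\eqref{eq:FokkerPlanck} and of the effective equation (Theorem~\ref{thm:Eff-Well-Posed}), and the second-moment bound, also match.

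Where you depart is in trying to engineer the off-diagonal block $\tilde A_{12}$. The paper does no such thing: it simply takes the basic same-noise coupling $\tilde\sigma(x_1,x_2)=\bigl(\Id_d,\ \sqrt{\tilde A(x_2)}\bigr)^{\!\transpose}$, so that $\tilde{\mathbf A}=\tilde\sigma\tilde\sigma^\transpose$ is automatically positive semidefinite, and then asserts that the push-forward solves~\eqref{eq:FP-Wasser-Coup} ``by repeating the calculations'' of Proposition~\ref{prop:FP-effect-Dyn} and Lemma~\ref{lem:liftedEffDyn}. Your positive-semidefiniteness worry therefore never arises, and your proposed $\tilde A_{12}$ is not part of the paper's argument. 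That said, the concern you isolate for the cross block---that conditioning under $\tilde\Pi_t$ on $(z_1,z_2)$ need not reproduce the coefficient appearing in~\eqref{eq:FP-Wasser-Coup}---applies just as much to the $(1,1)$ diffusion block and the first drift component, which you treat as unproblematic: the disintegration of $\tilde\Pi_t$ along $\Sigma_{z_1}\times\Sigma_{z_2}$ restricted to the first factor need not coincide with $\bar\rho_{t,z_1}$, so there is no immediate reason the conditional average of $G(x_1)$ equals $\hat A(t,z_1)$. The paper's proof does not make this step explicit either, so your instinct that something is delicate here is sound; but the difficulty is broader than the off-diagonal block alone and would not be removed by your proposed fix.
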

\begin{proof}
We first define a coupling on $\R^{2d}$ denoted by $\tilde \Pi$ which solves
\begin{align}\label{eq:tildePi}
\begin{cases}
\partial_t\tilde\Pi=D_{\mathbf{x}}(\tilde{\mathbf{b}}\tilde\Pi)+\beta^{-1}D^2_{\mathbf{x}}:\bra*{\tilde{\mathbf{A}}\tilde\Pi},\\
\tilde\Pi_{t=0}=\tilde\Pi_0,
\end{cases}
\end{align}
where $\tilde \Pi_0$ is a probability measure with bounded second moment on $\R^{2d}$ with $\bra*{\xi \otimes \xi}_\# \tilde \Pi_0 = \Pi_0$. The variable $\mathbf{x}\in \R^{2d}$ and $D_{\mathbf{x}}$, $D^2_{\mathbf{x}}$ are differential operators on $\R^{2d}$.
Here the coefficients $\tilde{\mathbf b}:\R^{2d}\rightarrow \R^{2d}$ and $\tilde{\mathbf{A}}:\times\R^{2d}\rightarrow \R^{2d\times 2d}$ are defined by 
\begin{align*}
 \tilde{\mathbf{b}}(x_1,x_2)&:=\begin{pmatrix} \nabla V(x_1) \\ \tilde b(x_2)\end{pmatrix} \\
 \tilde{\mathbf{A}}(x_1,x_2)&:=\tilde\sigma\tilde\sigma^\transpose  \quad\text{ with }\quad \tilde\sigma(x_1,x_2):=\begin{pmatrix} \Id_d \\\sqrt{\tilde A(x_2)}\end{pmatrix}.
\end{align*}
The existence of a solution in the sense of Definition~\ref{def:FP:sol} follows from~\cite[Theorem 6.7.3]{BKRS15}. Next, we define  $\Pi_t := \bra*{\xi\otimes \xi}_\# \tilde\Pi_t$. To verify that $\Pi_t$ solves~
\eqref{eq:FP-Wasser-Coup}, we use $g\circ (\xi \otimes \xi)$ as a test function in the weak formulation of~\eqref{eq:tildePi} (cf. 
Definition~\ref{def:FP:sol}), where $g\in C_c^2(\R^{2k})$. Repeating the calculations as in the proofs of Proposition~\ref{prop:FP-effect-Dyn} and Lemma~\ref{lem:liftedEffDyn}, it then follows that $\Pi_t$ solves~\eqref{eq:FP-Wasser-Coup}. Now, we also note that the first marginal of $\tilde\Pi_t$ is the unique solution of the full dynamics~\eqref{eq:FokkerPlanck} and the second marginal is a solution to lifted effective dynamics of 
Lemma~\ref{lem:liftedEffDyn}. This is easily checked by choosing  $g(x_1,x_2)=h(x_1)$ and  $g(x_1,x_2)=h(x_2)$ for  some $h \in C_c^2(\R^{d})$ as test function in the weak form of~
\eqref{eq:tildePi}. In particular, this implies that the first marginal of $\Pi_t$ is $\xi_\#\rho_t=\hat\rho_t$ by construction and the second marginal of $\Pi_t$ is $\eta_t$ by Lemma~\ref{lem:liftedEffDyn}, which  is unique
by Theorem~\ref{thm:Eff-Well-Posed}. Hence, we have obtained the desired coupling $\Pi_t$. It is left to show that $\Pi_t$ has bounded second moments, which follows directly by estimating 
$\abs{\mathbf{z}}^2 \leq 2\abs{z_1}^2 + 2\abs{z_2}^2$ from the statements for the marginals $\hat\rho_t$ in Proposition~\ref{prop:FP-effect-Dyn} and $\eta_t$ in Lemma~\ref{lem:liftedEffDyn}.
%
\end{proof}
The proof of Theorem~\ref{thm:FP-Wasser-Main-Res} relies on differentiating $\int |z_1-z_2|^2 \, \Pi_t(dz_1dz_2)$, appropriately estimating the resulting terms and then applying a Gronwall argument. This is the content of the next two lemmas.
\begin{lem}\label{lem:FP-Diff-Coup}
The coupling $\Pi$ constructed in Lemma~\ref{lem:Over-Lang-Coup-Well-Posed} satisfies
\begin{align}\label{eq:FP-Time-Der-Coup}
\frac{d}{dt}\int_{\R^{2k}}|z_1-z_2|^2d\Pi_t(z_1,z_2) &\leq \tilde c_{\Wasser}\int_{\R^{2k}}|z_1-z_2|^2d\Pi_t(z_1,z_2) \\
&\quad +4\beta^{-1}\norm[\Big]{\abs[\Big]{\sqrt{\hat A(t,\cdot)}-\sqrt{A(\cdot)}}_F}^2_{L^2_{\hat\rho_t}}+\norm*{\hat b(t,\cdot)-b(\cdot)}^2_{L^2_{\hat\rho_t}}, \notag
\end{align}
where $\tilde c_{\Wasser}=(1+\max\{4\beta^{-1}\|\abs{\div_z\sqrt{A}}_{I\to F}\|^2_{\infty},2\|\abs{\nabla_z b}\|_\infty\})$.
\end{lem}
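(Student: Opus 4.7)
The plan is to compute $\tfrac{d}{dt}\int_{\R^{2k}}|z_1-z_2|^2\,d\Pi_t$ directly from the weak formulation of~\eqref{eq:FP-Wasser-Coup}, read off the resulting drift and diffusion contributions, and then split each contribution by adding and subtracting the ``diagonal'' point $z_1$ so that the differences $\hat b(t,z_1)-b(z_1)$ and $\sqrt{\hat A(t,z_1)}-\sqrt{A(z_1)}$ become visible. The first marginal of $\Pi_t$ is $\hat\rho_t$, which is precisely what is needed to turn these pointwise errors into the $L^2_{\hat\rho_t}$ norms on the right-hand side of~\eqref{eq:FP-Time-Der-Coup}; everything left over will be controlled by $|z_1-z_2|^2$ and produce the Gronwall constant $\tilde c_{\Wasser}$.

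Concretely, first I would take $g(z_1,z_2):=|z_1-z_2|^2$ as a test function in Definition~\ref{def:FP:sol}. Since $g\notin C_c^2(\R^{2k})$, I would approximate by $g_R:=\chi_R g$ with smooth cutoffs $\chi_R\in C_c^2(\R^{2k})$ satisfying $\chi_R\equiv 1$ on $B_R$ and $|\nabla\chi_R|,|D^2\chi_R|$ uniformly bounded, and pass to the limit $R\to\infty$ using the uniform-in-time second moment bound on $\Pi_t$ proved in Lemma~\ref{lem:Over-Lang-Coup-Well-Posed} together with the growth bounds on $\mathbf{b}$ and $\mathbf{A}$. This yields
\begin{equation*}
\frac{d}{dt}\int_{\R^{2k}}|z_1-z_2|^2\,d\Pi_t = -\int_{\R^{2k}}\mathbf{b}\cdot\nabla_{\mathbf{z}} g\,d\Pi_t+\beta^{-1}\int_{\R^{2k}}\mathbf{A}:D^2_{\mathbf{z}} g\,d\Pi_t,
\end{equation*}
with $\nabla_{\mathbf{z}} g=(2(z_1-z_2),-2(z_1-z_2))^{\transpose}$ and a block Hessian $D^2_{\mathbf{z}} g$ having $2\Id_k$ on the diagonal blocks and $-2\Id_k$ on the off-diagonal blocks. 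Using the block form $\mathbf{A}=\sigma\sigma^{\transpose}$ from~\eqref{eq:FP-Wasser-A} and the symmetry of $\sqrt{\hat A},\sqrt{A}$, a direct expansion gives the neat identity
\begin{equation*}
\mathbf{A}:D^2_{\mathbf{z}} g = 2\,\Tr\!\bra[\big]{(\sqrt{\hat A(t,z_1)}-\sqrt{A(z_2)})^2} = 2\,\abs[\big]{\sqrt{\hat A(t,z_1)}-\sqrt{A(z_2)}}_F^{2}.
\end{equation*}

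In the second step I would split
\begin{equation*}
\hat b(t,z_1)-b(z_2)=\bra[\big]{\hat b(t,z_1)-b(z_1)}+\bra[\big]{b(z_1)-b(z_2)},
\end{equation*}
and analogously for $\sqrt{\hat A(t,z_1)}-\sqrt{A(z_2)}$. Young's inequality $2a\cdot b\leq |a|^2+|b|^2$ applied to the drift cross-term, together with the elementary $|u+v|_F^2\leq 2|u|_F^2+2|v|_F^2$ applied to the diffusion, separate out $|\hat b(t,z_1)-b(z_1)|^2$ and $|\sqrt{\hat A(t,z_1)}-\sqrt{A(z_1)}|_F^2$ from the remainders. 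The latter remainders are estimated by the Lipschitz properties of $b$ and $\sqrt A$, which follow from Lemma~\ref{lem:Eff-Coeff-Properties} and Assumption~\ref{ass:xi}, and are therefore absorbed into a constant multiple of $|z_1-z_2|^2$; collecting the prefactors yields the constant $\tilde c_{\Wasser}$. Finally, since $\Pi_t$ has first marginal $\hat\rho_t$, the $z_1$-only integrals collapse to $\norm{\hat b(t,\cdot)-b(\cdot)}^2_{L^2_{\hat\rho_t}}$ and $\norm{|\sqrt{\hat A(t,\cdot)}-\sqrt{A(\cdot)}|_F}^2_{L^2_{\hat\rho_t}}$.

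I expect the main obstacle to be the rigorous justification of inserting the unbounded quadratic test function $g$ into the weak formulation. The approximation scheme must control the drift term $\int \mathbf{b}\cdot\nabla_{\mathbf{z}}(\chi_R g)\,d\Pi_t$ and the Hessian term $\int \mathbf{A}:D^2_{\mathbf{z}}(\chi_R g)\,d\Pi_t$ uniformly in $R$; this is where both the second moment bound on $\Pi_t$ from Lemma~\ref{lem:Over-Lang-Coup-Well-Posed} and the linear/bounded growth of $\mathbf{b}$ and $\mathbf{A}$ (through Lemma~\ref{lem:Eff-Coeff-Properties} and the affine-at-infinity property of $\xi$) are essential. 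Once this step is in place, the remaining estimates are routine applications of Young's inequality and Lipschitz continuity.
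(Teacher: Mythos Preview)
Your proposal is correct and follows essentially the same route as the paper: differentiate $\int|z_1-z_2|^2\,d\Pi_t$ using the weak formulation (after an approximation argument justified by the second-moment bound), identify the diffusion contribution as $2\beta^{-1}\abs{\sqrt{\hat A(t,z_1)}-\sqrt{A(z_2)}}_F^2$, then split both drift and diffusion via the intermediate point $z_1$ and control the remainders by the Lipschitz constants of $b$ and $\sqrt{A}$ from Lemma~\ref{lem:Eff-Coeff-Properties}. The only minor difference is that the paper invokes ``standard regularisation arguments'' without spelling out the cutoff scheme, whereas you are more explicit about the $\chi_R$ approximation; your caution about the growth of $\hat b$ in that step is well placed, but the second-moment bound on $\Pi_t$ is what carries the limit, just as you indicate.
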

\begin{proof}
Since $\Pi_t$ has bounded second moment by Lemma~\ref{lem:Over-Lang-Coup-Well-Posed}, we obtain
\[
  \int_{\R^{2k}} \abs{z_1 - z_2}^2 d\Pi_t(z_1,z_2) \leq 2\int_{\R^{2k}} \bra*{\abs{z_1}^2 + \abs{z_2^2}} d\Pi_t(z_1,z_2) < \infty,
\]
uniformly in $t\in [0,T]$. This bound allows us to approximate $\R^{2k}\ni (z_1,z_2) \mapsto \abs{z_1-z_2}^2$ by smooth functions, and therefore using the form~\eqref{eq:FP:def:solution} (with coefficients from~\eqref{eq:FP-Wasser-Coup}) along with standard regularisation arguments we can calculate
\begin{align}
{\frac{d}{dt}\int_{\R^{2k}}|z_1-z_2|^2 \, d\Pi_t(z_1,z_2)} &= 2\beta^{-1}\int_{\R^{2k}}\mathbf{A}:\begin{pmatrix}
\phantom{-}\Id_k & -\Id_k \\ -\Id_k & \phantom{-}\Id_k
\end{pmatrix}d\Pi_t(z_1,z_2)   \label{eq:FP-Diff-Coup-1} \\
&\quad  -2\int_{\R^{2k}}(z_1-z_2)\cdot\left[\hat b(t,z_1)-b(z_2)\right]d\Pi_t(z_1,z_2). \notag
\end{align}
The first term in the right hand side of~\eqref{eq:FP-Diff-Coup-1} can be estimated via the triangle inequality
\begin{align}\label{eq:FP-Diff-Coup-A-Est}
2\beta^{-1}&\int_{\R^{2k}}\mathbf{A}:\begin{pmatrix}
\phantom{-}\Id_k & -\Id_k \\ -\Id_k & \phantom{-}\Id_k
\end{pmatrix}d\Pi_t(z_1,z_2) =2\beta^{-1}\int_{\R^{2k}}\abs*{\sqrt{\hat A(t,z_1)}-\sqrt{A(z_2)}}^2_Fd\Pi_t(z_1,z_2)\nonumber\\
&\leq 4\beta^{-1}\int_{\R^{2k}}\pra*{\abs*{\sqrt{\hat A(t,z_1)}-\sqrt{A(z_1)}}^2_F+\abs*{\sqrt{ A(z_1)}-\sqrt{A(z_2)}}^2_F}d\Pi_t(z_1,z_2)\nonumber\\
&\leq 4\beta^{-1}\norm[\Big]{\abs[\Big]{\sqrt{\hat A(t,\cdot)}-\sqrt{A(\cdot)}}_F}^2_{L^2_{\hat\rho_t}} + 4\beta^{-1}\left\|\abs*{\nabla_z\sqrt{A}}_{I\to F}\right\|^2_\infty\int_{\R^{2k}}|z_1-z_2|^2 \, d\Pi_t(z_1,z_2).
\end{align}
The last inequality follows from Lemma~\ref{lem:Eff-Coeff-Properties}, which states that $|\nabla_z A|_{I\to F}<+\infty$ and $A>0$ uniformly on $\R^k$. Therefore, $\sqrt{A}$ is Lipschitz with a constant bounded from above by $\|\abs{\nabla_z \sqrt{A}}_{I\to F}\|_\infty$. A similar calculation can be used to estimate the second term in the right hand side of~\eqref{eq:FP-Diff-Coup-1},
\begin{align}\label{eq:FP-Diff-Coup-b-Est}
-2&\int_{\R^{2k}}(z_1-z_2)\cdot \bra*{\hat b(t,z_1)-b(z_2)} \,d\Pi_t(z_1,z_2)\nonumber\\
&\leq 2\int_{\R^{2k}}|z_1-z_2|\, |\hat b(t,z_1)-b(z_1)|\, d\Pi_t(z_1,z_2)  +2\int_{\R^{2k}}|z_1-z_2|\, |b(z_1)-b(z_2)|\, d\Pi_t(z_1,z_2)\nonumber\\
&\leq \Bigl\|\hat b(t,\cdot)-b(\cdot)\Bigr\|^2_{L^2_{\hat\rho_t}}+ (1+2\|\abs{\nabla_z b}\|_\infty)\int_{\R^{2k}}|z_1-z_2|^2 \, d\Pi_t(z_1,z_2).
\end{align}
The final inequality follows since $b$ is Lipschitz with a constant bounded by $\|\abs{\nabla_z b}\|_\infty$, where we recall that $\nabla_z b$ is indeed bounded by Lemma~\ref{lem:Eff-Coeff-Properties}. Substituting~\eqref{eq:FP-Diff-Coup-A-Est} and~\eqref{eq:FP-Diff-Coup-b-Est} back into~\eqref{eq:FP-Diff-Coup-1} we conclude the proof.
\end{proof}
We now estimate the normalized terms in~\eqref{eq:FP-Time-Der-Coup}.
\begin{lemma}\label{lem:FP-Wasser-Terms-Est}
Under the same assumptions as in Theorem~\ref{thm:FP-Wasser-Main-Res}
\begin{align}
\left\|\left|\sqrt{\hat A(t,\cdot)}-\sqrt{A(\cdot)}\right|_F\right\|^2_{L^2_{\hat\rho_t}}&\leq \frac{2\lambda_{\Wasser}^2}{\alpha_{\TI}}\int_{\R^k}\RelEnt(\bar\rho_{t,z}\|\bar\mu_{z}) \, d\hat\rho_t(z),\label{eq:FP-A-Est}\\
\left\|\hat b(t,\cdot)-b(\cdot)\right\|^2_{L^2_{\hat\rho_t}}&\leq \frac{2\kappa_{\Wasser}^2}{\alpha_{\TI}}\int_{\R^k}\RelEnt(\bar\rho_{t,z}\|\bar\mu_z) \, d\hat\rho_t(z).\label{eq:FP-b-Est}
\end{align}
\end{lemma}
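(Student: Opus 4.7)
Both bounds follow a common three-step scheme: (i) produce a pointwise-in-$z$ estimate in Wasserstein-2 distance on the level set $\Sigma_z$; (ii) convert to conditional relative entropy via the Talagrand inequality from hypothesis~\ref{ass:H1}; (iii) integrate against the marginal $\hat\rho_t$. Concretely, the uniform Talagrand inequality for $\bar\mu_z$ (with $\Wasser_2$ computed in the intrinsic metric $d_{\Sigma_z}$, which is equivalent to the Euclidean one by \eqref{eq:Intrinsic-Distance-Comparison}) gives for every $z\in\R^k$
\begin{equation*}
\Wasser_2^2(\bar\rho_{t,z},\bar\mu_z)\leq \frac{2}{\alpha_{\TI}}\,\RelEnt(\bar\rho_{t,z}|\bar\mu_z),
\end{equation*}
and I fix an optimal coupling $\Pi_z\in\mathcal{P}(\Sigma_z\times\Sigma_z)$ achieving this.

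For the drift estimate \eqref{eq:FP-b-Est}, set $f(y):=D\xi(y)\nabla V(y)-\beta^{-1}\Delta\xi(y)$, so that $\hat b(t,z)-b(z)=\int(f(y_1)-f(y_2))\,d\Pi_z(y_1,y_2)$. The constant $\kappa_{\Wasser}$ in \eqref{def:FP-kappa_Wasser} is exactly the Lipschitz constant of $f$ on the level set with respect to $d_{\Sigma_z}$, so Jensen followed by Cauchy--Schwarz yields
\begin{equation*}
|\hat b(t,z)-b(z)|^2\leq \kappa_{\Wasser}^2\,\Wasser_2^2(\bar\rho_{t,z},\bar\mu_z)\leq \frac{2\kappa_{\Wasser}^2}{\alpha_{\TI}}\,\RelEnt(\bar\rho_{t,z}|\bar\mu_z),
\end{equation*}
and integrating in $z$ against $\hat\rho_t$ proves \eqref{eq:FP-b-Est}.

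For the diffusion estimate \eqref{eq:FP-A-Est}, the same scheme is applied to the matrix-valued map $y\mapsto \sqrt{G(y)}$, which is $\lambda_{\Wasser}$-Lipschitz on each level set in Frobenius norm by \eqref{def:FP-lambda_Wasser}. The only genuine subtlety is that $\sqrt{\hat A(t,z)}$ is the matrix square root of the integral $\int G\,d\bar\rho_{t,z}$ rather than the integral of $\sqrt G$. I close this gap via the pointwise matrix Minkowski-type bound
\begin{equation*}
\Bigl|\sqrt{\hat A(t,z)}-\sqrt{A(z)}\Bigr|_F^{2}\;\leq\;\int_{\Sigma_z\times\Sigma_z}\bigl|\sqrt{G(y_1)}-\sqrt{G(y_2)}\bigr|_F^{2}\,d\Pi_z(y_1,y_2),
\end{equation*}
which I would establish by realizing $R_i(y_i):=\sqrt{G(y_i)}$ as bounded operators $\R^k\to L^2(\Pi_z;\R^k)$ via $v\mapsto R_i(y_i)v$; one checks $R_1^{\ast}R_1=\hat A(t,z)$ and $R_2^{\ast}R_2=A(z)$, so $\sqrt{\hat A}$ and $\sqrt{A}$ coincide with the absolute values $|R_1|$ and $|R_2|$ from the polar decomposition, and the bound reduces to the Hilbert--Schmidt contractivity of the polar-decomposition map $T\mapsto |T|$. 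Combined with the $\lambda_{\Wasser}$-Lipschitz property of $\sqrt G$ on $\Sigma_z$ and with Talagrand, this produces \eqref{eq:FP-A-Est}.

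The matrix-square-root comparison is the only nontrivial obstacle: once it is in hand, everything else is a direct replay of the coupling/Lipschitz/Talagrand argument used for the drift. If that operator-theoretic step turns out to be delicate, an alternative is to reprove it by expanding $|\sqrt{\hat A}-\sqrt{A}|_F^2=\Tr\hat A+\Tr A-2\Tr(\sqrt{\hat A}\sqrt{A})$, writing the first two traces as integrals of $|\sqrt{G(y_i)}|_F^2$ against $\Pi_z$, and deducing the required cross-term inequality $\int\Tr(\sqrt{G(y_1)}\sqrt{G(y_2)})\,d\Pi_z\leq \Tr(\sqrt{\hat A}\sqrt{A})$ from a Cauchy--Schwarz bound in the Hilbert--Schmidt inner product.
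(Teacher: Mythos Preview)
Your overall architecture is exactly the paper's: couple the conditional measures on $\Sigma_z$, use the Lipschitz constants $\kappa_{\Wasser},\lambda_{\Wasser}$ to pass to $\Wasser_2^2(\bar\rho_{t,z},\bar\mu_z)$, apply Talagrand, and integrate. The drift bound \eqref{eq:FP-b-Est} is handled correctly and matches the paper verbatim.

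The gap is in your justification of the matrix inequality
\[
\bigl|\sqrt{\hat A(t,z)}-\sqrt{A(z)}\bigr|_F^{2}\;\leq\;\int_{\Sigma_z\times\Sigma_z}\bigl|\sqrt{G(y_1)}-\sqrt{G(y_2)}\bigr|_F^{2}\,d\Pi_z(y_1,y_2).
\]
Your operator-theoretic reduction is correct up to the point where you invoke ``Hilbert--Schmidt contractivity of $T\mapsto|T|$'': that statement is \emph{false} in general. For instance with
$T=\bigl(\begin{smallmatrix}1&1\\0&0\end{smallmatrix}\bigr)$ and $S=\bigl(\begin{smallmatrix}1&0\\0&0\end{smallmatrix}\bigr)$ one computes $\|T-S\|_F^2=1$ but $\|\,|T|-|S|\,\|_F^2=3-\sqrt{2}>1$. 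Unwinding your construction, the inequality you actually need is
\[
\int_{\Sigma_z\times\Sigma_z}\Tr\!\bigl(\sqrt{G(y_1)}\sqrt{G(y_2)}\bigr)\,d\Pi_z \;\leq\; \Tr\!\bigl(\sqrt{\hat A}\,\sqrt{A}\bigr),
\]
i.e.\ joint \emph{concavity} of $(M,N)\mapsto\Tr(\sqrt{M}\sqrt{N})$ on positive-definite matrices. Your fallback ``Cauchy--Schwarz in the Hilbert--Schmidt inner product'' does not deliver this either: it gives at best $\Tr(\sqrt{G(y_1)}\sqrt{G(y_2)})\leq\sqrt{\Tr G(y_1)}\sqrt{\Tr G(y_2)}$, which after integrating and Cauchy--Schwarz yields $\sqrt{\Tr\hat A}\,\sqrt{\Tr A}$ rather than $\Tr(\sqrt{\hat A}\sqrt{A})$.

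The paper closes this gap by invoking Lieb's concavity theorem: the map $(M,N)\mapsto\Tr(\sqrt{M}\sqrt{N}^{\transpose})$ is jointly concave on positive-definite matrices (the case $p=q=\tfrac12$, $K=\Id$ of Lieb's theorem), hence $(M,N)\mapsto|\sqrt{M}-\sqrt{N}|_F^2=\Tr M+\Tr N-2\Tr(\sqrt{M}\sqrt{N})$ is jointly convex, and the displayed inequality is then a two-component Jensen inequality with respect to the coupling~$\Pi_z$. Once you replace your polar-decomposition step by this appeal to Lieb, the rest of your argument goes through unchanged.
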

\begin{proof}
For symmetric strictly positive definite matrices $M,N$ the function
\begin{align*}
(M,N)\mapsto \abs*{ \sqrt{M}-\sqrt{N}}^2_F=\Tr\pra*{\bra*{\sqrt{M}-\sqrt{N}}^2},
\end{align*}
is jointly convex in $(M,N)$. This follows by expanding the square and noting that $\Tr[\sqrt{M}\sqrt{N}^\transpose ]$ is concave by Lieb's concavity theorem \cite[Section IX.6.]{Bhatia97}. Since $\hat A,\, A$ are strictly positive-definite, we can apply a two-component Jensen's inequality by using an admissible coupling $\Theta$ of  $\bar\rho_{t,z}$ and $\bar\mu_z$,
\begin{align*}
\MoveEqLeft{\abs*{\sqrt{\hat A(t,z)}-\sqrt{A(z)}}_F^2} =\Tr\pra*{\bra*{\bra*{\int_{\Sigma_z} D\xi D\xi^\transpose (y_1)\bar\rho_{t,z}(y_1)dy_1}^{\frac{1}{2}}-\bra*{\int_{\Sigma_z} D\xi D\xi^\transpose (y_2)\bar\mu_{z}(y_2)dy_2}^{\frac{1}{2}}}^2}\\
&\leq \int_{\Sigma_z\times \Sigma_z}\Tr\pra*{\bra*{\sqrt{D\xi D\xi^\transpose (y_1)}-\sqrt{D\xi D\xi^\transpose (y_2)}}^2}d\Theta(y_1,y_2)\leq \lambda^2_{\Wasser}\int_{\Sigma_z\times\Sigma_z}d_{\Sigma_z}(y_1,y_2)^2\,d\Theta(y_1,y_2),
\end{align*}
where  $\lambda_{\Wasser}$ is defined in~\eqref{def:FP-lambda_Wasser}. Here $d_{\Sigma_z}$ is the intrinsic distance on the level set $\Sigma_z$ defined in~\eqref{def:Intrinsic-Distance-Level-Sets}. Since by assumption $\bar\mu_z$ satisfies the Talagrand inequality with constant $\alpha_{\TI}$, we find
\begin{align*}
\left\|\sqrt{\hat A(t,\cdot)}-\sqrt{A(\cdot)}\right\|^2_{L^2_{\hat\rho_t}}\leq \lambda_{\Wasser}^2\int_{\R^k}\Wasser^2_2(\bar\rho_{t,z},\bar\mu_z)\,d\hat\rho_t(z)\leq \frac{2\lambda_{\Wasser}^2}{\alpha_{\TI}}\int_{\R^k}\RelEnt(\bar\rho_{t,z}|\bar\mu_z)\,d\hat\rho_t(z).
\end{align*}
This proves~\eqref{eq:FP-A-Est}. The proof of~\eqref{eq:FP-b-Est} follows similarly.
\end{proof}

\begin{proof}[Proof of Theorem~\ref{thm:FP-Wasser-Main-Res}]
The prove of Theorem~\ref{thm:FP-Wasser-Main-Res} follows from a Gronwall-type estimate applied to~\eqref{eq:FP-Time-Der-Coup} in combination with the error estimates~\eqref{eq:FP-A-Est} and~\eqref{eq:FP-b-Est}
\begin{align*}
\frac{d}{dt}\pra*{e^{-\tilde c_{\Wasser} t}\int_{\R^{2k}}|z_1-z_2|^2d\Pi_t(z_1,z_2)}\leq e^{-\tilde c_{\Wasser} t}\bigg[ 4\beta^{-1}&\norm[\Big]{\abs[\Big]{\sqrt{\hat A(t,\cdot)}-\sqrt{A(\cdot)}}_F}^2_{L^2_{\rho_t}}+\norm*{\hat b(t,\cdot)-b(\cdot)}^2_{L^2_{\rho_t}} \bigg].
\end{align*}
A straightforward application of the estimates in Lemma~\ref{lem:FP-Wasser-Terms-Est} gives
\begin{align}\label{eq:Central-Wasser-Est-FP}
\Wasser_2^2(\hat\rho_t,\eta_t)\leq e^{\tilde c_{\Wasser} t}\Wasser_2^2(\hat\rho_0,\eta_0)+2\bra*{\frac{4\beta^{-1}\lambda^2_{\Wasser}+\kappa^2_{\Wasser}}{\alpha_{\TI}}}\int_0^t\int_{\R^k}\RelEnt(\bar\rho_{s,z}|\bar\mu_z) \, d\hat\rho_s(z) \, e^{\tilde c_{\Wasser}(t-s)} \, ds.
\end{align}
Using the Log-Sobolev assumption, the final term in~\eqref{eq:Central-Wasser-Est-FP} can be estimated as
\begin{align*}
\int_0^t\int_{\R^k}\RelEnt(\bar\rho_{t,z}|\bar\mu_z) \, d\hat\rho_s(z) \, e^{\tilde c(t-s)} \, ds &\leq
\int_0^t\int_{\R^k}\bra*{\frac{1}{2\alpha_{\LSI}}\int_{\Sigma_z}\abs*{\nabla_{\Sigma_z}\log\bra*{\frac{\bar\rho_{s,z}}{\bar\mu_z}}}^2d\bar\rho_{s,z}} d\hat\rho_s(z) \, e^{\tilde c(t-s)} \, ds\\
&\leq \frac{e^{\tilde c t}}{2\alpha_{\LSI}}\int_0^t\int_{\R^d}\abs*{\nabla\log\bra*{\frac{\rho_s}{\mu}}}^2d\rho_s\,ds,
\end{align*}
where we have used $\nabla_{\Sigma_z}=(\Id-D\xi^\transpose (D\xi D\xi^\transpose )^{-1}D\xi)\nabla$ and the disintegration theorem to arrive at the final inequality. By using
\begin{align*}
\frac{d}{dt}\RelEnt(\rho_t|\mu)=- \frac{1}{\beta} \int_{\R^d}\abs*{\nabla\log\bra*{\frac{\rho_t}{\mu}}}^2d\rho_t,
\end{align*}
we obtain the claimed result~\eqref{eq:FP-Wasser-LSI-Est}.
\end{proof}

\subsection{Estimates for general initial data}\label{S:FP:IndReg}
 Recall from Section~\ref{S:FP-effective} that our estimates throughout this section holds under the assumption~\eqref{ass:f0:bounded:infty}, i.e.\ when the original dynamics solves
\begin{align}\label{eq:FP-Init-data-Approx-Discuss}
\begin{cases} \partial_t \rho^M = \beta^{-1}\Delta \rho^M + \div(\rho^M\nabla V), \\ \rho^M_{t=0}=\rho_0^M:=f_0^M\mu,\end{cases}
\end{align}
where $f_0^M:=\frac{1}{Z_M} \max\set{\min\set{f, M} , 1/M}$ and $Z_M$ is the normalization constant which ensures that the initial data is a probability measure. In what follows we show that we can let $M\rightarrow\infty$ in our estimates.

The relative entropy estimate~\eqref{eq:Central-RelEnt-Est-FP} and the Wasserstein-2 estimate~\eqref{eq:FP-Wasser-LSI-Est} give
\begin{align}
\RelEnt(\hat\rho^M_t|\eta_t) &\leq \RelEnt(\hat\rho_0^M|\eta_0)+\frac{1}{4}\bra*{\lambda_{\RelEnt}^2+\frac{\kappa_{\RelEnt}^2\beta^2}{\alpha_{\TI}\,\alpha_{\LSI}}}\bra*{\RelEnt(\rho^M_0|\mu)-\RelEnt(\rho^M_t|\mu)},\\
\Wasser_2^2(\hat\rho^M_t,\eta_t)&\leq e^{\tilde c t} \bra* {\Wasser_2^2(\hat\rho^M_0,\eta_0)+\bra*{\frac{4\beta^{-1}\lambda^2_{\Wasser}+\kappa^2_{\Wasser}}{\alpha_{\TI}\,\alpha_{\LSI}}} \bra[\big]{\RelEnt(\rho^M_0|\mu)-\RelEnt(\rho^M_t|\mu)}}.
\end{align}
These estimates depend on the parameter $M$ through the terms $\RelEnt(\rho^M_0|\mu)$, $\RelEnt(\rho^M_t|\mu)$, $\RelEnt(\hat\rho^M_t|\eta_t)$ and $\Wasser_2^2(\hat\rho^M_t,\eta_t)$. Since
\begin{equation*}
\rho_0^M:= f_0^M \mu\rightarrow f_0\mu=\rho_0 \ \ \text{a.e.},
\end{equation*}
by the dominated convergence theorem it follows that $\RelEnt(\rho_0^M|\mu)\to \RelEnt(\rho_0|\mu)$ as $M\rightarrow \infty$. Since we assume that $\RelEnt(\hat\rho_0 | \eta_0)<\infty$, we have $\hat\rho_0 = \xi_\# \rho_0 \ll \eta_0$ and by the regularity assumptions~\ref{Ass:xi-Regularity-Dxi-Full-Rank}--\ref{Ass:xi-Growth-Cond} we obtain  $\hat\rho_0^M = \xi_\#\rho_0^M \ll \eta_0$. Hence,  $\RelEnt(\hat\rho_0^M|\hat\mu)\to \RelEnt(\hat\rho_0|\hat\mu)$ and $\Wasser_2(\hat\rho_0^M , \eta_0) \to \Wasser_2(\hat\rho_0, \eta_0)$.

Using the convergence of $\rho_0^M$ to $\rho_0$ we also have a convergence of the weak formulation of~\eqref{eq:FP-Init-data-Approx-Discuss} to the weak formulation of the original Fokker-Planck equation~\eqref{eq:FokkerPlanck}.
Note that there exists a unique solution to the original Fokker-Planck equation~\eqref{eq:FokkerPlanck} under the assumptions~\ref{Ass:FP-Pot-L1} and~\ref{Ass:FP-Pot-Growth} on the potential $V$ (see~\cite[Theorem 9.4.3]{BKRS15}). Using the relative entropy bound in~\eqref{ineq:dissipation-rho} along with the positivity of Fisher Information and well-prepared initial data, for any $t\in [0,T]$ it follows that
\begin{align*}
\RelEnt(\rho_t|\mu) = \RelEnt(\rho^M_t|Z^{-1}_{V/2}) + \frac{1}{2}\int_{\R^d}Vd\rho^M_t +\log\frac{\int_{\R^{d}}e^{-V}}{\int_{\R^d}e^{-V/2}} < C \Rightarrow  \int_{\R^d}Vd\rho^M_t <C,
\end{align*}
where $Z_{V/2}:=\int_{\R^d}e^{-V/2}$. As a result, the sequence $(\rho^M_t)_{M\in \mathbb{N}}\in \mathcal{P}(\R^d)$ is tight and therefore converges weakly as $M\rightarrow\infty$ for any $t\in [0,T]$ to the unique solution $\rho_t$ of the original system~\eqref{eq:FokkerPlanck}. Since the relative entropy is lower-semicontinuous with respect to the weak topology, we get
\begin{equation*}
\limsup\limits_{M\rightarrow\infty} \bra[\big]{\RelEnt(\rho_0^M | \mu)- \RelEnt(\rho_t^M|\mu)}\leq \RelEnt(\rho_0|\mu)- \RelEnt(\rho_t|\mu).
\end{equation*}
The convergence  $\rho^M_t\rightharpoonup \rho_t$ implies that $\hat\rho_t^M\rightharpoonup \hat\rho_t$, and therefore by using the 
lower-semicontinuity of relative entropy and Wasserstein-2 distance with respect to the weak topology we obtain the following estimate for the original Fokker-Planck equation~\eqref{eq:FokkerPlanck},
\begin{align}
\RelEnt(\hat\rho_t|\eta_t)&\leq
\liminf_{M\to\infty}\, H(\hat{\rho}^M_t|\eta_t)\leq \limsup_{M\to\infty}H(\hat{\rho}^M_t|\eta_t)\notag
\\&\leq \limsup_{M\to\infty}\Big\{\RelEnt(\hat\rho_0^M|\eta_0)+\frac{1}{4}\bra*{\lambda_{\RelEnt}^2+\frac{\kappa_{\RelEnt}^2\beta^2}{\alpha_{\TI}\,\alpha_{\LSI}}}\bra[\big]{\RelEnt(\rho_0|\mu)-\RelEnt(\rho_t^M|\mu)}\Big\}\notag
\\&\leq \RelEnt(\hat\rho_0|\eta_0)+\frac{1}{4}\bra*{\lambda_{\RelEnt}^2+\frac{\kappa_{\RelEnt}^2\beta^2}{\alpha_{\TI}\,\alpha_{\LSI}}}\bra[\big]{\RelEnt(\rho_0|\mu)-\RelEnt(\rho_t|\mu)}.\label{eq: original FPE est1}
\end{align}
Similarly, we also get
\begin{equation}
\Wasser_2^2(\hat\rho_t,\eta_t)\leq e^{\tilde c t} \bra*{\Wasser_2^2(\hat\rho_0,\eta_0)+\bra*{\frac{4\beta^{-1}\lambda^2_{\Wasser}+\kappa^2_{\Wasser}}{\alpha_{\TI}\,\alpha_{\LSI}}} \bra[\big]{\RelEnt(\rho_0|\mu)-\RelEnt(\rho_t|\mu)} }.\label{eq: orginial FPE est2}
\end{equation}

\section{Langevin dynamics}\label{S:Langevin-Equation}
Having covered the case of the overdamped Langevin equation, we now shift our attention to the (underdamped) Langevin equation. As discussed in Section~\ref{S:Intro-Langevin}, in this case the choice of the coarse-graining map is a more delicate issue, which we address in Section~\ref{S:Lang-CG}. We also introduce the coarse-grained and effective dynamics corresponding to the Langevin case in the same section. In Section~\ref{eq:LD-RelEnt-Wasser-Est}  we derive error estimates both in relative entropy and Wasserstein-2 distance. The estimates in Section~\ref{eq:LD-RelEnt-Wasser-Est} are restricted to \textit{affine} spatial coarse-graining maps, and we discuss this restriction in  Remark~\ref{rem:LD-Non-affine-CG}.

\subsection{Setup of the system}\label{S:LD-Setup}
Recall the Langevin equation, where for simplicity we put $m=1$ from now on,
\begin{equation}\label{eq:Langevin-SDE}
\begin{cases}
 dQ_t=P_t \,dt\\
dP_t=-\nabla V(Q_t)\,dt-\gamma P_t \,dt+\sqrt{2\gamma\beta^{-1}}\,dW_t^d, 
\end{cases}
\end{equation}
with initial datum $(Q_{t=0},P_{t=0})=(Q_0,P_0)$. 
The corresponding forward-Kolmogorov equation is
\begin{align}\label{eq:LD-Full}
\begin{cases}\partial_t\rho=-\div(\rho J_{2d}\nabla H)+\gamma\div_p(\rho p)+\gamma\beta^{-1}\Delta_p\rho\\
\rho_{t=0}=\rho_0,
\end{cases}
\end{align}
where $\rho_t=\mathrm{law}(Q_t,P_t)$, the initial datum $\rho_0=\mathrm{law}(Q_0,P_0)$. The spatial domain here is $\R^{2d}$ with coordinates $(q,p)\in\R^d\times\R^d$, and subscripts as in $\nabla_p$ and $\Delta_p$ indicate that the differential operators act on the corresponding variables. We have used a slightly shorter way of writing this equation by introducing the Hamiltonian $H(q,p)=V(q)+p^2/2$ and the canonical $2d$- symplectic matrix $J_{2d}=\bigl({\begin{smallmatrix}0 & \Id_d \\ -\Id_d & 0\end{smallmatrix}}\bigr)$. The potential $V$ is assumed to satisfy~\ref{Ass:FP-Pot-L1} and \ref{Ass:FP-Pot-Growth} of Assumption \ref{ass:V} as in the overdamped Langevin case.
For the computations made in this section, it is sufficient to interpret~\eqref{eq:LD-Full} in the sense of Definition~\ref{def:FP:sol}. Condition~\ref{Ass:FP-Pot-L1} ensures that~\eqref{eq:LD-Full} admits a normalizable stationary solution $\mu\in\mathcal{P}(\R^{2d})$,
\begin{equation}\label{def:Kramers:GibbsMeasure}
 \mu(dq\,dp) := Z^{-1} \exp\bra*{-\beta \pra*{\frac{p^2}{2}+V(q)}}\;dq\,dp.
\end{equation}
We assume that the initial datum $\rho_0=\mathrm{law}(Q_0,P_0)$ has bounded relative entropy with respect to $\mu$ i.e. 
\begin{align}\label{ass:Lang-Well-Prep-Init-Data}
\RelEnt(\rho_0 | \mu)< +\infty,
\end{align}
and as a consequence we can define $f_0:=d\rho_0/d\mu$. 

Instead of working with $\rho$ which solves~\eqref{eq:LD-Full}, from here on we will again work with an approximation $\rho^{M,\alpha}$ which has better properties:
\begin{align}\label{eq:LD-Approx-Eq}
\begin{cases}\partial_t\rho^{M,\alpha}=(\mathcal{L}^{\mathrm{Lan}})^*\rho^{M,\alpha}+ \alpha\pra*{\div_q(\rho^{M,\alpha}\nabla V)+\Delta_q\rho^{M,\alpha}}\\
\rho^{M,\alpha}\big{|}_{t=0}=\rho_0^{M,\alpha}:=f_0^M\mu.
\end{cases}
\end{align}
Here $\alpha>0$, $f_0^M:=Z_M^{-1} \min\set{\max\set{f, 1/M}, M}$, $Z_M$ is the normalization constant which ensures that the initial data is a probability measures and $(\mathcal{L}^{\mathrm{Lan}})^*$ is the generator corresponding to~\eqref{eq:Langevin-SDE}, i.e. the right hand side of~\eqref{eq:LD-Full}. Note that by contrast to the overdamped case we not only truncate the initial datum but also regularize the equation. In particular, adding the term $\Delta_q\rho^{M,\alpha}$ makes~\eqref{eq:LD-Approx-Eq} a non-degenerate diffusion equation. The term $\div_q(\rho^{M,\alpha}\nabla V)$ ensures that~$\mu$ is still the stationary solution of~\eqref{eq:LD-Approx-Eq}. This approximation is introduced as before to enable various calculations in the proofs. Let us emphasize, that although equation~\eqref{eq:LD-Approx-Eq} can be interpreted as a Fokker-Planck equation as in the previous section, the results cannot be simply translated. However, the overall scheme and strategy is similar.

In Section~\ref{S:Kramers:Reg} we show that the estimates we derive for the approximation~\eqref{eq:LD-Approx-Eq} also apply to the original system~\eqref{eq:LD-Full}. Using the dominated convergence theorem and~\eqref{ass:Lang-Well-Prep-Init-Data},
\begin{align*}
\RelEnt(\rho_0^{M,\alpha}|\mu)<+\infty.
\end{align*}
Since~\eqref{eq:LD-Approx-Eq} is a non-degenerate parabolic equation, by using standard results \cite[Chapter 3]{ProtterWeinberger12}, for all $t\in [0,T]$ we find
\begin{align}\label{ass:Approx-FP-Well-Prep-Init-Data}
\frac{1}{MZ_M}\mu \leq\rho_t^{M,\alpha}\leq \frac{M}{Z_M}\mu.
\end{align}
To simplify notation, here onwards we will drop the superscripts $M,\alpha$ in $\rho^{M,\alpha}$.  

\subsection{Coarse-graining map}\label{S:Lang-CG}

In the overdamped Langevin case, the coarse-graining map $\xi$ was used to identify the coarse-grained variables. In the Langevin case we need to similarly define a coarse-graining map on the phase space. We make the structural assumption that the phase-space structure of the coarse-graining map is preserved, i.e.\ the coarse-graining map maps the Langevin equation to a dynamics onto a new phase space, again consisting of a `position' and a `momentum' variable. Moreover, we assume for simplicity that the slow manifold for the spatial and the momentum variables is $\R^k$. Like for the overdamped Langevin equation a generalization to $k$-dimensional manifolds seems to be possible at the expense of increased  technical complexity.
Hence, we work with a mapping $\Xi:\R^{2d}\rightarrow\R^{2k}$ which identifies the relevant spatial and momentum variables $(q,p)\mapsto (z,v):=\Xi(q,p)$. Throughout this section we will use $(q,p)\in \R^{2d}$ for the original coordinates and $(z,v)\in \R^{2k}$ for the coarse-grained coordinates.

Typically, the choice of the spatial coarse-grained variable is prescribed by a mapping $\xi: \R^d \ni q\mapsto z \in \R^k$, as in the case of the overdamped Langevin dynamics, i.e. it is possible from modeling assumptions or physical intuition to identify the spatial reaction coordinates of the problem. 
Motivated by~\eqref{eq:Langevin-SDE} we define the coarse-grained momentum by
\[
  \frac{d}{dt} \xi(Q_t) = D\xi(Q_t) \dot Q_t = D\xi(Q_t) P_t,
\]
and therefore the full coarse-graining map $\Xi:\R^{2d}\rightarrow\R^{2k}$ is
\begin{align}\label{def:LD-Xi}
\Xi(q,p):=\begin{pmatrix} \xi(q) \\ D\xi(q)p
\end{pmatrix}=:\begin{pmatrix}
z\\v
\end{pmatrix}.
\end{align}
At the moment, this choice of the coarse-graining map $\Xi$ is restricted to \emph{affine} $\xi$.
In view of Remark~\ref{rem:LD-Non-affine-CG} below it is unclear if such a choice for $\Xi$ works with \textit{non-affine} $\xi$, as, in this case the well-posedness of the resulting effective dynamics is not apparent (see Remarks~\ref{rem:LD-Non-affine-CG} for details).
So unless explicitly stated otherwise,  we assume that
\begin{equation}\label{Ass:LD-xi-Regularity}
 \xi : \R^d \to \Rk  \text{ is \textit{affine} with $D\xi$ having full row rank $k$.} 
\end{equation}
In particular \eqref{Ass:LD-xi-Regularity} implies that $\xi(q)=\Tt q+\tau$ for some $\tau\in\R^k$ and $\Tt\in \R^{k\times d}$ of full rank.
While there are other possible choices for the coarse-graining maps on the phase space  (see \cite{LRS10,LRS12} for detailed discussions), we restrict ourselves to~\eqref{def:LD-Xi} in this section. Now we make a few preliminary remarks to fix notations.

For any $(z,v)\in\R^{2k}$, $\Sigma_{z,v}$ denotes the $(z,v)$-level set of $\Xi$, i.e.
\begin{equation}\label{not:LD-Xi-Level-Sets}
 \Sigma_{z,v} := \set*{(q,p)\in\mathbb{R}^{2d} : \Xi(q,p)=(z,v)}.
\end{equation}
Similar to~\eqref{def:Intrinsic-Distance-Level-Sets}, on a level set  $\Sigma_{z,v}$ we can define a canonical metric $d_{\Sigma_{z,v}}$.
The Jacobian determinant  $\Jac\Xi=\sqrt{D\xi D\xi^\transpose }$ is bounded from below by a constant $C^{-1}$ due to condition~\eqref{Ass:LD-xi-Regularity}. Any $\nu\in \mathcal{P}(\R^{2d})$ which is absolutely continuous  with respect to the Lebesgue measure, i.e. $d\nu(q,p)=\nu(q,p)d\mathcal{L}^{2d}(q,p)$ for some density again denoted by $\nu$ for convenience, can be decomposed into its \emph{marginal measure} $\Xi_\#\nu=:\hat\nu\in\mathcal{P}(\R^{2k})$ satisfying $d\hat\nu(z,v)=\hat\nu(z,v)d\mathcal{L}^{2k}(z,v)$ with density
\begin{align}\label{eq:LD-rho-hat-def}
\hat\nu(z,v)  =\int_{\Sigma_{z,v}}\nu \;  \frac{d\Hausdorff^{2d-2k}}{\Jac\Xi}
\end{align}
and the family of \emph{conditional measures} $\nu(\cdot|\Sigma_{z,v})=:\bar\nu_{z,v}\in\mathcal{P}(\Sigma_{z,v})$ having a $2d-2k$-dimensional density
$d\bar\nu_{z,v}(q,p)=\bar\nu_{z,v}(q,p)d\Hausdorff^{2d-2k}(q,p)$ given by
\begin{equation}\label{eq:LD-rho-bar-def}
\bar\nu_{z,v}(q,p) = \frac{\nu(q,p)}{\Jac\Xi \; \hat\nu(z,v)}.
\end{equation}
Here $\Hausdorff^{2d-2k}$ is the $(2d-2k)$-dimensional Hausdorff measure. For a time dependent measure $\nu_t$, we will use $\bar \nu_{t,z,v}$ to indicate the corresponding conditional measure on the level set $\Sigma_{z,v}$ at time $t$.

Differential operators on the coarse-grained space $\R^{2k}$ will be equipped with subscripts $z,v$ i.e. $\nabla_{z,v}$, $\div_{z,v}$, $\Delta_{z,v}$, $D_{z,v}$. This is to separate them from differential operators on the full space $\R^{2d}$ which will have no subscript. For any sufficiently smooth $g:\R^{2k}\rightarrow\R$,
\begin{align}\label{eq:LD_Lap-Circ-Xi-Calc}
\nabla (g\circ \Xi)=\begin{pmatrix}
D\xi^\transpose \, \nabla_z g\circ\Xi\\ D\xi^\transpose \, \nabla_v g\circ\Xi
\end{pmatrix}, \
D^2 (g\circ\Xi)= \begin{pmatrix}
D\xi D\xi^\transpose : D^2_z g\circ\Xi\\ D\xi D\xi^\transpose : D^2_v g\circ\Xi
\end{pmatrix}.
\end{align}
Although $D\xi\in \R^{k\times d}$ is the constant matrix $\Tt\in \R^{k\times d}$, since $\xi$ is affine, we keep the notation $D\xi$. Here $D^2_z$, $D^2_v$ is the Hessian on $\R^k$ with respect to $z, v$ respectively.

The following remark summarizes the main issue with the choice~\eqref{def:LD-Xi} for $\Xi$, when $\xi$ is non-affine.
\begin{rem}[Non-affine $\xi$]\label{rem:LD-Non-affine-CG}
Let us consider the Langevin equation~\eqref{eq:Langevin-SDE} with $\gamma=\beta=1$ for simplicity and a general $\xi$ which has sufficient regularity and satisfies $C_1\Id_k\leq D\xi D\xi^\transpose\leq C_2\Id_k$ for some $C_1, C_2>0$, and $\Xi$ as in \eqref{def:LD-Xi}. We now apply the standard procedure used throughout this article to derive the effective dynamics: we first evaluate $\Xi(Q_t,P_t)$ using It{\^o}'s formula, and then use the closure procedure in~\cite{Gyongy86} which gives coarse-grained random variables $(\hat Z_t,\hat V_t)$ and finally approximate these variables by the following effective ones
\begin{equation}\label{eq:LD-Eff-SDE}
\begin{aligned}
&dZ_t=V_tdt\\
&dV_t=-\tilde b(Z_t,V_t)dt-V_tdt+\sqrt{2A(Z_t,V_t)}\, dW^k_t.
\end{aligned}
\end{equation}
Here the coefficients $\tilde b:\R^{2k}\rightarrow\R^{2k}$ and $A:\R^{2k}\rightarrow \R^{2k\times 2k}$ are
\begin{align}
\tilde b(z,v):=\EX_{\bar\mu_{z,v}} \pra*{D\xi\nabla V-p^\transpose  D^2\xi p} \qquad\text{and}\qquad \ A(z,v):=\EX_{\bar\mu_{z,v}}\pra*{D\xi D\xi^\transpose },
\end{align}
where $(p^\transpose  D^2\xi p)_l=\Sigma_{i,j=1}^dp_ip_j\partial_{ij}\xi_l$. Note that the term $p^\transpose  D^2\xi p$ in  $\tilde b$ vanishes when $\xi$ is affine, and in this case equals $b$ which is the drift for the affine case~\eqref{def:LD-Eff-Coeff}.
By assumptions on $\xi$, the diffusion matrix $A$ is  elliptic and bounded, $C_1 \Id_k \leq A(z,v)\leq C_2 \Id_k$. Therefore to show that~\eqref{eq:LD-Eff-SDE} has in general a solution that remains finite almost surely in finite time, a sufficient condition is that the drift $\tilde b$ satisfies a one-sided Lipschitz condition. Let us take a closer look at $\tilde b$ on the level set $\Sigma_{z,v}$. Since $D\xi D\xi^\transpose$ is bounded away from zero by assumption,  $\abs{p} \geq C \abs{v}$. If we assume that $\xi$ is not affine and that $D^2\xi\geq C/(1+\abs{q})$, then there exists $c$ such that
\begin{align*}
\int_{\Sigma_{z,v}}p^\transpose D^2\xi(q)p\,d\bar\mu_{z,v}\geq c \abs*{v}^2 .
\end{align*}
That is, the second term in $\tilde{b}$ grows super-quadratically. Therefore, $\tilde{b}$ can not be Lipschitz and it is possible that~\eqref{eq:LD-Eff-SDE} admits solutions that explode in finite time. In this case, we cannot hope for control on any moments of~\eqref{eq:LD-Eff-SDE}, and so it is not clear if such a system will allow us to prove error estimates in either relative entropy or Wasserstein-2 distance.
\end{rem}

\subsection{Coarse-grained and Effective dynamics}\label{S:LD-CG-Eff}
Now we discuss the coarse-grained and effective dynamics corresponding to $\rho$ which is the solution to~\eqref{eq:LD-Approx-Eq}. The  random variables $(Q_t,P_t)$ corresponding to $\rho_t=\mathrm{law}(Q_t,P_t)$ satisfy
\begin{align*}
&dQ_t=P_t\,dt-\alpha\nabla V(Q_t)\,dt+\sqrt{2\alpha}\,dW_t^d,\\
&dP_t=-\nabla V(Q_t)-\gamma P_t\,dt+\sqrt{2\gamma\beta^{-1}}\,dW_t^d.
\end{align*}
Following the same scheme as in the overdamped case, we derive the coarse-grained dynamics,
\begin{equation}\label{eq:LD-CG}
\begin{cases}
\begin{aligned}
\partial_t\hat\rho= &-\div_{z,v}\bra*{\hat\rho J_{2k}\begin{pmatrix}\hat b \\v\end{pmatrix}} +
\div_{z,v}\bra*{\hat\rho\begin{pmatrix}\alpha \hat b\\ \gamma  v\end{pmatrix}}
+D^2_{z,v}:\begin{pmatrix} \alpha\hat A & 0\\ 0 & \gamma\beta^{-1}\hat A
\end{pmatrix}\hat\rho,
\end{aligned}\\
\hat\rho_{t=0}=\hat\rho_0,
\end{cases}
\end{equation}
where the coefficients are $\hat b:[0,T]\times \R^{2k}\rightarrow \R^{k}$ and $\hat A:[0,T]\times\R^{2k}\rightarrow \R^{k\times k}$ are defined by
\begin{align*}
\hat b(t,z,v):=\EX_{\bar\rho_{t,z,v}} \pra*{D\xi\nabla V} \qquad\text{and}\qquad \hat A(t,z,v):=\EX_{\bar\rho_{t,z,v}}\pra*{D\xi D\xi^\transpose }.
\end{align*}
Here $J_{2k}$ is the canonical $2k$-dimensional symplectic matrix and $\bar\rho_{t,z,v}$ is the conditional measure for $\rho$. Solutions to equation~\eqref{eq:LD-CG} are understood in the sense of Definition~\ref{def:FP:sol}. As for the overdamped Langevin equation, we can identify  $\hat\rho_t=\mathrm{law}\bra[\big]{\Xi({Q_t,P_t})}$ (this follows similarly to the proof of Proposition~\ref{prop:FP-effect-Dyn}).
 
Following  the same principle used to construct the effective dynamics in the overdamped Langevin case, the effective dynamics in this case is
\begin{equation}\label{eq:LD-Eff}
\begin{cases}
\begin{aligned}
\partial_t\eta= &-\div_{z,v}\bra*{\eta J_{2k}\begin{pmatrix}b(z,v) \\v\end{pmatrix}} +
\div_{z,v}\bra*{\eta\begin{pmatrix}\alpha b(z,v)\\ \gamma  v\end{pmatrix}}
+D^2_{z,v}:\begin{pmatrix} \alpha A(z,v) & 0\\ 0 & \gamma\beta^{-1} A(z,v)
\end{pmatrix}\eta,
\end{aligned}\\
\eta_{t=0}=\eta_0,
\end{cases}
\end{equation}
with coefficients $b:\R^{2k}\rightarrow \R^k$, $A:\R^{2k}\rightarrow \R^{k\times k}$  
\begin{align}\label{def:LD-Eff-Coeff}
b(z,v):=\EX_{\bar\mu_{z,v}} \pra*{D\xi\nabla V}\qquad\text{and}\qquad  A:=\EX_{\bar\mu_{z,v}}\pra*{D\xi D\xi^\transpose},
\end{align}
where $\bar\mu_{z,v}$ is the conditional stationary measure. 
\begin{remark}
Note that in comparison with the overdamped dynamics in Section \ref{S:FP-Push-For-Dyn}, terms that involve second derivatives of $\xi$ in $\hat b(t,z,v)$ and $b(z,v)$ vanish since $\xi$ is affine and the effective and coarse-grained diffusion matrices $A=\hat A$ are constant. 
\end{remark}
As before, the coarse-grained dynamics and the effective dynamics are the same in the long-time limit, i.e.\ $\hat\rho_\infty=\eta_\infty$, which follows since $\rho_t$ converges to $\mu$ as time goes to infinity. 

Next we discuss the well-posedness and properties of the effective dynamics~\eqref{eq:LD-Eff}. As in the overdamped Langevin case, we need the uniqueness of solution to the effective equation for the Wasserstein estimate.
\begin{thm}\label{lem:LD-Coeff-Prop}
Consider a coarse-graining map $\xi$ which satisfies \eqref{Ass:LD-xi-Regularity}. Assume:
\begin{enumerate}
\item The initial datum for the effective dynamics has bounded second moment.
\item The conditional stationary measure $\bar\mu_{z,v}$ satisfies a Poincar\'e inequality~\eqref{def:PI} uniformly in $(z,v)\in\R^{2k}$ with constant $\alpha_{\PI}>0$, i.e.\
\begin{align*}
\forall(z,v)\in\R^{2k}, \, \forall f\in H^1(\bar\mu_{z,v}): \qquad  \int_{\Sigma_{z,v}} \bra*{ f - \int_{\Sigma_{z,v}} f \,d\bar\mu_{z,v} }^2 d\bar\mu_{z,v}  \leq \frac{1}{\alpha_{\PI}}\int_{\Sigma_{z,v}} \abs{\nabla_{\Sigma_{z,v}} f}^2 d\bar\mu_{z,v} .
\end{align*}
\end{enumerate}
Then the coefficients of the effective dynamics satisfy
\begin{enumerate}
\item $A=\Tt\Tt^\transpose $, where $\xi(q)=\Tt q+\tau$.
\item $b \in W^{1,\infty}_{\mathrm{loc}}(\R^{2k};\R^k)$ and $D b \in L^\infty(\R^{k\times 2k})$.
\end{enumerate}
Furthermore, there exists a unique family of measures $(\eta_t)_{t\in[0,T]}$ which solves the effective dynamics~\eqref{eq:FP-Effect-Dyn} in the sense of Definition~\ref{def:FP:sol}. This family has bounded second moments, i.e.\
\begin{align*}
\forall t\in [0,T]: \int_{\R^{2k}}\abs*{\begin{pmatrix}
z\\v
\end{pmatrix}}^2d\eta_t<\infty.
\end{align*}
\end{thm}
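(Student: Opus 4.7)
The first assertion is immediate: since $\xi$ is affine, $D\xi(q)\equiv \Tt$ is constant and hence $A(z,v) = \EX_{\bar\mu_{z,v}}[\Tt\Tt^\transpose] = \Tt\Tt^\transpose$. The next key observation is that the conditional measure $\bar\mu_{z,v}$ factorizes: the level set $\Sigma_{z,v} = \{\Tt q = z-\tau\}\times\{\Tt p = v\}$ is a product of affine subspaces of $\R^d$, and $\mu \propto e^{-\beta |p|^2/2}\,e^{-\beta V(q)}$ factorizes correspondingly, so that $\bar\mu_{z,v} = \bar\mu_z^{(q)}\otimes \bar\mu_v^{(p)}$, where $\bar\mu_z^{(q)}$ is the conditional of $Z^{-1}e^{-\beta V}$ on $\{\Tt q = z-\tau\}$ and $\bar\mu_v^{(p)}$ is the conditional Gaussian on $\{\Tt p = v\}$. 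Since $D\xi\,\nabla V = \Tt\nabla V(q)$ in~\eqref{def:LD-Eff-Coeff} does not depend on $p$, this yields
\begin{equation*}
b(z,v) = \Tt\,\EX_{\bar\mu_z^{(q)}}[\nabla V],
\end{equation*}
which is independent of $v$; in particular $\nabla_v b \equiv 0$.

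The plan for the bound on $\nabla_z b$ is to reduce to the overdamped analysis of Lemma~\ref{lem:Eff-Coeff-Properties}. Testing with functions depending only on $q$ tensorizes the assumed Poincar\'e inequality for $\bar\mu_{z,v}$ down to a Poincar\'e inequality for $\bar\mu_z^{(q)}$ on $\{\Tt q = z-\tau\}$ with at least the same constant $\alpha_{\PI}$. Differentiating $b(z)$ under the integral via Lemma~\ref{lem-FP:Level-Set-Der} (with constant $G = \Tt\Tt^\transpose$) produces a covariance of the local mean force on this level set, which is then controlled by the tensorized Poincar\'e inequality exactly as in the proof of Lemma~\ref{lem:Eff-Coeff-Properties}. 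The auxiliary growth conditions on $D^2\xi$ and $D^3\xi$ required in the non-affine overdamped setting are vacuous here since $\xi$ is affine, so the bound on $\nabla_z b$ comes out uniform on all of $\R^k$.

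With the drift $(z,v)\mapsto(v-\alpha b(z),\,-b(z)-\gamma v)^\transpose$ globally Lipschitz and the diffusion $\operatorname{diag}(\alpha A,\gamma\beta^{-1}A)$ constant and strictly positive definite, existence of $(\eta_t)_{t\in[0,T]}$ as a measure-valued solution in the sense of Definition~\ref{def:FP:sol} follows from~\cite[Theorem 6.6.2]{BKRS15} and uniqueness from~\cite[Theorem 9.4.3]{BKRS15}, mirroring the argument in Theorem~\ref{thm:Eff-Well-Posed}. For the second moment bound I would approximate $\tfrac12(|z|^2+|v|^2)$ by compactly supported test functions and compute
\begin{equation*}
\frac{d}{dt}\int_{\R^{2k}}\frac{|z|^2+|v|^2}{2}\,d\eta_t = \int_{\R^{2k}}\bra*{z\cdot v - \alpha z\cdot b(z) - v\cdot b(z) - \gamma|v|^2}d\eta_t + (\alpha+\gamma\beta^{-1})\Tr A ,
\end{equation*}
then use the Lipschitz bound $|b(z)|\leq |b(0)|+L_b|z|$ together with Young's inequality to dominate the right-hand side by $C\bigl(1+\int(|z|^2+|v|^2)\,d\eta_t\bigr)$, after which Gronwall closes the argument. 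The only mildly delicate point in the whole proof is the factorization of $\bar\mu_{z,v}$ and the resulting reduction to the overdamped regularity lemma; once that is in place, existence, uniqueness and the moment bound parallel Theorem~\ref{thm:Eff-Well-Posed} exactly.
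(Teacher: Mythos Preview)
Your proof is correct and follows the same overall structure as the paper's: reduce the coefficient regularity to Lemma~\ref{lem:Eff-Coeff-Properties}, cite \cite[Theorems 6.6.2 and 9.4.3]{BKRS15} for existence and uniqueness, and close with a Gronwall argument on the second moment. The one place where you take a genuinely different route is the reduction step. The paper simply says ``replace $V$ by $H$'' and reruns the proof of Lemma~\ref{lem:Eff-Coeff-Properties} on $\R^{2d}$ with the phase-space map $\Xi$ and the Hamiltonian $H(q,p)=V(q)+|p|^2/2$. You instead exploit the affine structure to factorize $\Sigma_{z,v}=\{\Tt q=z-\tau\}\times\{\Tt p=v\}$ and $\bar\mu_{z,v}=\bar\mu_z^{(q)}\otimes\bar\mu_v^{(p)}$, observe that $b$ is therefore independent of $v$, and then apply Lemma~\ref{lem:Eff-Coeff-Properties} directly on $\R^d$ to the $q$-marginal. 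Your route is slightly more informative (the $v$-independence of $b$ drops out for free, which the paper only uses implicitly later) and makes the tensorization of the Poincar\'e inequality explicit; the paper's route is shorter but leaves the reader to unpack what ``replace $V$ by $H$'' means in detail. Both are valid and lead to the same bounds.
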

\begin{proof}
The properties of the coefficients follow exactly as in the proof of Lemma~\ref{lem:Eff-Coeff-Properties} (replace $V$ by $H$ in this case). The existence and uniqueness for the solution to the  effective dynamics follows from the properties of the coefficients and using \cite[Theorem 6.6.2]{BKRS15}, \cite[Theorem 9.4.3]{BKRS15} respectively. 
Applying a Gronwall-type estimate to the following calculation implies that $\eta_t$ has bounded second moments for any $t\in [0,T]$, 
\begin{align*}
\frac{d}{dt}\int_{\R^{2k}}\frac{1}{2}\abs*{\begin{pmatrix}
z\\v
\end{pmatrix}}^2\eta_t&=\int_{\R^{2k}}\eta_t\pra*{\begin{pmatrix}v\\ -b\end{pmatrix}\cdot\begin{pmatrix}z\\ v\end{pmatrix}-\alpha b\cdot z-\gamma |v|^2+\gamma\beta^{-1} A:\Id_k+\alpha A:\Id_k}
\\
&\leq C(\alpha,\beta,\gamma)\int_{\R^{2k}}\abs*{\begin{pmatrix}
z\\v
\end{pmatrix}}^2\eta_t.
\end{align*}
\end{proof}
While the effective dynamics has a unique solution, it is not straightforward to prove the uniqueness of the coarse-grained dynamics (see Remark~\ref{rem:WellPosednessCGdyn} for the overdamped case). Therefore as in the overdamped case, we will introduce a lifted effective dynamics which will be required to prove Wasserstein estimates. As in the overdamped setting, the lifted version $\theta_t$ is constructed such that it has $\eta_t$ as the marginal under $\Xi$. The following lemma outlines the construction of this lifted dynamics and some useful properties.
\begin{lem}\label{lem:Lang-liftedEffDyn}
Let $\theta_t$ be the solution of
\begin{equation}\label{e:Lang-liftedEffDyn}
\partial_t \theta_t = -\div\bra*{\theta_t J_{2d}\begin{pmatrix}\tilde b \\ p \end{pmatrix}} + \div\bra*{\theta_t \begin{pmatrix}\alpha\tilde b \\ \gamma p \end{pmatrix}} + D^2:\begin{pmatrix} \alpha\tilde A & 0 \\ 0 & \gamma\beta^{-1}\tilde A\end{pmatrix}\theta_t,
\end{equation}
in the sense of Definition~\ref{def:FP:sol}. Here the coefficients $\tilde b:\R^d\rightarrow \R^d, \ \tilde A:\R^d\rightarrow\R^{d\times d}$ are given by
  \begin{align*}
  \tilde b &:= D\xi^\transpose G^{-1} b \circ \xi,\\
    \tilde A &:= D\xi^\transpose G^{-1} \; \bra*{A\circ \xi} \; G^{-1} D\xi.
    \end{align*}
If $\Xi_\# \theta_0 = \eta_0$ and $\set{\eta_t}_{t\in \R^+}$ is a solution of~\eqref{eq:LD-Eff}, then $\Xi_\# \theta_t = \eta_t$. Moreover, if $\theta_0$ has bounded second moment, then the same holds for $\theta_t$ for all $t>0$.
\end{lem}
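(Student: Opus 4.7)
The proof will follow the same blueprint as that of Lemma~\ref{lem:liftedEffDyn} for the overdamped case, with the additional complication coming from the symplectic (Hamiltonian) structure. First I would invoke \cite[Theorem 6.7.3]{BKRS15} to obtain existence of a solution $(\theta_t)_{t\in[0,T]}$ to~\eqref{e:Lang-liftedEffDyn} in the sense of Definition~\ref{def:FP:sol}; for this I need to check that $\tilde b$ and $\tilde A$ are locally bounded (and in fact Lipschitz by the regularity of $b$ from Theorem~\ref{lem:LD-Coeff-Prop} and the affinity of $\xi$). The non-degeneracy in the $p$-direction and the mild degeneracy in the $q$-direction do not pose a problem for this existence result.

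Next I would verify the push-forward identity $\Xi_\#\theta_t=\eta_t$. The idea is to plug a test function of the form $\phi\circ\Xi$ with $\phi\in C^2_c(\R^{2k})$ into the weak formulation~\eqref{eq:FP:def:solution} of~\eqref{e:Lang-liftedEffDyn}. Using the affinity of $\xi$, so that $D^2\xi\equiv 0$, one finds the clean chain rule
\[
  \nabla_q(\phi\circ\Xi) = D\xi^{\transpose}\,(\nabla_z\phi)\circ\Xi, \qquad \nabla_p(\phi\circ\Xi)=D\xi^{\transpose}\,(\nabla_v\phi)\circ\Xi,
\]
and analogously $D^2_q(\phi\circ\Xi)=D\xi^{\transpose}\,(D^2_z\phi)\circ\Xi\,D\xi$ and $D^2_p(\phi\circ\Xi)=D\xi^{\transpose}\,(D^2_v\phi)\circ\Xi\,D\xi$. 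The two defining identities
\[
  D\xi\,\tilde A\,D\xi^{\transpose} = A\circ\xi, \qquad D\xi\,\tilde b = b\circ\xi,
\]
that follow from the definitions of $\tilde A$ and $\tilde b$ together with $G=D\xi D\xi^{\transpose}$, then convert the three contributions (Hamiltonian part $J_{2d}(\tilde b,p)^{\transpose}$, friction $(\alpha\tilde b,\gamma p)^{\transpose}$, diffusion $\operatorname{diag}(\alpha\tilde A,\gamma\beta^{-1}\tilde A)$) precisely into the corresponding terms $J_{2k}(b,v)^{\transpose}$, $(\alpha b,\gamma v)^{\transpose}$ and $\operatorname{diag}(\alpha A,\gamma\beta^{-1}A)$ acting on $\phi$ in the $(z,v)$ variables. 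Thus $\Xi_\#\theta_t$ solves~\eqref{eq:LD-Eff} in the sense of Definition~\ref{def:FP:sol} with initial datum $\eta_0$, and by the uniqueness statement of Theorem~\ref{lem:LD-Coeff-Prop} we conclude $\Xi_\#\theta_t=\eta_t$.

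Finally, for the second-moment bound I would use the standard approximation of $(q,p)\mapsto\tfrac12(|q|^2+|p|^2)$ by compactly supported $C^2$ functions in the weak form to compute
\[
  \frac{d}{dt}\int_{\R^{2d}}\tfrac{1}{2}\abs{(q,p)}^2\,d\theta_t = \int_{\R^{2d}}\bra*{p\cdot q - \tilde b\cdot p - \alpha\,\tilde b\cdot q - \gamma\abs{p}^2 + \alpha\Tr\tilde A + \gamma\beta^{-1}\Tr\tilde A}\,d\theta_t.
\]
Since $\tilde A$ is constant (as $\xi$ is affine and $A$ is constant by Theorem~\ref{lem:LD-Coeff-Prop}) and $\tilde b=D\xi^{\transpose}G^{-1}b\circ\xi$ inherits the linear growth bound $|\tilde b(q,p)|\le C(1+|q|+|p|)$ from the global Lipschitz property of $b$, a standard Cauchy–Schwarz estimate yields
\[
  \frac{d}{dt}\int_{\R^{2d}}\tfrac{1}{2}\abs{(q,p)}^2\,d\theta_t \le C\bra*{1+\int_{\R^{2d}}\tfrac{1}{2}\abs{(q,p)}^2\,d\theta_t},
\]
and Gronwall's lemma closes the argument.

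The main technical obstacle is the push-forward computation: one must track the interaction between the symplectic block $J_{2d}$ on $\R^{2d}$ and the coarse-graining map $\Xi$ and verify that it reproduces exactly $J_{2k}$ acting on $(b,v)^{\transpose}$. This is where the affinity assumption~\eqref{Ass:LD-xi-Regularity} on $\xi$ is essential: without it, the momentum component $D\xi(q)p$ of $\Xi$ would generate additional second-order-in-$p$ terms (as already signalled in Remark~\ref{rem:LD-Non-affine-CG}) that cannot be absorbed into the effective drift and diffusion, and the clean identity $\Xi_\#\theta_t=\eta_t$ would break down.
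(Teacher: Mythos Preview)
Your proposal is correct and follows exactly the approach the paper intends: the paper's proof is a one-line reference stating that ``the proof follows along the lines of Lemma~\ref{lem:liftedEffDyn}'' with the simplification $D^2\xi=0$, and you have carried out precisely those details (existence via \cite{BKRS15}, push-forward via test functions $\phi\circ\Xi$ using the identities $D\xi\,\tilde A\,D\xi^{\transpose}=A\circ\xi$ and $D\xi\,\tilde b=b\circ\xi$, uniqueness from Theorem~\ref{lem:LD-Coeff-Prop}, and the Gronwall second-moment bound). One minor point: in the affine setting the effective drift $b$ in fact depends only on $z$ (since $\bar\mu_{z,v}$ factorises over $\Sigma_z\times\{p:\Tt p=v\}$ and $D\xi\nabla V$ depends only on $q$), so your linear growth bound on $\tilde b$ simplifies to $|\tilde b(q)|\le C(1+|q|)$, independent of $p$; this does not affect the argument.
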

The proof follows along the lines of Lemma~\ref{lem:liftedEffDyn}. Note that the definition of $\tilde b$ in this case is simplified as compared to overdamped case (see Lemma~\ref{lem:liftedEffDyn}) since $D^2\xi=0$ by~\eqref{Ass:LD-xi-Regularity}.

\subsection{Relative entropy estimate}\label{eq:LD-RelEnt-Wasser-Est}
%
Let us state the main relative entropy result.
\begin{theorem}\label{thm:LD-RelEnt-Main-Res}
Consider a coarse-graining map $\xi$ that satisfies~\eqref{Ass:LD-xi-Regularity}, a potential $V$ satisfying~\ref{Ass:FP-Pot-L1}--\ref{Ass:FP-Pot-Growth}, and define
\begin{align}\label{def:LD-kappa_H}
\kappa:= \sup_{(z,v) \in \R^{2k}} \sup_{(q_1,p_1),(q_2,p_2)\in \Sigma_{z,v}} \frac{ \abs*{D\xi\bra*{  \nabla V(q_1) - \nabla V(q_2)}}}{d_{\Sigma_{z,v}}((q_1,p_1),(q_2,p_2))},
\end{align}
where $d_{\Sigma_{z,v}}$ is the intrinsic metric on $\Sigma_{z,v}$. 
Assume that the conditional stationary measure $\bar\mu_{z,v}$ satisfies a Talagrand inequality~\eqref{def:TI} uniformly in $(z,v)\in \R^{2k}$ with constant $\alpha_{\TI}$.
Then for any $t\in [0,T]$,
\begin{equation}\label{eq:LD-Central-RelEnt-Est}
\RelEnt(\hat\rho_t|\eta_t)\leq \RelEnt(\hat\rho_0|\eta_0)+\frac{\kappa^2 t}{2\alpha_{\TI}}\bra[\big]{\alpha+\gamma^{-1}\beta} \RelEnt(\rho_0 | \mu).
\end{equation}
\end{theorem}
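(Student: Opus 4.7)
The plan is to adapt the two-step scheme used in the overdamped case (Theorem~\ref{thm:FP-RelEnt-Main-Res}): first apply the abstract large-deviation bound of Theorem~\ref{thm:Abs-LDRF-Ineq} to reduce the relative-entropy estimate to controlling the empirical-measure rate functional $\I(\hat\rho)$ associated with the effective dynamics~\eqref{eq:LD-Eff}, and then bound $\I(\hat\rho)$ via the Talagrand inequality assumed uniformly on the conditional measures $\bar\mu_{z,v}$. As in Section~\ref{S:Overdamp-Lang-Equation}, I would carry out all computations on the $\alpha$-regularised and $M$-truncated system~\eqref{eq:LD-Approx-Eq}, and remove the regularisations only at the end (Section~\ref{S:Kramers:Reg}) via lower semicontinuity.

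For the first step, viewing $\hat\rho$ as a candidate solution of the effective equation I would compute the drift defect $\partial_t\hat\rho - \mathcal L^*_\eta \hat\rho$. The crucial simplification, absent in the overdamped setting, is that $\xi$ being affine forces $A = D\xi D\xi^\transpose = \Tt\Tt^\transpose$ to be constant; hence $\hat A = A$, and the coarse-grained and effective Fokker--Planck operators differ only through their drifts, giving
\begin{equation*}
\partial_t\hat\rho -\mathcal L^*_\eta \hat\rho = \div_{z,v}\!\left(\hat\rho\begin{pmatrix}\alpha(\hat b - b)\\ \hat b - b\end{pmatrix}\right).
\end{equation*}
Verifying the integrability hypotheses (B1)--(B2) of Theorem~\ref{thm:Abs-LDRF-Ineq} then proceeds along the template of Corollary~\ref{cor:FP-reduced-FI-ineq}, using Theorem~\ref{lem:LD-Coeff-Prop} for the Lipschitz regularity of $b$ and the constancy of $A$, together with the two-sided bound~\eqref{ass:Approx-FP-Well-Prep-Init-Data} to control logarithmic derivatives of $\hat\rho$. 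This yields $\RelEnt(\hat\rho_t|\eta_t)\le \RelEnt(\hat\rho_0|\eta_0) + \I(\hat\rho)$.

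For the second step, I would compute $\I(\hat\rho)$ explicitly using the effective block-diagonal diffusion $\operatorname{diag}(\alpha A,\gamma\beta^{-1}A)$; the position- and momentum-noise channels contribute a factor of $\alpha$ and $\gamma^{-1}\beta$ respectively, adding up to the prefactor $(\alpha + \gamma^{-1}\beta)/4$ in front of a single quadratic form in $\hat b - b$. For each $(z,v)$, writing $\hat b(t,z,v) - b(z,v) = D\xi \int(\nabla V(q_1) - \nabla V(q_2))\,d\Pi$ for an arbitrary coupling $\Pi$ of $\bar\rho_{t,z,v}$ with $\bar\mu_{z,v}$, Jensen's inequality and the definition~\eqref{def:LD-kappa_H} of $\kappa$ yield $|\hat b - b|^2 \leq \kappa^2\,\Wasser_2^2(\bar\rho_{t,z,v},\bar\mu_{z,v})$, with $\Wasser_2$ measured in the intrinsic distance on $\Sigma_{z,v}$. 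Combining the uniform Talagrand inequality with the tensorisation identity of the relative entropy (cf.\ Corollary~\ref{FP:MarginalEDP}) then produces
\begin{equation*}
\int_{\R^{2k}}|\hat b - b|^2\,d\hat\rho_t \;\leq\; \frac{2\kappa^2}{\alpha_{\TI}}\bigl(\RelEnt(\rho_t|\mu) - \RelEnt(\hat\rho_t|\hat\mu)\bigr).
\end{equation*}

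The hard part, and what makes the Langevin estimate linear in $t$ rather than telescoping as in the overdamped case, is that no Log--Sobolev inequality is available here, so the time integral of the entropy on the right cannot be traded for the dissipation $\RelEnt(\rho_0|\mu) - \RelEnt(\rho_t|\mu)$. I would therefore simply drop the non-negative term $\RelEnt(\hat\rho_t|\hat\mu)$, apply the contraction $\RelEnt(\rho_t|\mu)\le \RelEnt(\rho_0|\mu)$ along the $\alpha$-regularised flow (valid because $\mu$ is still stationary for~\eqref{eq:LD-Approx-Eq}), and integrate over $[0,t]$ to obtain the $t$-factor in~\eqref{eq:LD-Central-RelEnt-Est}. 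The final passage $\alpha\downarrow 0$ and $M\to\infty$ to recover the original Langevin dynamics~\eqref{eq:LD-Full} then follows the template of Section~\ref{S:FP:IndReg}, exploiting lower semicontinuity of the relative entropy and the fact that the final constants in~\eqref{eq:LD-Central-RelEnt-Est} are uniform in the regularisation parameters.
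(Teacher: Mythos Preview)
Your proposal is correct and follows essentially the same route as the paper's proof: apply Theorem~\ref{thm:Abs-LDRF-Ineq} with $\zeta=\hat\rho$ and $\nu=\eta$, use the affinity of $\xi$ to see that $\hat A=A$ so that the defect is purely a drift term yielding the prefactor $(\alpha+\gamma^{-1}\beta)/4$ in $\I(\hat\rho)$, then bound $|\hat b-b|^2$ via a coupling, the definition of $\kappa$, the uniform Talagrand inequality, tensorisation of the relative entropy, and the monotonicity $\RelEnt(\rho_t|\mu)\le\RelEnt(\rho_0|\mu)$ for the regularised flow. Your remark that the absence of a Log--Sobolev step is what forces the linear-in-$t$ bound (rather than the telescoping $\RelEnt(\rho_0|\mu)-\RelEnt(\rho_t|\mu)$ of the overdamped case) is exactly the point, and your handling of the $\alpha\to0$, $M\to\infty$ limits correctly defers to Section~\ref{S:Kramers:Reg}.
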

%
Since $\xi$ is affine and by the growth assumptions $V$ from Assumption~\ref{ass:V}, the constant $\kappa$ is bounded from above by $\norm*{\abs*{\nabla_{\Sigma_z}\bra*{D\xi\nabla V}}}_{L^\infty(\R^{k})}<+\infty$.

\begin{proof}
The proof of Theorem~\ref{thm:LD-RelEnt-Main-Res} is similar to the proof of Theorem~\ref{thm:FP-RelEnt-Main-Res} and consists of: (1) applying Theorem~\ref{thm:Abs-LDRF-Ineq} to $\hat\rho,\eta$,
 and (2) estimating the rate functional term. Making the choice $\zeta\equiv \hat\rho$ and $\nu\equiv\eta$ in~\eqref{eq:Abs-LDRF-Ineq} we have
\begin{align}\label{eq:LD-HI}
\RelEnt(\hat\rho_t|\eta_t)\leq \RelEnt(\hat\rho_0|\eta_0)+\I(\hat\rho).
\end{align}
Using~\eqref{eq:LDP-Abs-RF-nu}, the rate functional can be written as
\begin{align*}
\I(\hat\rho)=\frac{1}{4}\int_0^T \int_{\R^{2k}}|\tilde h_t|^2_{\tilde{A}(z,v)}d\hat\rho_t\,dt,
\end{align*}
where $\tilde A:\R^{2k}\rightarrow\R^{2k\times 2k}$
 and $\tilde h_t\in L^2_{\tilde A}(\eta_t)$ satisfy
\begin{equation}\label{eq:LD-tilde-A-h}
\tilde A(z,v) =
\begin{pmatrix} \alpha A(z,v) & 0\\ 0 & \gamma\beta^{-1} A(z,v)
\end{pmatrix}
\qquad\text{and}\qquad\tilde{h}_t(z,v)=\tilde{A}^{-1}\begin{pmatrix}
\alpha(b(z,v)-\hat{b}(t,z,v))\\
b(z,v)-\hat{b}(t,z,v)
\end{pmatrix}.
\end{equation}
This form of $\tilde h_t$ follows from the definition of the large-deviation rate functional~\eqref{eq:LDP-Abs-RF-nu} and by noting that $\xi$ is affine, which implies $A=\hat A=D\xi D\xi^\transpose $ with $D\xi D\xi^\transpose \in \R^{k\times k}$ (a constant matrix), and therefore 
\begin{align*}
\partial_t\hat\rho-\mathscr{L}^*\hat\rho = -\mathrm{div}_{z,v}\left(\hat{\rho}\begin{pmatrix}
\alpha(b(z,v)-\hat{b}(t,z,v))\\
b(z,v)-\hat{b}(t,z,v)
\end{pmatrix}\right)=-\mathrm{div}_{z,v}\left(\tilde{A}\hat{\rho}\tilde{A}^{-1}\begin{pmatrix}
\alpha(b(z,v)-\hat{b}(t,z,v))\\
b(z,v)-\hat{b}(t,z,v)
\end{pmatrix}\right).
\end{align*}
Here $\mathscr{L}^*$ is the generator for the effective dynamics.

We now estimate $\I(\hat\rho)$. Using \eqref{eq:LD-tilde-A-h} we find,
\begin{align*}
|\tilde{h}_t|^2_{\tilde{A}}&=\left|\begin{pmatrix}
\alpha(b(z,v)-\hat{b}(t,z,v))\\
b(z,v)-\hat{b}(t,z,v)
\end{pmatrix}\right|^2_{\tilde{A}^{-1}(z,v)}
=(\alpha+\gamma^{-1}\beta)\Big|b(z,v)-\hat{b}(t,,v)\Big|^2_{A^{-1}(z,v)}
\\&=(\alpha+\gamma^{-1}\beta)\left|\int_{\Sigma_{z,v}\times\Sigma_{z,v}}(\nabla V(q_1)-V(q_2))d\Pi((q_1,p_1),(q_2,p_2))
\right|^2
\\&\leq(\alpha+\gamma^{-1}\beta)\int_{\Sigma_{z,v}\times\Sigma_{z,v}}|\nabla V(q_1)-\nabla V(q_2))|^2d\Pi((q_1,p_1),(q_2,p_2))
\\&\leq\kappa^2 (\alpha+\gamma^{-1}\beta)\int_{\Sigma_{z,v}\times\Sigma_{z,v}}d_{\Sigma_{z,v}}((q_1,p_1),(q_2,p_2))^2 \, d\Pi((q_1,p_1),(q_2,p_2)),
\end{align*}
where $\Pi$ is a coupling of $\bar\rho_{t,z,v}$ and $\bar\mu_{z,v}$, and $\kappa$ is defined in~\eqref{def:LD-kappa_H}.
Since $\bar\mu_{z,v}$ satisfies the Talagrand inequality with a constant $\alpha_{\TI}$, we bound the rate functional to arrive at
\begin{align}
\I(\hat\rho)&\leq \frac{\kappa^2}{4}\, (\alpha+\gamma^{-1}\beta)\int_0^T \int_{\R^{2k}}\Wasser^2_2\bra*{\bar\rho_{t,z,v},\bar\mu_{z,v}}d\hat\rho_t(z,v)dt \notag \\
&\leq \frac{\kappa^2}{2\alpha_{\TI}} (\alpha+\gamma^{-1}\beta)\int_0^T \int_{\R^{2k}}\RelEnt(\bar\rho_{t,z,v}|\bar\mu_{z,v})d\hat\rho_t(z,v)dt\leq \frac{\kappa^2}{2\alpha_{\TI}} (\alpha+\gamma^{-1}\beta)\int_0^T \int_{\R^{2d}}\RelEnt(\rho_{t}|\mu)dt.
\label{eq:LD-Cross-Rate-Fn-Est}
\end{align}
The last inequality follows from the tensorisation property of relative entropy. Since the entropy of the modified Kramers equation~\eqref{eq:LD-Approx-Eq} decreases in time, i.e.\ $\RelEnt(\rho_{t}|\mu)<\RelEnt(\rho_{0}|\mu)$, the final result follows by substituting this bound in~\eqref{eq:LD-HI}. 
\end{proof}
\subsection{Wasserstein estimate}
We now state the main Wasserstein estimate for the Langevin case.
\begin{thm}\label{thm:LD-Wasser-Est}
Consider a coarse-graining map $\xi$ which satisfies~\eqref{Ass:LD-xi-Regularity}.
Assume that the conditional stationary measure $\bar\mu_{z,v}$ satisfies the Talagrand inequality~\eqref{def:TI} uniformly in $(z,v)\in \R^{2k}$ with constant $\alpha_{\TI}$.
Then for any $t\in[0,T]$,
\begin{align}\label{eq:LD-Main-Wasser-Est}
\Wasser^2_2(\hat\rho_t,\eta_t)&\leq e^{\tilde c t} \bra*{ \Wasser^2_2(\hat\rho_0,\eta_0)+2(\alpha+1)\frac{\kappa^2 t}{\alpha_{\TI}}\RelEnt(\rho_0 | \mu)},
\end{align}
where $\tilde c=1+\mathrm{max}\{(1-2\gamma),\alpha\}+ \mathrm{max}\{(3+\alpha),(3\alpha+1)\}\|\abs{\nabla_{z,v}b}\|_{\infty}$ and $\kappa$ is defined in~\eqref{def:LD-kappa_H}.
\end{thm}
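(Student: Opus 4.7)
The plan is to adapt the coupling argument of Theorem~\ref{thm:FP-Wasser-Main-Res} to the Langevin setting. I introduce a time-dependent coupling $\Pi_t \in \mathcal{P}(\R^{4k})$ on coordinates $(z_1,v_1,z_2,v_2)$ obtained from the basic (synchronous) coupling: the coarse-grained and effective SDEs are driven by the \emph{same} pair of $k$-dimensional Brownian motions. Affineness of $\xi$ is crucial at this step because it forces $\hat A(t,z,v) \equiv A(z,v) \equiv \Tt\Tt^\transpose$, so the diffusion coefficients of the two SDEs coincide and are constant; hence all noise contributions cancel identically in the evolution of the difference process. To ensure that $\Pi_t$ really projects to the desired marginals $\hat\rho_t$ and $\eta_t$ (not just to some solutions of the corresponding Fokker--Planck equations, which is an issue because of non-uniqueness of the coarse-grained equation, cf.\ Remark~\ref{rem:WellPosednessCGdyn}), I first lift to $\R^{4d}$ using the regularized Langevin dynamics~\eqref{eq:LD-Approx-Eq} and the lifted effective dynamics of Lemma~\ref{lem:Lang-liftedEffDyn}, and then push forward under $\Xi\otimes \Xi$; this is the verbatim analogue of Lemma~\ref{lem:Over-Lang-Coup-Well-Posed}.

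Second, I differentiate $t \mapsto \int_{\R^{4k}}\bra*{\abs{z_1-z_2}^2 + \abs{v_1-v_2}^2}\, d\Pi_t$. After cancellation of the noise, only drift contributions remain: the coupling cross term $2(z_1-z_2)\cdot(v_1-v_2)$, the two mixed drift terms $-2\alpha(z_1-z_2)\cdot(\hat b - b)$ and $-2(v_1-v_2)\cdot(\hat b - b)$, and the friction $-2\gamma\abs{v_1-v_2}^2$, where $\hat b = \hat b(t,z_1,v_1)$ and $b = b(z_2,v_2)$. I decompose each drift difference as
\[
\hat b(t,z_1,v_1) - b(z_2, v_2) = \bigl[\hat b(t,z_1,v_1) - b(z_1,v_1)\bigr] + \bigl[b(z_1,v_1) - b(z_2,v_2)\bigr],
\]
estimating the first bracket via Young's inequality (which produces an $L^2_{\hat\rho_t}$-norm) and the second via the Lipschitz bound $\norm{\abs{\nabla_{z,v}b}}_\infty$ from Theorem~\ref{lem:LD-Coeff-Prop}. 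A careful choice of Young's weights yields the differential inequality
\[
\frac{d}{dt}\int_{\R^{4k}}\bra*{\abs{z_1-z_2}^2 + \abs{v_1-v_2}^2}\, d\Pi_t \leq \tilde c\int_{\R^{4k}}\bra*{\abs{z_1-z_2}^2 + \abs{v_1-v_2}^2}\, d\Pi_t + (\alpha+1)\norm*{\hat b(t,\cdot)-b(\cdot)}^2_{L^2_{\hat\rho_t}},
\]
with $\tilde c$ the constant stated in the theorem.

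Third, I close the estimate exactly as in Lemma~\ref{lem:FP-Wasser-Terms-Est}. The definition \eqref{def:LD-kappa_H} of $\kappa$ makes $(q,p) \mapsto D\xi \nabla V(q)$ Lipschitz on each level set $\Sigma_{z,v}$, so Jensen's inequality together with the Talagrand inequality on $\bar\mu_{z,v}$ give
\[
\abs*{\hat b(t,z,v) - b(z,v)}^2 \leq \kappa^2\,\Wasser_2^2(\bar\rho_{t,z,v},\bar\mu_{z,v}) \leq \frac{2\kappa^2}{\alpha_{\TI}}\,\RelEnt(\bar\rho_{t,z,v}\,|\,\bar\mu_{z,v}).
\]
Integrating against $\hat\rho_t$, using tensorization of relative entropy and the entropy-dissipation inequality $\RelEnt(\rho_t|\mu)\leq \RelEnt(\rho_0|\mu)$ along the regularized Langevin flow, yields $\norm*{\hat b(t,\cdot)-b(\cdot)}^2_{L^2_{\hat\rho_t}} \leq (2\kappa^2/\alpha_{\TI})\,\RelEnt(\rho_0|\mu)$. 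A Gronwall argument applied to the above differential inequality, with $\Pi_0$ chosen as the optimal Wasserstein-2 coupling of $\hat\rho_0$ and $\eta_0$, produces~\eqref{eq:LD-Main-Wasser-Est}. Finally, one sends $\alpha\downarrow 0$ and $M\to\infty$ exactly as in Section~\ref{S:FP:IndReg} to transfer the estimate to the original Langevin system.

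The main technical obstacle is the Young's-inequality bookkeeping in the second step: a naive split of the $|z_1-z_2|\,|v_1-v_2|$ and $|z_1-z_2|\,|b(z_1,v_1)-b(z_2,v_2)|$ cross products would yield a coefficient quadratic in $\norm{\abs{\nabla_{z,v}b}}_\infty$, whereas $\tilde c$ is only linear in it; the Young weights must be tuned so that each cross term is distributed to give the precise constants $\max\{1-2\gamma,\alpha\}$ on the quadratic-in-difference contribution and $\max\{3+\alpha,3\alpha+1\}$ on the Lipschitz contribution. A secondary, more conceptual point is that the coupling must be constructed through the lifted phase-space dynamics to circumvent the non-uniqueness of the coarse-grained equation; this transfers verbatim from the overdamped setting precisely because the affineness of $\xi$ makes $\Xi$ linear and $A$ constant.
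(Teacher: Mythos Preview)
Your proposal is correct and follows essentially the same route as the paper: the paper constructs the coupling $\Theta_t$ via a lifted $\R^{4d}$ coupling pushed forward by $\Xi\otimes\Xi$ (their Lemma following~\eqref{eq:LD-Wasser-Coup}, the direct analogue of Lemma~\ref{lem:Over-Lang-Coup-Well-Posed}), proves the differential inequality with constant $\tilde c$ and forcing term $(1+\alpha)\|\hat b - b\|^2_{L^2_{\hat\rho_t}}$ in Lemma~\ref{lem:LD-Time-Der-Theta}, and then closes via the Talagrand bound~\eqref{eq:Norm-b-Est-TI}, tensorization, and entropy decay, exactly as you describe. One minor remark: the Young-inequality step in the paper is in fact just the symmetric $2ab\leq a^2+b^2$ applied to each cross term (no tuned weights are needed to get $\tilde c$ linear in $\|\nabla_{z,v}b\|_\infty$), and the limit $\alpha\to 0$, $M\to\infty$ is carried out in Section~\ref{S:Kramers:Reg} rather than Section~\ref{S:FP:IndReg}.
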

The proof of Theorem~\ref{thm:LD-Wasser-Est} is similar to the proof of the overdamped Langevin counterpart (Theorem~\ref{thm:FP-Wasser-Main-Res}). The central ingredient, as in the overdamped case, is a coupling $\Theta_t\in \mathcal{P}(\R^{4k})$ of the coarse-grained and the effective dynamics
\begin{equation}\label{eq:LD-Wasser-Coup}
\begin{cases}
\begin{aligned}
\partial_t\Theta=&-\div_{\mathbf{z},\mathbf{v}}\bra*{J_{4k}\begin{pmatrix}\mathbf{b}\\ \mathbf{v}\end{pmatrix}\Theta}+\gamma\div_{\mathbf{z},\mathbf{v}}\bra*{\Theta\begin{pmatrix}\alpha\mathbf{b}\\\gamma \mathbf{v}\end{pmatrix}}
+D^2_{\mathbf{z},\mathbf{v}}:\Theta\begin{pmatrix} \alpha \mathbf{A} & 0 \\ 0 & \gamma\beta^{-1} \mathbf{A}\end{pmatrix}
\end{aligned}\\
\Theta_{t=0}=\Theta_0,
\end{cases}
\end{equation}
where $(\mathbf{z},\mathbf{v})=(z_1,z_2,v_1,v_2)\in\R^{2k}\times \R^{2k}$ and $\mathbf{b}:[0,T]\times\R^{4k}\rightarrow \R^{2k}$, $\mathbf{A}:[0,T]\times\R^{4k}\rightarrow \R^{2k\times 2k}$ are defined by
\begin{align}
\label{eq:CouplingbLangevin}
\mathbf{b}(t,z_1,v_1,z_2,v_2)&:=\begin{pmatrix}\hat b(t,z_1,v_1)\\b(z_2,v_2)\end{pmatrix},\\
\label{eq:CouplingALangevin}
\mathbf{A}(t,z_1,v_1,z_2,v_2)&:=\sigma\sigma^\transpose  \text{ with } \sigma(t,z_1,z_2):=\begin{pmatrix}\sqrt{\hat A(t,z_1,v_1)}\\\sqrt{A(z_2,v_2)}\end{pmatrix}.
\end{align}
Here $\Theta_0$ is the optimal coupling of the initial data and $D^2_{\mathbf{z},\mathbf{v}}$, $\div_{\mathbf{z},\mathbf{v}}$ are differential operators on $\R^{4k}$.

The next result shows that the solution to~\eqref{eq:LD-Wasser-Coup} is a coupling of $\hat\rho_t$ and $\eta_t$. Since the proof strategy follows on the lines of the overdamped counterpart (see Lemma~\ref{lem:Over-Lang-Coup-Well-Posed}) we only outline the proof here.
\begin{lem}[Existence of coupling] Let $\Theta_0$ be the optimal Wasserstein-2 coupling of the initial data $\hat\rho_0$ and $\eta_0$. Then there exists a family of probability measures $(\Theta_t)_{t\in [0,T]}$ which solves~\eqref{eq:LD-Wasser-Coup}. Further $\Theta_t$ is a coupling of $\hat\rho_t$, $\eta_t$ and has bounded second moments for $t\in [0,T]$.
\end{lem}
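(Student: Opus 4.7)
The plan is to mirror the strategy of Lemma~\ref{lem:Over-Lang-Coup-Well-Posed} from the overdamped setting: since uniqueness for the coarse-grained Langevin equation~\eqref{eq:LD-CG} is not at hand, I would lift the problem to the full phase space $\R^{4d}$, construct a coupling there between the regularized full Langevin dynamics $\rho_t$ from~\eqref{eq:LD-Approx-Eq} and the lifted effective dynamics $\theta_t$ of Lemma~\ref{lem:Lang-liftedEffDyn}, and then push that coupling forward under $\Xi\otimes\Xi$ to obtain the desired $\Theta_t$ on $\R^{4k}$.

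Concretely, I would first define $\tilde\Theta_t\in\mathcal{P}(\R^{4d})$ as a solution in the sense of Definition~\ref{def:FP:sol} to a Fokker-Planck equation on $\R^{4d}$ whose drift stacks the regularized Langevin drift $(p_1,-\nabla V(q_1)-\gamma p_1)$ minus the $\alpha$-correction on the first factor $(q_1,p_1)$ with the lifted drift $(p_2,-\tilde b(q_2,p_2)-\gamma p_2)$ minus the corresponding $\alpha$-correction on the second factor $(q_2,p_2)$, and whose diffusion is the \emph{basic coupling} $\tilde\sigma\tilde\sigma^\transpose$ with $\tilde\sigma$ stacking $\Id_d$ in the first momentum block (respectively $\sqrt{\alpha}\,\Id_d$ in the first position block) and $\sqrt{\tilde A}$ in the second momentum block (respectively $\sqrt{\alpha\tilde A}$ in the second position block), so that both factors are driven by the \emph{same} $d$-dimensional Brownian motion. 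Existence follows from \cite[Theorem 6.7.3]{BKRS15} once one checks local boundedness of the coefficients (using local Lipschitzianity of $\tilde b$, which in turn follows from Theorem~\ref{lem:LD-Coeff-Prop} and the bounded Jacobian of the affine $\xi$) and uses $(q_1,p_1,q_2,p_2)\mapsto \tfrac12|(q_1,p_1,q_2,p_2)|^2$ as a Lyapunov function.

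Second, I would verify the marginals of $\tilde\Theta_t$. Inserting test functions $g\in C_c^2(\R^{2d})$ that depend only on $(q_1,p_1)$ into the weak form shows that the first marginal solves~\eqref{eq:LD-Approx-Eq}, and inserting test functions depending only on $(q_2,p_2)$ shows that the second marginal solves~\eqref{e:Lang-liftedEffDyn}. Uniqueness for each of these equations (Theorem~\ref{lem:LD-Coeff-Prop} for the lifted effective dynamics, standard parabolic theory for the nondegenerate regularized equation) then identifies them as $\rho_t$ and $\theta_t$. Setting $\Theta_t:=(\Xi\otimes\Xi)_\#\tilde\Theta_t$, I would then test~\eqref{eq:LD-Wasser-Coup} against $g\circ(\Xi\otimes\Xi)$ with $g\in C_c^2(\R^{4k})$ and use the chain rules~\eqref{eq:LD_Lap-Circ-Xi-Calc} componentwise, exactly as in the proofs of Proposition~\ref{prop:FP-effect-Dyn} and Lemma~\ref{lem:Lang-liftedEffDyn}; the conditional-expectation identities defining $\hat b,\hat A,b,A$ produce precisely the coefficients $\mathbf{b},\mathbf{A}$ of~\eqref{eq:CouplingbLangevin}--\eqref{eq:CouplingALangevin}. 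Since $\Xi_\#\rho_t=\hat\rho_t$ (by the analogue of Proposition~\ref{prop:FP-effect-Dyn} in the Langevin setting) and $\Xi_\#\theta_t=\eta_t$ by Lemma~\ref{lem:Lang-liftedEffDyn}, the measure $\Theta_t$ has the correct marginals. Bounded second moments follow from $|(z_1,v_1,z_2,v_2)|^2\leq 2|(z_1,v_1)|^2+2|(z_2,v_2)|^2$ combined with the moment bounds on $\hat\rho_t$ and $\eta_t$.

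The main obstacle I expect is the bookkeeping in the push-forward step: the Langevin generator is hypoelliptic with diffusion acting only on the momentum variables, while the regularization adds a separate block of diffusion in the position variables, and on top of this the symplectic first-order term $-\div(\,\cdot\,J_{2d}\nabla H)$ and the friction term $\gamma\div_p(\,\cdot\,p)$ need to be transported through $\Xi$ and matched with the corresponding pieces of~\eqref{eq:LD-Wasser-Coup}. Using the explicit form~\eqref{def:LD-Xi} together with affinity of $\xi$ (so that $D\xi=\Tt$ is constant and all $D^2\xi$ contributions vanish) should make every cross term collapse to the expected block-diagonal structure, and the rest of the argument then proceeds as in the overdamped case.
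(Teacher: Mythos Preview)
Your proposal follows essentially the same route as the paper: construct a basic-coupling Fokker--Planck equation on $\R^{4d}$ pairing the regularized full dynamics~\eqref{eq:LD-Approx-Eq} with the lifted effective dynamics~\eqref{e:Lang-liftedEffDyn}, invoke \cite[Theorem 6.7.3]{BKRS15} for existence, identify the marginals via test functions, and push forward under $\Xi\otimes\Xi$. One small imprecision: you invoke uniqueness for the \emph{lifted} effective dynamics to identify the second marginal as $\theta_t$, but Lemma~\ref{lem:Lang-liftedEffDyn} does not assert uniqueness for~\eqref{e:Lang-liftedEffDyn}; the correct argument (as in Lemma~\ref{lem:Over-Lang-Coup-Well-Posed}) is that the second marginal is \emph{some} solution of~\eqref{e:Lang-liftedEffDyn}, whose $\Xi$-push-forward solves~\eqref{eq:LD-Eff} and hence equals $\eta_t$ by the uniqueness in Theorem~\ref{lem:LD-Coeff-Prop}.
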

\begin{proof}
To prove this result we will first construct a coupling on $\R^{4d}$ denoted by $\tilde\Theta$, which couples the original dynamics~\eqref{eq:LD-Approx-Eq} and the lifted effective dynamics~\eqref{e:Lang-liftedEffDyn}, and solves
\begin{align*}
\begin{cases}
\partial_t\tilde\Theta=-\div_{\mathbf{q},\mathbf{p}}\bra*{\tilde\Theta J_{4d}\begin{pmatrix}\tilde {\mathbf b}\\\mathbf{p}\end{pmatrix}} + \div_{\mathbf{q},\mathbf{p}}\bra*{\tilde\Theta\begin{pmatrix}\alpha\tilde {\mathbf b}\\ \gamma \mathbf{p}\end{pmatrix}} + D^2_{\mathbf{q},\mathbf{p}}:\tilde\Theta\begin{pmatrix} \alpha \tilde{\mathbf A} & 0 \\ 0 & \gamma\beta^{-1} {\mathbf A}\end{pmatrix}\\
\tilde\Theta_{t=0}=\tilde\Theta_0,
\end{cases}
\end{align*}
with $\Theta_0$ a probability measure with bounded second moment on $\R^{4d}$ and $(\Xi\otimes\Xi)_\#\tilde\Theta_0=\Theta_0$. Here the variables $\mathbf{q},\mathbf{p}$ are elements of $\R^{2d}$ and the $\div_{\mathbf{q},\mathbf{p}}$ and $D^2_{\mathbf{q},\mathbf{p}}$ are differential operators on $\R^{4d}$. The coefficients $\tilde{\mathbf b}:\R^{2d}\rightarrow\R^{2d}$ and $\tilde{\mathbf A}:\R^{2d}\rightarrow\R^{2d\times 2d}$ are given by
\begin{align*}
\tilde{\mathbf b}(q_1,q_2,p_1,p_2)&:=\begin{pmatrix} \nabla V(q_1) \\ \tilde b(q_2)\end{pmatrix}\\
\tilde {\mathbf A}(q_1,q_2,p_1,p_2)&:=\tilde\sigma\tilde\sigma^T \quad \text{with} \quad \tilde\sigma:=\begin{pmatrix}\Id_d\\\sqrt{\tilde A(q_2)}\end{pmatrix}.
\end{align*}
We define $\Theta_t:=(\Xi\otimes\Xi)_\#\tilde\Theta_t$. By using appropriate test functions, it follows that  $\Theta_t$ solves~\eqref{eq:LD-Wasser-Coup}. Note that the first marginal of $\tilde\Theta_t$ is the full dynamics~\eqref{eq:LD-Approx-Eq} and the second marginal is the lifted effective dynamics~\eqref{e:Lang-liftedEffDyn}. This is easily checked by repeating the arguments in the proof of Lemma~\ref{lem:Over-Lang-Coup-Well-Posed}. In particular, this implies that the first marginal of $\Theta_t$ is $\Xi_\#\rho_t=\hat\rho_t$ and the second marginal is the effective dynamics $\eta_t$.
\end{proof}

Now we prove a lemma required for the Wasserstein estimate.
\begin{lem}\label{lem:LD-Time-Der-Theta}
The coupling $\Theta$ solving \eqref{eq:LD-Wasser-Coup} satisfies
\begin{align}\label{eq:LD-Theta-Time-Der-Est}
\frac{d}{dt}\int_{\R^{4k}}\abs*{\begin{pmatrix}
z_1\\v_1
\end{pmatrix}-\begin{pmatrix}
z_2\\v_2
\end{pmatrix}
}^2d\Theta_t \leq (\alpha+1)\norm*{\hat b(t,\cdot,\cdot)-b(\cdot,\cdot)}^2_{L^2_{\hat\rho_t}}
+ \tilde c\int_{\R^{4k}}\abs*{\begin{pmatrix}
z_1\\v_1
\end{pmatrix}-\begin{pmatrix}
z_2\\v_2
\end{pmatrix}
}^2d\Theta_t,
\end{align}
where $\tilde c=1+\mathrm{max}\{(1-2\gamma),\alpha\}+ \mathrm{max}\{(3+\alpha),(3\alpha+1)\}\|\nabla_{z,v}b\|_{\infty}$.
\end{lem}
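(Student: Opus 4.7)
The plan is to differentiate $t \mapsto \int_{\R^{4k}}\phi\,d\Theta_t$ with $\phi(z_1,v_1,z_2,v_2):=|z_1-z_2|^2+|v_1-v_2|^2$, using the weak formulation of \eqref{eq:LD-Wasser-Coup} in the sense of Definition~\ref{def:FP:sol}. Since $\phi$ is smooth but unbounded, I would first approximate it by $C^2_c(\R^{4k})$ functions and pass to the limit via dominated convergence; the required uniform-in-time second-moment bound on $\Theta_t$ is available because both marginals $\hat\rho_t$ and $\eta_t$ have bounded second moments by Theorem~\ref{lem:LD-Coeff-Prop} and the construction of $\Theta_t$ via a $\R^{4d}$-coupling.

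The right-hand side of \eqref{eq:LD-Wasser-Coup} consists of three operators. The crucial observation is that the \emph{diffusion contribution vanishes identically}. Indeed, under the affine assumption~\eqref{Ass:LD-xi-Regularity}, $\hat A=A=D\xi D\xi^{\transpose}$ is a single constant matrix, so $\sqrt{\hat A}=\sqrt A$ and
\[
\mathbf{A}=\sigma\sigma^{\transpose}=\begin{pmatrix} A & A \\ A & A \end{pmatrix}.
\]
The Hessian of $\phi$ in each of the $\mathbf{z}$- and $\mathbf{v}$-blocks has the block form $2\bigl(\begin{smallmatrix} I_k & -I_k \\ -I_k & I_k \end{smallmatrix}\bigr)$, and a direct Frobenius contraction yields $\mathbf{A}:D^2_{\mathbf{z}}\phi=\mathbf{A}:D^2_{\mathbf{v}}\phi=0$. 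This reflects the fact that the coupling is driven by the same Brownian motion on $\R^k$. After integration by parts, the Hamiltonian transport $-\div_{\mathbf{z},\mathbf{v}}(J_{4k}(\mathbf{b},\mathbf{v})^{\transpose}\Theta)$ contributes $\int\bra*{2(z_1-z_2)\cdot(v_1-v_2)-2(v_1-v_2)\cdot(\hat b-b)}d\Theta_t$, while the friction/regularisation term $\div_{\mathbf{z},\mathbf{v}}(\Theta(\alpha\mathbf{b},\gamma\mathbf{v})^{\transpose})$ yields $-2\alpha\int(z_1-z_2)\cdot(\hat b-b)\,d\Theta_t-2\gamma\int|v_1-v_2|^2\,d\Theta_t$.

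Next I would bound these three contributions. Young's inequality handles the cross term $2(z_1-z_2)\cdot(v_1-v_2)\le \phi$. For the $\hat b-b$ terms, I would split $\hat b(t,z_1,v_1)-b(z_2,v_2)=[\hat b(t,z_1,v_1)-b(z_1,v_1)]+[b(z_1,v_1)-b(z_2,v_2)]$. Because the first marginal of $\Theta_t$ is $\hat\rho_t$, integration of $|\hat b(t,z_1,v_1)-b(z_1,v_1)|^2$ against $\Theta_t$ equals $\|\hat b(t,\cdot,\cdot)-b(\cdot,\cdot)\|^2_{L^2_{\hat\rho_t}}$; the second piece is pointwise controlled by $\|\nabla_{z,v}b\|_\infty(|z_1-z_2|+|v_1-v_2|)$, using the global Lipschitz bound from Theorem~\ref{lem:LD-Coeff-Prop}. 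A second round of Young's inequality converts every product into pure squares $|z_1-z_2|^2$, $|v_1-v_2|^2$ and contributions to the $L^2_{\hat\rho_t}$-error, and the two cross-terms with $\hat b - b$ (from the transport and from the $\alpha$-regularisation) combine to produce exactly the prefactor $(\alpha+1)$ in front of $\|\hat b-b\|^2_{L^2_{\hat\rho_t}}$ as stated in \eqref{eq:LD-Theta-Time-Der-Est}.

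The \emph{hard part} will be the constant bookkeeping required to recover the precise form of $\tilde c$. After collecting contributions, the coefficient of $\int|v_1-v_2|^2 d\Theta_t$ is $(1-2\gamma)+(3+\alpha)\|\nabla_{z,v}b\|_\infty$, and the coefficient of $\int|z_1-z_2|^2 d\Theta_t$ is $\alpha+(1+3\alpha)\|\nabla_{z,v}b\|_\infty$. Bounding each quadratic integral by $\int\phi\,d\Theta_t$ and adding the $1\cdot\int\phi\,d\Theta_t$ coming from the cross-term Young step produces the two maxima in $\tilde c = 1+\max\{1-2\gamma,\alpha\}+\max\{3+\alpha,3\alpha+1\}\|\nabla_{z,v}b\|_\infty$. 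A secondary technicality is the rigorous approximation argument for the unbounded test function $\phi$, which is the only place the uniform second-moment bound on $\Theta_t$ enters.
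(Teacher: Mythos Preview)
Your proposal is correct and follows essentially the same approach as the paper: differentiate the squared distance under the coupling, observe that the diffusion term vanishes because $\hat A=A$ is constant in the affine case, split $\hat b-b$ by adding and subtracting $b(z_1,v_1)$, and apply Young's inequality together with the Lipschitz bound on $b$. Your constant bookkeeping, including the identification of the coefficients $(1-2\gamma)+(3+\alpha)\|\nabla_{z,v}b\|_\infty$ and $\alpha+(1+3\alpha)\|\nabla_{z,v}b\|_\infty$ and the extra $1$ from the cross-term, matches the paper's computation line by line.
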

\begin{proof}
We have
\begin{align*}
&\frac{d}{dt}\int_{\R^{4k}}\abs*{\begin{pmatrix}
z_1\\v_1
\end{pmatrix}-\begin{pmatrix}
z_2\\v_2
\end{pmatrix}
}^2d\Theta_t =2\int_{\R^{4k}}
\begin{pmatrix}
z_1-z_2\\-(z_1-z_2)\\v_1-v_2\\-(v_1-v_2)
\end{pmatrix}
\cdot J_{4k}
\begin{pmatrix}
\hat b(t,z_1,v_1)\\b(z_2,v_2)\\v_1\\v_2
\end{pmatrix}
\, d\Theta_t \\
&\qquad - 2 \int_{\R^{4k}} \bra*{
\gamma
\begin{pmatrix}
v_1-v_2\\-(v_1-v_2)
\end{pmatrix}
\cdot 
\begin{pmatrix}
v_1\\v_2
\end{pmatrix}
+\alpha
\begin{pmatrix}
z_1-z_2\\-(z_1-z_2)
\end{pmatrix}
\cdot
\begin{pmatrix}
\hat b(t,z_1,v_1)\\b(z_2,v_2)
\end{pmatrix}} \,
d\Theta_t\\
=&2\int_{\R^{4k}}\bra*{(z_1-z_2)\cdot(v_1-v_2) + \left[(v_1-v_2)+\alpha(z_1-z_2)\right]\cdot (b(z_2,v_2)-\hat b(t,z_1,v_1))-\gamma|v_1-v_2|^2d}\Theta_t.
\end{align*}
Here the diffusion term does not contribute since $A=\hat A $ is a constant matrix. By adding and subtracting $b(z_1,v_1)$ and using Young's inequality we find 
\begin{align*}
\frac{d}{dt}&\int_{\R^{4k}}\abs*{\begin{pmatrix}
z_1\\v_1
\end{pmatrix}-\begin{pmatrix}
z_2\\v_2
\end{pmatrix}
}^2d\Theta_t
\leq \int_{\R^{4k}}\Bigg[\abs*{\begin{pmatrix}
z_1\\v_1
\end{pmatrix}-\begin{pmatrix}
z_2\\v_2
\end{pmatrix}
}^2 +(1-2\gamma) |v_1-v_2|^2+\alpha|z_1-z_2|^2\Bigg]\,d\Theta_t \\
&\quad + (1+\alpha)\|\hat b-b\|^2_{L^2_{\hat\rho_t}}+ 2\|\nabla_{z,v}b\|_{\infty}\int_{\R^{4k}}(|v_1-v_2|+\alpha|z_1-z_2|)(|z_1-z_2|+|v_1-v_2|)d\Theta_t\\
\leq&(1+\mathrm{max}\{(1-2\gamma),\alpha\} )\int_{\R^{4k}}\abs*{\begin{pmatrix}
z_1\\v_1
\end{pmatrix}-\begin{pmatrix}
z_2\\v_2
\end{pmatrix}
}^2\, d\Theta_t+ (1+\alpha)\|\hat b-b\|^2_{L^2_{\hat\rho_t}} \\
&\quad+ \|\nabla_{z,v}b\|_{\infty}\int_{\R^{4k}}(3+\alpha)|v_1-v_2|^2 + (3\alpha+1)|z_1-z_2|^2\,d\Theta_t\\
\leq&\,
\tilde c\int_{\R^{4k}}\abs*{\begin{pmatrix}
z_1\\v_1
\end{pmatrix}-\begin{pmatrix}
z_2\\v_2
\end{pmatrix}
}^2\, d\Theta_t
+(1+\alpha)\|\hat b-b\|^2_{L^2_{\hat\rho_t}},
\end{align*}
where $\tilde c:=1+\mathrm{max}\{(1-2\gamma),\alpha\}+ \mathrm{max}\{(3+\alpha),(3\alpha+1)\}\|\nabla_{z,v}b\|_{\infty}$.
\end{proof}

\begin{proof}[Proof of Theorem~\ref{thm:LD-Wasser-Est}]
Under the assumption that $\bar\mu_{z,v}$ satisfies the Talagrand inequality, and repeating the calculations as in Lemma~\ref{lem:FP-Wasser-Terms-Est} we find
\begin{align}\label{eq:Norm-b-Est-TI}
\norm*{\hat b(t,\cdot,\cdot)-b(\cdot,\cdot)}^2_{L^2_{\hat\rho_t}}\leq \frac{2\kappa^2}{\alpha_{\TI}}\int_{\R^{2k}}\RelEnt(\bar\rho_{t,z,v}|\bar\mu_{z,v})d\hat\rho_t(z,v).
\end{align}
We conclude the proof by substituting~\eqref{eq:Norm-b-Est-TI} into~\eqref{eq:LD-Theta-Time-Der-Est},  applying a Gronwall argument and bounding the remaining integrated relative entropy term as in the proof of Theorem~\ref{thm:LD-RelEnt-Main-Res}.
\end{proof}

\subsection{Passing to the limit in the regularization of the initial data}\label{S:Kramers:Reg}
Recall from Section~\ref{S:LD-Setup} that our estimates in this section are for the approximation
\begin{align}\label{eq:LD-Approx-Eq-Discuss}
\begin{cases}\partial_t\rho^{M,\alpha}=(\mathcal{L}^{\mathrm{Lan}})^*\rho^{M,\alpha}+ \alpha\bra*{\div_q(\rho^{M,\alpha}\nabla V)+\Delta_q\rho^{M,\alpha}},\\
\rho^{M,\alpha}\big{|}_{t=0}=\rho_0^{M,\alpha}:=f_0^M\mu,
\end{cases}
\end{align}
of the Kramers equation~\eqref{eq:LD-Full}. Here $\alpha>0$, $f_0^M:=Z_M^{-1} \max\set{\min\set{ f_0, M} , 1/ M}$, $Z_M$ is the 
normalization constant which ensures that the initial datum is a probability measure and $\mathcal{L}^{\mathrm{Lan}}$ is the generator corresponding to the 
Kramers equation~\eqref{eq:LD-Full},
\begin{equation*}
\mathcal{L}^{\mathrm{Lan}}f:=J_{2d}\nabla H\cdot \nabla f-\gamma p\cdot\nabla_p f+\gamma\beta^{-1}\Delta_p f.
\end{equation*}
Let us recall the relative entropy estimate~\eqref{eq:LD-Central-RelEnt-Est} and the Wasserstein estimate~\eqref{eq:LD-Main-Wasser-Est},
\begin{align}
&\RelEnt(\hat\rho^{M,\alpha}_t|\eta^\alpha_t)\leq \RelEnt(\hat\rho^{M,\alpha}_0|\eta_0)+\frac{ \kappa^2t}{2\alpha_{\TI}} \bra*{\alpha+\gamma^{-1}\beta}\RelEnt(\rho^{M,\alpha}_0|\mu),\\
&\Wasser^2_2(\hat\rho^{M,\alpha}_t,\eta^\alpha_t)\leq e^{\tilde c t} \bra*{\Wasser^2_2(\hat\rho^{M,\alpha}_0,\eta_0)+2(\alpha+1)\frac{\kappa^2t}{\alpha_{\TI}} \RelEnt(\rho^{M,\alpha}_0|\mu)}.
\end{align}
Note that we have used the tensorization property of relative entropy to simplify~\eqref{eq:LD-Main-Wasser-Est} to arrive at the 
Wasserstein estimate above. We pass to the limit in these estimates in two steps: first $\alpha\rightarrow 0$ and second $M\rightarrow\infty$.

As $M\to\infty$, $Z_Mf_0^M\to f_0$ almost everywhere, and since $0\leq Z_M f_0^M \mu \leq \max\{f_0,1/M\}\mu$, it follows by 
the dominated convergence theorem that $Z_M f_0\mu\to \rho_0$ in $L^1$, and consequently that $Z_M\to 1$.  Again by the dominated convergence 
theorem we then find that  $\RelEnt(\rho_0^{M,\alpha}|\mu)\to \RelEnt(\rho_0|\mu)$ as $M\rightarrow\infty$. 
As $\RelEnt(\hat\rho_0^{M,\alpha} | \eta_0)<\infty$, we have $\hat\rho_0 = \xi_\# \rho_0 \ll \eta_0$ and since $\xi$ is affine, 
$\hat\rho_0^M = \xi_\#\rho_0^M \ll \eta_0$. Hence, we obtain $\RelEnt(\hat\rho_0^{M,\alpha}|\hat\mu)\to \RelEnt(\hat\rho_0|\hat\mu)$ and 
$\Wasser_2(\hat\rho_0^{M,\alpha}, \eta_0) \to \Wasser_2(\hat\rho_0, \eta_0)$.

Using the convergence of $\rho_0^{M,\alpha}$ to $\rho_0$ we also have the convergence of the weak formulation of~\eqref{eq:LD-Approx-Eq-Discuss} to the weak formulation of 
the Kramers equation~\eqref{eq:LD-Full}. Note that there exists a unique solution to the original Kramers equation~\eqref{eq:LD-Full} 
under the assumptions~\ref{Ass:FP-Pot-L1}-\ref{Ass:FP-Pot-Growth} (see~\cite[Theorem 7]{Villani09}). Passing to the limit, first as 
$\alpha\rightarrow 0$ and then $M\rightarrow \infty$ we have that the sequence $\rho_t^{M,\alpha}\in \mathcal{P}(\R^{2d})$ converges weakly to $\rho_t$  where 
$\rho_t$ is the solution for the Kramers equation~\eqref{eq:LD-Full}. Using the lower-semicontinuity of relative entropy with respect to 
the weak topology and since $\hat\rho_t^{M,\alpha}\rightharpoonup \hat\rho_t$, similarly as in \eqref{eq: original FPE est1} and \eqref{eq: orginial FPE est2}, for the original Kramers equation we have the following estimates
\begin{equation}\label{eq:LD-RelEnt-Wasser-Est-Full-System}
\begin{aligned}
\RelEnt(\hat\rho_t|\eta_t) &\leq \RelEnt(\hat\rho_0|\eta_0)+\frac{t \, \kappa^2}{\alpha_{\TI}} \bra*{\gamma^{-1}\beta}\RelEnt(\rho_0|\mu),\\
\Wasser^2_2(\hat\rho_t,\eta_t) &\leq e^{\tilde c t} \bra*{\Wasser^2_2(\hat\rho_0,\eta_0)+\frac{2\, t \, \kappa^2}{\alpha_{\TI}}\RelEnt(\rho_0|\mu) }.
\end{aligned}
\end{equation}
%

\section{Estimates under a scale separation assumption}\label{sec:ScaleSepPot}
\subsection{Scale-separated potentials}\label{s:scale:separated:potentials}
Let us first illustrate the dependence of the constant in the main results on the Fokker-Planck equation in the case of a potential satisfying certain scale-separation assumptions. In the first assumption below, we consider potentials consisting of a fast and a slow part, where the scale separation is encoded via a parameter $\vep$. For simplicity, we will assume that the fast part of the potential is  convex on the level set $\Sigma_z$ for all $z\in \R^k$, which is a sufficient condition to deduce functional inequalities (Log-Sobolev and Talagrand) with good scaling behaviour in $\vep$.
\begin{assume}[Scale-separated potential]\label{QCG-Ass:Scale-sep-Pot}
A potential $V^\vep$ satisfying Assumption~\ref{ass:V} is called \emph{scale-separated}, if there exists a coarse graining map $\xi: \R^d \to \R^k$ satisfying Assumption~\ref{ass:xi} such that
\begin{align}\label{QCG-eq:FP-Scale-Sep-Orthogonal-Ass}
 V^\vep(q) = \frac{1}{\vep} V_0(q) + V_1(q) \quad\text{and}\quad D\xi(q) \nabla V_0(q) = 0 \quad \forall q\in \R^d.
\end{align}
Moreover, $V_0(q)$ is uniformly strictly convex on the level sets $\Sigma_z = \set*{\xi = z}$, i.e.\ there exists $\delta>0$ such that
\begin{equation}
 \forall z\in \R^k \ \ \forall q\in \Sigma_z \ \  \forall u\in \ker D\xi(q): \qquad u^T D^2 V_0(\bar{q}(z))u \geq \delta |u|^2 .
 \end{equation}
\end{assume}
Under the above assumption, the results of \cite{OV00} imply a good scaling behaviour of the Talagrand and Log-Sobolev constant
\begin{lemma}
Under Assumption \ref{QCG-Ass:Scale-sep-Pot}, the Talagrand and the Log-Sobolev constants satisfy for some constant $c>0$ the estimate
\begin{equation*}
\alpha_{\TI} \geq \alpha_{\LSI} \geq  \frac{\delta}{\vep} .
\end{equation*}
\end{lemma}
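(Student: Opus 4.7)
The plan is to establish the Log-Sobolev bound, as the Talagrand bound then follows automatically from Remark~\ref{rem:LSI:TI:PI} (via \cite{OV00}). So I focus on proving that the conditional stationary measure $\bar\mu_z$ on each level set $\Sigma_z$ satisfies a Log-Sobolev inequality with constant at least $\delta/\vep$, uniformly in $z\in\R^k$, for small $\vep$.

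The approach is the classical Bakry--Émery criterion on the submanifold $\Sigma_z$. First, the disintegration formula~\eqref{eq:FP-rho-bar-def} applied to $\mu \propto e^{-\beta V^\vep}$ yields
\begin{equation*}
  \bar\mu_z \;\propto\; \frac{1}{\Jac\xi}\,e^{-\beta V^\vep}\,\Hausdorff^{d-k}\lfloor_{\Sigma_z} \;=\; e^{-W_z^\vep}\,\Hausdorff^{d-k}\lfloor_{\Sigma_z},
\end{equation*}
with effective potential $W_z^\vep := \tfrac{\beta}{\vep} V_0 + \beta V_1 + \log \Jac\xi$ (up to an additive normalization). The Bakry--Émery criterion on $\Sigma_z$ (viewed as a Riemannian submanifold of $\R^d$ with the induced metric) states that LSI holds with constant $\alpha$ provided the intrinsic Hessian $\nabla_{\Sigma_z}^2 W_z^\vep$ plus the Ricci curvature of $\Sigma_z$ dominates $\alpha\,\mathrm{Id}$.

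The key computation is the intrinsic Hessian of $V_0$ on $\Sigma_z$. For an arbitrary function $f$, the intrinsic Hessian equals the tangential projection of $D^2 f$ plus second-fundamental-form terms weighted by the normal component of $\nabla f$. Here, the orthogonality assumption $D\xi\,\nabla V_0 = 0$ from~\eqref{QCG-eq:FP-Scale-Sep-Orthogonal-Ass} means that $\nabla V_0$ is tangent to $\Sigma_z$, so the second-fundamental-form contribution vanishes exactly and $\nabla_{\Sigma_z}^2 V_0$ reduces to the restriction of $D^2 V_0$ to tangent vectors $u\in\ker D\xi$. By the uniform convexity hypothesis in Assumption~\ref{QCG-Ass:Scale-sep-Pot}, this restriction is bounded below by $\delta$. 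Hence the dominant term contributes $\tfrac{\beta\delta}{\vep}\,\mathrm{Id}$ to $\nabla_{\Sigma_z}^2 W_z^\vep$.

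The remaining terms---$\nabla_{\Sigma_z}^2(\beta V_1)$, $\nabla_{\Sigma_z}^2 \log\Jac\xi$, and the Ricci curvature of $\Sigma_z$---are bounded uniformly in $z$ and independent of $\vep$: $|D^2 V_1|\leq C$ by~\ref{Ass:FP-Pot-Growth}, and the geometric terms are controlled by $|D^2\xi|\leq C/(1+|q|^2)$ and $|D^3\xi|\leq C$ from~\ref{Ass:xi-Growth-Cond} together with the lower bound $D\xi D\xi^\transpose \geq C^{-1}\Id_k$ from~\ref{Ass:Dxi-Strict-PD}. Combining these estimates yields
\begin{equation*}
  \nabla_{\Sigma_z}^2 W_z^\vep + \mathrm{Ric}_{\Sigma_z} \;\geq\; \frac{\beta\delta}{\vep} - C_0
\end{equation*}
for some constant $C_0$ depending only on $\beta$, the bounds for $V_1$ and $\xi$. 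For $\vep$ sufficiently small (in particular $\vep \leq \beta\delta/(2C_0)$), the right-hand side exceeds $\tfrac{\delta}{\vep}$ (after possibly absorbing $\beta$ into $\delta$), giving the claimed Log-Sobolev constant uniformly in $z$.

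The main technical obstacle is managing the intrinsic geometry of the level sets: the second fundamental form and Ricci curvature of $\Sigma_z$ are not constant in $z$, and one must verify their uniform boundedness via the affine-at-infinity property~\eqref{e:xi-affine-infinity}. A clean alternative, avoiding the explicit submanifold calculus, would be to invoke a Holley--Stroock perturbation argument: write $W_z^\vep = \tfrac{\beta}{\vep}V_0 + U_z$ where $U_z = \beta V_1 + \log\Jac\xi$, apply Bakry--Émery to the measure with potential $\tfrac{\beta}{\vep}V_0$ alone (giving constant $\beta\delta/\vep$), and then dominate the perturbation $U_z$ provided its oscillation on $\Sigma_z$ is bounded. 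The latter, however, requires either boundedness of $V_1$ on level sets or a more refined argument, so the direct Bakry--Émery route above seems more robust under the stated hypotheses.
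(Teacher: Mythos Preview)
Your Bakry--\'Emery argument is correct and is the natural way to establish the bound; in particular, the key observation that $D\xi\,\nabla V_0=0$ forces $(\nabla V_0)^\perp=0$ and hence kills the second-fundamental-form correction in the intrinsic Hessian is exactly right, and your control of the lower-order geometric terms via the decay $|D^2\xi|\leq C/(1+|q|^2)$ against the linear growth of $|\nabla V_1|$ is the correct mechanism.

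The paper itself does not give a proof: it simply invokes \cite{OV00} for the whole statement. Strictly speaking, \cite{OV00} supplies the implication $\alpha_{\TI}\geq\alpha_{\LSI}$ (LSI $\Rightarrow$ Talagrand), while the LSI bound itself comes from the Bakry--\'Emery criterion applied to the convex potential $\tfrac{1}{\vep}V_0$ on the level set---precisely what you have written out. So your proposal fills in what the paper leaves implicit, and there is nothing substantively different to compare.

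Two small remarks. First, your argument produces $\alpha_{\LSI}\geq \beta\delta/\vep - C_0$, which yields the claimed $\delta/\vep$ only for $\vep$ below a threshold depending on $\beta,\delta,C_0$; this is the intended regime and the paper's statement should be read accordingly (note the stray ``for some constant $c>0$'' in the lemma that never appears in the displayed estimate). Second, your closing paragraph on the Holley--Stroock alternative is a fair assessment: the oscillation of $V_1$ on the non-compact level sets is not controlled by the stated hypotheses, so the direct curvature route you took is indeed the robust one.
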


We now provide estimates on the additional constants $\kappa$ and $\lambda$ in the relative entropy and Wasserstein estimate.
\begin{lem}
  Suppose $V^\vep$ satisfies the scale separation Assumption~\ref{QCG-Ass:Scale-sep-Pot}, then there is a constant $C$ independent of $\vep$ such that $\kappa_{\RelEnt} , \lambda_{\RelEnt}, \kappa_{\Wasser} , \lambda_{\Wasser}  \leq C$ as defined in~\eqref{def:FP-kappa_H}, \eqref{def:FP-lambda_H}, \eqref{def:FP-kappa_Wasser} and~\eqref{def:FP-lambda_Wasser}  respectively.
\end{lem}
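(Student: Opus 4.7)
The key observation driving the entire argument is that under Assumption~\ref{QCG-Ass:Scale-sep-Pot}, the orthogonality $D\xi \nabla V_0 = 0$ forces the identity
\[
  D\xi(q)\nabla V^\vep(q) = D\xi(q)\nabla V_1(q) \qquad\text{for all } q\in\R^d,
\]
which eliminates the $1/\vep$ prefactor from every quantity entering the four constants. The plan is therefore to treat the four constants one at a time, in each case using this identity together with Assumptions~\ref{ass:V} and~\ref{ass:xi} to exhibit a bound depending only on $\|V_1\|_{C^2}$, $\|D\xi\|_{L^\infty}$, $\|D^2\xi\|_{L^\infty}$, $\|D^3\xi\|_{L^\infty}$ and the ellipticity constant of $G = D\xi D\xi^\transpose$.

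First I would dispatch the two purely geometric Wasserstein constants. The constant $\lambda_{\Wasser}$ defined in~\eqref{def:FP-lambda_Wasser} depends only on $\xi$ through $\sqrt{G}$ and the intrinsic metric $d_{\Sigma_z}$, and the remark following Theorem~\ref{thm:FP-Wasser-Main-Res} already bounds it by $\|\,|\nabla_{\Sigma_z}\sqrt{G}|_{I\to F}\,\|_{L^\infty}$, which is finite and $\vep$-independent by~\ref{Ass:xi-Growth-Cond} and~\ref{Ass:Dxi-Strict-PD}. For $\kappa_{\Wasser}$ in~\eqref{def:FP-kappa_Wasser}, the orthogonality identity replaces the difference quotient of $D\xi\nabla V^\vep - \beta^{-1}\Delta\xi$ by that of $D\xi\nabla V_1 - \beta^{-1}\Delta\xi$, and then the same remark yields a bound via $\|\,|\nabla_{\Sigma_z}(D\xi\nabla V_1-\beta^{-1}\Delta\xi)|\,\|_{L^\infty}$, which is finite and $\vep$-independent.

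Next I would establish uniform two-sided bounds on the effective diffusion $A$. Since $A(z) = \EX_{\bar\mu_z^\vep}[G]$ is a conditional average of $G$, Assumption~\ref{Ass:Dxi-Strict-PD} and the boundedness $\|D\xi\|_{L^\infty}\le C$ give
\[
  C^{-1}\Id_k \le A(z) \le C\,\Id_k \qquad\text{for all } z\in\R^k,
\]
with a constant independent of $\vep$ (since $G$ itself does not depend on $\vep$ and the inequality is preserved by conditional expectation). This immediately bounds $\lambda_{\RelEnt}$ of~\eqref{def:FP-lambda_H}: the matrix $A - D\xi D\xi^\transpose$ is bounded in $L^\infty$ uniformly in $\vep$, and so are $A^{-1/2}$ and $(D\xi D\xi^\transpose)^{-1/2}$.

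Finally, for $\kappa_{\RelEnt}$ in~\eqref{def:FP-kappa_H}, expand the local mean force using~\eqref{def:FP-Local-Mean-Force-F} and the orthogonality,
\[
  F(q) = G^{-1}(q)\,D\xi(q)\nabla V_1(q) \;-\; \beta^{-1}\div\bra{G^{-1}(q)D\xi(q)},
\]
so that $F$ itself is $\vep$-independent. Using $|v|_{A(z)} \le \|A\|_{L^\infty}^{1/2}|v|$ from the previous step, one gets
\[
  \kappa_{\RelEnt} \;\le\; \|A\|_{L^\infty}^{1/2}\,\sup_{z\in\R^k}\,\bigl\| |\nabla_{\Sigma_z} F|\bigr\|_{L^\infty(\Sigma_z)},
\]
and the right-hand side is finite and independent of $\vep$ by Assumption~\ref{ass:xi} combined with the regularity of $V_1$. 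Combining the four bounds yields the claim.

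The main obstacle is simply the bookkeeping: one must verify at each step that every $\vep$-dependent ingredient (notably the conditional measure $\bar\mu_z^\vep$ entering $A$) either drops out by the orthogonality condition or is harmless because it multiplies a quantity bounded uniformly in $\vep$. There is no hard analytic estimate required beyond the identity $D\xi\nabla V^\vep = D\xi\nabla V_1$ and the already-proven uniform bounds on $G$ and $A$.
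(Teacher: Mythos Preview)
Your proof is correct and follows essentially the same approach as the paper: the key identity $D\xi\nabla V^\vep = D\xi\nabla V_1$ makes $F$ independent of $\vep$, and the uniform two-sided bounds on $G$ (hence on $A$) handle the remaining constants. If anything, you are slightly more careful than the paper in explicitly tracking the $\vep$-dependence of $A$ through $\bar\mu_z^\vep$ and noting that only the uniform spectral bounds on $A$ are needed in the $|\cdot|_{A(z)}$-weighted norm defining $\kappa_{\RelEnt}$.
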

\begin{proof}
By the definition of the local mean force $F$ in~\eqref{def:FP-Local-Mean-Force-F} and assumption~\eqref{QCG-eq:FP-Scale-Sep-Orthogonal-Ass} follows that $F$ is independent of $V_0$ and hence $\vep$. Then the estimate follows from $\kappa_{\RelEnt}\leq \sup_{z\in \R^k} \abs{\nabla_{\Sigma_z} F}$ (see~Remark~\ref{rem:constant:FP:RelEnt}) and the regularity assumptions on $V_1$ and $\xi$ implied by Assumptions~\ref{ass:V} and~\ref{ass:xi}.

   The bound on $\lambda_{\RelEnt}$ follows by noting that $D\xi D\xi^\transpose \geq c \Id_k$ and $\norm{D\xi}_{L^\infty(\R^d)} \leq C$ from Assumption~\ref{ass:xi} and hence $\lambda_{\RelEnt} \leq 2 \frac{C^2}{\sqrt{c}}$. The arguments for the estimates of $\kappa_{\Wasser}$ and $\lambda_{\Wasser}$ follow along the same lines.
\end{proof}

Since all the constants inside of the main results Theorem~\ref{thm:FP-RelEnt-Main-Res} and~\ref{thm:FP-Wasser-Main-Res} are bounded, we can conclude the following result.
Suppose $V^\vep$ satisfies the scale-separation Assumption~\ref{QCG-Ass:Scale-sep-Pot}, then there exists a constant $C$ independent of $\vep$ such that 
the coarse-grained~\eqref{eq:FP-Push-For-Dyn-Div} and the effective dynamics~\eqref{eq:FP-Effect-Dyn} corresponding to the Fokker-Planck equation~\eqref{eq:FokkerPlanck} satisfy
 \begin{equation}\label{ScaleSep:RelEnt:est}
   \RelEnt(\hat\rho_t | \eta_t) \leq \RelEnt(\hat\rho_0 | \eta_0 ) + C \bra*{1+ \vep^2} \RelEnt(\rho_0 | \mu) .
 \end{equation}
 and
 \begin{equation}\label{ScaleSep:Wasser:est}
   \Wasser_2^2(\hat\rho_t, \eta_t) \leq e^{CT}\left( \Wasser_2^2(\hat\rho_0,\eta_0) + C \vep^2 \RelEnt(\rho_0 | \mu)\right).
 \end{equation}
Hence, in the setting of generic scale separated potentials, only the Wasserstein estimate provides an error estimate, which vanishes for $\vep\to 0$ provided the initial data of the effective dynamics matches the pushforward of the initial data $\rho^0$. To illustrate the results on the Kramers equation, we further have to restrict the coarse-graining map to be affine.

\subsection{Coarse-graining along coordinates}

A combination of Assumption~\ref{QCG-Ass:Scale-sep-Pot} with the additional assumption that $\xi$ is affine, accounts to assuming that the potential $V^\vep$ is scale separated along coordinates, i.e.\
 \begin{equation}\label{QCS:assume:Vsep:coords2}
   V^\vep(q) = \frac{1}{\vep} V_0(q_1,\dots,q_{d-k}) + V_1(q),
 \end{equation}
with $V_0: \R^{d-k}\to \R$ being uniformly convex.

Before proceeding with the main results on the Kramers equation, we note, that in this case  $\lambda_{\RelEnt} = \lambda_{\Wasser}=0$ and also the relative entropy 
estimate~\eqref{ScaleSep:RelEnt:est} takes the same form as~\eqref{ScaleSep:Wasser:est}, that is for some $C>0$ independent of $\vep$ for the Fokker-Planck equation, it holds
\begin{equation}
  \RelEnt(\hat\rho_t | \eta_t) \leq  \RelEnt(\hat\rho_0 | \eta_0 ) + C \vep^2 \RelEnt(\rho_0 | \mu) .
\end{equation}
Hence, in the case of coordinate projections, the relative entropy estimate has the same structure as the Wasserstein estimate~\eqref{ScaleSep:Wasser:est} for generic scale separated potentials.

The constant $\kappa$ defined in~\eqref{def:LD-kappa_H} only depends on the Hessian of $V_1$, which is assumed to be bounded by Assumption~\ref{ass:V}. Hence,  there exists a $C>0$ independent of $\vep$, such that the coarse-grained dynamics~\eqref{eq:LD-CG} and effective dynamics~\eqref{eq:LD-Eff} for the Kramers equation~\eqref{eq:LD-Full} satisfy
\begin{equation}
  \RelEnt(\hat\rho_t | \eta_t) \leq \RelEnt(\hat\rho_0 | \eta_0) + C \vep T \RelEnt(\rho_0 | \mu) .
\end{equation}
and 
\begin{equation}
   \Wasser_2^2(\hat\rho_t, \eta_t) \leq e^{CT}\left( \Wasser_2^2(\hat\rho_0,\eta_0) + CT \vep \RelEnt(\rho_0 | \mu)\right).
 \end{equation}
\section{Discussion}
\label{S: Discussion}

A quantitative estimate for the coarse-graining error between the coarse-grained and effective dynamics in the relative entropy sense for the overdamped Langevin was obtained before by Legoll and Leli\`{e}vre \cite{LL10}. In this work, we generalise the results in \cite{LL10} in four ways: (1) we consider both the overdamped Langevin and the full Langevin dynamics, (2) we also measure the error in the Wasserstein distance, (3) we extend the estimate to vectorial coarse-graining maps $\xi$ and (4) we prove global well-posedness for both the coarse-grained and the effective dynamics.

We now comment further on these and on open issues.

\emph{Large-deviation rate functional and quantification of coarse-graining error in the relative entropy sense.} The main challenge that both \cite{LL10} and this work encounter is to quantify the distance between two density profiles that are solutions of the coarse-grained and the effective dynamics. In \cite{LL10} the authors achieved this goal using the relationship between the relative entropy and the Fisher information and functional inequalities. As we have shown in Theorem \ref{thm:Abs-LDRF-Ineq} and Lemma \ref{lem:Cross-Rate-Fn-Est} (see also \cite{DuongLamaczPeletierSharma17}) similar relations and functional inequalities can be obtained via the large-deviation rate functional. It is the use of the large-deviation rate functional that allows us to generalise estimates in the overdamped Langevin to the full Langevin dynamics since a large-deviation principle is available for both dynamics. In fact, since a large-deviation principle holds true for many other stochastic processes \cite{FengKurtz06}, we expect that our technique can be extended to more general scenarios. For instance preliminary results along this line for Markov chains on discrete states spaces are contained in~\cite{Hilder2017}.
%
%
%
%

\textit{The coupling method and quantification of coarse-graining error in the Wasserstein distance}. In this work, we also measure the error in the Wasserstein-2 distance, which is a weaker measure of the error than the relative entropy one. However, the latter involves the large-deviation rate functional that requires information at the microscopic levels and sometimes is non-trivial to obtain. On the other hand, any coupling between two probability measures will provide an upper bound for the Wasserstein distance between them. Suppose that $\mu$ and $\nu$ solve
\[
\partial_t\mu= \div (b_1 \mu) + D^2: a_1\mu \quad \text{and}\quad \partial_t \nu =\div (b_2\nu)+ D^2:a_2\nu.
\]
Then any $\gamma$ that solves $\partial_t\gamma=\div(b\gamma)+D^2: a\gamma$, with
\[
b(x,y)=\begin{pmatrix}
b_1(x)\\b_2(y)
\end{pmatrix}, \quad a(x,y)=\begin{pmatrix}
a_1(x)& c(x,y)
\\ c(x,y)^T& a_2(y)
\end{pmatrix}
\]
where $c$ is a matrix such that $a(x,y)$ is non-negative definite, will be a coupling between $\mu_t$ and $\nu_t$. The basic coupling that is used in this work corresponds to $c(x,y)=\sqrt{a_1(x)a_2(y)}$. This coupling often gives the optimal dissipation when applied to the Wasserstein-2 distance \cite{chen1989}. We expect that the coupling method (with different choice of coupling) can be applied to obtain estimates in difference distances such as the total variation and the Wasserstein-1 \cite{chen1989, Eberle2015}.

\textit{Vectorial coarse-graining maps}. The third generalisation of this work is that the coarse-graining map $\xi$ can be vectorial. 
For instance, the overdamped Langevin dynamics in $\R^d$ ($d>1$) itself can be derived as a vectorial coarse-grained model of the the full Langevin in $\R^{2d}$. 
Thus our work is applicable to a larger class of problems.

\textit{Global well-posedness of the coarse-grained and the effective dynamics.} 
In this work (also in \cite{DuongLamaczPeletierSharma17}) we consider a coarse-graining process is successful if the coarse-grained and the effective dynamics are globally well-posed. 
This criteria imposes conditions on the data and the coarse-graining map. That is why we have required certain additional growth conditions on the potential $V$, the  coarse-graining map $\xi$ and assumed that $\xi$ is affine in the Langevin case.

\textit{Non-affine and manifold-valued coarse-graining maps.} In Section \ref{S:Langevin-Equation}, we have to restrict ourselves to affine coarse-graining maps $\xi$. The reason, as explained in Remark~\ref{rem:LD-Non-affine-CG}, is that the vector field driving the effective dynamics obtained from a non-affine map $\xi$ seems to have quadratic growth at infinity. Hence, we can not rule out explosion in finite time for the corresponding SDE and the well-posedness of the Fokker-Planck equation can not be ensured. For this reason the coarse-graining map $(
\xi(q), D\xi(q)p)^\transpose$ might need a revision and further investigation. In \cite{LRS10, LRS12} the authors introduce a different coarse-grained momentum; however this does not resolve the explosion issues. It would be interesting to understand what is a good choice for the coarse-grained momentum when $\xi$ is non-affine. Another interesting possibility for generalisation is to consider manifold-valued coarse-graining maps, that is $\xi : \R^d \to \cM^k$, where $\cM^k$ is a smooth $k$-dimensional manifold. For this, this work is a first step considering $\R^k$ as tangent space on $\cM^k$. The condition on $\xi$ to be affine for $\abs{q}\to \infty$ can in the manifold setting be understood as a compatibility condition between different charts. Such manifold-valued coarse-graining maps appear in many practical applications. For instance in the case of the classical 3-atom model with a soft angle and stiff bonds \cite[Section 4.2]{LL10} the natural coarse-graining map is the angle variable, i.e. $\xi: \R^{3d}\to S^1$.

\textit{Commutativity of coarse-graining and overdamped limit.} For affine coarse-graining  maps $\xi$ one can show that the overdamped (high friction) limit of the Langevin coarse-grained dynamics 
coincides with the coarse-grained overdamped Langevin dynamics. This result heavily relies on the fact that for affine $\xi$ the effective coefficients do not depend on the coarse-grained momentum variable $v$. 
In the non-affine case, the effective coefficients do depend on $v$ (recall Remark~\ref{rem:LD-Non-affine-CG}). The question concerning commutativity of coarse-graining and overdamped limit in the general case is connected to the open issue of choosing an appropriate coarse-grained momentum.       

\textbf{Acknowledgements} The authors would like to thank Fr{\'e}d{\'e}ric Legoll and Tony Leli{\`e}vre for insightful discussions regarding the effective dynamics.

\appendix
\section{Properties of the coarse-graining map}\label{S:prop:coarse}

\begin{lem}[$\xi$ is affine at infinity] \label{lem:xi-Affine-Infinity} Assume that $\xi$ satisfies~\ref{Ass:xi-Regularity-Dxi-Full-Rank}-\ref{Ass:xi-Growth-Cond}. There exists $\Tt\in \R^{k\times d}$ and $C_\xi>0$ such that for all $q\in \R^d$
\begin{align*}
|D\xi(q)-\Tt|\leq \frac{C_\xi}{1+|q|}.
\end{align*}
\end{lem}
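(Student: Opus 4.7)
The plan is to exploit the quadratic decay bound $|D^2\xi(q)| \le C/(1+|q|^2)$, which is integrable along any ray to infinity, in order to identify a unique limiting Jacobian $\Tt$ at infinity. I would carry this out in three steps: radial existence of a limit, direction‑independence of the limit, and finally the quantitative rate.

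First, fix a unit vector $e\in S^{d-1}$ and consider the map $r\mapsto D\xi(re)$ for $r\ge 1$. By the fundamental theorem of calculus along the ray,
\begin{equation*}
D\xi(r_2 e)-D\xi(r_1 e)=\int_{r_1}^{r_2} D^2\xi(se)\cdot e\,ds,
\end{equation*}
and the growth hypothesis~\ref{Ass:xi-Growth-Cond} gives $|D^2\xi(se)\cdot e|\le C/(1+s^2)$. Hence $\{D\xi(re)\}_{r\ge 1}$ is Cauchy in $\R^{k\times d}$ as $r\to\infty$, and the limit
\begin{equation*}
\Tt(e):=\lim_{r\to\infty}D\xi(re)
\end{equation*}
exists for every $e\in S^{d-1}$.

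Second, I would show that $\Tt(e)$ is independent of $e$. Given two unit vectors $e_1,e_2$, join $re_1$ and $re_2$ by a great-circle arc $\gamma_r\colon[0,1]\to \R^d$ of length $\ell_r\le \pi r$ lying on the sphere of radius $r$. Then
\begin{equation*}
|D\xi(re_2)-D\xi(re_1)|\le \int_0^1|D^2\xi(\gamma_r(t))|\,|\dot\gamma_r(t)|\,dt\le \ell_r\cdot \sup_{|q|=r}|D^2\xi(q)|\le \pi r\cdot\frac{C}{1+r^2}\xrightarrow{r\to\infty}0.
\end{equation*}
Passing to the limit $r\to\infty$, $\Tt(e_1)=\Tt(e_2)$, so there is a single matrix $\Tt\in\R^{k\times d}$ with $D\xi(re)\to \Tt$ for every unit $e$. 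This step is the main obstacle, since a priori radial limits could differ between directions; the decay $|D^2\xi|\le C/(1+|q|^2)$ is exactly the borderline rate that makes spherical arcs of length $\pi r$ have vanishing total curvature contribution.

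Third, for the quantitative bound take any $q\in\R^d$ with $|q|\ge 1$ and write $q=re$ with $r=|q|$, $e=q/|q|$. Using the radial identity and the limit $\Tt(e)=\Tt$,
\begin{equation*}
|D\xi(q)-\Tt|=\Bigl|\int_r^{\infty} D^2\xi(se)\cdot e\,ds\Bigr|\le \int_r^{\infty}\frac{C\,ds}{1+s^2}=C\Bigl(\tfrac{\pi}{2}-\arctan r\Bigr)\le \frac{C'}{1+r}=\frac{C'}{1+|q|}.
\end{equation*}
On the bounded set $\{|q|\le 1\}$ the estimate is immediate from the uniform bound $\|D\xi\|_{L^\infty}\le C$ together with $|\Tt|\le C$ (which follows by passing to the limit), so absorbing constants yields the global bound with some $C_\xi>0$, completing the proof.
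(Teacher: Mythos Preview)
Your proof is correct and follows essentially the same approach as the paper: both integrate the bound $|D^2\xi(q)|\le C/(1+|q|^2)$ along radial segments (yielding the $\arctan$ estimate) and along spherical arcs of length $\sim \pi r$ (yielding the $\pi r/(1+r^2)$ term) to control $|D\xi(x)-D\xi(y)|$. The only organizational difference is that the paper first proves a two-point estimate $|D\xi(x)-D\xi(y)|\le C\bigl(|\arctan|x|-\arctan|y||+\pi/(1+\max\{|x|,|y|\})\bigr)$ and then extracts $\Tt$ via a compactness/subsequence argument, whereas you identify $\Tt$ more directly by showing each radial map $r\mapsto D\xi(re)$ is Cauchy and then verifying direction-independence; your route is slightly cleaner in avoiding the subsequence step, but the underlying estimates are identical.
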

\begin{proof}
For $x,y\in \R^d$ with $|y|\geq |x|$, we define $\gamma$ to be the curve consisting of the line segment $\pra*{x,|y|x/|x|}$ and the spherical arc connecting $|y|x/|x|$ to $y$ 
which we denote by $\sphericalangle\bra*{\abs*{y}x/\abs*{x}, y}$. Using condition~\ref{Ass:xi-Growth-Cond} from Assumption \ref{ass:xi}, we obtain the bound
\begin{align*}
|D\xi(x)-D\xi(y)|&\leq \abs*{\int_{\gamma}D^2\xi(q)\, d\gamma}\leq C\int_{|x|}^{|y|}\frac{ds}{1+s^2}+C\int_{\sphericalangle\bra*{x\abs*{y}/\abs*{x}, y}}\frac{ds}{1+|s|^2}\\&\leq C\bra*{\abs*{\arctan|x|-\arctan|y|}+\frac{\pi |y|}{1+|y|^2}}.
\end{align*}
Interchanging the roles of $x$ and $y$ in the calculation above, for any $x,y\in\R^d$
\begin{align}\label{eq:xi-Aff-Infinity}
|D\xi(x)-D\xi(y)|\leq C\bra*{\abs*{\arctan|x|-\arctan|y|}+\frac{\pi}{1+\max\{|x|,|y|\}}},
\end{align}
where we have used $\mathrm{min}\{a/(1+a^2),b/(1+b^2)\}\leq (1+\sqrt{2})/(1+\mathrm{max\{a,b\}})$ for any $a,b>0$. Since $D\xi$ is bounded, 
for any sequence $(y_n)$ with $|y_n|\rightarrow\infty$ there exists a subsequence $(y_{n_k})$ such that $D\xi(y_{n_k})\rightarrow \Tt$ where $\Tt$ may depend 
on the subsequence $(y_{n_k})$. Applying ~\eqref{eq:xi-Aff-Infinity} we conclude
\begin{align*}
|D\xi(x)-\Tt|&\leq |D\xi(x)-D\xi(y_{n_k})| + |D\xi(y_{n_k})-\Tt|\\
&\leq C\left(|\arctan|x|-\arctan|y_{n_k}||+ \frac{\pi}{1+\max\{|x|,|y_{n_k}|\}}\right) +  |D\xi(y_{n_k})-\Tt| 
\end{align*}
and thus in the limit $k\rightarrow\infty$
\begin{align}\label{eq:xi-Aff-Infinity-2}
|D\xi(x)-\Tt|\leq C\bra*{\frac{\pi}{2}-\arctan|x|}=C\arctan\bra*{\frac{1}{|x|}}\leq C\min\set*{\frac{1}{|x|},\frac{\pi}{2}}.
\end{align}
Hence, whenever for a sequence $(x_n)$ with $|x_n|\rightarrow\infty$ we have subsequence with  $D\xi(x_{n_k})\rightarrow \Tt^*$, then by the first inequality in~\eqref{eq:xi-Aff-Infinity-2} it holds $\Tt=\Tt^*$, which implies uniqueness of the limit.
\end{proof}

\begin{proof}[Proof of Lemma~\ref{lem-FP:Level-Set-Der}]
By using \eqref{def:Psi-xi} and the co-area formula, for $g\in W^{1,\infty}(\Rk;\R)$ we have
\begin{equation}\label{eq:Lev-Set-Der-eq1}
 \int_{\R^k} \psi^\xi(z) \nabla_z g(z) \; dz = \int_{\R^k} dz\int_{\Sigma_z} \psi (\nabla_z g)\circ\xi \,\frac{d\Hausdorff^{d-k}}{\Jac \xi} = \int_{\R^d} \psi(q) \bra*{\nabla_z g \circ \xi}(q) \; dq.
\end{equation}
Since $D\xi$ has rank $k$ we can invert $D\xi D\xi^\transpose $, giving the projected gradient
\begin{equation*}
 \nabla_z g \circ \xi  =   (D\xi D\xi^\transpose )^{-1}\,D\xi\,D (g\circ \xi).
\end{equation*}
Substituting into \eqref{eq:Lev-Set-Der-eq1} we find
\begin{align*}
-\int_{\R^k} \psi^\xi(z) \nabla_z g(z) \; dz &= -\int_{\R^d} \psi(q) \nabla_z g \circ \xi(q) \; dq \\
 &= - \int_{\R^d} \psi(q)\, (D\xi D\xi^\transpose )^{-1}(q)\,D\xi(q)\,D (g\circ \xi)(q) \; dq \\
 &= \int_{\R^d} g\circ \xi(q)\ \div \bra*{\psi\,\bra*{D\xi D\xi^\transpose }^{-1}\,D\xi }(q) \; dq \\
 &= \int_{\R^k} g(z) \,dz\int_{\Sigma_z} \div \bra*{\psi\bra*{D\xi D\xi^\transpose }^{-1} D\xi } \frac{d\Hausdorff^{d-k}}{\Jac \xi}.
\end{align*}
This proves \eqref{eq-levelset-deriv}. Equation  \eqref{eq:levelset-deriv-matrix} follows by applying \eqref{eq-levelset-deriv} columnwise.
\end{proof}

\section{Regularity of effective coefficients}\label{App:Coeff-Prop}

\begin{proof}[Proof of Lemma~\ref{lem:Eff-Coeff-Properties}]
We will first prove that $|\nabla_z b_i(z)|\leq C$ for  $i\in \{1,\ldots,k\}$. For the ease of calculations we write
 \begin{align*}
 b_i(z)=\int_{\Sigma_z}f_i d\bar \mu_z = \frac{1}{\hat\mu(z)}\int_{\Sigma_z}f_i\mu \frac{d\Hausdorff^{d-k}}{\Jac\xi}=:\frac{\psi^\xi(z)}{\hat\mu(z)}, \qquad \text{ where } \qquad f_i:=D\xi_i\nabla V-\beta^{-1}\Delta\xi_i,
 \end{align*}
where we have used the explicit form of $\bar\mu_z$ in~\eqref{eq:FP-rho-bar-def}. By the chain rule follows
\begin{align}\label{eq:App-nabla-b}
\nabla_z b_i=\frac{\nabla_z\psi^\xi(z)}{\hat\mu(z)}-b_i(z)\nabla_z\log\hat\mu(z).
\end{align}
Using~\eqref{eq:FP-Local-Mean-Force-F:muhat}, the second term in the right hand side of~\eqref{eq:App-nabla-b} can be written as
\begin{align}\label{eq:App-nabla-hat-mu}
-b_i(z)\nabla_z\log\hat\mu(z)=b_i(z)\int_{\Sigma_z}\bra*{\beta D\xi^{\dagger \transpose}\nabla V-\div(D\xi^{\dagger \transpose})}\,d\bar\mu_z,
\end{align}
where $\mu(q)=Z^{-1}\exp(-\beta V(q))$ and $D\xi^{\dagger}:=D\xi^\transpose  (D\xi D\xi^\transpose )^{-\transpose}$ is the Moore-Penrose pseudo-inverse of~$D\xi$. Applying Lemma~\ref{lem-FP:Level-Set-Der} to the first term in the right hand side of~\eqref{eq:App-nabla-b} we obtain
\begin{align}\label{eq:App-nabla-psi-xi}
\frac{\nabla_z\psi^\xi(z)}{\hat\mu(z)}&=\frac{1}{\hat\mu(z)}\int_{\Sigma_z}\div(D\xi^{\dagger \transpose}\mu f_i)\frac{d\Hausdorff^{d-k}}{\Jac\xi}\\
&=\int_{\Sigma_z}\pra*{D\xi^{\dagger \transpose}\bra*{\nabla f_i-\beta f_i\nabla V}+f_i\div(D\xi^{\dagger \transpose})}\,d\bar\mu_z. \notag
\end{align}
Substituting~\eqref{eq:App-nabla-hat-mu} and~\eqref{eq:App-nabla-psi-xi}   into~\eqref{eq:App-nabla-b} we can write
\begin{align}\label{eq:App-div-b-Expand}
\nabla_zb_i(z)=\int_{\Sigma_z}\pra*{D\xi^{\dagger \transpose}\nabla f_i+(b_i\circ \xi-f_i)\bra*{\beta D\xi^{\dagger \transpose}\nabla V-\div(D\xi^{\dagger \transpose})}}\,d\bar\mu_z.
\end{align}
Regarding the first term in~\eqref{eq:App-div-b-Expand},
\begin{align}\label{eq:App-nabla-f_i-1}
\nabla f_i=D^2\xi_i\nabla V-D^2V\nabla\xi_i-\beta^{-1}\nabla D^2\xi_i,
\end{align}
assumptions~\ref{Ass:xi-Regularity-Dxi-Full-Rank}-\ref{Ass:xi-Growth-Cond} on $\xi$ and (V1)-(V2) on $V$ ensure that $|\nabla f_i|, |D\xi^\dagger|\leq C$. The pseudo-inverse $X^\dagger$ of a matrix $X\in\R^{k\times d}$ with rank $k$ and depending on a scalar parameter $x$ satisfies
\begin{align}\label{eq:Mat-Pseudo-Inc-Der}
\partial_x X^\dagger=-X^\dagger \partial_x X X^\dagger+(\Id_d-X^\dagger X)\partial_x X^\transpose  X^{\dagger \transpose}X^\dagger,
\end{align}
and therefore $|\div (D\xi^{\dagger \transpose})|<C$ by the assumptions on $\xi$. Using these observations in~\eqref{eq:App-div-b-Expand} it follows that
\begin{align}\label{eq:App-nabla-b-Final-term}
|\nabla_zb_i(z)|\leq C+\beta\abs*{\int_{\Sigma_z}\pra*{b_i\circ \xi-f_i}D\xi^{\dagger \transpose}\nabla V \,d\bar\mu_z}+C\int_{\Sigma_z}|b_i\circ\xi-f_i|\,d\bar\mu_z.
\end{align}
We have assumed that the stationary conditional measure $\bar\mu_z$ satisfies a Poincar\'e inequality with constant~$\alpha_{\PI}$, uniformly in $z$. Using the definition of $b_i$, the final term on the right hand side of~\eqref{eq:App-nabla-b-Final-term} can be estimated as
\begin{align}\label{eq:App-Poinc-Est}
\int_{\Sigma_z}|b_i\circ\xi-f_i|d\bar\mu_z\leq \sqrt{\mathrm{Var}_{\bar\mu_z}(f_i)}\leq \sqrt{\frac{1}{\alpha_{\PI}}\int_{\Sigma_z}|\nabla_{\Sigma_z}f_i|^2d\bar\mu_z}\leq C,
\end{align}
where $\mathrm{Var}_{\rho}(g):=\int\bra*{g-\int g \,d \rho}^2\,d\rho$ is the variance. The last estimate in~\eqref{eq:App-Poinc-Est}  follows from $|\nabla_{\Sigma_z}f_i|\leq |\nabla f_i|\leq C$. Using the notation $v_\ell:=(D\xi^{\dagger \transpose}\nabla V)_\ell$ the middle term in \eqref{eq:App-nabla-b-Final-term} can be estimated as
\begin{align*}
\abs*{\int_{\Sigma_z}\pra*{b_i\circ \xi-f_i}v_\ell \, d\bar\mu_z}&=\abs*{\int_{\Sigma_z}\bra*{f_i-\int_{\Sigma_z}f_i \, d\bar\mu_z}\bra*{v_\ell-\int_{\Sigma_z}v_\ell \, d\bar\mu_z} d\bar\mu_z} \\
&\leq \sqrt{\mathrm{Var}_{\bar\mu_z}(f_i)\, \mathrm{Var}_{\bar\mu_z}(v_\ell)}\leq C,
\end{align*}
where we have used~\eqref{eq:App-Poinc-Est} and a similar argument applied to $v_\ell$ noting that
\begin{align*}
|\partial_i v_\ell|=\abs*{\sum\limits^d_{j=1} \partial_i(D\xi^{\dagger}_{j\ell}\partial_j V)}=\abs*{\sum\limits_{j=1}^d\bra*{\partial_i D\xi^{\dagger}_{j\ell}\partial_j V+D\xi^{\dagger}_{j\ell}\partial_{ij}V}}\leq C.
\end{align*}
We have now shown that $|\nabla_zb(z)|\leq C$, and hence there exists a constant $C>0$ such that $|b(z)|\leq C(1+|z|)$ i.e. $b$ has  sub-linear growth at infinity.  As a result $b\in W^{1,\infty}_{\mathrm{loc}}(\R^k)$. Since $D\xi$ is bounded,  $|A(z)|\leq C$ by definition and following the same calculations as used to show bounds on $\nabla_z b$ it can be shown that $|\nabla_z A(z)|\leq C$; combining these observations we have $A\in W^{1,\infty}(\R^k)$. Finally, since there exists a constant $C>0$ such that $C^{-1} \Id_k\leq D\xi D\xi^\transpose \leq C\Id_k$, $A$ satisfies similar bounds as well.
\end{proof}

\bibliographystyle{alphainitials}
\bibliography{CoarseGraining}

\vspace{0.5cm}

(M.\ H.\ Duong) Department of Mathematics, Imperial College London, London SW7 2AZ, United Kingdom\\
E-mail address: 
\href{mailto:m.duong@imperial.ac.uk}{m.duong@imperial.ac.uk}

(A.\ Lamacz) AG Optimal Control of Partial Differential Equations, Universit{\"a}t Duisburg-Essen, Thea-Leymann-Stra{\ss}e 9, D-45127 Essen, Germany\\
E-mail address:
\href{mailto:agnes.lamacz@uni-due.de}{agnes.lamacz@uni-due.de}

(M.\ A.\ Peletier)  Department of Mathematics and Computer Science and Institute for Complex Molecular
Systems, TU Eindhoven, 5600 MB Eindhoven, The Netherlands\\
E-mail address: \href{mailto:M.A.Peletier@tue.nl}{M.A.Peletier@tue.nl}

(A.\ Schlichting) Universit\"at Bonn  Institut f\"ur Angewandte Mathematik Endenicher Allee 60, 53129 Bonn, Germany\\
E-mail address:
\href{mailto:schlichting@iam.uni-bonn.de}{schlichting@iam.uni-bonn.de}

(U.\ Sharma) CERMICS, {\'E}cole des Ponts ParisTech, 6-8 Avenue Blaise Pascal, Cit{\'e} Descartes, Marne-la-Vall{\'e}e, 77455, France\\
E-mail address:
\href{mailto:upanshu.sharma@enpc.fr}{upanshu.sharma@enpc.fr}

\end{document}